\newtheorem{remark}{Remark}[section]
\newtheorem{theorem}{Theorem}[section]
\newtheorem{prop}[theorem]{Proposition}
\newtheorem{lemma}{Lemma}[section]
\theoremstyle{definition}
\newtheorem{Example}{Example}[section]
\numberwithin{equation}{section}
\title{Common change point estimation in panel data from the least squares and maximum likelihood viewpoints}
\date{\today}
\author[1]{Monika Bhattacharjee}
\author[2]{Moulinath Banerjee}
\author[3]{George Michailidis}
\affil[1]{Informatics Institute, University of Florida, Gainesville, USA}
\affil[2]{Department of Statistics, University of Michigan, Ann Arbor, USA}
\affil[3]{Department of Statistics \& Informatics Institute, University of Florida, Gainesville, USA}
\begin{document}

\maketitle

   \begin{abstract}  We establish the convergence rates and asymptotic distributions of the common break change-point-estimators, obtained by least squares and maximum likelihood in panel data models and compare their asymptotic variances. Our model assumptions accommodate a variety of commonly encountered probability distributions and, in particular, models of particular interest in econometrics beyond the commonly analyzed Gaussian model, including the zero-inflated Poisson model for count data, and the probit and tobit models. We also provide novel results for time dependent data in the signal-plus-noise model, with emphasis on a wide array of noise processes, including Gaussian process, MA$(\infty)$ and $m$-dependent processes. The obtained results show that maximum likelihood estimation requires a stronger signal-to-noise model identifiability condition compared to its least squares
counterpart. Finally, since there are three different asymptotic regimes that depend on the behavior of the norm difference of the model parameters 
before and after the change point, which cannot be realistically assumed to be known, we develop a novel{\em data driven adaptive procedure} that provides valid confidence intervals for the common break, without requiring a priori knowledge of the asymptotic regime the problem falls in.
      \end{abstract}
   \medskip

\noindent \textbf{Key words and phrases.}  panel data, change point, least squares estimator, maximum likelihood estimator, adaptive estimation \medskip

\noindent{\bf JEL Classification.}  C23, C33, C51 \bigskip

\section{Introduction}

The change point problem for univariate data has a long history in the econometrics and statistics literature. A broad overview of the
technical aspects of the problem is provided in \citet{Basseville1993detection, csorgo1997limit}. The problem has a wide range of applications
in economics \citep{baltagi2016estimation, RePEc:siu:wpaper:07-2015, doi:10.1080/01621459.2015.1119696} and finance 
\citep{frisen2008financial}, while other standard areas include quality monitoring and control \citep{qiu2013introduction}, as well as newer ones
such as genetics and medicine \citep{chen2011parametric} and neuroscience \citep{koepcke2016single}. On the other hand, there is little work
when it comes to panel data, despite the presence of common break  in such data as argued in \citet{bai2010common}. Further, most
of the analytical emphasis is on numerical/continuous data, although there are a lot of applications involving count data (see
\citep{cameron2013count, hsiao2014analysis} and references therein) and binary data \citep{park2011changepoint, wu2008change}
or categorical data \citep{zhang2010detecting}.

The technical literature on change point analysis for panel data focuses on the common break model given by
\begin{eqnarray}
\label{model.defn}
X_{it} & = & \mu_{i1}+\epsilon_{it}, \ \ \ t=1,2,\cdots,\tau \\ \nonumber
X_{it} & = & \mu_{i2}+\epsilon_{it}, \ \ \ t=\tau+1,\cdots,T \\
       &    & i=1,\cdots,N, \nonumber
\end{eqnarray}
where $\tau$ represents a common break point for all $N$ series, the difference $|\mu_{i1}-\mu_{i2}|$ represents the magnitude of the
shift for each series and $\epsilon_{it}$ are random noise processes that are cross-sectionally independent. 
\citet{bai2010common} employed a least squares criterion to estimate the common
change point $\tau$ and established its asymptotic distribution, while \citet{horvath2012change} developed tests for the presence of a
change point during the observation period. \citet{kim2014common} investigated estimation of the change point under cross-sectional dependence
in panels modeled by a common factor (see also \citep{baltagi2016estimation}).

As previously mentioned, the focus in the literature has been on the estimation of the common change point based on the mean shift model
using a least squares criterion. However, for other types of data, such as count data that can be modeled by Poisson or negative binomial
models and their zero inflated counterparts \citep{cameron2013count}, maximum likelihood estimation is a more suitable procedure.  The same
holds true for more complex models such as probit or Tobit models \citep{park2011changepoint}. To emphasize the latter point, consider
a zero-inflated Poisson model characterized by tthe following two parameters: (i) $\sigma$ the probability of extra zero counts and (ii) $\lambda$
the expected count of the Poisson component. The mean is given by the expession $(1-\sigma)\lambda$ and one can consider settings where
simultaneous changes in the $\sigma, \lambda$ parameters before and after the change point do not lead to changes in the corresponding mean
parameter. A least squares criterion, based on fitting different means before and after a candidate for the change point, would not be able to identify 
such structural changes, while a maximum likelihood based criterion clearly would. The same holds true for other complex models and hence a {\em comprehensive} study of the problem under the 
maximum likelihood criterion is warranted.

The key objective of this paper is to investigate the estimation of the common change point in {\em independent panel data} based both
on the least squares and the maximum likelihood criteria for a wide class of statistical models and further compare the assumptions needed to establish
consistency of the respective estimates, as well as the nature of their asymptotic distributions. To the best of our knowledge, this
is the first comprehensive treatise in the literature on maximum likelihood based estimation of the change point for panel data in a general setting.
The general setting adopted, encompasses as a special case, exponential families. Further,
for the least squares criterion for panel data, we also consider a more general setting for temporally dependent data than the one considered in previous
literature (e.g. \citep{bai2010common}).

Our results show
that maximum likelihood estimates require a {\em  stronger identifiability condition} (denoted as  SNR2 in Section \ref{sec: mle}) vis-a-vis that
for least squares estimates (denoted as SNR1 in Section \ref{sec: lse}), 
while the asymptotic distribution of the change point 
exhibits smaller variance. The singular case is for normally distributed data, where the identifiability condition needed to establish consistency
and obtain the asymptotic distribution is identical for the two criteria. 

Another key contribution of the paper is the introduction of a data based {\em adaptive inference} scheme for obtaining the asymptotic distribution
of the change point estimate in practice. As established in the literature of least squares criterion and further shown in this paper for the maximum likelihood criterion,
there are three distinct asymptotic regimes for the change point estimator that depend on the norm difference of the model parameters before and after the
change point. Since that norm difference is not {\em a priori known}, the practioner faces a dilemma of which regime to employ for the construction
of confidence intervals for the change point parameter.  
Our proposed scheme overcomes this issue and provides a unified regime that {\em self-adapts}
to the true underlying setting, thus enabling the data analyst to construct accurate confidence intervals. To the best of our knowledge, this topic has not
been  pursued in the literature before.


\noindent
{\bf Problem Formulation:}
We consider panel data comprising $m$ series (variables), with each series  observed at $n$ time
points. The observations are denoted by $\{X_{kt}^{(n)}:\  1 \leq k \leq m,\  1 \leq t \leq n\}$. 
In general, the sequence of observations available depends on the number of time points $n$; however, for ease of exposition and to reduce notational
overhead, we shall write $X_{kt}$ for $X_{kt}^{(n)}$. Further, 
there is a single structural change common across all panels, that occurs at  $\tau_n \in (0,1)$, referred to as the common break/change point.
We assume that  $\{X_{kt}\}$ are independent over $k$. For each $k$, variables within the sets $\{X_{kt}: t \leq n\tau_n\}$  and $\{X_{kt}: t > n\tau_n\}$ are independently and identically distributed, whereas variables between these two sets are independent\footnote{The assumption of independence across time is relaxed in Section 2.3.}. Throughout this paper, we assume $0<c^{*}<\tau_n<1-c^{*}<1$ for some $c^{*}>0$ and consider estimates of $\tau_n$ which are in $(c^{*},1-c^{*})$.

We are interested in obtaining the least squares and the maximum likelihood estimates of $\tau_n$ for a wide range of statistical models, under suitable
regularity conditions. The least squares estimation problem is presented in Section \ref{sec: lse} for independent and identically distributed data,
while that of maximum likelihood estimation in Section \ref{sec: mle}. Further, extensions to time dependent data for least squares estimates are
presented in Section \ref{lse:time-dependent}. Finally, the issue of adaptive inference is examined in Section \ref{sec: adaptive}.

\noindent 
The following diagram provides a schematic road-map for the main results established, as well as illuminating examples of interest in econometrics.
In the diagram the following abbreviations are employed: \textit{indep}: independent, \textit{dep}: dependent, \text{conv}: Convergence, \text{asymp distribn}: asymptotic distribution,  \textit{Thm}: Theorem, \textit{Prop}: Proposition, \textit{Rem}: Remark, \textit{Pf}: Proof, \textit{Sec}: Section, \textit{Exm}: Example, \textit{WN}: white noise, \textit{adap inf}: adaptive inference and  \textit{Supp}: Supplementary file. By $\gamma$ we mean $\gamma_{_\text{L,LSE}}$, $\gamma_{_\text{R,LSE}}$, $\gamma_{_\text{L,LSE}}^{*}$, $\gamma_{_\text{R,LSE}}^{*}$ and $c_1$.

\tikzset{font=\small,
edge from parent fork down,
level distance=1.75cm,
every node/.style=
    {top color=white,
    bottom color=white,
    rectangle,rounded corners,
    minimum height=8mm,
    draw=black!75,
    very thick,
    drop shadow,
    align=center,
    text depth = 4pt
    },
edge from parent/.style=
    {draw=black!50,
    very thick,
    level 1/.style={sibling distance=16cm}}}

\begin{center}
\begin{tikzpicture}
\Tree [.{Change point estimator}
        [.{LSE}
              [.{indep data} 
                  [.{\textbf{conv rate}:\\ Thm 2.1, Pf: Sec 5.1} 
                     [.{\textbf{asymp distribn}: \\ Thm 2.2, Pf: Sec 5.2}  
                        [.{\textbf{existence of $\gamma$ limits}:\\ Props 2.3, 2.4, Pf: Supp} 
                           [.{Exms 2.1-2.3} 
                             [.{\textbf{adap inf}:\\ Thm 4.1, Pf: Sec 5.7} ] ] ] ] ] ]
                [.{dep data} 
                     [.{\textbf{conv rate}:\\ Thm 2.5, Pf: Supp}
                         [.{\textbf{asymp distribn}:\\ Thm 2.6, Pf: Sec 5.3}
                            [.{\textbf{m-dep, WN process}\\ Rems 2.6-2.8, Exms in Supp}
                               [.{\textbf{Gaussian process}\\ Exm 2.4}
                                  [.{discusion on (D3): Supp} 
                                    [.{\textbf{linear process}\\ Exm 2.5, Pf: Sec 5.4} 
                                       [.{\textbf{adap inf:} Supp} ] ] ] ] ] ] ] ] ]
        [.{MLE} 
            [.{indep data} 
                 [.{\textbf{conv rate}: \\ Thm 3.1, Pf: Sec 5.5} 
                    [.{\textbf{asymp distribn}:\\ Thm 3.2, Pf: Sec 5.6} 
                      [.{\textbf{Exp family}:\\ Exm 3.1}   
                        [.{\textbf{0-inflated Poisson, Probit, Tobit models}:\\ Exms 3.2-3.4}  
                          [.{more exms in Supp} 
                            [.{\textbf{adap inf:}\\ Thm 4.2, Pf: omitted} ] ] ] ] ] ] ] ]
 ]
\end{tikzpicture}
\end{center}

\section{Least squares estimation of the common break model parameters} \label{sec: lse}

We  present asymptotic properties of the least squares estimator of the change point $\tau_n$. 
The underlying assumption is  that the break $\tau_n$ occurs due to a change in mean parameters of $\{X_{kt}\}$, which is  equivalent to the following statement.  For each  $k \geq 1$, 
$$E(X_{kt}) = \mu_{1k}(n)I(t \leq n\tau_n) + \mu_{2k}(n)I(t>n\tau_n),$$ 
where $\mu_{1k}(n) \neq \mu_{2k}(n)$ for at least one $k$. Note that in general, $\{EX_{kt}\}$ depends on $n$. For ease of exposition, henceforth we write $\mu_{ik}$ for $\mu_{ik}(n)$ for all $k, n \geq 1$. 
\newline
\indent The least squares estimator $\hat{\tau}_{n,\text{LSE}}$ of $\tau_n$ can be obtained by optimizing the following criterion function:
\begin{eqnarray} 
 \hat{\tau}_{n,\text{LSE}} &=& \arg\max_{b \in (c^{*},1-c^{*})} M_{n}(b)\ \ \text{where} \nonumber \\
 M_{n}(b) &=& \sum_{k=1}^{m} M_{k,n}(b),\ \ 
M_{k,n}(b) = -\frac{1}{n}\bigg[ \sum_{t=1}^{nb}(X_{kt}-\hat{\mu}_{1k}(b) )^2 + \sum_{t=nb+1}^{n} (X_{kt}-\hat{\mu}_{2k}(b) )^2 \bigg], \nonumber \\
\hat{\mu}_{1k}(b) &=& \frac{1}{nb} \sum_{t=1}^{nb} X_{kt}\ \ \text{and}\ \ \hat{\mu}_{2k}(b) = \frac{1}{n(1-b)} \sum_{t=nb+1}^{n} X_{kt}. \label{eqn: optilse}
\end{eqnarray}

\bigskip\noindent
{\bf Rate of convergence for $\hat{\tau}_{n,\text{LSE}}$}. To establish our results, we consider the following assumptions. 

\noindent \textbf{(A1)} $\sup_{k,n, t} E(X_{kt}- E(X_{kt}))^4 < \infty$. \vskip 3pt
\noindent Note that $(A1)$ implies  $\sup_{k,n, t} \text{Var}(X_{kt}) < \infty$. 

\noindent Set $\mu_i = (\mu_{i1},\mu_{i2},\ldots,\mu_{im}),\ i=1,2$ and consider the following {\em signal-to-noise} condition. \vskip 3pt
\noindent
\textbf{(SNR1)} $nm^{-1}||\mu_1 -\mu_2||_2^2 \to \infty$ as $n \to \infty$.

\noindent Assumption (A1) controls the 4th moment of $\{X_{kt}\}$, which arises to control the variance of the least squares quadratic criterion function posited above.  Observe that $n||\mu_1 - \mu_2||_2^2$ is the gross signal in the observed data set. Therefore,  $nm^{-1}||\mu_1 -\mu_2||_2^2$ indicates average signal per series, which is allowed to grow to $\infty$ in (SNR1). 
Given the (A1) and (SNR1) assumptions, the following rate result for $\hat{\tau}_{n,\text{LSE}}$ can be established, whose proof is given in Section \ref{subsec: proofmlelserate}. 
\begin{theorem} \textbf{Least squares convergence rate}.  \label{thm: lse1}
Suppose (A1)  and (SNR1) hold. Then,
\begin{eqnarray}
n||\mu_1 - \mu_2||_2^2 (\hat{\tau}_{n,\text{LSE}}-\tau_n) = O_{P}(1). \nonumber 
\end{eqnarray}
\end{theorem}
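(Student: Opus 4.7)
The plan follows the classical three-step argument for rates of $M$-estimators in change-point problems (cf.\ the scalar treatment in \citet{bai2010common}). \emph{Reformulation and decomposition.} Using the identity $\sum_{t\in I}(X_{kt}-\bar X_I)^2 = \sum_{t\in I}X_{kt}^2 - |I|\bar X_I^2$, the $b$-independent part drops out and maximizing $M_n(b)$ is equivalent to maximizing
\[
\tilde M_n(b)\ :=\ \sum_{k=1}^m\bigl[b\,\hat\mu_{1k}(b)^2 + (1-b)\,\hat\mu_{2k}(b)^2\bigr].
\]
Writing $\hat\mu_{ik}(b) = E\hat\mu_{ik}(b) + \bar\epsilon_{ik}(b)$ with mean-zero noise averages and expanding the squares produces a decomposition $\tilde M_n(b) = G(b) + L_n(b) + Q_n(b)$, in which $G$ is deterministic and $L_n$, $Q_n$ are respectively linear and quadratic in $\bar\epsilon_{ik}(b)$.

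\emph{Linear drift from $G$.} A short algebraic calculation (using $E\hat\mu_{2k}(b) = [(\tau_n-b)\mu_{1k}+(1-\tau_n)\mu_{2k}]/(1-b)$ for $b\le\tau_n$, and the symmetric formula for $b>\tau_n$) yields
\[
G(\tau_n) - G(b)\ =\ \frac{(\tau_n-b)(1-\tau_n)}{1-b}\,\|\mu_1-\mu_2\|_2^2\qquad(b\le\tau_n),
\]
and an analogous expression for $b>\tau_n$. Because $\tau_n,b\in(c^*,1-c^*)$ these fractions are bounded below by a positive constant, giving a uniform linear slope $G(\tau_n)-G(b)\ge c_0|b-\tau_n|\,\|\mu_1-\mu_2\|_2^2$ for some $c_0>0$. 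Combined with $\tilde M_n(\hat\tau_{n,\text{LSE}})\ge\tilde M_n(\tau_n)$, this produces the basic inequality
\[
c_0\,|\hat\tau_{n,\text{LSE}}-\tau_n|\,\|\mu_1-\mu_2\|_2^2\ \le\ \bigl|L_n(\hat\tau_{n,\text{LSE}})-L_n(\tau_n)\bigr|\ +\ \bigl|Q_n(\hat\tau_{n,\text{LSE}})-Q_n(\tau_n)\bigr|.
\]

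\emph{Peeling.} With $s_n = 1/(n\|\mu_1-\mu_2\|_2^2)$, decompose $\{|b-\tau_n|>Ks_n\}$ into dyadic shells $A_j=\{b:2^{j-1}Ks_n<|b-\tau_n|\le 2^jKs_n\}$. On $\{\hat\tau_{n,\text{LSE}}\in A_j\}$ one of the two stochastic suprema must exceed $c_0 2^{j-1}K/n$, so what is needed are maximal inequalities of the form
\[
E\sup_{|b-\tau_n|\le\delta}\bigl(L_n(b)-L_n(\tau_n)\bigr)^2\ \lesssim\ \frac{\delta\,\|\mu_1-\mu_2\|_2^2}{n},\qquad E\sup_{|b-\tau_n|\le\delta}\bigl(Q_n(b)-Q_n(\tau_n)\bigr)^2\ \lesssim\ \frac{\delta\,m}{n^2}.
\]
These follow by expressing the increments in terms of the partial-sum processes $S_k(nb)=\sum_{t\le nb}\epsilon_{kt}$ and applying Doob's inequality, with (A1) supplying the required fourth moments for the quadratic piece. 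Chebyshev then bounds $P(\hat\tau_{n,\text{LSE}}\in A_j)$ by a constant multiple of $2^{-j}K^{-1}\bigl[1 + m/(n\|\mu_1-\mu_2\|_2^2)\bigr]$; summing over $j$ gives $O(K^{-1})$, which can be made arbitrarily small because (SNR1) forces $m/(n\|\mu_1-\mu_2\|_2^2)\to 0$.

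The main obstacle is the second maximal inequality. The linear piece inherits $\|\mu_1-\mu_2\|_2$-sized coefficients directly from the drift, so its increment variance near $\tau_n$ matches the slope of $G$ automatically; the quadratic piece $Q_n$, however, contains no signal at all, and its only hope for being negligible against the drift is that (SNR1) defeats the extra factor of $m$. Getting the $Q_n$ bound with the correct $\delta$-scaling (rather than the cruder uniform bound that would replace $\delta$ by a constant) requires expanding $Q_n(b)-Q_n(\tau_n)$ explicitly as a polynomial in the partial-sum increments $S_k(n\tau_n)-S_k(nb)$ and $S_k(n)-S_k(n\tau_n)$, and using (A1) together with a Doob-type inequality applied to the squared process; this bookkeeping is where the real work lies.
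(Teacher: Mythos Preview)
Your proposal is correct and follows the same three-step strategy as the paper: (i) a deterministic linear drift $G(\tau_n)-G(b)\asymp\|\mu_1-\mu_2\|_2^2\,|b-\tau_n|$; (ii) a maximal inequality for the stochastic fluctuation obtained via Doob by reducing to a variance bound at the endpoint, with (A1) supplying fourth moments for the quadratic-in-noise terms; (iii) (SNR1) to absorb the extra factor of $m$ carried by the signal-free quadratic piece. The paper packages these same ingredients through the abstract van der Vaart--Wellner rate lemma (its Lemma~5.1) rather than by explicit peeling, and it decomposes the increment $M_n(b)-M_n(\tau_n)$ as $\sum_k(T_{1k}+T_{2k}+T_{3k})$ (with $T_{3k}$ carrying the drift $N_{4k}^2$ and the leading linear cross-term $N_{3k}N_{4k}$, while $T_{1k},T_{2k}$ are purely quadratic in noise). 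Your $G+L_n+Q_n$ organization by polynomial degree in the noise is an equivalent regrouping of the same algebra and is arguably more transparent; the paper's version has the minor bookkeeping advantage that each $T_{ik}$ already vanishes at $b=\tau_n$.

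One small point to tighten: your $Q_n$ is not centered, so $E[Q_n(b)-Q_n(\tau_n)]=O\!\bigl(m|b-\tau_n|/n\bigr)$ (coming from the variance heterogeneity $\sigma_{1k}^2-\sigma_{2k}^2$) must be handled before Chebyshev is applied to the centered remainder. On each shell this mean is $o(1)$ times the threshold $c_0 2^{j-1}K/n$ precisely by (SNR1), so the fix is immediate; the paper avoids the issue by working with $M_n-EM_n$ from the outset.
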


\bigskip\noindent
{\bf Asymptotic distribution of $\hat{\tau}_{n,\text{LSE}}$}. 
For the panel data setting, there are three different regimes, as shown in \citet{bai2010common} : (a) $\lim_{n \to \infty}  ||\mu_1 - \mu_2||_2 \to \infty$, (b) $\lim_{n\to\infty}  ||\mu_1 - \mu_2||_2 \to 0$ and (c) $\lim_{n\to\infty}  ||\mu_1 - \mu_2||_2 \to c >0$. Asymptotic distributions of the change point
estimate are different in these three regimes.  
\newline \indent Recall that
in the presence of a single panel, the following two results have been established in the literature:  (i) if $|\mu_1-\mu_2|\rightarrow
0$ (Regime (b)) at an appropriate rate as a function of the sample size $n$, then the asymptotic distribution of the change point is given by the maximizer of
a Brownian motion with triangular drift  (for details see \cite{bhattacharya1994}); and (ii) if $|\mu_1-\mu_2|\rightarrow c$ (Regime (c)), then the
asymptotic distribution of the change point, in the random design setting,  is given by the maximizer of a two-sided compound Poisson process (for details see Chapter 14 of the book by \cite{kosorok2008}).  As previously mentioned and will be established rigorously
next, in the panel data setting analogous regimes emerge, with the modification that in the case of (ii) since we are dealing with a fixed design, the
process becomes a two-sided generalized random walk; in addition there exists a third one (Regime (a)), where the asymptotic distribution of the change point becomes degenerate at the true value.
\newline
\indent Next, we introduce  assumptions needed to establish these results.  In Regime (a),  the asymptotic distribution of the change point can be derived under the same assumptions as in Theorem \ref{thm: lse1}. On the other hand, in the second and third regimes,  a non-degenerate limit distribution can be obtained under the following additional  assumptions.  Detailed comments on these assumptions will be provided after stating the results. 

\vskip 5pt
\noindent \textbf{Regime (b): $||\mu_1 - \mu_2||_2 \to 0$, assumptions}. 
Note that  existence of $\text{Var}(X_{kt})$ is guaranteed by (A1).  Denote
\begin{eqnarray}
\text{Var}(X_{kt}) = \sigma_{1k}^2(n) I(t \leq n\tau_n) + \sigma_{2k}^{2}(n)I(t>n\tau_n),\ \  \forall k \geq 1.
\end{eqnarray}
\noindent For ease of presentation,  we write $\sigma_{ik}$ for $\sigma_{ik}(n)$. Let,
\begin{eqnarray} \label{eqn: gammaLRnew}
\gamma_{_{\text{L, LSE}}}^2 = \lim_{n \to \infty} \frac{\sum_{k=1}^{m} (\mu_{1k} - \mu_{2k})^2 \sigma_{1k}^2}{||\mu_1 - \mu_2||_2^{2}}\ \ \text{and}\ \ \ \gamma_{_{\text{R, LSE}}}^2 = \lim_{n \to \infty} \frac{\sum_{k=1}^{m} (\mu_{1k} - \mu_{2k})^2 \sigma_{2k}^2}{||\mu_1 - \mu_2||_2^{2}}.
\end{eqnarray}

\noindent We then require, \\
\noindent \textbf{(A2)} $\gamma_{_{\text{L, LSE}}}$ and $\gamma_{_{\text{R, LSE}}}$ exist. \\
\noindent \textbf{(A3)} There exists an $\epsilon >0$, such that $\inf_{k,t,n} \text{Var}(X_{kt})>\epsilon$.
\vskip 5pt

\noindent \textbf{Regime (c): $||\mu_1-\mu_2||_2 \to c>0$, assumptions}.
\noindent 
Consider the following disjoint and exhaustive subsets of $\{1,2,3,\ldots, m(n)\}$:
\begin{align} \label{eqn: partition}
\mathcal{K}_0 &= \{k:\ 1 \leq k \leq m(n),\ \lim (\mu_{1k} - \mu_{2k}) \neq 0\} \ \ \text{and} \\
\mathcal{K}_n &= \mathcal{K}_0^c = \{k:\ 1 \leq k \leq m(n),\ \lim (\mu_{1k} - \mu_{2k}) = 0\},\ \forall n \geq 1. \nonumber
\end{align}
Clearly, $\mathcal{K}_0$ is a finite set. We consider the following assumptions on $\mathcal{K}_0$.
\vskip 3pt
\noindent \textbf{(A4)}  $\mathcal{K}_0$ does not vary with $n$.  
\vskip 3pt
\noindent \textbf{(A5)} For some $\tau^{*} \in (c^{*},1-c^{*})$, $\tau_n \to \tau^{*}$ as $n \to \infty$. Moreover,  there is a collection of independent random variables $\{X_{ik}^{*}: k \in \mathcal{K}_0, i=1,2\}$ such that for all $0<f<1$,
\begin{align}
X_{k\lfloor nf \rfloor} \stackrel{\mathcal{D}}{\to} X_{1k}^{*}I(f \leq \tau^{*}) + X_{2k}^{*}I(f > \tau^{*}).
\end{align}
Let $E(X_{ik}^{*}) = \mu_{ik}^{*}$ (say). For all $k \in \mathcal{K}_0$ and $i=1,2$, we have
$\mu_{ik}(n) \to \mu_{ik}^{*}$.
\vskip 3pt
\noindent We consider the following assumptions on $\mathcal{K}_n$. Let
\begin{eqnarray}
c_1^2 &=& \lim_{n \to \infty} \sum_{k \in \mathcal{K}_n} (\mu_{1k} - \mu_{2k})^2, \\ 
\gamma_{_{\text{L, LSE}}}^{*2} &=& \lim_{n \to \infty} \sum_{k \in \mathcal{K}_n} (\mu_{1k} - \mu_{2k})^2 \sigma_{1k}^2, \ \ \ 
\gamma_{_{\text{R, LSE}}}^{*2} = \lim_{n \to \infty} \sum_{k \in \mathcal{K}_n}  (\mu_{1k} - \mu_{2k})^2 \sigma_{2k}^2. \label{eqn: gammastarLRnew}
\end{eqnarray}
\noindent \textbf{(A6)} $c_1$,  $\gamma_{_{\text{L, LSE}}}^{*}$ and $\gamma_{_{\text{R, LSE}}}^{*}$ exist. \\
\noindent \textbf{(A7)} $\sup_{k \in \mathcal{K}_n} |\mu_{1k}-\mu_{2k}| \to 0$.
\vskip 10pt

Let $X \stackrel{d}{=} Y$ denote equality in distribution for random variables $X$ and $Y$.  The following theorem provides the limiting distribution of $\hat{\tau}_{n,\text{LSE}}$. Its proof is given in Section \ref{subsec: lse2}.
\begin{theorem} \textbf {Least squares asympotic distributions}. \label{thm: lse2}
Suppose (A1)  and (SNR1) hold. Then, the following statements are true.

\noindent
{\bf (a)} If $||\mu_1-\mu_2||_2 \to \infty$, then 
$$\lim_{n \to \infty} P(n||\mu_1-\mu_2||_2^2 (\hat{\tau}_{n,\text{LSE}}-\tau_n)=0) = \lim_{n \to \infty} P(\hat{\tau}_{n,\text{LSE}}=\tau_n) =1.$$

\noindent
{\bf (b)} If (A2) and (A3) hold and  $||\mu_1-\mu_2||_2 \to 0$, then
\begin{eqnarray}
n||\mu_1-\mu_2||_2^2 (\hat{\tau}_{n,\text{LSE}}-\tau_n) \stackrel{\mathcal{D}}{\to}  \arg \max_{h \in \mathbb{R}} (-0.5|h| + \gamma_{_{\text{L,LSE}}}B_hI(h\leq 0) + \gamma_{_{\text{R,LSE}}}B_h I(h>0)),\ \ 
\end{eqnarray}
where $B_h$ denotes the standard Brownian motion.

\noindent
{\bf (c)} Suppose (A3)-(A7) hold and $||\mu_1 -\mu_2||_2 \to c >0$, then
\begin{eqnarray}
n||\mu_1 - \mu_2||_2^2 (\hat{\tau}_{n,\text{LSE}} -\tau_n) &\stackrel{\mathcal{D}}{\to}& \arg \max_{h \in c^2\mathbb{Z}} (D(h) + C(h) + A(h)) \nonumber 
\end{eqnarray}
where  for each $\tilde{h} \in \mathbb{Z}$,
\begin{eqnarray}
D(c^2 (\tilde{h}+1))-D(c^2 \tilde{h}) &=& 0.5 {\rm{Sign}}(-h) c_1^2, \\
C(c^2 (\tilde{h}+1)) - C(c^2 \tilde{h}) &=& (\gamma_{_{\text{L,LSE}}}^{*}I(\tilde{h}\leq 0) + \gamma_{_{\text{R,LSE}}}^{*}I(\tilde{h}> 0)) W_{\tilde{h}},\ \ W_{\tilde{h}} \stackrel{\text{i.i.d.}}{\sim} \mathcal{N}(0,1), \ \ \ \ \ \ \  \ \label{eqn: msethm2c}\\
A(c^2 (\tilde{h}+1)) - A(c^2 \tilde{h}) &=& \sum_{k \in \mathcal{K}_0} \bigg[(Z_{k\tilde{h}} -\mu_{1k}^{*})^2 -(Z_{k\tilde{h}} -\mu_{2k}^{*})^2 \bigg],\  
  \label{eqn: msethm2a}
\end{eqnarray}
 $\{Z_{k\tilde{h}}\}$ are independently distributed with $Z_{k\tilde{h}} \stackrel{d}{=} X_{1k}^{*}I(\tilde{h} < 0) + X_{2k}^{*}I(\tilde{h} \geq  0)$ for all $k \in \mathcal{K}_0$. 
\end{theorem}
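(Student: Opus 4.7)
The plan is to apply the argmax continuous mapping theorem to a rescaled objective. Let $\Delta_n := \|\mu_1 - \mu_2\|_2$; by Theorem \ref{thm: lse1}, $\hat h_n := n\Delta_n^2(\hat\tau_n - \tau_n) = O_P(1)$, so attention can be restricted to $h$ in a compact set. Define the localized, appropriately rescaled process $Q_n(h) := n[M_n(\tau_n + h/(n\Delta_n^2)) - M_n(\tau_n)]$, whose argmax is $\hat h_n$. The key algebraic step, for $b > \tau_n$ with $j := nb - n\tau_n$, is the identity
\begin{equation*}
\sum_{t=n\tau_n+1}^{nb}\bigl[(X_{kt}-\mu_{1k})^2 - (X_{kt}-\mu_{2k})^2\bigr] = j(\mu_{1k}-\mu_{2k})^2 - 2(\mu_{1k}-\mu_{2k})\eta_{mk},
\end{equation*}
where $\eta_{mk} := \sum_{t=n\tau_n+1}^{nb}(X_{kt}-\mu_{2k})$, with a symmetric identity for $b<\tau_n$. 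Swapping true means for their sample counterparts introduces an $o_P(1)$ remainder on compact $h$-sets under (A1), yielding a clean drift-plus-noise decomposition of $Q_n$.

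Part (a) is immediate from Theorem \ref{thm: lse1}: when $\Delta_n \to \infty$ one has $n|\hat\tau_n - \tau_n| = O_P(1/\Delta_n^2) = o_P(1)$, and since $M_n$ is piecewise constant on the lattice $\{j/n\}$ the integer-valued quantity $n(\hat\tau_n - \tau_n)$ must equal zero with probability tending to one. For part (b), under $\Delta_n \to 0$ the drift reduces to $-|h|$ uniformly on compact sets, while the noise $2\sum_k(\mu_{1k}-\mu_{2k})\eta_{mk}$ is a partial-sum process over a window of $j = h/\Delta_n^2 \to \infty$ terms; Donsker's theorem combined with a Lindeberg argument across $k$ (using (A1)--(A3)) delivers convergence to $2\gamma_{_{\text{R, LSE}}} B_h$ on $\{h>0\}$ and $2\gamma_{_{\text{L, LSE}}} B_h$ on $\{h<0\}$. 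By Brownian scaling the argmax of $-|h| + 2\gamma_{_{\text{L, LSE}}} B_h I(h\leq 0) + 2\gamma_{_{\text{R, LSE}}} B_h I(h > 0)$ coincides in distribution with the argmax of $-0.5|h| + \gamma_{_{\text{L, LSE}}} B_h I(h\leq 0) + \gamma_{_{\text{R, LSE}}} B_h I(h > 0)$, whose almost-sure uniqueness is classical, and the argmax continuous mapping theorem concludes.

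For part (c), $\Delta_n \to c > 0$ makes the $h$-grid spacing of $Q_n$ equal $\Delta_n^2 \to c^2$, so the limit lives on the lattice $c^2\mathbb{Z}$. Split $Q_n$ along $\mathcal{K}_0 \cup \mathcal{K}_n$ via (A4): the $\mathcal{K}_n$ piece, where (A7) forces vanishing mean differences, repeats the part (b) argument with $\Delta_n^2$ replaced by $c_1^2$ from (A6), producing the drift $D$ and the Gaussian random walk $C$. The $\mathcal{K}_0$ piece is a finite sum under (A4) whose contribution does not vanish; moving $b$ by one $1/n$-grid step reclassifies exactly one observation whose limiting law is identified by (A5), yielding the non-Gaussian random walk $A$ with i.i.d. increments. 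Independence across $k$ of $\{X_{kt}\}$ furnishes joint finite-dimensional convergence $Q_n \Rightarrow D + C + A$ on $c^2\mathbb{Z}$. The principal obstacle is this argmax step: the limit is a lattice-valued random walk, so the continuous-sample-path version of the argmax theorem does not apply directly. A lattice-argmax argument is required, entailing (i) tightness of $\hat h_n$, which follows from Theorem \ref{thm: lse1} together with transience of $D+C+A$ at $\pm\infty$ (the linear drift of $D$ dominates the increments of $A$ and $C$ under (A6) and (A7)), and (ii) almost-sure uniqueness of $\arg\max(D+C+A)$, which is ensured by the nondegenerate Gaussian increments of $C$ (positive variance under (A3)) absolutely-continuously perturbing the lattice sums and breaking any ties.
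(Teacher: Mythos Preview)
Your proposal is correct and follows essentially the same strategy as the paper: both localize the criterion as $Q_n(h)=n[M_n(\tau_n+h/(n\Delta_n^2))-M_n(\tau_n)]$, establish a drift-plus-noise expansion, invoke a Lyapunov/Lindeberg CLT for the noise, and conclude via the argmax continuous mapping theorem (the paper's Lemma~\ref{lem: wvandis1}).

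Two presentational differences are worth noting. First, your ``swap true means for sample means'' step is exactly what the paper proves through its explicit $T_{1k},T_{2k},T_{3k}$ decomposition (equations \eqref{eqn: t123ex}--\eqref{eqn: t123}) together with Lemmas~\ref{lem: expectation}--\ref{lem: variancelse}; the paper's route is longer but makes the $o_P(1)$ remainder transparent, whereas you assert it. Second, in part~(c) you are more careful than the paper about the lattice argmax: you explicitly argue transience at $\pm\infty$ for tightness and use the absolutely continuous Gaussian increments of $C$ to break ties, points the paper leaves implicit in its appeal to Lemma~\ref{lem: wvandis1}. Your factor-of-two reconciliation in~(b) (dividing the objective by $2$ leaves the argmax unchanged) matches the paper's calculation at \eqref{eqn: t1233}--\eqref{eqn: xtstar}.
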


\noindent \textbf{Discussion of Theorem \ref{thm: lse2}}.  Next, we provide comments on the assumptions and how they relate to the three regimes established in Theorem \ref{thm: lse2}.
In the first regime, the signal for $\hat{\tau}_{n,\text{LSE}}$ is  high and therefore the difference
$(\hat{\tau}_{n,\text{LSE}} - \tau_n)$ becomes a point
mass at $0$. On the other hand, in the second and third regimes,  the total signal is weak and moderate, respectively, and a non-degenerate limit distribution can be obtained under additional  assumptions.
\newline
\indent
Under the last two regimes, the results are based on the weak convergence of the process 
\begin{eqnarray} 
M_{n}^{*}(h) &=& \sum_{k=1}^{m} M_{k,n}^{*}(h)\ \ \text{where}\ \  M_{k,n}^{*}(h) = n(M_{k,n}(\tau_n + n^{-1}||\mu_1-\mu_2||_2^{-2}h)-M_{k,n}(\tau_n)),  \nonumber \\
&&h \in ||\mu_1-\mu_2||_2^{2}\{-(n-1), -(n-2),\ldots, -1,0,1,\ldots, (n-2), (n-1)\}. \hspace{1 cm} \label{eqn: normalizationlse}
\end{eqnarray}
Under  appropriate conditions (as mentioned in Theorem \ref{thm: lse2}), the $\arg \max_{h} M_{n}^{*}(h) = n||\mu_1 - \mu_2||_2^2 (\hat{\tau}_{n,\text{LSE}}-\tau_n)$  converges weakly to the unique $\arg \max$ of the limiting process. For more details see Lemma \ref{lem: wvandis1}.  

\noindent \textbf{Regime (b): $||\mu_1-\mu_2||_2 \to 0$}. In the second regime, the asymptotic variance of $M_{n}^{*}(h)$ is proportional to $\gamma_{_{\text{L, LSE}}}$ when $h<0$ and $\gamma_{_{\text{R, LSE}}}$ if $h > 0$ and
hence the need for assumption (A2). Discussion of specific models and conditions under which (A2) is satisfied are given in Proposition \ref{prop: 1} and in Examples \ref{example: berlse}-\ref{example: normallse}. Finally, (A3) is required for establishing the non-degeneracy of the asymptotic distribution.

\noindent \textbf{Regime (c): $||\mu_1-\mu_2||_2 \to c>0$}. Under the third regime,  the limiting process has two components based on the partition of  $\{1,2,\ldots, m(n)\}$ into sets $\mathcal{K}_0$ and $\mathcal{K}_n = \mathcal{K}_0^c$, defined in (\ref{eqn: partition}).   
Observe that $\mathcal{K}_n$ is the collection of all such  indices whose corresponding variables eventually have the same mean before and after the change point. In the second regime, $\mathcal{K}_0$ is the empty set. On the other hand, under the third regime,  $\mathcal{K}_0$ may not be empty,  but can be at most a finite set. Hence, $\mathcal{K}_n$ is necessarily an infinite set. 

These two sets in the partition contribute  differently to the limit. Note that
\begin{eqnarray}
M_{n}^{*}(h) = \sum_{k \in \mathcal{K}_n} M_{k,n}^{*}(h) + \sum_{k \in \mathcal{K}_0} M_{k,n}^{*}(h) =: M_n^{I}(h) + M_{n}^{II}(h),\ \text{say}.
\end{eqnarray}
Let $\tilde{M}_n^{II}(c^2\tilde{h}) = M_{n}^{II}(||\mu_1 - \mu_2||_2^2 \tilde{h})$ for all $\tilde{h} \in \mathbb{Z}$. Then  
\begin{align}
& \tilde{M}_{n}^{II}(c^2 (\tilde{h}+1)) - \tilde{M}_{n}^{II}(c^2 \tilde{h}) = M_{n}^{II}(||\mu_1 - \mu_2||_2^2 (\tilde{h}+1)) - M_{n}^{II}(||\mu_1 - \mu_2||_2^2 \tilde{h}) \nonumber \\
=  & \sum_{k \in \mathcal{K}_0} [ (X_{kt} - \mu_{1k})^2 - (X_{kt} - \mu_{2k})^2) ] I(t = n\tau_n + (\tilde{h}+1)).
\end{align}
Thus, since $\mathcal{K}_0$ is a finite set, by (A4) and (A5),  $M_{n}^{II}(h)$ converges weakly to the process $A(h)$, described in Theorem \ref{thm: lse2}(c). 
\newline
\indent As mentioned before, in the second regime $\mathcal{K}_n = \{1,2,\ldots,m(n)\}$. Therefore, in the third regime, the set $\mathcal{K}_n$ can be treated in the same way as in the second regime. Thus, $M_{n}^{I}(h)$ converges weakly to an appropriately scaled  Gaussian process  on $c^2\mathbb{Z}$ with a triangular drift, as given by the definitions of $C(h)$ and $D(h)$ in Theorem \ref{thm: lse2}(c). Hence, it becomes obvious the need for assumptions (A3) and (A6).
Discussion of specific models and conditions under which (A6) is satisfied are given in Proposition \ref{prop: 2} and  subsection Illustrative Examples \ref{example: berlse}-\ref{example: normallse}. \newline
\indent (A7) is a technical assumption. Following the proof of Theorem \ref{thm: lse2}(c),  at some point we  seek to establish  the  asymptotic normality of 
\begin{align}
\sum_{k \in \mathcal{K}_n} (\mu_{1k}-\mu_{2k})(X_{kt} - E(X_{kt}))\ \forall t \geq 1. \label{eqn: asymnorlse}
\end{align}
Note that for $t \geq 1$, $\{(\mu_{1k}-\mu_{2k})(X_{kt} - E(X_{kt})): k \in \mathcal{K}_n\}$ is a collection of infinitely many independent and centered random variables. To apply Lyapunov's central limit theorem to (\ref{eqn: asymnorlse}),  we require
\begin{align}
\frac{\sum_{k \in \mathcal{K}_n} |\mu_{1k}-\mu_{2k}|^3 E|X_{kt} - E(X_{kt})|^3}{\sum_{k \in \mathcal{K}_n}(\mu_{1k}-\mu_{2k})^2 E(X_{kt} - E(X_{kt}))^2 }  \to 0.  \label{eqn: lyaplse}
\end{align}
By (A1) and (A3), the left side of (\ref{eqn: lyaplse}) is dominated by
\begin{align}
C\frac{\sum_{k \in \mathcal{K}_n} |\mu_{1k}-\mu_{2k}|^3}{\sum_{k \in \mathcal{K}_n}(\mu_{1k}-\mu_{2k})^2  } \label{eqn: lyaplsedom}
\end{align}
for some $C>0$.  (A7) is a sufficient condition for (\ref{eqn: lyaplsedom}) to converge to $0$. 
We do not need such an assumption for the second regime, since (A7)  under the second regime  is automatically satisfied.

Based on Theorem \ref{thm: lse2}, the following remarks are immediate consequences.

\begin{remark} \label{rem: lserem4}
Note that in (\ref{eqn: normalizationlse}), we use the normalization $n^{-1}||\mu_1 - \mu_2||_2^{-2}$ for $h$ for the purpose of making a unified statement.  In the third regime, we can also use the normalization $n^{-1}$ for $h$. Using the latter, we have the following restatement of Theorem \ref{thm: lse2}(c).
\vskip 3pt
Suppose (A1), (A3)-(A7) and (SNR1) hold and $||\mu_1 - \mu_2||_2 \to c>0$, then 
\begin{eqnarray}
 n (\hat{\tau}_{n,\text{LSE}} -\tau_n) &\stackrel{\mathcal{D}}{\to}& \arg \max_{h \in \mathbb{Z}} (D_1(h) + C_1(h) + A_1(h)),
\end{eqnarray}
where $D_1(h) = D(c^2h)$, $C_1(h) = C(c^2h)$, $A_1(h) = A(c^2h)$ for all $h \in \mathbb{Z}$. 
\end{remark}
Let $B_h$ be the standard Brownian motion. In the following remarks, we use the relation that for any function $f: \mathbb{R} \to \mathbb{R}$ and $C \in \mathbb{R}$,  $\arg \max_{h} f(C^2 h) = C^{-2}\arg \max_{h} f(h)$ and the processes $B_{C^2 h}$ and $CB_h$ have identical distributions.
\begin{remark} \label{rem: lse1}
Suppose $\sup_{k}|\sigma_{1k} -\sigma_{2k}| \to 0$. Then,
\begin{eqnarray}
\gamma_{_{\text{L,LSE}}} = \gamma_{_{R,LSE}}= \gamma_{_{\text{LSE}}}\ \ \text{(say)}\ \text{and}\ \ \gamma_{_{\text{L,LSE}}}^{*} = \gamma_{_{R,LSE}}^{*} = \gamma_{_{\text{LSE}}}^{*}\ \ \text{(say)}. \label{eqn: gammalse} 
\end{eqnarray}

Further suppose (A1), (A2), (A3) and (SNR1) hold and  $||\mu_1-\mu_2||_2 \to 0$, then
\begin{eqnarray}
n||\mu_1-\mu_2||^2_2 (\hat{\tau}_{n,\text{LSE}}-\tau_n) &\stackrel{\mathcal{D}}{\to}&  \arg \max_{h \in \mathbb{R}} (-0.5|h| + \gamma_{_{\text{LSE}}}B_h) \nonumber \\
 &=& \gamma_{_{\text{LSE}}}^2 \arg \max_{h \in \mathbb{R}} (-0.5\gamma_{_{\text{LSE}}}^2 |h| +\gamma_{_{\text{LSE}}}B_{\gamma_{_{\text{LSE}}}^2 h}) \nonumber \\
 \hspace{0cm} & \stackrel{d}{=} & \gamma_{_{\text{LSE}}}^2 \arg \max_{h \in \mathbb{R}} (-0.5\gamma_{_{\text{LSE}}}^2 |h| +\gamma_{_{\text{LSE}}}^2 B_{h}) \nonumber \\
& = & \gamma_{_{\text{LSE}}}^2 \arg \max_{h \in \mathbb{R}} (-0.5 |h| + B_{h}).
\end{eqnarray}
\indent Moreover, the conclusion of Theorem \ref{thm: lse2}(c) holds  under (A1), (A3)-(A7) and (SNR1) with
$C(c^2 (h+1)) - C(c^2 h) = \gamma_{_{\text{LSE}}}^{*} W_h$ where $W_h \stackrel{\text{i.i.d.}}{\sim} \mathcal{N}(0,1)$. 
\end{remark}

\begin{remark} \label{rem: lserem2}
Suppose $||\mu_1 - \mu_2||_2 \to c>0$,  $\sup_{k} |\sigma_{1k} - \sigma_{2k}| \to 0$ and  $\mathcal{K}_0$ is the empty set. Then, 
in Theorem \ref{thm: lse2}$(c)$, $A(h) = 0\ \forall h \in c^2\mathbb{Z}$, $c_1 = c$ and $ \gamma_{_{\text{L,LSE}}}^{*} = \gamma_{_{\text{R,LSE}}}^{*} = \gamma_{_{\text{LSE}}}^{*}$. Further, suppose (A1), (A3)-(A7) and (SNR1) hold. 
Then
\begin{eqnarray}
n||\mu_1 - \mu_2||_2^2 (\hat{\tau}_{n,\text{LSE}} -\tau_n) & \stackrel{\mathcal{D}}{\to}&  \arg \max_{h \in c^2\mathbb{Z}} (-0.5c^2|h| + \gamma_{_{\text{LSE}}}^{*} B_h) \nonumber \\
&\stackrel{d}{=} &  \arg \max_{h \in c^2 \mathbb{Z}} (-0.5c^2|h| + c^{-1}\gamma_{_{\text{LSE}}}^{*} B_{c^2h}) \nonumber \\
&=& c^2 \arg \max_{h \in  \mathbb{Z}} (-0.5|h| + c^{-1}\gamma_{_{\text{LSE}}}^{*} B_{h}) \nonumber \\
&\stackrel{d}{=} & \gamma_{_{\text{LSE}}}^{*2} \arg \max_{h \in \mathbb{Z}} (-0.5 |h| + B_{h}).\ \ \ 
\end{eqnarray}
In other words,
$n (\hat{\tau}_{n,\text{LSE}} -\tau_n)  \stackrel{\mathcal{D}}{\to} c^{-2}\gamma_{_{\text{LSE}}}^{*2} \arg \max_{h \in \mathbb{Z}} (-0.5 |h| + B_{h}).$
\end{remark}

\begin{remark}\label{rem: lserem3}
Suppose $c_1^2 = \lim \sum_{k \in \mathcal{K}_n} (\mu_{1k} - \mu_{2k})^2 =0$ and (A1) holds. Then, (A6) and (A7) hold and $\gamma_{_{\text{L,LSE}}}^{*} = \gamma_{_{\text{R,LSE}}}^{*} = 0$. Hence,  if further (A4), (A5),  (SNR1) hold and $||\mu_1 - \mu_2|| \to c>0$, then
$n||\mu_1 - \mu_2||_2^2 (\hat{\tau}_{n,\text{LSE}} -\tau) \stackrel{\mathcal{D}}{\to} \arg \max_{h \in c^2 \mathbb{Z}} A(h)$, where $A(h)$ is given in (\ref{eqn: msethm2a}). 
\end{remark}

\subsection{Sufficient conditions for (A2) and (A6)}
Conditions (A2) and (A6) which guarantee the existence of certain limits are not satisfied without some restrictions on the mean and variance parameters in the panel data model. For example, these limits may fail to exist if the variance parameters $\{\sigma_{ik}(n)\}$ oscillate over $n$. The examples in 
Section 1 of the Supplementary file 
examine concrete scenarios where the limits do not exist. 
\newline
\indent Propositions \ref{prop: 1} and \ref{prop: 2}, given below,  provide sufficient conditions for (A2) and (A6) to hold.  Their proofs  are respectively given in Sections \ref{subsec: prop1} and \ref{subsec: prop2}. 

\begin{prop} \label{prop: 1}
(A2) holds if (a)-(e)  described below,  are satisfied for some $\mu_{ik}^{*} \in \mathbb{R}$ and $\sigma_{ik}^{*}>0$, which are free of $n$.
\vskip 3pt
\noindent (a) $\sup_{k \in \mathcal{K}_n} |\sigma_{ik}^2(n)-\sigma_{ik}^{*2}| \to 0$ for all $i =1,2$.
\vskip 3pt
\noindent (b) $\sup_{k} |\mu_{ik}(n) - \mu_{ik}^{*}| = O((m(n))^{-1/2} ||\mu_1(n)-\mu_2(n)||_2)$ for all $i=1,2$.
\vskip 3pt
\noindent (c) $\sum_{k=1}^{m(n)} | \mu_{1k}(n) - \mu_{2k}(n)| = o(\sqrt{m(n)} ||\mu_1(n)-\mu_2(n)||_2)$.
\vskip 3pt
\noindent (d) $\frac{||\mu_{1}(n+1)-\mu_{2}(n+1)||}{||\mu_{1}(n)-\mu_{2}(n)||} \to 1$.
 \vskip 3pt
\noindent (e) $\frac{m(n+1)}{m(n)} \to 1$.
\vskip 3pt
\noindent  Suppose $\sigma_{ik}^2(n) = \sigma_{i}^2(n)$  $\forall k,n \geq 1$ and $i=1,2$. Then (A2) holds if $\sigma_i^2(n) \to \sigma_i^2$ as $n \to \infty$.  Moreover, if $\sigma_{ik}^2(n) = \sigma_{i}^2$  for all $k,n \geq 1$ and $i=1,2$, then (A2) is always satisfied.
\end{prop}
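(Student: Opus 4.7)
The plan is to establish existence of $\gamma_{_{\text{L, LSE}}}^2$ and $\gamma_{_{\text{R, LSE}}}^2$ defined in \eqref{eqn: gammaLRnew}; by symmetry I describe the argument only for $\gamma_{_{\text{L, LSE}}}^2$. Abbreviate $\Delta_k(n) = \mu_{1k}(n) - \mu_{2k}(n)$, $\Delta_k^{*} = \mu_{1k}^{*} - \mu_{2k}^{*}$, and $D_n = ||\mu_1(n)-\mu_2(n)||_2^2$. The target is convergence of the convex combination $R_n = \sum_{k=1}^{m(n)} w_k^{(n)}\,\sigma_{1k}^2(n)$, where $w_k^{(n)} = \Delta_k(n)^2/D_n$ defines a probability distribution on $\{1,\dots,m(n)\}$.

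I proceed via two reductions followed by a Cauchy-type limit argument. First, (a) gives $\sup_k |\sigma_{1k}^2(n) - \sigma_{1k}^{*2}| \to 0$, so $R_n$ and $\tilde R_n := \sum_k w_k^{(n)} \sigma_{1k}^{*2}$ differ by $o(1)$ since the weights sum to one. Second, I replace the $n$-dependent weights by weights built out of the $n$-free $\Delta_k^{*}$. Starting from
\begin{equation*}
\Delta_k(n)^2 - \Delta_k^{*2} = (\Delta_k(n) - \Delta_k^{*})(\Delta_k(n) + \Delta_k^{*}),
\end{equation*}
(b) bounds the first factor uniformly in $k$ by $O(m(n)^{-1/2}||\mu_1-\mu_2||_2)$, while (c) controls $\sum_k|\Delta_k(n)|$ by $o(\sqrt{m(n)}\,||\mu_1-\mu_2||_2)$. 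A triangle inequality combined with (b) yields a matching $O(\sqrt{m(n)}\,||\mu_1-\mu_2||_2)$ bound on $\sum_k|\Delta_k^{*}|$. Inserting these bounds into both the numerator and the denominator of $\tilde R_n$, and using the uniform boundedness of $\sigma_{1k}^{*2}$ (inherited from (A1)), produces $\tilde R_n - \hat R_n = o(1)$, where
\begin{equation*}
\hat R_n = \frac{\sum_{k=1}^{m(n)} \Delta_k^{*2}\,\sigma_{1k}^{*2}}{\sum_{k=1}^{m(n)}\Delta_k^{*2}}.
\end{equation*}

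The final step is to verify convergence of the deterministic sequence $\hat R_n$, which depends on $n$ only through $m(n)$. Conditions (d) and (e) jointly control how much $D_n$ and $m(n)$ can change from one $n$ to the next; combining with the comparison $\sum_{k=1}^{m(n)}\Delta_k^{*2}\asymp D_n$ established in Step~2, both partial sums defining $\hat R_n$ change by vanishingly small relative amounts between consecutive $n$. A standard Cauchy argument (using that $\hat R_n$ is uniformly bounded) then produces a limit. The remaining claims are immediate: when $\sigma_{ik}^2(n) = \sigma_i^2(n)$ the numerator factors as $\sigma_i^2(n)\cdot D_n$, giving $R_n = \sigma_i^2(n) \to \sigma_i^2$; and when $\sigma_{ik}^2(n) = \sigma_i^2$, $R_n \equiv \sigma_i^2$.

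The main obstacle I foresee is Step~2: the naive bounds from (b) alone only deliver $\tilde R_n - \hat R_n = O(1)$, not the required $o(1)$. Condition (c) is effectively an $\ell^1$ gain over the Cauchy--Schwarz bound $\sum_k|\Delta_k(n)| \le \sqrt{m(n)}\,||\mu_1-\mu_2||_2$ and is precisely what converts the $O$ into $o$. Carefully exploiting this improvement, and simultaneously propagating it through both the numerator and denominator corrections in $\tilde R_n$ without losing a power of $m(n)^{1/2}$, is the delicate bookkeeping on which the argument turns.
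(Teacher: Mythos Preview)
Your Step~2 does not go through, and the obstruction is exactly the one you flag: condition (c) does not convert the $O(1)$ into $o(1)$.  Write
\[
\Delta_k(n)^2 - \Delta_k^{*2} \;=\; (\Delta_k(n)-\Delta_k^{*})\bigl(2\Delta_k(n)-(\Delta_k(n)-\Delta_k^{*})\bigr).
\]
Summing over $k$ and weighting by the bounded $\sigma_{1k}^{*2}$, the cross piece $2\sup_k|\Delta_k(n)-\Delta_k^{*}|\cdot\sum_k|\Delta_k(n)|$ is indeed $o(D_n)$ by (b) and (c).  But the diagonal piece $\sum_k(\Delta_k(n)-\Delta_k^{*})^2 \le m(n)\,\sup_k|\Delta_k(n)-\Delta_k^{*}|^2$ is only $O(D_n)$, because (b) is a big-$O$ hypothesis, not little-$o$.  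Equivalently, your triangle-inequality transfer of (c) from $\sum_k|\Delta_k(n)|$ to $\sum_k|\Delta_k^{*}|$ picks up an additive $m(n)\sup_k|\Delta_k(n)-\Delta_k^{*}| = O(\sqrt{m(n)}\,D_n^{1/2})$, so $\sum_k|\Delta_k^{*}|$ is only $O(\sqrt{m(n)}\,D_n^{1/2})$, and the product with the $O(m(n)^{-1/2}D_n^{1/2})$ factor from (b) stays $O(D_n)$.  The numerator and denominator of $\tilde R_n$ each differ from those of $\hat R_n$ by $O(D_n)$, not $o(D_n)$, and your reduction collapses.  Nothing in (a)--(e) even guarantees that $\sum_{k\le m(n)}\Delta_k^{*2}$ is bounded away from zero, so $\hat R_n$ may be ill-defined; this also undermines the $\asymp D_n$ comparison you invoke in Step~3.

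The paper sidesteps this by never inserting the $n$-free $\Delta_k^{*}$ into the weights.  After your Step~1 it works directly with $\tilde R_n$ and bounds the consecutive difference $|\tilde R_{n+1}-\tilde R_n|$: the factor $|\Delta_k(n{+}1)+\Delta_k(n)|$ is controlled via $\sum_k|\Delta_k(n{+}1)|+\sum_k|\Delta_k(n)|$, and \emph{both} of these sums enjoy the $o(\sqrt{m}\,D_n^{1/2})$ bound from (c) (with (d), (e) aligning the scales at $n$ and $n{+}1$).  Multiplying by $\sup_k|\Delta_k(n{+}1)-\Delta_k(n)| = O(m^{-1/2}D_n^{1/2})$, obtained from (b) by passing through $\mu_{ik}^{*}$, now yields $o(D_n)$ honestly.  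The key structural point is that (c) applies to every $\Delta_k(n)$ but says nothing about $\Delta_k^{*}$; comparing two consecutive $n$'s keeps both factors inside the scope of (c), whereas comparing to the limit does not.
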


\begin{prop} \label{prop: 2}
(A6) holds if (a), (e)  described in Proposition \ref{prop: 1} and (f), (g)  described below, are satisfied for some $\mu_{ik}^{*} \in \mathbb{R}$, 
which are free of $n$.
\vskip 3pt
\noindent (f) $\sup_{k \in \mathcal{K}_n} |\mu_{ik}(n) - \mu_{ik}^{*}| = O(m(n)^{-1/2})$ for all $i=1,2$.
\vskip 3pt
\noindent (g) $\sum_{k \in \mathcal{K}_n} | \mu_{1k}(n) - \mu_{2k}(n)| = o(\sqrt{m(n)})$.
 \end{prop}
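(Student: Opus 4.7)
The plan is to show that, under (a), (e), (f), (g) --- together with the standing moment hypothesis (A1) that underpins the least-squares analysis --- all three limits appearing in (A6) exist and are in fact equal to zero. This puts the setup squarely in the degenerate regime flagged by Remark~\ref{rem: lserem3}.

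The first step is purely algebraic. For any $k \in \mathcal{K}_n$ the definition of $\mathcal{K}_n$ forces $\lim_n(\mu_{1k}(n) - \mu_{2k}(n)) = 0$, while (f) guarantees $\mu_{ik}(n) \to \mu_{ik}^*$ for $i=1,2$. Passing to limits shows $\mu_{1k}^* = \mu_{2k}^*$ for every such $k$, so a triangle inequality combined with (f) yields the uniform sup-rate
\[ \sup_{k \in \mathcal{K}_n} |\mu_{1k}(n) - \mu_{2k}(n)| \;\leq\; 2 \sup_{k \in \mathcal{K}_n,\, i \in \{1,2\}} |\mu_{ik}(n) - \mu_{ik}^*| \;=\; O\bigl(m(n)^{-1/2}\bigr). \]

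The second step is the only substantive one. I combine this sup-rate with the $\ell^1$-type rate of (g) via the elementary inequality $\sum_k a_k^2 \leq \bigl(\sup_k |a_k|\bigr)\bigl(\sum_k |a_k|\bigr)$, applied with $a_k = \mu_{1k}(n) - \mu_{2k}(n)$:
\[ \sum_{k \in \mathcal{K}_n}(\mu_{1k}(n) - \mu_{2k}(n))^2 \;\leq\; O\bigl(m(n)^{-1/2}\bigr) \cdot o\bigl(\sqrt{m(n)}\bigr) \;=\; o(1). \]
Hence $c_1^2$ exists and equals $0$. The third step then handles the variance-weighted sums $\gamma_{L,\text{LSE}}^{*2}$ and $\gamma_{R,\text{LSE}}^{*2}$: condition (A1) caps $\sigma_{ik}^2(n)$ uniformly in $k,n$, which combined with (a) transfers this uniform bound to $\sigma_{ik}^{*2}$ and hence to $\sigma_{ik}^2(n)$ for $n$ large, uniformly in $k \in \mathcal{K}_n$. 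Pulling this bound through each sum in (\ref{eqn: gammastarLRnew}) reduces it to a constant multiple of $\sum_{k \in \mathcal{K}_n}(\mu_{1k} - \mu_{2k})^2$, which is $o(1)$ by Step~2. Therefore $\gamma_{L,\text{LSE}}^{*} = \gamma_{R,\text{LSE}}^{*} = 0$.

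The crux, and the only place where the hypotheses do genuine work, is Step~2: the precise balance between the $O(m^{-1/2})$ supremum rate supplied by (f) and the $o(m^{1/2})$ partial-sum rate postulated in (g) is exactly what makes the quadratic sum negligible. Condition (e) does not appear to enter this particular argument and seems to be retained purely for parity with the hypotheses of Proposition~\ref{prop: 1}. I do not foresee a deeper obstacle beyond the routine care needed to distinguish uniform-in-$k$ from pointwise-in-$k$ statements when passing $\sigma_{ik}^2(n) \to \sigma_{ik}^{*2}$ inside the sum.
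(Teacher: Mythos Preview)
Your proof is correct and takes a genuinely different, more direct path than the paper's. The paper argues via a Cauchy-sequence bound: after invoking (a) to replace $\sigma_{ik}^2(n)$ by the $n$-free $\sigma_{ik}^{*2}$, it controls the difference between consecutive terms of each sequence using (f) (to bound $\sup_k|\mu_{ik}(n+1)-\mu_{ik}(n)|$ through the common anchor $\mu_{ik}^*$), (g) (for the $\ell^1$ sum of differences), and (e) (to handle the new indices in $\mathcal{K}_{n+1}\setminus\mathcal{K}_n$). Your route instead isolates a consequence the paper leaves implicit: since membership in $\mathcal{K}_n$ forces $\mu_{1k}(n)-\mu_{2k}(n)\to 0$ while (f) forces $\mu_{ik}(n)\to\mu_{ik}^*$, one has $\mu_{1k}^*=\mu_{2k}^*$ for every such $k$, which upgrades (f) to a uniform $O(m^{-1/2})$ bound on the differences $\mu_{1k}-\mu_{2k}$ themselves. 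Pairing that with the $o(m^{1/2})$ bound of (g) via $\sum a_k^2\le(\sup|a_k|)(\sum|a_k|)$ kills the sum outright, so $c_1^2=0$, and then (A1) disposes of the variance-weighted versions. This is shorter, actually identifies the limits (landing in the degenerate regime of Remark~\ref{rem: lserem3}), and---as you correctly suspect---never uses (e); indeed (a) is also redundant once (A1) supplies the uniform variance cap. The paper's Cauchy machinery has the appearance of greater generality, but under the stated (f)--(g) the limits are in fact forced to vanish, so that extra generality is not realized here.
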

Conditions (a), (c) in Proposition \ref{prop: 1} and, Conditions (a), (g)  in Proposition \ref{prop: 2} seem to be necessary for (A2) and (A6) to hold. It is justified by the examples given in Section 1 of the Supplementary file. 
\vskip 3pt
\indent The following remark is immediate by the mean value theorem. 
\begin{remark} \label{rem: prop}
Condition (a) is implied by (b) or (f) in the above propositions in the presence of a smooth mean-variance relationship, say, if for all $k,n \geq 1$ and $i=1,2$, $\sigma_{ik}^2(n) = g(\mu_{ik}(n))$, where $g(\cdot)$ is a continuous function with bounded first derivative.
\end{remark}

\subsection{Illustrative Examples}

We discuss selective examples, which are widely encountered in practice. 
Suppose (SNR1) holds for all the following examples. Conditions (b)-(e) and (e)-(g) in Propositions \ref{prop: 1} and \ref{prop: 2} are respectively assumed for Regimes (b) and (c) (after replacing $\{\mu_{ik}: k \geq 1, i=1,2\}$ by the analogous mean parameters). Further, (f) in Proposition \ref{prop: 2} implies (A7). For all the examples below, we also assume (A4)  for regime (b). 


\begin{Example} \label{example: berlse}
\textbf{Bernoulli data}. A random variable $X$ follows a $\mbox{Ber}(p)$ distribution if $P(X=1) = 1- P(X=0)= p$.  Suppose  $\{X_{kt}\}$ are independently distributed and for all $k \geq 1$, 
\begin{align}
X_{kt} \sim \mbox{Ber}(p_{1k}(n))I(t \leq n\tau_n) + \mbox{Ber}(p_{2k}(n))I(t > n\tau_n) 
\label{eqn: berlse1}
\end{align}
and $\ p_{1k}(n) \neq p_{2k}(n)$ for at least one $k$.  Then,  (A1) is satisfied and 
the conclusions of Theorems \ref{thm: lse1} and \ref{thm: lse2}(a) hold for the model in  (\ref{eqn: berlse1}). 
\newline
 \indent Now, (A3) is satisfied if  for some $0 < C <1$, we have 
\begin{align} \label{eqn: berlse2}
0<C < p_{ik}(n) < 1-C<1\ \ \forall k,n \geq 1\ \text{and}\ i=1,2.
\end{align}
Also, (A5) holds if for some constants $\{p_{ik}^{*}: k \in \mathcal{K}_0, i=1,2\}$ and as $n \to \infty$,  
\begin{eqnarray} \label{eqn: extraber}
\tau_n \to \tau^{*}\ \text{and}\ \ p_{ik}(n) \to p_{ik}^{*},\ \ \forall k \in \mathcal{K}_0\ \text{and}\ i=1,2.
\end{eqnarray}
Conditions (b)-(e) and (e)-(g) in Propositions \ref{prop: 1} and \ref{prop: 2}, respectively, imply (A2) and (A6). By Remark \ref{rem: prop}, Condition (a) is satisfied by Conditions (b) or (f) as
in this example, $\sigma_{ik}^{2}(n) = p_{ik}(n)(1-p_{ik}(n)) = g(p_{ik}(n)) = g(\mu_{ik}(n))$ where $g(x) = x(1-x)$  is continuous with bounded first derivative for all $0 < x <1$.  
\newline
\indent Therefore, under (\ref{eqn: berlse2}), the conclusion of Theorem \ref{thm: lse2}(b)   holds for the model in (\ref{eqn: berlse1}). Moreover, the conclusion of Theorem \ref{thm: lse2}(c)   holds for  (\ref{eqn: berlse1}) if we further assume (\ref{eqn: extraber}).

\end{Example}

\begin{Example} \label{example: poilse}
\textbf{Poisson data}. A discrete random variable $X$ is $\mbox{Poi}(\lambda)$ distributed, if for all $x = 0,1,2,\ldots$, we have $P(X=x) = \frac{1}{x!}e^{-\lambda}\lambda^x$.  Suppose  $\{X_{kt}\}$ are independently distributed and for all $k \geq 1$, 
\begin{align}
X_{kt} \sim \mbox{Poi}(\lambda_{1k}(n))I(t \leq n\tau_n) + \mbox{Poi}(\lambda_{2k}(n))I(t > n\tau_n)  \label{eqn: poilse1}
\end{align}
and  $\lambda_{1k}(n) \neq \lambda_{2k}(n)$ for at least one $k$.  Suppose for some $C_1,C_2>0$, 
\begin{align} \label{eqn: poilse2}
C_1 < \inf_{i,k,n} \lambda_{ik}(n) \leq \sup_{i,k,n} \lambda_{ik}(n) < C_2.
\end{align}
The last inequality in (\ref{eqn: poilse2}) implies (A1). Therefore,  the conclusions of Theorems \ref{thm: lse1} and \ref{thm: lse2}(a)  hold for  the model in  (\ref{eqn: poilse1}).
\newline
\indent (A3) is satisfied if the first inequality in (\ref{eqn: poilse2}) holds. Conditions (b)-(e) and (e)-(g) in Propositions \ref{prop: 1} and \ref{prop: 2} respectively imply (A2) and (A6).  
Also, (A5) holds if (\ref{eqn: extraber}) is satisfied after replacing $p$ by $\lambda$.
\newline
\indent Hence,  under  (\ref{eqn: poilse2}), the conclusion of Theorem \ref{thm: lse2}(b)  holds for the model in (\ref{eqn: poilse1}).  Also the conclusion of Theorem \ref{thm: lse2}(c)  holds for  (\ref{eqn: poilse1}) if we further assume (\ref{eqn: extraber}) with $p$ replaced by $\lambda$.
\end{Example}

\begin{Example} \label{example: normallse}
\textbf{Normal data}.  $X$  follows a $\mathcal{N}(\mu,\sigma^2)$,  if its probability density function is 
\begin{align}
f_X(x) &= \frac{1}{\sigma\sqrt{2\pi}}e^{-\frac{1}{2}\left( \frac{x-\mu}{\sigma}\right)^2}\ \ \forall\ x, \mu \in \mathbb{R}\ \mbox{and}\ \ \sigma >0. \label{eqn: normallsea} 
\end{align}
Suppose $\{X_{kt}\}$ are independently distributed and for all $k \geq 1$,
\begin{align}
X_{kt} \sim \mathcal{N}(\mu_{1k}(n),\sigma_{1k}^2(n))I(t \leq n\tau_n) + \mathcal{N}(\mu_{2k}(n),\sigma_{2k}^2(n))I(t > n\tau_n),  
\label{eqn: normallse1}
\end{align}
 $\mu_{1k}(n) \neq \mu_{2k}(n)$ for at least one $k$ and for some $C>0$,
$\sup_{k,n} \sigma_{ik}^2(n) < C,\ \forall i=1,2$.
\newline
\indent Then, (A1) is satisfied and the conclusions of Theorems \ref{thm: lse1} and \ref{thm: lse2}(a)  hold for  (\ref{eqn: normallse1}). 
\newline
\indent Next, suppose Condition (a) in Proposition \ref{prop: 1} is satisfied. Then, by Propositions \ref{prop: 1} and \ref{prop: 2}, (A2) and (A6) hold.  Also suppose (A3) holds. 
\newline
\indent Under the above assumptions, the conclusion of Theorem \ref{thm: lse2}(b)  holds for the model in (\ref{eqn: normallse1}).  Moreover,  Conditions (a)-(e)  can be relaxed if $\sigma_{ik}^2(n) = \sigma_i^2(n)$ and $\sigma_i^2(n) \to \sigma_i^2$ or $\sigma_i^2(n) = \sigma_i^2$ for all $k,n \geq 1$ and $i=1,2$. 
\newline
\indent Additionally, assume that (\ref{eqn: extraber}) holds after replacing $p$ by $\mu$ and $\sigma$.  Then, (A5) is satisfied and the conclusion of Theorem \ref{thm: lse2}(c)  holds for the model in (\ref{eqn: normallse1}).


\end{Example}

\subsection{Extensions to Time Dependent Data}\label{lse:time-dependent}
Next, we introduce dependence over the index $t$.  We consider stationary time series models, since they are widely encountered in applications.
There are different notions of stationarity established in the literature, including weak, strict, $r$-th order $(r > 2)$  and moment stationarity. 
In this section, we employ the latter two types of stationarity, defined below.
 
\noindent {\bf Definition:} A process $\{Y_t: t \in \mathbb{Z}\}$ is called centered  \textit{$4$-order stationary} if the following statement holds.\\
\noindent \textbf{(S1)} For all $t,t_1,t_2,\ldots,t_r \geq 1$ and $r=1,2,3,4$,  $$E(Y_t) = 0, \ E|Y_t|^r < \infty$$  and $\text{Cum}(X_{kt_1}, X_{kt_2},\ldots,X_{kt_r}) $ depends only on the lags $t_2-t_1,t_3-t_1, \ldots, t_r-t_1$.  

\noindent A process $\{Y_t: t \in \mathbb{Z}\}$ is called centered \textit{moment stationary} if (S1) holds for all $r \geq 1$. 

\vskip 10pt
 \noindent To introduce dependence, we assume that for all $k$, $\{X_{kt}\}$ is driven by a stationary process. \\
\noindent 
\textbf{(D1)} For each $k$ and $t$, 
\begin{eqnarray}
X_{kt} = \mu_{1k}I(t \leq n\tau_n) + \mu_{2k}I(t> n\tau_n) + Y_{kt}
\end{eqnarray}
where for each $k$, $\{Y_{kt}: t \in \mathbb{Z}\}$ is a centered $4$-order stationary process and not observable.  Also $\{Y_{kt}\}$ are independent over $k$.  In general, $\{\mu_{ik}: k \geq 1, i=1,2\}$ may depend on the sample size $n$, whereas in this section, $\{Y_{kt}\}$ do not depend on $n$. 

\noindent  We also assume that the cumulants are summable.  \vskip 3pt
\noindent \textbf{(D2)} $\displaystyle \sum_{\stackrel{t_i \in \mathbb{Z}}{1 \leq i \leq r}} \sup_{k}|\text{Cum}(X_{k1}, X_{k(t_1+1)},X_{k(t_2+1)},\ldots, X_{k(t_r+1)})| < \infty,\ \ \forall r=1,2,3.$
\vskip 3pt
\noindent Note that if $\{Y_{kt}\}$ are all independent, then  (D2)  implies (A1).  Note that (D1) and (D2) are the minimal assumptions required to establish our results. Later in Remarks \ref{rem: uncorrelated}-\ref{rem: both} and in Example \ref{example: dep3},  we shall establish that these assumptions are satisfied by a large class of stationary processes. 
\vskip 5pt
Given assumptions (D1), (D2) and (SNR1), we can then establish the following result. Its proof is similar to the proof of Theorem \ref{thm: lse1} and is given in the supplement. 
\begin{theorem} \textbf {Least squares convergence rate for dependent data}. \label{thm: lse3}
Suppose (D1), (D2) and (SNR1) hold. Then, 
\begin{eqnarray}
n||\mu_1 - \mu_2||_2^2 (\hat{\tau}_{n,\text{LSE}}-\tau_n) = O_{P}(1).
\end{eqnarray}
\end{theorem}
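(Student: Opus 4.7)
The plan is to mirror the structure of the proof of Theorem \ref{thm: lse1}, with assumption (D2) playing the role that (A1) played in the i.i.d. setting: it supplies the second- and fourth-moment control on partial sums of the noise process that a peeling argument requires. The overall strategy is the standard M-estimation device applied to the optimality inequality $M_n(\hat{\tau}_{n,\text{LSE}}) \geq M_n(\tau_n)$, combined with a deterministic-plus-stochastic decomposition of $M_n(b) - M_n(\tau_n)$.

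First I would carry over, without change, the algebraic expansion used in the i.i.d.\ proof: taking WLOG $b > \tau_n$ and writing $\Delta_b = b - \tau_n$, one has
$$n\bigl[M_n(\tau_n) - M_n(b)\bigr] = \mathcal{D}_n(b) - \mathcal{L}_n(b),$$
where $\mathcal{D}_n(b)$ is a deterministic drift and $\mathcal{L}_n(b)$ is a centered stochastic remainder, linear-plus-quadratic in the residuals $Y_{kt}$. Since this decomposition is purely algebraic, it is inherited from Theorem \ref{thm: lse1}. The constraint $\tau_n \in (c^*, 1-c^*)$ delivers the lower bound $\mathcal{D}_n(b) \geq C_1 \, n\Delta_b \, ||\mu_1 - \mu_2||_2^2$. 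The real content of the proof is to show that $\mathcal{L}_n(b)$ is dominated by $\mathcal{D}_n(b)$ off a small-probability event. The dominant component of $\mathcal{L}_n(b)$ has the form $\sum_{k=1}^m (\mu_{1k}-\mu_{2k}) \sum_{t \in I_b} Y_{kt}$ with $|I_b| \asymp n\Delta_b$; by (D2) with $r=1$, $\sup_k \mathrm{Var}\bigl(\sum_{t\in I_b} Y_{kt}\bigr) \leq C|I_b|$, and combined with cross-sectional independence (D1) this yields $\mathrm{Var}(\mathcal{L}_n(b)) \leq C_2 n\Delta_b\, ||\mu_1 - \mu_2||_2^2$. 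The identity $E S^4 = \kappa_4(S) + 3\kappa_2(S)^2$ together with (D2) for $r=1,2,3$ upgrades this to the fourth-moment bound $E\mathcal{L}_n(b)^4 \leq C_3 (n\Delta_b)^2 ||\mu_1 - \mu_2||_2^4$, uniformly in $k$.

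With these ingredients, the peeling argument is standard. Define shells $\mathcal{B}_j = \{b : K 2^j < n ||\mu_1 - \mu_2||_2^2 |b - \tau_n| \leq K 2^{j+1}\}$ for $j \geq 0$. On $\mathcal{B}_j$ the drift satisfies $\mathcal{D}_n(b) \geq c K 2^j$, so $\{\hat{\tau}_{n,\text{LSE}} \in \mathcal{B}_j\}$ forces $\sup_{b \in \mathcal{B}_j} \mathcal{L}_n(b) \geq c K 2^j$. Combining a maximal inequality for the partial-sum process $b \mapsto \mathcal{L}_n(b)$ (which only changes at the $O(K 2^j / ||\mu_1 - \mu_2||_2^2)$ integer time points inside $\mathcal{B}_j$) with the fourth-moment bound yields a shell probability that is geometric in $j$ and summable to $O(1/K)$; sending $K \to \infty$ delivers $n ||\mu_1 - \mu_2||_2^2(\hat{\tau}_{n,\text{LSE}} - \tau_n) = O_P(1)$. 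I expect the main obstacle to be precisely this maximal inequality under the dependence allowed by (D2): in the i.i.d.\ setting one invokes Doob directly on a martingale, whereas under cumulant-summable dependence one must either invoke a Hajek--Renyi-type bound or run a chaining argument on top of the fourth-moment estimate, and the non-trivial verification is that the resulting maximal bound still scales in $||\mu_1 - \mu_2||_2^2$ as cleanly as the pointwise variance calculation suggests---so that (SNR1) suffices to drive the shell probabilities to $0$ uniformly in the cross-sectional dimension $m(n)$.
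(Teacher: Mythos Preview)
Your proposal is correct and follows essentially the same route as the paper: the paper's proof (in the supplement) likewise reduces Theorem~\ref{thm: lse3} to re-establishing the expectation and variance bounds of Lemmas~\ref{lem: expectation} and~\ref{lem: variancelse} under (D1)--(D2), replacing the i.i.d.\ moment computations by cumulant-sum bounds exactly as you describe, and then invoking the same peeling machinery (Lemma~\ref{lem: wvan1}) as in Theorem~\ref{thm: lse1}. Your flagging of the maximal inequality as the delicate step is apt---the paper handles it by declaring that ``similar techniques as in Section~5.1'' apply, i.e.\ the same Doob-type passage from the sup-bound to a pointwise variance bound, without spelling out a dependence-robust substitute.
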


Next, we establish the analogue of Theorem \ref{thm: lse2}  for the time dependent setting. 

In Regime (a): $||\mu_1 - \mu_2||_2 \to \infty$, the conclusion of Theorem \ref{thm: lse2}(a) continues to hold under (D1), (D2) and (SNR1). Additional assumptions are required to obtain non-degenerate asymptotic distributions under the other two regimes.

Due to  dependence amongst observations, we need  the following stronger assumption. \\
\noindent \textbf{(D3)} For each $k \geq 1$, $\{Y_{kt}\}$ is a centered moment stationary process and 
 $$\sum_{t_1,t_2,\ldots, t_r = -\infty}^{\infty} \sup_{k}|\text{Cum}(X_{k1}, X_{k(t_1+1)},X_{k(t_2+1)},\ldots, X_{k(t_r+1)})| < \infty,\ \ \forall r \geq 1.$$

Later in Remarks \ref{rem: uncorrelated}-\ref{rem: both} and in Example \ref{example: dep3},  we shall see (D3)  being satisfied by many stationary processes.  For a linear process with i.i.d. innovations, this assumption can be relaxed using an appropriate truncation on the innovation process; see Example \ref{rem: lsema}.  

\noindent \textbf{Regime (b): $||\mu_1 - \mu_2||_2 \to 0$, assumptions}.
Recall $\gamma_{_\text{L,LSE}}$ and $\gamma_{_\text{R,LSE}}$ from (\ref{eqn: gammaLRnew}). By (D1),   $\sigma_{1k}^{2} = \text{Var}(X_{k1}) = \text{Var}(Y_{k1}) = \text{Var}(Y_{kn(\tau_n +1)}) = \text{Var}(X_{kn(\tau_n +1)}) = \sigma_{2k}^2$ and hence $\gamma_{_\text{L,LSE}} = \gamma_{_\text{R,LSE}} = \gamma_{_{\text{LSE}}}$, as defined in (\ref{eqn: gammalse}). But as $\gamma_{_{\text{LSE}}}$ uses only the marginal distributions of $\{X_{kt}\}$,  it cannot provide us the asymptotic variance of $n||\mu_1 - \mu_2||_2^2 (\hat{\tau}_{n,\text{LSE}}-\tau_n)$ when $\{X_{kt}\}$ are dependent over $t$.  Next, we introduce the counterpart of $\gamma_{_{\text{LSE}}}$ which exploits dependency among $\{X_{kt}\}$. 
\vskip 3pt
 \noindent For all $h_1,h_2 \in  \mathbb{R}$, let $\gamma_{(h_1,h_2),\text{DEP,LSE}}$ equal
\begin{eqnarray}
  \lim  \sum_{t_1=0 \wedge [h_1 ||\mu_1-\mu_2||_2^{-2}] }^{0 \vee [h_1||\mu_1-\mu_2||_2^{-2}] }\ \  \sum_{t_2=0 \wedge [h_2 ||\mu_1-\mu_2||_2^{-2}] }^{0 \vee [h_2||\mu_1-\mu_2||_2^{-2}] } \left(\sum_{k=1}^{m}(\mu_{1k}-\mu_{2k})^2 \text{Cum}(X_{kt_1},X_{kt_2}) \right).
\end{eqnarray}
\noindent \textbf{(D4)} For all $h_1, h_2 \in \mathbb{R}$,$\ \gamma_{(h_1,h_2),\text{DEP,LSE}}$ exists and $\gamma_{(h_1,h_1),\text{DEP,LSE}} >0$. 

We assume (D4) to obtain the asymptotic distribution of $n||\mu_1 - \mu_2||_2^2 (\hat{\tau}_{n,\text{LSE}}-\tau_n)$ when $||\mu_1-\mu_2||_2 \to 0$.    If $\{Y_{kt}\}$ are all independent/uncorrelated, then $\gamma_{(h_1,h_2),\text{DEP,LSE}} = \min(|h_1|,|h_2|) \gamma_{_{\text{LSE}}}^2$ and (D4) is equivalent to (A2) and (A3).  
Also it is easy to see that if (a)-(e) in Proposition \ref{prop: 1} are satisfied with
\begin{eqnarray}
\sigma_{ik}^2 (n) &=& ||\mu_1 - \mu_2||_2^{2}  \sum_{t_1=0 \wedge [h_1 ||\mu_1-\mu_2||_2^{-2}] }^{0 \vee [h_1||\mu_1-\mu_2||_2^{-2}] }\ \  \sum_{t_2=0 \wedge [h_2 ||\mu_1-\mu_2||_2^{-2}] }^{0 \vee [h_2||\mu_1-\mu_2||_2^{-2}] }  \text{Cum}(X_{kt_1},X_{kt_2})\ \forall k,n,i, \nonumber 
\end{eqnarray}
and for some $C>0$
\begin{eqnarray}
\inf_{k} \sum_{h=-\infty}^{\infty} \text{Cum}(X_{k1},X_{k(h+1)}) > C,
\end{eqnarray}
then (D4) holds.
\vskip 5pt
\noindent \textbf{Regime (c): $||\mu_1 - \mu_2||_2 \to c>0$, assumptions}. Recall the partition of $\{1,2,3,\ldots, m(n)\}$ into $\mathcal{K}_0$ and $\mathcal{K}_n = \mathcal{K}_0^c$, described in (\ref{eqn: partition}). We assume (A4) and the following assumption on $\mathcal{K}_0$.
\vskip 3pt
\noindent \textbf{(D5)} There exist $\tau^{*} \in (c^{*},1-c^{*})$ and $\mu_{ik}^{*} \in \mathbb{R}$ such that $\tau_n \to \tau^{*}$ and $\mu_{ik}(n) \to \mu_{ik}^{*}$ for all $k \in \mathcal{K}_0$ and $i=1,2$.
\vskip 3pt
\noindent Unlike (A5), here we do not require weak convergence of $\{Y_{kt}\}$, since  due to (D1) they do not depend on $n$.  If $\{Y_{kt}\}$ are independent, then (D5) is equivalent to (A5). \\
\indent Next, we consider assumptions on $\mathcal{K}_n$.  Recall $\gamma_{_{L,\text{LSE}}}^*$ and $\gamma_{_{R,\text{LSE}}}^*$ from  (\ref{eqn: gammastarLRnew}). Like Regime (b), here also 
$\gamma_{_{L,\text{LSE}}}^* = \gamma_{_{R,\text{LSE}}}^* = \gamma_{_{\text{LSE}}}^*$, as defined in (\ref{eqn: gammalse}) and we need to 
 introduce the counterpart of $\gamma_{_{\text{LSE}}}^*$ which exploits dependency among $\{X_{kt}\}$. \\
\indent For all $t_1,t_2 \in \mathbb{Z}$, define
\begin{eqnarray}
\gamma^{*}_{(t_1,t_2),\text{DEP,LSE}} = \lim \sum_{k \in \mathcal{K}_n} (\mu_{1k} -\mu_{2k})^2 \text{Cum}(X_{kt_1},X_{kt_2}).
\end{eqnarray}
Recall $c_1^2 = \lim \sum_{k \in \mathcal{K}_n^c} (\mu_{1k} - \mu_{2k})^2$. We assume (A7) and the following assumption. 

\noindent \textbf{(D6)} $c_1$ exists.  For all $t_1,t_2 \in \mathbb{Z}$, $\gamma^{*}_{(t_1,t_2),\text{DEP,LSE}}$ exists and $\gamma^{*}_{(t_1,t_1),\text{DEP,LSE}} >0$.  
\vskip 5pt
\noindent  If $\{Y_{kt}\}$ are all independent/uncorrelated, then $\gamma^{*}_{(t_1,t_2),\text{DEP,LSE}} = \gamma_{_{\text{LSE}}}^{*2}I(t_1=t_2)$ and (D6) is equivalent to (A3) and (A6).  Moreover, it is easy to see that if (e)-(g) in Proposition \ref{prop: 2} are satisfied and  $\inf_{k} \text{Var}(X_{kt})>C>0$, then (D6)  holds. 

\vskip 5pt
The following Theorem establishes the asymptotic distribution of 
$n||\mu_1 - \mu_2||_2^2 (\hat{\tau}_{n,\text{LSE}}-\tau_n)$ under appropriate dependence on $\{X_{kt}\}$. Its proof is given in Section \ref{subsec: lse4}. 

\begin{theorem} \textbf{Asymptotic distributions under temporal dependence}. \label{thm: lse4}
Suppose (D1), (D2), (SNR1) hold. Then the following statements hold.

\noindent {\bf (a)} If $||\mu_1-\mu_2||_2 \to \infty$, then $$\lim_{n \to \infty} P(\hat{\tau}_{n,\text{LSE}}=\tau_n) =1.$$

\noindent {\bf (b)} Further, if (D3) and (D4) hold and  $||\mu_1-\mu_2||_2 \to 0$, then
\begin{eqnarray}
n||\mu_1 - \mu_2||_2^2 (\hat{\tau}_{n,\text{LSE}}-\tau_n) \stackrel{\mathcal{D}}{\to}  \arg \max_{h \in \mathbb{R}} (-0.5|h| + B^{*}_h).
\end{eqnarray}
where for all $h_1,h_2,\ldots, h_r \in \mathbb{R}$ and $r \geq 1$, 
\begin{eqnarray}
(B_{h_1}^{*},B_{h_2}^{*},\ldots, B_{h_r}^{*}) \sim \mathcal{N}_{r}(0,\Sigma)\ \ \text{where}\ \  
\Sigma = ((\gamma_{(h_1,h_2),{\rm{DEP,MSE}}} ))_{1 \leq h_1,h_2 \leq r}.
\end{eqnarray}

\noindent {\bf (c)} Suppose (A4), (A7),  (D3), (D5) and (D6)  hold and $||\mu_1 -\mu_2||_2 \to c >0$, then
\begin{eqnarray}
n (\hat{\tau}_{n,\text{LSE}}-\tau_n) \stackrel{\mathcal{D}}{\to} \arg \max_{h \in \mathbb{Z}} (D^{*}(h) + C^{*}(h) + A^{*}(h))
\end{eqnarray}
where for each $h, t_1,t_2,\ldots,t_r \in \mathbb{Z}$ and $r \geq 1$,
\begin{eqnarray}
&& D^{*}(h) = -0.5  c_1^2 |h|,\ \ \ C^{*}(h) = \sum_{t=0 \wedge h}^{0 \vee h} W^{*}_t, \nonumber \\
 &&  (W_{t_1}^{*}, W_{t_2}^{*},\ldots, W_{t_r}^{*}) \sim \mathcal{N}_{r}(0,\Sigma^{*}),\ \ \Sigma^{*} = ((\gamma^{*}_{(t_1,t_2),\text{DEP,LSE}}))_{1 \leq t_1, t_2 \leq r}, \nonumber \\
 && A^*(h) = \sum_{t=0 \wedge h}^{0 \vee h} \sum_{k \in \mathcal{K}_0} \bigg[(Y_{kt} + (\mu_{2k}^{*}-\mu_{1k}^{*}){\rm{sign}}(h))^2 -Y_{kt}^2 \bigg]{\rm{sign}}(h).\  \nonumber 
\end{eqnarray}
\end{theorem}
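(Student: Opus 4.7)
The plan is to follow the argmax-continuous-mapping strategy used for Theorem~\ref{thm: lse2}: combine the rate result of Theorem~\ref{thm: lse3} (which localizes $\hat{\tau}_{n,\text{LSE}}$ within an $O_P(1/(n\|\mu_1-\mu_2\|_2^2))$ neighborhood of $\tau_n$) with finite-dimensional convergence of the rescaled criterion process
$$M_n^*(h) := n\bigl[M_n\bigl(\tau_n + h/(n\|\mu_1-\mu_2\|_2^2)\bigr) - M_n(\tau_n)\bigr]$$
(for Regime (b)) or the analogous process on the $n^{-1}\mathbb{Z}$ grid (for Regime (c)) to the respective limit processes. An argmax continuous mapping theorem then promotes this to convergence of $\arg\max M_n^*$.

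For part (a) no limit process is needed. Since $\hat{\tau}_{n,\text{LSE}}$ lives on the $n^{-1}\mathbb{Z}$ grid inside $(c^*,1-c^*)$ and Theorem~\ref{thm: lse3} gives $|\hat\tau_{n,\text{LSE}} - \tau_n| = O_P((n\|\mu_1-\mu_2\|_2^2)^{-1})$, the event $\hat\tau_{n,\text{LSE}} \neq \tau_n$ forces $|\hat\tau_{n,\text{LSE}} - \tau_n| \geq 1/n$, which would imply $\|\mu_1-\mu_2\|_2^2 = O_P(1)$; this contradicts $\|\mu_1-\mu_2\|_2 \to \infty$. Hence $P(\hat\tau_{n,\text{LSE}} = \tau_n) \to 1$.

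For parts (b) and (c), I would split each centered increment $M_n^*(h) - M_n^*(0)$ (treating $h>0$, the case $h<0$ being symmetric) into a deterministic drift and a stochastic fluctuation. Expanding the squared residuals inside each $M_{k,n}$ about the true piecewise means $\mu_{1k},\mu_{2k}$ and absorbing $\hat\mu_{ik}(b) - \mu_{ik} = O_P(n^{-1/2})$ (which (D2) delivers via summable covariances), the drift collapses to $-\tfrac{1}{2}|h|$ in Regime (b) and to $D^*(h) = -\tfrac{1}{2}c_1^2 |h|$ in Regime (c). The stochastic part is a double sum over $k$ and the inserted time indices; its covariance structure matches exactly what (D4) and (D6) are designed to encode, yielding the Gaussian limit process $B_h^*$ in Regime (b) and the Gaussian random-walk process $C^*$ on the $\mathcal{K}_n$ portion in Regime (c).

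The main obstacle is establishing asymptotic Gaussianity of the stochastic part under temporal dependence. In the i.i.d.\ case one invokes Lyapunov's CLT cross-sectionally, but here the cross-sectional sum at each time point is correlated across time, so I would instead verify Gaussianity via the cumulant method: show that for every $r \geq 3$, the $r$-th joint cumulant of the stochastic increments at arbitrary arguments $(h_1,\ldots,h_r)$ vanishes in the limit. This is precisely where (D3) is used, providing the absolute summability of $\sup_k|\text{Cum}(X_{k,1}, X_{k,t_1+1},\ldots, X_{k,t_r+1})|$ needed to control nested time sums uniformly in $k$. For Regime (c), the same cumulant argument handles the $\mathcal{K}_n$ contribution (producing $C^*$), whereas the $\mathcal{K}_0$ contribution is a finite sum over a fixed index set (by (A4)) whose joint distribution converges at each of the finitely many time displacements by (D5); this produces $A^*$ directly, without invoking any CLT, since $\{Y_{kt}\}_{k \in \mathcal{K}_0}$ is already a stationary, $n$-free process. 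Finally, tightness of the argmax on $\mathbb{R}$ (Regime (b)) or $\mathbb{Z}$ (Regime (c)) follows from the strictly negative drift combined with the sublinear growth of the Gaussian fluctuation and the $O_P(1)$ localization of Theorem~\ref{thm: lse3}, which together rule out escape of the argmax to infinity.
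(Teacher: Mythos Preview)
Your proposal is correct and follows essentially the same approach as the paper: part (a) is deduced from the rate result of Theorem~\ref{thm: lse3} via the grid argument, and parts (b)--(c) proceed by reducing the localized criterion process to the key stochastic term (the paper does this via the $T_{1k},T_{2k},T_{3k}$ decomposition and shows the negligible pieces vanish under (D1), (D2), (SNR1)), then establishing finite-dimensional Gaussianity through the cumulant method---bounding the $r$th joint cumulant by $C\sum_k|\mu_{1k}-\mu_{2k}|^r/\|\mu_1-\mu_2\|_2^2\to 0$ for $r\ge 3$ using (D3), with (D4)/(D6) supplying the limiting covariance---together with the $\mathcal{K}_0/\mathcal{K}_n$ split in Regime (c). This is exactly the paper's route.
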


The following remarks and examples are direct consequences of  Theorem \ref{thm: lse4}.

\begin{remark} \label{rem: uncorrelated}
A process $\{Y_t\}$ is called white noise, if ${\rm{Cum}}(Y_{t},Y_{t^\prime}) =0\  \forall t \neq t^\prime$. Suppose $\{X_{kt}\}$ satisfy all the assumptions in Theorem \ref{thm: lse4}
and  for each $k \geq 1$, $\{Y_{kt}: t \in \mathbb{Z}\}$ is a white noise process. Then, in Theorem \ref{thm: lse4}, we can replace (D4) and (D6) by (A2), (A3) and (A3), (A6), respectively. Moreover, in this case, the asymptotic distributions in Theorem \ref{thm: lse4} are identical to those posited in Theorem \ref{thm: lse2}.

\end{remark}

\begin{remark} \label{rem: mdeprem}
A process $\{Y_t\}$ is called $m$-dependent, if for all $1 \leq i \neq j \leq r$, $r \geq 2$ and $|t_i - t_j| > m$, $\{Y_{t_i}: 1 \leq i \leq r\}$ are independently distributed. Suppose (D1) and (SNR1) hold and for each $k \geq 1$, $\{Y_{kt}:  t \in \mathbb{Z}\}$ is an $m$-dependent process. Then, the infinite sum in (D2)  reduces to a finite sum. Hence (D2) and consequently the conclusion of Theorem \ref{thm: lse4}(a) hold if  $\sup_{k} E|Y_{kt}|^4 < \infty$.  Further, suppose $\sup_{k} E|Y_{kt}|^r < \infty\ \forall r \geq 1$. Then, (D3) is satisfied and the conclusions of Theorem \ref{thm: lse4}(b) and (c) hold, respectively under (D4) and,  (A4), (A7), (D5) and (D6).  
\end{remark}

\begin{remark} \label{rem: both}
Suppose (D1) and (SNR1) hold and for each $k \geq 1$, $\{Y_{kt}:  t \in \mathbb{Z}\}$ is an $m$-dependent white noise process. Then, by Remarks \ref{rem: uncorrelated} and \ref{rem: mdeprem}, the conclusion of Theorem \ref{thm: lse4}(a) holds if $\sup_{k} E|Y_{kt}|^4 < \infty$. Next, suppose  $\sup_{k} E|Y_{kt}|^r < \infty\ \forall r \geq 1$. Then, the conclusion of Theorem \ref{thm: lse4}(b) holds under (A2) and (A3). Also the conclusion of Theorem \ref{thm: lse4}(c) holds when (A3)-(A7) are satisfied. In this case, the asymptotic distributions in Theorem \ref{thm: lse4} are identical to those posited in Theorem \ref{thm: lse2}.
\end{remark}
Specific examples of $m$-dependent white noise error processes are given in Section $2$ of the Supplementary file. Next, we discuss an example which is neither an $m$-dependent, nor a white noise process.
\begin{Example} \textbf{Gaussian process}. \label{example: dep3}
 A process $\{Y_t: t \in \mathbb{Z}\}$ is called a centered stationary Gaussian process with covariance kernel $g(\cdot)$, if  $g: \mathbb{R} \to \mathbb{R}$ is a symmetric function with unique global maximum at $0$, $g(0) >0$ and for all $t,t_1,t_2 \in \mathbb{Z}$,
$E(Y_t) = 0\ \ \text{and}\ \ \text{Cum}(X_{t_1}, X_{t_2}) = g(t_1-t_2) = g(|t_1-t_2|)$.
Suppose for all $k,t \geq 1$,
\begin{eqnarray} \label{eqn: gaup}
 X_{kt} &=& \mu_{1k}I(t \leq n\tau_n) + \mu_{2k}I(t>n\tau_n) + Y_{kt}
 \end{eqnarray}
 where $\mu_{1k} \neq \mu_{2k}$ for at least one $k$, $\{Y_{kt}\}$ are independent over $k$ and  for each $k$, $\{Y_{kt}: t \in \mathbb{Z}\}$ is a centered stationary Gaussian process with covariance kernel $g_k(\cdot)$ satisfying  
 \begin{align} \label{eqn: covkerd2}
 \sum_{h=-\infty}^{\infty} \sup_{k} |g_{k}(|h|)| < \infty.
 \end{align}
 Then, (D1) and (D2) are satisfied. Further, under  (SNR1) the conclusions of Theorems \ref{thm: lse3} and \ref{thm: lse4}(a) hold for (\ref{eqn: gaup}).  \\
 \indent As all cumulants of $\{X_{kt}\}$ of order more than $2$ are zero,  (D2) and (D3) are equivalent.  \\
 \indent The conclusion of Theorem \ref{thm: lse4}(b)  holds for (\ref{eqn: gaup}) under (SNR1) and (D4). (D4) holds if for some $C>0$, $\inf_{k} \sum_{h=-\infty}^{\infty} g_k (h) > C$ and (a)-(e) in Proposition \ref{prop: 1} are satisfied with
\begin{eqnarray} \label{eqn: gaupsig}
\sigma_{ik}^2 (n) &=& ||\mu_1 - \mu_2||_2^{2}  \sum_{\stackrel{t_i=0 \wedge [h_i ||\mu_1-\mu_2||_2^{-2}]}{i=1,2} }^{0 \vee [h_i||\mu_1-\mu_2||_2^{-2}] }\ 
g_{k}(|t_1-t_2|)\ \forall k,n,i. \label{eqn: siggaup}
\end{eqnarray}
\indent The conclusion of \ref{thm: lse4}(c) holds for (\ref{eqn: gaup}) under (SNR1), (A4), (A7), (D5) and (D6). (D6) holds  if (e)-(g) in Proposition \ref{prop: 2} are satisfied and for some $C>0$, $\inf_k g_k(0) > C$.
\end{Example}
$\{Y_{kt}: t \in \mathbb{Z}\}$ in (D1) may not always  be an $m$-dependent or a Gaussian process.  This implies that (D3) may not always be satisfied, even if $\{Y_{kt}: t \in \mathbb{Z}\}$ is a moment stationary process.  In Section $2$ of the Supplementary file, we discuss a wide class of $\alpha$-mixing processes for which (D3) holds. 

The next example relaxes Assumption (D3) and considers a weaker condition when $\{Y_{kt}\}$ is a linear process, as was considered in  \citet{bai2010common}.  Its proof is based on an appropriate truncation on the innovation process and is given in Section \ref{subsec: lsema}.

\begin{Example} \textbf{Linear Error Process.}\label{rem: lsema}
Suppose  $\{\varepsilon_{k,t}\}$ are independent and identically distributed over $t$ and independent over $k$ with mean $0$, variance $\sigma_{k\varepsilon}^2$ and $\sup_{k} E\varepsilon_{k,t}^4 < \infty$. 
\noindent 
Suppose for each $k, t\geq 1$,
\begin{eqnarray} \label{eqn: lsedpma}
&& X_{kt} = \mu_{1k}I(t \leq n\tau_n) + \mu_{2k}I(t>n\tau_n) + Y_{kt}\ \ \text{where} \ \ \\ 
 && Y_{kt} = \sum_{j=0}^{\infty} a_{k,j} \varepsilon_{k,t-j}\ \ \text{and}\ \ 
  \sup_{k}\sum_{j=0}^{\infty} |a_{k,j}| < \infty.\ \ \ \  \nonumber 
\end{eqnarray}
Then,  (D1) and (D2) are satisfied and the following statements hold.
\vskip 5pt
\noindent (a) If (SNR1) holds and $||\mu_1 - \mu_2||_2 \to \infty$, then  $\lim_{n \to \infty} P(\hat{\tau}_{n,\text{LSE}}=\tau_n) =1$.
\vskip 5pt
\noindent (b) Further suppose (D4) holds for $||\mu_1 - \mu_2||_2 \to 0$ and  (A4), (A7), (D5) and (D6) hold for $||\mu_1-\mu_2||_2 \to c>0$. Then  the conclusions of Theorem \ref{thm: lse4}(b) and (c)   hold with
$$Cum(X_{kt_1},X_{kt_2}) = \sigma_{k\varepsilon}^2 \bigg(\sum_{j=0}^{\infty} a_{k, j}a_{k, j+|t_2-t_1|}\bigg)\ \ \forall t_1,t_2 \in \mathbb{Z}\ \text{and}\ k \geq 1.$$
(D3)  holds if and only if all moments of $\{\varepsilon_{k,t}\}$ are  finite.  But as stated above, the asymptotic distributions in Theorem \ref{thm: lse4}  still hold for the model (\ref{eqn: lsedpma}), when $E|\varepsilon_{k,t}|^r = \infty$ for some $k \geq 1$ and $r \geq 5$. 
\end{Example}

\subsubsection{Connections to the results presented in \citet{bai2010common}} \label{rem: bai}
\noindent
{\bf (A)} Next, we compare the results previously established with those in the paper by \citet{bai2010common} that posited that
data $\{X_{kt}\}$ are generated according to model (\ref{eqn: lsedpma}) and considered the following assumptions.

\begin{enumerate}
\item
$\sup_{k} \sum_{j=0}^{\infty} j |a_{k,j}| < \infty$ 
\item
$m^{-1/2} \sum_{k=1}^{m}(\mu_{1k} - \mu_{2k})^2 \to \infty$ 
\item
$||\mu_1 - \mu_2||_2 \to \infty\ \ \text{and}\ \ n^{-1}m\log( \log (n)) \to 0$
\end{enumerate}

The key result established in that paper is that assuming (1)-(3) we get
$$\lim_{n \to \infty} P(\hat{\tau}_{n,\text{LSE}}=\tau_n) =1.$$ 
Details are presented in Theorems $3.1$ and $3.2$ in \citet{bai2010common}.

In comparison,  we assume in Example \ref{rem: lsema} that $\sup_{k} \sum_{j=0}^{\infty}  |a_{k,j}| < \infty$ which is clearly weaker than (1).
Further, observe that assumptions (SNR1) and (2) above indicate two different regimes, since none of them implies the other one. 
Moreover, note that assumption (3) above is {\em stronger} than the posited (SNR1). Therefore, Example \ref{rem: lsema}(a) implies 
\citet{bai2010common}'s result under assumptions (1) and (3). 

\noindent
{\bf (B)} Recall the quantity $\gamma_{_{\text{LSE}}}^{*}$ from Remark \ref{rem: lserem2} and let $B_h$ denote the standard Brownian motion. 
Suppose $\{Y_{kt}\}$ in (\ref{eqn: lsedpma}) are uncorrelated,  $||\mu_1 - \mu_2||_2 \to c>0$, $n^{-1}m\log( \log (n)) \to 0$, and assumptions
(A3) and (A6) hold. Then, \citet{bai2010common} also established in Theorem $4.2$ that
\begin{eqnarray} \label{eqn: 24}
n (\hat{\tau}_{n,\text{LSE}} -\tau) \stackrel{\mathcal{D}}{\to}  c^{-2}\gamma_{_{\text{LSE}}}^{*2} \arg \max_{h \in \mathbb{Z}} (-0.5 |h| + B_{h}).
\end{eqnarray}

To derive (\ref{eqn: 24}),  one needs to establish the asymptotic normality of $\sum_{k=1}^{m}(\mu_{1k} - \mu_{2k})Y_{kt}$, presented at the end of the first column on page $90$ in \citet{bai2010common}. For doing so, assumptions need to be imposed on $\mu_1$ and $\mu_2$ in addition to $||\mu_1 - \mu_2||_2 \to c>0$, as we have already discussed in the current study around (\ref{eqn: asymnorlse})-(\ref{eqn: lyaplsedom}). 
However, such assumptions are missing in the presentation of \citet{bai2010common}. 

Finally, consider all the assumptions stated in the last paragraph.  Further, assume the weaker condition $mn^{-1} \to 0$ instead of $n^{-1}m\log( \log (n)) \to 0$.  Recall the sets $\mathcal{K}_0$ and $\mathcal{K}_n$ from (\ref{eqn: partition}).  By  Remarks \ref{rem: lserem2}, \ref{rem: uncorrelated} and Example \ref{rem: lsema}(b),  we additionally need (A7) and $\mathcal{K}_0$ as the empty set  for (\ref{eqn: 24}) to hold.  
Example \ref{rem: lsema}(b) provides a more general result for the model (\ref{eqn: lsedpma}) i.e. when $\{Y_{kt}\}$ are not necessarily uncorrelated.

\section{Maximum likelihood estimation of the common break model parameters} \label{sec: mle}

In this section, we discuss maximum likelihood estimation (MLE) for the change point $\tau_n$. As will become clear below, stronger assumptions will
be needed to establish both the rate and derive the asymptotic distribution of the MLE due to the possible lack of adequate smoothness of the likelihood function. \\
\indent The problem formulation is as follows: let $\{\mathbb{P}_{\lambda}: \lambda \in \Lambda \subset \mathbb{R}^{d}\}$ ($d$ being a finite positive integer) be a family of probability density/mass functions satisfying assumptions (B1)-(B11) described next.  \\
\indent Since $d$ is finite,  we define a sequence of  $d$-dimensional vectors or $d \times d$ matrices to be convergent if they converge entry wise. Binary operators such as $\leq , < , \geq$ and $>$ between two $d$-dimensional vectors or $d \times d$ matrices are also applicable in an entry wise manner. Modulus, power, exponential, log, expectation and variance functions also operate component wise.  Let $J_d$ and $1_d$ be respectively the $d \times d$ matrix and $d \times 1$ vector whose entries are all
equal to $1$.  Finally, we consider $\frac{\partial }{\partial \lambda} \log \mathbb{P}_{\lambda}(X)$  to be a column vector. 

Next, we postulate assumptions needed to establish the results. 

\noindent \textbf{(B1)}  Probability distributions in $\{\mathbb{P}_{\lambda}: \lambda \in \Lambda\}$  are distinct for different $\lambda$; i.e., $\mathbb{P}_{\lambda_1} = \mathbb{P}_{\lambda_2}$, if and only if $\lambda_1 = \lambda_2$.

\noindent \textbf{(B2)} The support of $\mathbb{P}_{\lambda}$ does not depend on $\lambda$. 

\noindent \textbf{(B3)} The parameter space $\Lambda$ contains an open set of which the true parameter value is an interior point. 

Suppose that for almost all $X$, $\mathbb{P}_{\lambda}(X)$ is differentiable with respect to $\lambda \in \Lambda$.  Further, suppose that
$X_1, X_2,\ldots, X_n$ are i.i.d. from $\mathbb{P}_{\lambda_0}$ for an unknown $\lambda_0 \in \Lambda$.  Then, the maximum likelihood estimator (MLE) of $\lambda_0$ is a root of the likelihood equation   
\begin{eqnarray} \label{eqn: likelihood equation}
\sum_{i=1}^{n}\frac{\partial }{\partial \lambda} \log \mathbb{P}_{\lambda}(X_i) = 0.
\end{eqnarray}

Note that the above set of assumptions is sufficient to establish that there is a (measurable) root $\hat{\lambda}_n$ of the likelihood equation (\ref{eqn: likelihood equation})  so that $\hat{\lambda}_n \stackrel{P}{\to} \lambda_0$.  It is easy to see that if there is a unique solution of (\ref{eqn: likelihood equation}) for almost all $X_1,X_2,\ldots,X_n,\ldots$ and for all sufficiently large $n$ (which may depend upon the sequence in consideration) and if $\Lambda$ is an open set, then this solution will be the MLE and also consistent for $\lambda_0$.  If there is more than one root of (\ref{eqn: likelihood equation}) which maximizes the joint log-likelihood $\sum_{i=1}^{n}\log \mathbb{P}_{\lambda}(X_i)$, then we select the consistent solution.  For more details see \cite{lehmann1998estimation}.  \\
\indent To obtain asymptotic results for $\hat{\lambda}_n$,  assumptions are needed to control the second derivative of the joint log-likelihood with respect to $\lambda$, given next. 

\noindent \textbf{(B4)} $\frac{\partial^2}{\partial \lambda^2}  \log P_\lambda (X)$ exists for all $\lambda \in \Lambda$ and almost everywhere in $X \sim P_a$, $a \in \Lambda$.

\noindent \textbf{(B5)} For some $0<C_1 \leq C_2<\infty$ and measurable function $G_2(\cdot)$ on $\mathbb{R}^{d \times d}$,
\begin{eqnarray} \label{eqn: sdv1}
0< C_1 G_2(x) \leq \sup_{\lambda \in \Lambda} \bigg| \frac{\partial^2}{\partial \lambda^2} \log \mathbb{P}_{\lambda}(x)\bigg| \leq C_2 G_2(x) < \infty\ \ \forall x.
\end{eqnarray}
 For some $0< \epsilon_1 \leq \epsilon_2 < \infty$ and $X \sim \mathbb{P}_{\lambda}$,
\begin{eqnarray} \label{eqn: sdv2}
0 < \epsilon_1 J_d \leq \inf_{\lambda \in \Lambda} E G_2(X) \leq \sup_{\lambda \in \Lambda} (E G_2^4 (X))^{1/4} \leq \epsilon_2 J_d <\infty.
\end{eqnarray}

Analogously to the discussion preceding assumption (A1) in Section  \ref{sec: lse},  we require control over the variance of an estimator to establish its probability convergence. For that
we need $\sup_{\lambda \in \Lambda} EG_2^{4}(X) < \infty$ in (B5) and additionally the following assumption. 

 \noindent \textbf{(B6)}  For $X \sim P_\lambda$,
$\sup_{\lambda \in \Lambda} E\bigg[ \bigg( \frac{\partial}{\partial \lambda} \log P_\lambda (X) \bigg)^\prime \bigg( \frac{\partial}{\partial \lambda} \log P_\lambda (X) \bigg) \bigg]^4 < \infty$.
\vskip 5pt
\noindent 
If $G_2(x) = C\ \forall x$, then (B6) (in Theorems \ref{thm: mle1g} and \ref{thm: mle2g}) can be relaxed to the weaker assumption given below. 
\vskip 5pt
 \noindent \textbf{(B7)}  For $X \sim P_\lambda$,
$\sup_{\lambda \in \Lambda} E\bigg[ \bigg( \frac{\partial}{\partial \lambda} \log P_\lambda (X) \bigg)^\prime \bigg( \frac{\partial}{\partial \lambda} \log P_\lambda (X) \bigg) \bigg]^2 < \infty$.
\vskip 5pt
\noindent The condition $G_2(x) = C\ \forall x$ holds for a wide class of probability distributions, such as the one parameter natural exponential family, which is 
examined in detail in Example \ref{example: expfam} below.
\vskip 5pt

Recall the posited setting based on data $\{X_{kt}:\ 1 \leq k \leq m,\ 1 \leq t \leq n\}$ and where for each $k \geq 1$, there is a common break $\tau_n$, so that
\begin{eqnarray}
X_{kt} \sim \mathbb{P}_{\theta_k(n)}I(t \leq n\tau_n) + \mathbb{P}_{\eta_k(n)}I(t > n\tau_n),\ \ \ \theta_k(n), \eta_k(n) \in \Lambda \subset \mathbb{R}^d,
\end{eqnarray}
and $\theta_k(n) \neq \eta_k(n)$ for at least some $k$.  For ease of presentation, we shall use $\theta_k$ and $\eta_k$ respectively for $\theta_k(n)$ and $\eta_k(n)$. \\
\indent The maximum likelihood estimator $\hat{\tau}_{n, \text{MLE}}$ is obtained as
\begin{eqnarray}
&& \hat{\tau}_{n, \text{MLE}} = \arg \max_{b \in (c^{*},1-c^{*})} \sum_{k=1}^{m} L_{k,n}(b)\ \ \text{where} \label{eqn: taumle} \\
&& L_{k,n}(b) = \frac{1}{n}\sum_{t=1}^{nb}\log \mathbb{P}_{\hat{\theta}_k(b)}(X_{kt}) + \frac{1}{n}\sum_{t=nb+1}^{n}\log \mathbb{P}_{\hat{\eta}_{k(b)}}(X_{kt}), \nonumber \\
&& \sum_{t=1}^{nb} \frac{\partial }{\partial \theta} \log \mathbb{P}_{{\theta}}(X_{kt}) \bigg|_{\theta = \hat{\theta}_k (b)} = \sum_{t=nb+1}^{n} \frac{\partial }{\partial \theta}\log \mathbb{P}_{\hat{\eta}_{k(b)}}(X_{kt}) \bigg|_{\eta = \hat{\eta}_k(b)} = 0. \nonumber
\end{eqnarray}
 Existence of $\{\hat{\theta}_{k}(b), \hat{\eta}_{k}(b) \}$ is guaranteed by Assumptions (B1)-(B3). 

\noindent
{\bf Rate of convergence for $\hat {\tau}_{n,\text{MLE}}$}. 
To establish the result, we typically need to deal with the second derivative of the joint log-likelihood at the random points $\hat{\theta}_{k}(b)$ and $\hat{\eta}_{k}(b)$ or intermediate points. This can be handled if $\hat{\theta}_k(b), \hat{\eta}_{k}(b) \in \Lambda$  for all $1 \leq k \leq m$. The assumptions (B8) and (B9), below, ensure this. As we deal simultaneously with all $k \leq m$, we also require an appropriate growth rate for $m = m(n)$. When $||\theta - \eta||_2 \to \infty$, we assume $\log m(n) = o(n)$. For the other two regimes i.e. when $||\theta - \eta||_2 \to 0$ or $c > 0$, (SNR2) (stated below) implies $m=o(\sqrt{n})$, a substantially slower rate of growth than $\log m(n) = o(n)$. \\
\indent A centered  $d$-dimensional random vector $X$ is called marginally sub-Gaussian, if for all $\epsilon >0$ and some $C_1,C_2>0$,  $P(|X| \geq \epsilon 1_d ) \leq C_1e^{-C_2 \epsilon^2}1_d$, or equivalently if there exists $b \in \mathbb{R}$ such that $E(e^{tX}) \leq e^{0.5 t^2 b^2} 1_d$ for all 
$t \in \mathbb{R}$.  This definition of sub-Gaussian also holds for a $d \times d$ random matrix $X$, if we replace $1_d$ by $J_d$. 

\noindent \textbf{(B8)}  For all $X \sim \mathbb{P}_\lambda$, $\frac{\partial}{\partial \lambda} \log \mathbb{P}_{\lambda}(X)$ is  a marginally sub-Gaussian random variable.

\noindent \textbf{(B9)}  For all $X \sim \mathbb{P}_\lambda$, 
$G_2(X) - E(G_2(X))$ is  a marginally sub-Gaussian random variable.
\vskip 5pt
\noindent Let $||\theta -\eta||_2^2 = \sum_{k=1}^{m} ||\theta_k - \eta_k||_2^2$. 
In this section, we consider the following  signal-to-noise condition.

\noindent \textbf{(SNR2)} $\sqrt{n}m^{-1}||\theta -\eta||_2^2 \to \infty$ as $n \to \infty$.

Given these assumptions the following rate result for $\hat{\tau}_{n,\text{MLE}}$ can be established, whose proof is 
given in Section \ref{subsec: mle1proof}.

\begin{theorem} \textbf{MLE convergence rate}. \label{thm: mle1g}
Suppose (B1)-(B6), (B8), (B9),  (SNR2) hold and $\log m(n) = o(n)$. Then,
\begin{eqnarray} \label{eqn: msethm1g}
n||\theta - \eta||_2^2 (\hat{\tau}_{n,\text{MLE}}-\tau_n) = O_{P}(1).
\end{eqnarray}
\end{theorem}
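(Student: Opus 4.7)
The plan is to follow the standard argmax approach for $M$-estimators, analogous in spirit to the proof sketch of Theorem \ref{thm: lse1}. Write $M_n(b) = \sum_{k=1}^{m} L_{k,n}(b)$, which is maximized at $\hat{\tau}_{n,\text{MLE}}$ by (\ref{eqn: taumle}). The goal reduces to showing that for any $\epsilon > 0$ one can choose $K > 0$ such that, with probability at least $1 - \epsilon$ for all large $n$, $M_n(b) - M_n(\tau_n) < 0$ uniformly over $\{b : |b - \tau_n| > K/(n\, ||\theta - \eta||_2^2)\} \cap (c^{*}, 1-c^{*})$. Since $M_n(\hat{\tau}_{n,\text{MLE}}) \geq M_n(\tau_n)$, this forces the rescaled error $n\,||\theta - \eta||_2^2 (\hat{\tau}_{n,\text{MLE}} - \tau_n)$ to lie in $[-K, K]$ on that event, yielding the $O_P(1)$ conclusion.

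\textbf{Drift-plus-noise decomposition.} Fix $b > \tau_n$; the case $b < \tau_n$ is symmetric. Splitting the time axis at $n\tau_n$ and $nb$, I would write
\begin{align*}
L_{k,n}(b) - L_{k,n}(\tau_n) &= \tfrac{1}{n}\sum_{t=1}^{n\tau_n} \bigl[\log \mathbb{P}_{\hat\theta_k(b)}(X_{kt}) - \log \mathbb{P}_{\hat\theta_k(\tau_n)}(X_{kt})\bigr] \\
&\quad + \tfrac{1}{n}\sum_{t=n\tau_n+1}^{nb} \bigl[\log \mathbb{P}_{\hat\theta_k(b)}(X_{kt}) - \log \mathbb{P}_{\hat\eta_k(\tau_n)}(X_{kt})\bigr] \\
&\quad + \tfrac{1}{n}\sum_{t=nb+1}^{n} \bigl[\log \mathbb{P}_{\hat\eta_k(b)}(X_{kt}) - \log \mathbb{P}_{\hat\eta_k(\tau_n)}(X_{kt})\bigr].
\end{align*}
Using (B4) and (B5), I would Taylor-expand each summand in the parameter argument around the true value ($\theta_k$ for $t \leq n\tau_n$, $\eta_k$ for $t > n\tau_n$) and substitute the defining score equations from (\ref{eqn: taumle}) to cancel the linear-in-parameter-difference terms. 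The surviving dominant piece is a sample Fisher-information quadratic arising from the mis-assigned block $(n\tau_n, nb]$, which is the cost of using an interpolated parameter estimate $\hat\theta_k(b)$ where the truth is $\eta_k$.

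\textbf{Bounding drift and fluctuations.} Taking expectations and using (B5) together with the positive definiteness of the per-series Fisher information and the fact that $\hat\theta_k(b)$ targets a convex combination of $\theta_k, \eta_k$ weighted by the block lengths, one obtains
\[
E\bigl[M_n(\tau_n) - M_n(b)\bigr] \;\geq\; c\, n\, ||\theta - \eta||_2^2 \, (b - \tau_n)
\]
for some $c > 0$ and $b$ in a shrinking neighborhood of $\tau_n$. For the stochastic fluctuations I would invoke the marginal sub-Gaussianity in (B8)-(B9) to derive exponential tail bounds for the per-series score sums $\sum_{t} \partial_\lambda \log \mathbb{P}_\lambda(X_{kt})$ and for the centered second-derivative sums $\sum_{t}[G_2(X_{kt}) - EG_2(X_{kt})]$. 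A union bound over $k \leq m$ costs only $\log m(n) = o(n)$ and is therefore harmless, while the fourth-moment control from (B5)-(B6) feeds Chebyshev-type variance estimates after discretizing in $b$.

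\textbf{Peeling and the role of (SNR2).} To finish I would run a peeling argument on the dyadic shells $\mathcal{S}_j = \{b : 2^j K \leq n\,||\theta - \eta||_2^2 \, |b - \tau_n| < 2^{j+1} K\}$, bounding on each $\mathcal{S}_j$ the drift lower bound (of order $2^j K$) against the fluctuation standard deviation (of order $\sqrt{2^j K}$) and summing a geometric series in $K^{-1/2}$, so that $K$ can be chosen to make the total probability below $\epsilon$. The main technical obstacle, and the reason (SNR2) strengthens (SNR1) by a factor of $\sqrt{n}$, is the control of the cubic and quartic Taylor remainders involving $||\hat\theta_k(b) - \theta_k||$ summed uniformly over $k$ and $b$: these remainders are absent from the least-squares argument because the LSE criterion is exactly quadratic, whereas here one needs $\sqrt{n}\, m^{-1}\, ||\theta - \eta||_2^2 \to \infty$ precisely to dominate them by the leading quadratic drift. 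The sub-Gaussian assumptions (B8) and (B9) are what make the requisite uniform-in-$(k,b)$ concentration viable under the slow growth $\log m(n) = o(n)$.
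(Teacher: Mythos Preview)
Your overall architecture---decompose the likelihood increment across the three time blocks, Taylor-expand about the true parameters, separate drift from fluctuations, and close with a peeling argument---is sound and would lead to the result. The paper, however, packages this differently: rather than working directly with $\mathbb{N}_n(b)=\sum_k L_{k,n}(b)$, it constructs an explicit \emph{dominating process} $\mathbb{M}_n(b)=\sum_k\sum_{i=1}^7 \mathbb{M}_{ik}(b)$ (built from score sums at the true parameters and the envelope $G_2$) and invokes a variant of the rate-of-convergence lemma (Lemma~\ref{lem: wvan2}) requiring only $P\bigl[\mathbb{N}_n(b)-\mathbb{N}_n(\tau)\le C(\mathbb{M}_n(b)-\mathbb{M}_n(\tau))\bigr]\to 1$; the sub-Gaussian assumptions (B8)--(B9) and $\log m=o(n)$ are used \emph{only} to secure $\hat\theta_k(b),\hat\eta_k(b)\in\Lambda$ for all $k$ simultaneously (Lemma~\ref{lem: inlemma}), which in turn validates the domination. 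After that, the argument reduces to routine mean--variance computations for the seven $\mathbb{M}_{ik}$ pieces, avoiding any direct peeling. Your direct route is viable but would require you to make rigorous the uniform-in-$(k,b)$ control of quantities evaluated at the random points $\hat\theta_k(b),\hat\eta_k(b)$, which is precisely what the dominating-process device sidesteps.

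One point deserves correction: your attribution of (SNR2) to ``cubic and quartic Taylor remainders'' is off. In the paper's bookkeeping the extra $\sqrt{n}$ loss comes from $\mathbb{M}_{3k}$ and $\mathbb{M}_{5k}$, which encode the \emph{first-order} cross term $(\hat\eta_k(b)-\eta_k)\cdot\frac{1}{n}\sum_{t\in\text{middle block}}\partial_\eta\log\mathbb{P}_{\eta_k}(X_{kt})$ (and its $\theta$-analogue). Since $\hat\eta_k(b)-\eta_k=O_P(n^{-1/2})$ and the average score over a block of arbitrary (possibly unit) length is only $O_P(1)$, the expectation of each such term is $O((\tau-b)/\sqrt{n})$, summing to $O(m(\tau-b)/\sqrt{n})$; dominating this by the drift $(\tau-b)\|\theta-\eta\|_2^2$ is exactly (SNR2). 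No third-order Taylor expansion is needed (indeed (B11) is not assumed here). Also, your displayed drift bound carries a spurious factor of $n$: with the $1/n$ already built into $L_{k,n}$, the correct statement is $E[M_n(\tau_n)-M_n(b)]\ge c\,\|\theta-\eta\|_2^2\,|b-\tau_n|$.
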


\begin{remark}\label{remark:Gaussian-case}
Suppose  that $\frac{\partial^2 }{\partial \lambda^2} \log \mathbb{P}_{\lambda} (x) = -\Sigma$ for all $\lambda, x$ and that for some positive definite matrix $\Sigma \in \mathbb{R}^{d \times d}$, which does not depend on $\lambda$ and $x$. This is equivalent to positing 
that for each $k \geq 1$,
$X_{kt} \sim \mathcal{N}_d(\theta_k, \Gamma)I(t \leq \tau_n) + \mathcal{N}_d(\eta_k, \Gamma)I(t > \tau_n)$, $\theta_k \neq \eta_k$ for at least one $k$
and for some known $d \times d$ positive definite matrix $\Gamma$. Then,  the result in (\ref{eqn: msethm1g}) continues to hold under the weaker assumption (B7) (or equivalently (A1) in Section \ref{sec: lse}) and (SNR1). 
Therefore, for the Gaussian likelihood, the rate of convergence of the maximum likelihood change point estimate can be established 
under weaker assumptions.
\end{remark}

\noindent
{\bf Asymptotic distribution of $\hat{\tau}_{n,\text{MLE}}$}.
Next, we present results regarding the asymptotic distribution of $n||\theta - \eta||_2^2 (\hat{\tau}_{n,\text{MLE}}-\tau_n)$. Under the same assumptions  as in Theorem \ref{thm: mle1g}, $(\hat{\tau}_{n,\text{MLE}}-\tau_n)$ is degenerate at $0$ if $||\theta - \eta||_2 \to \infty$.  
Analogously to the results in Section \ref{sec: lse}, additional assumptions are needed to obtain the asymptotic distribution for the
cases $||\theta - \eta||_2 \to 0$ or $c>0$, given next. \\
\indent For $X \sim \mathbb{P}_{\lambda}$, define
\begin{eqnarray} \label{eqn: I1}
I(\lambda) = E \bigg[\left(\frac{\partial}{\partial \lambda} \log \mathbb{P}_{\lambda}(X) \right) \left(\frac{\partial}{\partial \lambda} \log \mathbb{P}_{\lambda}(X) \right)^\prime \bigg]\ \ \forall \lambda \in \Lambda,
\end{eqnarray}
which exists by (B6) or (B7). Moreover, by (B4) and (B5), for some $C_1,C_2>0$
\begin{eqnarray} \label{eqn: I2}
 I(\lambda) = - E \left(\frac{\partial^2}{\partial \lambda^2} \log \mathbb{P}_{\lambda}(X) \right)\ \forall \lambda \in \Lambda,\ \  
 0< C_1 J_d \leq \inf_{\lambda \in \Lambda} I(\lambda) \leq \sup_{\lambda \in \Lambda} I(\lambda) \leq C_2 J_d < \infty. \label{eqn: I3} 
\end{eqnarray}
 Recall the sets $\mathcal{K}_0$ and $\mathcal{K}_n = \mathcal{K}_0^c$ in (\ref{eqn: partition}).  
\noindent   Let,
\begin{eqnarray} \label{eqn: gammamleg}
\gamma_{_{\text{MLE}}}^2 = \lim \frac{\sum_{k=1}^{m}(\theta_k - \eta_k)^\prime I(\theta_k) (\theta_k - \eta_k)}{||\theta-\eta||_2^2}\ \text{and}\ 
\gamma_{_{\text{MLE}}}^{*2} = \lim \sum_{k \in \mathcal{K}_n}(\theta_k - \eta_k)^\prime I(\theta_k) (\theta_k - \eta_k). \nonumber 
\end{eqnarray}
Note that  (\ref{eqn: I3}) implies $\gamma_{_{\text{MLE}}} >0$. Moreover, $\gamma_{_{\text{MLE}}}^* >0$ if and only if $\lim \sum_{k \in \mathcal{K}_n} ||\theta_k - \eta_k ||_2^2 >0$.  Existence of $\gamma_{_{\text{MLE}}}$ and $\gamma_{_{\text{MLE}}}^*$ are required respectively for $||\theta - \eta||_2 \to 0$ and $c>0$.  However, this is guaranteed by the conditions in Propositions \ref{prop: 1} and \ref{prop: 2} when $\mu_{1k}$, $\mu_{2k}$, $\sigma_{1k}^2$ and $\sigma_{2k}^2$ are respectively replaced by $\theta_k$, $\eta_k$,  $I(\theta_k)$ and $I(\eta_k)$. 
As $\mathcal{K}_0$ may not be the empty set under $||\theta - \eta||_2 \to c>0$,  we consider  (A4) and (B10), given below,  on $\mathcal{K}_0$.
 \vskip 5pt
 \noindent \textbf{(B10)} (i) There is $\tau^{*} \in (c^{*},1-c^{*})$ such that $\tau_n \to \tau^{*}$. \\
 (ii) $\mathbb{P}_{\lambda}(x)$ is continuous in both $\lambda$ and $x$. \\
 (iii) Let $\{X_{ik}^{*}: k \in \mathcal{K}_0, i=1,2\}$ be a collection of independent random variables such that for all $k \in \mathcal{K}_0$ and $0<f<1$,
$X_{k\lfloor nf \rfloor} \stackrel{\mathcal{D}}{\to} X_{1k}^{*}I(f \leq \tau^{*}) + X_{2k}^{*}I(f > \tau^{*})$. \\
(iv) Let  $X_{1k} \sim \mathbb{P}_{\theta_k^{*}}$ and $X_{2k} \sim \mathbb{P}_{\eta_k^{*}}$. Then for all $k \in \mathcal{K}_0$, $\theta_{k}(n) \to \theta_k^{*}$ and $\eta_{k}(n) \to \eta_k^{*}$

The next assumption is on the third derivative of the log-likelihood that takes values in $\mathbb{R}^{d \times d \times d}$.  Note that the operations of modulus, $\sup$, $ \leq$ and expectation on $d \times d \times d$ cubes  are component wise.   

\noindent \textbf{(B11)} $\frac{\partial^3}{\partial \lambda^3} \log P_\lambda (X)$ exists for all $\lambda \in \Lambda$ and almost everywhere in 
$X \sim P_a$, $a \in \Lambda$. 
Moreover, for some measurable function $G_3(\cdot) \in \mathbb{R}^{d \times d \times d}$
\begin{eqnarray}
\sup_{\lambda \in \Lambda} \bigg| \frac{\partial^3}{\partial \lambda^3} \log P_\lambda (x) \bigg| \le G_3 (x)\ \ \forall x
\end{eqnarray}
 such that  $E(G_3(X)) < \infty$ for any $X \sim P_a$, $a \in \Lambda$.
\vskip 5pt

We are now ready to establish the asymptotic distribution of $n||\theta - \eta||_2^2 (\hat{\tau}_{n,\text{MLE}}-\tau_n)$. The proof of the following theorem is given in Section \ref{subsec: mle2}.
\begin{theorem} \textbf{MLE asymptotic distributions}. \label{thm: mle2g}
Suppose (B1)-(B6), (B8), (B9) and (SNR2) hold. We then have

\noindent $(a)$ If $||\theta-\eta||_2 \to \infty$ and $\log m(n) = o(n)$, then $\lim_{n \to \infty} P(\hat{\tau}_{n,\text{MLE}}=\tau_n) =1$

\noindent $(b)$ If $\gamma_{_{\text{MLE}}}$ exists, (B11) holds and $||\theta -\eta||_2 \to 0$, then
\begin{eqnarray}
n||\theta - \eta||_2^2 (\hat{\tau}_{n,\text{MLE}}-\tau) \stackrel{\mathcal{D}}{\to}  \arg \max_{h \in \mathbb{R}} (-0.5\gamma_{_{\text{MLE}}}^{2}|h| + \gamma_{_{\text{MLE}}}B_h) = \gamma_{_{\text{MLE}}}^{-2}\arg \max_{h \in \mathbb{R}} (-0.5|h| + B_h)\ \ \ \ 
\end{eqnarray}
where $B_h$ corresponds to a standard Brownian motion.

\noindent $(c)$ If $\gamma_{_{\text{MLE}}}^*$ exists, (A4), (B10) and (B11) hold, $\sup_{k \in \mathcal{K}_n}|\theta_k -\eta_k |_2 \to 0$ and $||\theta - \eta||_2 \to c >0$, then
\begin{eqnarray}
n (\hat{\tau}_{n,\text{MLE}} -\tau) &\stackrel{\mathcal{D}}{\to}& \arg \max_{h \in \mathbb{Z}} (D_2(h) + C_2(h) + A_2(h)), \nonumber 
\end{eqnarray}
where for each $h \in \mathbb{Z}$,
\begin{eqnarray}
D_2 ( h+1)-D_2 ( h) &=& -0.5 \text{Sign}(h)  \gamma_{_{\text{MLE}}}^{*2}, \\
C_2 (h+1) - C_2 ( h) &=& \gamma_{_{\text{MLE}}}^{*} W_h,\ \ W_h \stackrel{\text{i.i.d.}}{\sim} \mathcal{N}(0,1), \label{eqn: msethm2cg}\\
A_2 (h+1) - A_2 ( h) &=& \sum_{k \in \mathcal{K}_0}  \left(\log \mathbb{P}_{\eta_k^*}(Z_{kh}) - \log \mathbb{P}_{\theta_k^*}(Z_{kh}) \right)
  \label{eqn: msethm2ag}
\end{eqnarray}
and $\{Z_{kh}\}$ are independently distributed with $Z_{kh} \stackrel{d}{=} X_{1k}^{*}I(h \leq 0) + X_{2k}^{*}I(h > 0)$. 
\end{theorem}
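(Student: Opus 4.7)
The plan is to adapt the weak-convergence-of-processes strategy used for Theorem \ref{thm: lse2} to the log-likelihood criterion. Define
\begin{eqnarray}
L_n^*(h) = n\left[ \sum_{k=1}^m L_{k,n}\!\left(\tau_n + \frac{h}{n||\theta-\eta||_2^2}\right) - \sum_{k=1}^m L_{k,n}(\tau_n) \right] \nonumber
\end{eqnarray}
on the rescaled grid of admissible $h$-values. Since Theorem \ref{thm: mle1g} gives $n||\theta-\eta||_2^2(\hat\tau_{n,\text{MLE}}-\tau_n)=O_P(1)$, the argmax of $L_n^*$ is already tight, and it suffices to prove weak convergence of the process $L_n^*(\cdot)$ in each regime and then invoke an argmax continuous-mapping result (Lemma \ref{lem: wvandis1}) to transfer this into weak convergence of $n||\theta-\eta||_2^2(\hat\tau_{n,\text{MLE}}-\tau_n)$. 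Part $(a)$ is then immediate: $\hat\tau_{n,\text{MLE}}$ lives on a grid of spacing $1/n$, so any nonzero $\hat\tau_{n,\text{MLE}}-\tau_n$ satisfies $n|\hat\tau_{n,\text{MLE}}-\tau_n|\ge 1$, and under $||\theta-\eta||_2\to\infty$ the $O_P(1)$ bound forces $\hat\tau_{n,\text{MLE}}=\tau_n$ with probability tending to one.

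For part $(b)$, I would first decouple the nuisance MLEs $\{\hat\theta_k(b),\hat\eta_k(b)\}$ from the localization of the break. A second-order Taylor expansion (justified by (B4), (B5), (B11)) gives
\begin{eqnarray}
\log\mathbb{P}_{\hat\theta_k(b)}(X_{kt}) - \log\mathbb{P}_{\theta_k}(X_{kt}) = (\hat\theta_k(b)-\theta_k)^\prime \frac{\partial}{\partial \theta}\log\mathbb{P}_{\theta}(X_{kt})\Big|_{\theta=\theta_k} + R_{kt}(b), \nonumber
\end{eqnarray}
with remainder controlled by $||\hat\theta_k(b)-\theta_k||^2 G_2(X_{kt})$. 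The sub-Gaussian tails in (B8) and (B9) enable a union-bound control over the $m(n)$ panels of the MLE consistency rate $||\hat\theta_k(b)-\theta_k||=O_P((nb)^{-1/2})$; combined with the score equation satisfied by $\hat\theta_k(b)$, the nuisance-parameter contribution is $o_P(1)$ uniformly over $h$-compacts. What remains is the log-likelihood ratio over the observations whose regime is reassigned between $\tau_n$ and the perturbed break. A further second-order expansion of $\log\mathbb{P}_{\theta_k}(X_{kt})-\log\mathbb{P}_{\eta_k}(X_{kt})$ in $\eta_k$ about $\theta_k$ (valid because $||\theta-\eta||_2\to 0$) yields a deterministic drift of $-\tfrac12(\theta_k-\eta_k)^\prime I(\theta_k)(\theta_k-\eta_k)$ per misclassified observation plus a mean-zero score contribution. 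Summing over the $\asymp |h|/||\theta-\eta||_2^2$ misclassified $t$'s and over $k$, the drift converges to $-0.5\,\gamma_{_{\text{MLE}}}^{2}|h|$ by the definition of $\gamma_{_{\text{MLE}}}^2$, and the fluctuation converges to $\gamma_{_{\text{MLE}}}B_h$ via a Lyapunov CLT whose conditions are verified from (B5)-(B6) and (SNR2), mirroring (\ref{eqn: lyaplse})-(\ref{eqn: lyaplsedom}).

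For part $(c)$, decompose $L_n^*(h)=L_n^{*,I}(h)+L_n^{*,II}(h)$ along the partition $\{1,\dots,m(n)\}=\mathcal{K}_n\cup\mathcal{K}_0$ of (\ref{eqn: partition}). Since $\sup_{k\in\mathcal{K}_n}||\theta_k-\eta_k||_2\to 0$, the piece $L_n^{*,I}(h)$ is treated exactly as in part $(b)$, contributing $D_2(h)+C_2(h)$ with $\gamma_{_{\text{MLE}}}^*$ in place of $\gamma_{_{\text{MLE}}}$. For the finite set $\mathcal{K}_0$, only a finite number of observations (indexed by integer-valued increments of $h$) have their regime reassigned, so $L_n^{*,II}(h)$ is a finite sum of log-likelihood ratios evaluated at the fitted MLEs; the joint use of continuity of $\mathbb{P}_\lambda(x)$ in (B10)(ii), the weak convergence in (B10)(iii), and the convergence $\hat\theta_k(b)\to\theta_k^*$, $\hat\eta_k(b)\to\eta_k^*$ from (B10)(iv) and consistency of MLEs on a finite collection yield weak convergence to the process $A_2(h)$ in (\ref{eqn: msethm2ag}).

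The main obstacle is the uniform-in-$b$, uniform-in-$k$ control of the substitution error from the nuisance MLEs $\hat\theta_k(b),\hat\eta_k(b)$ when $m(n)\to\infty$: ordinary pointwise MLE asymptotics do not suffice. This is precisely where the sub-Gaussian tails (B8) and (B9) come into play — they permit a union bound over the $m(n)$ panels to obtain uniform score concentration — while the integrable third-derivative bound (B11) is used to show the Taylor remainders vanish at the $n||\theta-\eta||_2^2$ scale. The growth condition (SNR2), which is stronger than (SNR1), is what ensures this control is consistent with simultaneously analyzing all panels.
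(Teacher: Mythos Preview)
Your proposal is correct and follows essentially the same route as the paper. The paper decomposes $L_{k,n}(b)-L_{k,n}(\tau)$ into four nuisance-MLE pieces $A_{1k},\dots,A_{4k}$ plus the ``pure'' log-likelihood ratio $\mathbb{M}_{7k}(b)=n^{-1}\sum_{t=nb+1}^{n\tau}(\log\mathbb{P}_{\eta_k}(X_{kt})-\log\mathbb{P}_{\theta_k}(X_{kt}))$, recycles the expectation bounds on $\mathbb{M}_{1k},\dots,\mathbb{M}_{6k}$ from the rate proof (Theorem \ref{thm: mle1g}) to kill $n\sum_k A_{ik}(b)$, and then Taylor-expands $\mathbb{M}_{7k}$ exactly as you describe; your ``decouple the nuisance MLEs, then expand the remaining log-likelihood ratio'' is the same architecture. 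One small organizational difference: in part $(c)$ the paper first disposes of \emph{all} $A_{ik}$ terms (including those with $k\in\mathcal{K}_0$) and only then partitions $\sum_k\mathbb{M}_{7k}$ along $\mathcal{K}_0\cup\mathcal{K}_n$, so the $\mathcal{K}_0$ contribution $A_2(h)$ arises from $\log\mathbb{P}_{\eta_k}-\log\mathbb{P}_{\theta_k}$ at the \emph{true} parameters (which then converge to $\theta_k^*,\eta_k^*$ by (B10)), rather than at the fitted MLEs as you wrote---but since $\mathcal{K}_0$ is finite your variant would work equally well.
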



\begin{remark}
For the Gaussian case (see remark \ref{remark:Gaussian-case}), the results in Theorem \ref{thm: mle2g} 
continue to hold under the weaker assumption (B7) (or equivalently (A1) in Section \ref{sec: lse}) and (SNR1). 
\end{remark}

\begin{remark} \label{rem: varcompmlelse}
Suppose $d=1$. Therefore $\theta_k, \eta_k \in \mathbb{R}\ \forall k$. Also suppose, for all $k \geq 1$, $E(X_{kt}) = \theta_k I(t \leq n\tau_n) + \eta_k I(t > n\tau_n)$.  Let $B_h$ be the standard Brownian motion and denote $V ={\rm{Var}}(\arg\max_{h} (-0.5|h|+B_h))$.   Then under the assumptions in Theorem \ref{thm: mle2g}(b), if $||\theta -\eta||_2 \to 0$, the asymptotic variance $V_{\text{MLE}}$ of $n||\theta - \eta||_2(\hat{\tau}_{n,\text{MLE}}-\tau_n)$ is
\begin{align*}
V_{\text{MLE}} = \bigg(\lim \frac{\sum_{k=1}^{m}(\theta_k-\eta_k)^2}{\sum_{k=1}^{m}(\theta_k-\eta_k)^2 I(\theta_k)}\bigg)^2 V = \bigg(\lim \frac{\sum_{k=1}^{m}(\theta_k-\eta_k)^2}{\sum_{k=1}^{m}(\theta_k-\eta_k)^2 I(\eta_k)}\bigg)^2 V. 
\end{align*}
Recall that ${\rm{Var}}(X_{kt}) = \sigma_{1k}^2I(t \leq n\tau_n)+\sigma_{2k}^2 I(t > n\tau_n)$ for all $k \geq 1$. Suppose $\sigma_{1k}^2 = g(\theta_k)$ and $\sigma_{2k}^2 = g(\eta_k)$ for some continuous function $g(\cdot)$ with bounded first derivative.  Then $||\theta - \eta||_2 \to 0$ implies $\sup_{k} |\sigma_{1k} - \sigma_{2k}| \to 0$. Under the assumptions given in Theorem \ref{thm: lse2}(b), if $||\theta - \eta||_2 \to 0$, the asymptotic variance $V_{\text{LSE}}$ of $n||\theta - \eta||_2(\hat{\tau}_{n,\text{LSE}}-\tau_n)$ is
\begin{align*}
V_{\text{LSE}} = \bigg(\lim \frac{\sum_{k=1}^{m}(\theta_k-\eta_k)^2 \sigma_{1k}^2}{\sum_{k=1}^{m}(\theta_k-\eta_k)^2} \bigg)^2 V = \bigg(\lim \frac{\sum_{k=1}^{m}(\theta_k-\eta_k)^2\sigma_{2k}^2}{\sum_{k=1}^{m}(\theta_k-\eta_k)^2 }\bigg)^2 V. 
\end{align*}
As the arithmetic mean is bigger than the harmonic mean and by  Cram\'{e}r-Rao lower bound, $\sigma_{1k}^2 \geq (I(\theta_k))^{-1}$, $\sigma_{2k}^2 \geq (I(\eta_k))^{-1}$ for all $k \geq 1$, we have $V_{\text{MLE}} \leq V_{\text{LSE}}$. A similar conclusion holds when $||\theta -\eta||_2 \to c$ and  $\mathcal{K}_0$ is the empty set.
\end{remark}

\subsection{Illustrative Examples}
We showcase the asymptotic behavior of $\hat{\tau}_{n,\text{MLE}}$, when the data generating mechanism for $\{X_{kt}\}$ follows specific probability distributions, including those in the exponential family (full rank as well as curved) and some models of particular interest in econometrics, namely, the $0$-inflated Poisson for count data, as well as the Probit and Tobit models. We illustrate how Assumptions (B1)-(B10) hold for these models and also provide comparisons between the least squares and maximum likelihood estimators of the change point for these examples. Apart from the one parameter full rank exponential family, together with the $0$-inflated Poisson, Probit and Tobit models, all other ones are discussed in the Supplementary file in the interests of space. 

\begin{Example} \label{example: expfam1}
\textit{\textbf{Exponential family}}.
A random variable $X$ belongs to the one parameter natural exponential family, if its probability density/mass function has the form
\begin{eqnarray} \label{eqn: defnexp}
f_{\lambda}(x) = e^{\lambda x -\beta(\lambda)+h(x)},\ \ x \in \mathbb{R}\ \text{and}\ \lambda \in \Lambda \subset \mathbb{R},
\end{eqnarray}
where $\beta(\lambda) = \log \int_{\mathbb{R}} e^{\lambda x + h(x)} dx$ is an infinitely differentiable convex function.  Note that in this case, $E(X) = \beta^\prime (\lambda)$ and $\text{Var}(X) = \beta^{\prime \prime}(\lambda)$. Since $\beta(\cdot)$ is a convex function, $\beta^\prime(\cdot)$ is a strictly increasing function and therefore its inverse exists. \\ 
\indent This example assumes that for each $k$ and $t$, the probability distribution of $X_{kt}$ belongs to the one parameter natural exponential family and the break  occurs due to change in the value of the parameter. This is equivalent to positing that  
the probability density/mass function of $X_{kt}$ is 
\begin{eqnarray} \label{eqn: exppdfch}
f_{kt}(x) = f_{\theta_k}(x)I(t \leq \tau_n) + f_{\eta_k}I(t > \tau_n),
\end{eqnarray}
where $\theta_k \neq \eta_k$ for at least one $k \geq 1$. \\
\indent Following the developments in Section \ref{sec: mle}, one can establish the conclusions of Theorem \ref{thm: mle1g} and Theorem \ref{thm: mle2g}(a)  when the second derivative of $\beta(\lambda)$ for $\lambda \in \Lambda$  is bounded away from both $0$ and $\infty$, the $4$-th moment of $X_{kt}$ is uniformly bounded above and (SNR2) holds.\\
\indent Further, suppose that the third derivative of $\beta(\cdot)$ is bounded.
Then, the conclusion of Theorem \ref{thm: mle2g}(b) holds for the model in (\ref{eqn: exppdfch},) if $\gamma_{_\text{MLE}}$  exists. In addition,
the conclusion of Theorem \ref{thm: mle2g}(b) holds for (\ref{eqn: exppdfch}), if $\gamma_{_\text{MLE}}^{*}$  exists and, (A4), (B10) and $\sup_{k \in \mathcal{K}_n} |\theta_k - \eta_k| \to 0$ hold. \\
\indent Note that in this example, $E(X_{kt}) \neq \theta_k I(t \leq n\tau_n) + \eta_k I(t > n\tau_n)$  and therefore, we can not not  apply Remark \ref{rem: varcompmlelse} directly. However, using the structure of the exponential family, one can  establish similar variance comparisons as given in Remark \ref{rem: varcompmlelse}  for the model  (\ref{eqn: exppdfch}).  \\
\indent For the Gaussian case, i.e., when $f_{\lambda}(x) = (\sqrt{2\pi \sigma^2})^{-1} \exp \{-(x-\lambda)^2 /\sigma^2\}$ for $\lambda \in \mathbb{R}$ and known constant $\sigma >0$,  Condition (SNR2) can be relaxed to (SNR1). Also in this case, $\beta(\cdot) = C\lambda^2$ for some constant $C>0$ and hence, all the requirements on $\beta(\cdot)$,  as stated above, hold  naturally. \\
\indent Details are given in Example $6.5$ of the supplement due to space constraint. 
\end{Example}

\begin{Example} \label{example: 0infpois}
\textit{\textbf{0-inflated Poisson distribution}}. A random variable $X$ follows a $0$-inflated Poisson distribution with parameter $(\sigma,\lambda)$ ($0 < \sigma <1$ and $\lambda >0$), if $X$ has the following probability mass function:
\begin{eqnarray}
P(X=x) = (\sigma + (1-\sigma)e^{-\lambda})I(x=0) + (1-\sigma) e^{-\lambda} \frac{\lambda^x}{x !} I(x =1,2,\ldots).
\end{eqnarray}
In this model,  the maximum likelihood method is  recommended over least squares because the latter method relies on the means to detect the change point.
However, for this model  $E(X) = (1-\sigma)\lambda$ and it is easy to come up with scenarios where two different pairs of $(\sigma, \lambda)$ (e.g. $(0.5,2)$ and $(2/3,3)$) end up with the same (or very similar)  mean(s). In that case, the least squares based method would fail to detect the change point, while
the maximum likelihood one would not, provided that the other conditions required and previously discussed hold.

It can then easily be seen that assumptions (B1) and (B2) hold for this example. The log-likelihood of $(\sigma,\lambda)$ is given by
\begin{eqnarray}
L(\sigma,\Lambda) &=& (\log (\sigma + (1-\sigma)e^{-\lambda}))I(x=0) + (\log(1-\sigma) - \lambda + x \log \lambda - \log(x!))I(x =1,2,\ldots) \nonumber \\
&=& (\log (\sigma + (1-\sigma)e^{-\lambda}))I(x=0) + (\log(1-\sigma) - \lambda)I(x =1,2,\ldots)  \nonumber \\
&& \hspace{2.5 cm}+ x (\log \lambda) I(x =1,2,\ldots) - (\log(x!))I(x =1,2,\ldots).
\end{eqnarray}
Thus, (B4)  holds.  Moreover, 
\begin{eqnarray}
\frac{\partial^2 L(\sigma,\lambda)}{\partial \lambda^2} &=& \frac{(1-\sigma)e^{-\lambda}}{\sigma + (1-\sigma)e^{-\lambda}} \bigg(1- \frac{(1-\sigma)e^{-\lambda}}{\sigma + (1-\sigma)e^{-\lambda}} \bigg) I(x=0) - \lambda^{-2} x I(x=1,2,\ldots), \nonumber \\
\frac{\partial^2 L(\sigma,\lambda)}{\partial \sigma \partial \lambda} &=& \bigg[ \frac{(1-\sigma)e^{-\lambda} (1-e^{-\lambda})}{(\sigma + (1-\sigma)e^{-\lambda})^2} + \frac{e^{-\lambda}}{(\sigma + (1-\sigma)e^{-\lambda})}\bigg] I(x=0), \nonumber \\
\frac{\partial^2 L(\sigma,\lambda)}{\partial \sigma^2} &=& -\bigg(\frac{ (1-e^{-\lambda})}{(\sigma + (1-\sigma)e^{-\lambda})} \bigg)^2 I(x=0)  - \frac{1}{(1-\sigma)^2} I(x = 1,2,\ldots).
\end{eqnarray}
Consider the following three measurable functions on the set of all non-negative integers:
\begin{eqnarray}
G_{11}(x) = I(x=0) + xI(x=1,2,\ldots),\ G_{12}(x) = I(x=0)\ \text{and}\ \ G_{22}(x) = 1. 
\end{eqnarray}
Note that the above functions satisfy (\ref{eqn: sdv2}). For some fixed  $0<c_1<1$ and $0<c_2<c_3<\infty$, define the restricted parameter space
\begin{eqnarray} \label{eqn: 0poispara}
\Lambda = \{(\sigma,\lambda): \ \sigma \in (c_1,1-c_1),\ \lambda \in (c_2,c_3)\}.
\end{eqnarray}
 Then, for some $0<C_1 \leq  C_2< \infty$,
\begin{eqnarray}
&& 0 < C_1 G_{11}(x) \leq \sup_{(\sigma,\lambda) \in \Lambda} \bigg |\frac{\partial^2 L(\sigma,\lambda)}{\partial \lambda^2} \bigg | \leq  C_2 G_{11}(x) < \infty, \nonumber \\
&& 0 < C_1 G_{12}(x) \leq \sup_{(\sigma,\lambda) \in \Lambda} \bigg |\frac{\partial^2 L(\sigma,\lambda)}{\partial \sigma \partial \lambda} \bigg| \leq  C_2 G_{12}(x) < \infty, \nonumber \\
&& 0 < C_1 G_{22}(x) \leq \sup_{(\sigma,\lambda) \in \Lambda} \bigg|\frac{\partial^2 L(\sigma,\lambda)}{\partial \sigma^2} \bigg| \leq  C_2 G_{22}(x) < \infty. \nonumber 
\end{eqnarray}
Hence, (B5) holds for this example. Moreover, (B6), (B8), (B9) and (B11) hold for the parameter space $\Lambda$ defined in (\ref{eqn: 0poispara}). 

Suppose the data $\{X_{kt}\}$ are generated from the $0$-inflated Poisson with parameter 
$$(\sigma_{1k},\lambda_{1k})I(t \leq n\tau_n) + (\sigma_{2k},\lambda_{2k})I(t > n\tau_n)\ \ \forall k \geq 1,$$
where $(\sigma_{1k},\lambda_{1k}) \neq (\sigma_{2k},\lambda_{2k})$ for at least one $k$. We obtain $\hat{\tau}_{n,\text{MLE}}$ by (\ref{eqn: taumle}). 

Suppose $(\sigma,\lambda) \in \Lambda$,  (SNR2) holds and $\log m(n) = o(n)$. Then, 
\begin{eqnarray} 
n \bigg( \sum_{k=1}^{n} (\sigma_{1k} - \sigma_{2k})^2 + \sum_{k=1}^{n} (\lambda_{1k} - \lambda_{2k})^2 \bigg) (\hat{\tau}_{n,\text{MLE}}-\tau_n) = O_{P}(1).
\end{eqnarray}
Under the above assumptions and $\big(\sum_{k=1}^{n} (\sigma_{1k} - \sigma_{2k})^2 + \sum_{k=1}^{n} (\lambda_{1k} - \lambda_{2k})^2 \big) \to \infty$, 
\begin{eqnarray}
P(\hat{\tau}_{n,\text{MLE}} = \tau_n) \to 1\ \ \text{as $n \to \infty$}.
\end{eqnarray}
Further, suppose $\gamma_{_{\text{MLE}}}$ and  $\gamma_{_{\text{MLE}}}^*$ exist respectively, when $\big( \sum_{k=1}^{n} (\sigma_{1k} - \sigma_{2k})^2 + \sum_{k=1}^{n} (\lambda_{1k} - \lambda_{2k})^2 \big) \to 0$ and $C>0$. Moreover, suppose $\sup_{k \in \mathcal{K}_n}(|\sigma_{1k} - \sigma_{2k}| + |\lambda_{1k} - \lambda_{2k}|) \to 0$, (A4) and (B10) hold for the latter case. Then, the conclusions of Theorem \ref{thm: mle2g}(b) and (c) continue to hold.  Note that in the last two regimes, (SNR2) implies $m(n) = o(\sqrt{n})$, which is stronger than $\log m(n) = o(n)$. 
\end{Example}

\begin{Example} \label{example: probit}
\textit{\textbf{Probit model}}. Suppose a response variable $X$ is binary, that is it can have only two possible outcomes which we will denote as 1 and 0. We also have a predictor vector $Y \in \mathbb{R}^{d}$, which is assumed to influence the outcome $X$. The probit model is then defined as
$P(X=1) = \Phi(Y^\prime \beta)$,
where $\Phi(\cdot)$ is the distribution function of the standard Gaussian variable and $\beta \in \mathbb{R}^{d}$ is the parameter vector of interest. Clearly this model satisfies (B1), (B2) and (B3). 
\\ \indent The log-likelihood of $\beta$ is given by
$L(\beta) = X \log \Phi(Y^\prime \beta) + (1-X) \log (1-\Phi(Y^\prime \beta))$. 
Therefore, (B4) holds. For all $x \in \mathbb{R}$, let $\Phi^\prime(x) = \frac{\partial}{\partial x} \Phi(x)$ and  $\Phi^{\prime \prime}(x)=\frac{\partial^2}{\partial x^2} \Phi(x)$.  Therefore, \\
\begin{align*}
\frac{\partial^2}{\partial \beta^2} L(\beta) = - \left(\frac{x-\Phi(Y^\prime \beta)}{\Phi(Y^\prime \beta)(1-\Phi(Y^\prime \beta))} \Phi^\prime (Y^\prime \beta)\right)^2 YY^\prime + \frac{x-\Phi(Y^\prime \beta)}{\Phi(Y^\prime \beta)(1-\Phi(Y^\prime \beta))} \Phi^{\prime \prime}(Y^\prime \beta)YY^\prime.
\end{align*}
Moreover, as $\Phi^{\prime \prime}(x) = -x\Phi^\prime(x) $, on simplification, we get 
\begin{align*}
\frac{\partial^2}{\partial \beta^2} L(\beta) = - \left(\frac{x-\Phi(Y^\prime \beta)}{\Phi(Y^\prime \beta)(1-\Phi(Y^\prime \beta))} \Phi^\prime (Y^\prime \beta)\right) \left( x + \frac{x-\Phi(Y^\prime \beta)}{\Phi(Y^\prime \beta)(1-\Phi(Y^\prime \beta))} \Phi^\prime (Y^\prime \beta) \right)YY^\prime.
\end{align*}
Suppose both $Y$ and $\beta$ belong to a compact subset $\Lambda$ of $\mathbb{R}^d$, so that $0<C_1 \leq Y^\prime \beta \leq C_2 < \infty$.  Then,
it is easy to see that for some $C_1, C_2>0$
\begin{align*}
0 < C_1 J_d \leq \inf_{y, \beta \in \Lambda} \bigg|\frac{\partial^2}{\partial \beta^2} L(\beta)\bigg| \leq \sup_{y, \beta \in \Lambda} \bigg|\frac{\partial^2}{\partial \beta^2} L(\beta)\bigg| < C_2 J_d < \infty.
\end{align*}
Therefore, (B5) holds with $G_2 = CJ_d$ for some $C>0$. 
\\ \indent Further, since $X$ is a bounded random variable and $y, \beta$ belong to a compact subset of $\mathbb{R}^d$, Assumptions (B6) and (B7) are satisfied. In addition, (B8) and (B9) hold, since $X$ is Sub-Gaussian and $G_2$ is a constant function. Analogously, it is see that (B11) holds.
\\ \indent Suppose $\{X_{kt}\}$ are independently generated from the Probit model with parameter
\begin{eqnarray}
\beta_{1k}(n)I(t \leq n\tau_n) + \beta_{2k}(n) I(t > n\tau_n)\ \ \forall k \geq 1. \nonumber 
\end{eqnarray}
For ease of presentation, we shall write $\beta_{1k}$ and $\beta_{2k}$, respectively, for $\beta_{1k}(n)$ and $\beta_{2k}(n)$. 
Further, $\beta_{1k} \neq \beta_{2k}$ for at least one $k$. Let
$||\beta_1 - \beta_2||_2^2 = \sum_{k=1}^{m} ||\beta_{1k} - \beta_{2k}||_2^2$.
Suppose $Y,\beta \in \Lambda$,  (SNR2) holds and $\log m(n) = o(n)$. Then,
$n  ||\beta_1 - \beta_2||_2^2 (\hat{\tau}_{n,\text{MLE}}-\tau_n) = O_{P}(1)$ 
and when $||\beta_1 - \beta_2||_2^2 \to \infty$, 
we have $P(\hat{\tau}_{n,\text{MLE}} = \tau_n) \to 1$ as $n \to \infty$.
\\ \indent Further, suppose  $\gamma_{_{\text{MLE}}}$ and  $\gamma_{_{\text{MLE}}}^*$ exist respectively,  when  $||\beta_1 - \beta_2||_2^2 \to 0$ and $C>0$. 
In addition, suppose $\sup_{k \in \mathcal{K}_n}|\beta_{1k} - \beta_{2k}|_2 \to 0$, (A4) and (B10) hold for the latter case. Then, the conclusions of Theorem \ref{thm: mle2g}(b) and (c) continue to hold.  Again as in previous examples, for the last two regimes,  (SNR2) implies the stronger assumption $m(n) = o(\sqrt{n})$ than $\log m(n) = o(n)$. 
\end{Example}

\begin{Example} \label{example: Tobit}
\textit{\textbf{Tobit model}}.  In this model, the response variable $X$ depends on a $d \times 1$ vector predictor $Y$ as 
$X = (Y^\prime \beta + \varepsilon) I(\varepsilon > -Y^\prime \beta)$,
where $\varepsilon \sim \mathcal{N}(0,1)$ and $\beta$ is a $d\times 1$ parameter vector. Though Assumptions (B1), (B2) and (B3) hold for this model,  since $X$ has neither a probability density, nor a mass function, there may not always exist a consistent solution of the joint log-likelihood equation (\ref{eqn: likelihood equation}). \citet{amemiya1973regression} established that (\ref{eqn: likelihood equation}) has a consistent sequence of solutions, if for a given data set $\{(X_i,Y_i):\ 1 \leq i \leq n\}$,  the following (T1) and (T2) conditions hold. 

\noindent (T1) The empirical distribution of $\{Y_i:\ 1 \leq i \leq n\}$ converges weakly to some probability distribution.

\noindent  (T2) $\lim_{n \to \infty}\frac{1}{n} \sum_{i=1}^{n} Y_i Y_i^\prime$ is positive definite.

The log-likelihood function of $\beta$ is given by
\begin{align*}
L(\beta) = (1- \Phi(Y^\prime \beta))I(X=0) - 0.5 (X-Y^\prime \beta)^2 I(X>0) + C
\end{align*}
for some constant $C$. Thus, Assumption (B4) holds. Let $\Phi^{\prime \prime}(x) =\frac{\partial^2}{\partial x^2} \Phi(x)$. Then,
\begin{align*}
\frac{\partial^2}{\partial \beta^2} L(\beta) &=  - \Phi^{\prime \prime}(Y^\prime \beta) YY^\prime I(X=0) -  YY^\prime I(X>0).
\end{align*}
Suppose both $Y$ and $\beta$ belong to a compact subset $\Lambda$ of $\mathbb{R}^d$, so that $0<C_1 \leq Y^\prime \beta \leq C_2 < \infty$. Therefore, (B5) holds with $G_2(x) = C$ for all $x$ and for some $C>0$. Similarly (B11) holds.  Moreover, it is easy to see that (B8) and (B9) are satisfied for this model.  \\
\indent Suppose $\{X_{kt}\}$ are independently generated with 
\begin{eqnarray}
X_{kt} = \begin{cases} (Y_{kt}^\prime \beta_{1k}(n) + \varepsilon_{kt}) I(\varepsilon_{kt} > -Y_{kt}^\prime \beta_{1k}(n)),\ \ \text{if $t \leq n\tau_n$} \\
(Y_{kt}^\prime \beta_{2k}(n) + \varepsilon_{kt}) I(\varepsilon_{kt}(n) > -Y_{kt}^\prime \beta_{2k}(n)),\ \ \text{if $t \leq n\tau_n$}
\end{cases}
\end{eqnarray}
where $\{\varepsilon_{kt}\}$ are i.i.d. standard Gaussian variables and $\beta_{1k}(n) \neq \beta_{2k}(n)$ for at least one $k$.  We shall write $\beta_{1k}$ and $\beta_{2k}$ respectively, for $\beta_{1k}(n)$ and $\beta_{2k}(n)$. Suppose for each $k \geq 1$, $\{Y_{kt}:\  t \leq n\tau_n\}$ and  $\{Y_{kt}:\  t \geq n\tau_n\}$ satisfy  (T1) and (T2) and $Y, \beta \in \Lambda$. 
Then, under (SNR2) and $\log m(n) = o(n)$,  we have
$n  ||\beta_1 - \beta_2||_2^2 (\hat{\tau}_{n,\text{MLE}}-\tau_n) = O_{P}(1)$
and when $||\beta_1 - \beta_2||_2^2 \to \infty$,  we have
$P(\hat{\tau}_{n,\text{MLE}} = \tau_n) \to 1$ as $n \to \infty$.
\\ \indent Further, suppose  $\gamma_{_{\text{MLE}}}$ and  $\gamma_{_{\text{MLE}}}^*$ exist respectively,  when  $||\beta_1 - \beta_2||_2^2 \to 0$ and $C>0$. Moreover, suppose $\sup_{k \in \mathcal{K}_n}|\beta_{1k} - \beta_{2k}|_2 \to 0$, (A4) and (B10) hold for the latter case. Then, the conclusions of Theorem \ref{thm: mle2g}(b) and (c) continue to hold.  Again as in previous examples, for the last two regimes,  (SNR2) implies the more strong assumption $m(n) = o(\sqrt{n})$ than the required $\log m(n) = o(n)$. 
\end{Example}

\begin{remark}
In this work, we do not pursue the investigation of ML estimation of the change point under dependence, since the likelihood will depend on
the temporal dependence posited and can become exceedingly complicated.
\end{remark}

\section{Adaptive inference for the asymptotic distribution of the change-point estimate} \label{sec: adaptive}
In Sections \ref{sec: lse} and \ref{sec: mle}, we derived point estimates $\hat{\tau}_{n,\text{LSE}}$ and $\hat{\tau}_{n,\text{MLE}}$ of the change point $\tau_n$  and established their convergence rates and asymptotic distributions, respectively.  
However, the results in Theorems \ref{thm: lse2} and \ref{thm: mle2g} identify three different limiting regimes depending on the behavior 
of the norm difference of the parameters before and after the change point. The latter norm difference is {\em not a priori known},
and hence the practitioner is left with the dilemma of which regime to use for construction of confidence intervals. Next, we present
{\em a data based adaptive procedure} to determine  the quantiles of the asymptotic distribution, irrespective of the specific regime pertaining to the
data at hand. 

\subsection{Adaptive inference for the least squares estimator} \label{sub: adaptivelse}
Recall the observed data set $\{X_{kt}: k,t \geq 1\}$. Let $\mathbb{P}_{\mu,\sigma^2,\theta}$ be a probability distribution which is fully characterized by its mean $\mu$, variance $\sigma^2$ and the $d \times 1$ parameter vector $\theta$. Therefore, $$\mu = \int_{\mathbb{R}} x d\mathbb{P}_{\mu,\sigma^2, \theta}\ \ \text{and}\ \ \ \sigma^2 = \int_{\mathbb{R}}(x-\mu)^2 d\mathbb{P}_{\mu,\sigma^2, \theta}.$$ 
$\mu$, $\sigma^2$ and $\theta$ may not be functionally independent.  We denote the pre- and post-change point probability distributions of $X_{kt}$ by $\mathbb{P}_{\mu_{1k}(n), \sigma_{1k}^2(n), \theta_{1k}(n)}$ and $\mathbb{P}_{\mu_{2k}(n), \sigma_{2k}^2(n), \theta_{2k}(n)}$, respectively.  For ease of exposition, we shall write  $\mu_{ik}$, $\sigma_{ik}^2$ and $\theta_{ik}$, respectively, for $\mu_{1k}(n)$, $\sigma_{ik}^2(n)$ and $\theta_{ik}(n)$. Let $\hat{\tau}_{n,\text{LSE}}$ be the least squares estimator of the change point $\tau_n$,
\begin{align*}
& \hat{\mu}_{1k} = \frac{1}{n\hat{\tau}_{n,\text{LSE}}} \sum_{t=1}^{n\hat{\tau}_{n,\text{LSE}}} X_{kt},\ \ \hat{\sigma}_{1k}^2 = \frac{1}{n\hat{\tau}_{n,\text{LSE}}} \sum_{t=1}^{n\hat{\tau}_{n,\text{LSE}}} (X_{kt} - \hat{\mu}_{1k})^2, \nonumber \\
& \hat{\mu}_{2k} = \frac{1}{n(1-\hat{\tau}_{n,\text{LSE}})} \sum_{t=n\hat{\tau}_{n,\text{LSE}}+1}^{n} X_{kt},\ \ \hat{\sigma}_{2k}^2 = \frac{1}{n(1-\hat{\tau}_{n,\text{LSE}})} \sum_{t=n\hat{\tau}_{n,\text{LSE}}+1}^{n} (X_{kt} - \hat{\mu}_{2k})^2\nonumber
\end{align*}
and $\hat{\theta}_{ik}$ be an estimator of $\theta_{ik}$, such that $\hat{\theta}_{ik} - {\theta}_{ik} \stackrel{\text{P}}{\to} 0,\ \ \forall k \geq 1, i=1,2$. If $\sigma^2= g(\mu)$ and $\theta = h(\mu,\sigma^2)$ for some functions $g$ and $h$, then we consider $\hat{\sigma}_{ik}^2 = g(\hat{\mu}_{ik})$ and $\hat{\theta}_{ik} = h(\hat{\mu}_{ik},\hat{\sigma}_{ik}^2)$. Moreover, if $\theta = h(\mu,\sigma^2,\eta)$ for some function $h$ and $\eta$ does not depend on $\mu$ and $\sigma^2$, then $\hat{\theta}_{ik} = h(\hat{\mu}_{ik},\hat{\sigma}_{ik}^2,\hat{\eta}_{ik})$ where $\hat{\eta}_{ik} - {\eta}_{ik} \stackrel{\text{P}}{\to} 0,\ \ \forall k \geq 1, i=1,2$.

Generate independent random variables $\{\tilde{X}_{kt,\text{LSE}}: k,t \geq 1\}$, where for each $k \geq 1$, 
\begin{align}
& \tilde{X}_{kt,\text{LSE}} \sim \mathbb{P}_{\hat{\mu}_{1k},\hat{\sigma}_{1k}^2,\hat{\theta}_{1k}}I(t \leq n\hat{\tau}_{n,\text{LSE}}) + \mathbb{P}_{\hat{\mu}_{2k},\hat{\sigma}_{2k}^2,\hat{\theta}_{2k}}I(t > n\hat{\tau}_{n,\text{LSE}}).
\end{align}
The least squares criterion function is given by
\begin{eqnarray} \label{eqn: tildetau}
\tilde{M}_{n}(h) =-\frac{1}{n} \sum_{k=1}^{m} \bigg[\sum_{t=1}^{n\hat{\tau}_{n,\text{LSE}}+h} (\tilde{X}_{kt,\text{LSE}} - \hat{\mu}_{1k})^2 + \sum_{t=n\hat{\tau}_{n,\text{LSE}}+h+1}^{n} (\tilde{X}_{kt,\text{LSE}} - \hat{\mu}_{2k})^2\bigg],
\end{eqnarray}
and define $\displaystyle \tilde{h}_{n,\text{LSE}} = {\arg\max}
\{\tilde{M}_n(h): h \in [n(c^* - \hat{\tau}_{n,\text{LSE}}), n(1-c^* -\hat{\tau}_{n,\text{LSE}})]\}$.   Note that $\tilde{h}_{n,\text{LSE}}$ can take both positive and negative values as $c^{*}<\hat{\tau}_{n,\text{LSE}} < 1-c^{*}$.

The following theorem states the asymptotic distribution of $\tilde{h}_{n,\text{LSE}}$. In \textbf{Regime (a)}: $||\mu_1 - \mu_2||_2 \to \infty$, we need the same assumptions as in Theorem \ref{thm: lse2}(a). In \textbf{Regime (b)},  $||\mu_1 - \mu_2||_2 \to 0$  additional assumptions are required, beyond those posited in  
Theorem \ref{thm: lse2}(b), as well as a stronger signal-to-noise condition.  These are:

\noindent \textbf{(A8)} $\{X_{kt}\}$ are Sub-Gaussian, and

\noindent \textbf{(SNR3)} $\frac{1}{\sqrt{\log m}}\frac{\sqrt{n}}{m} ||\mu_1 - \mu_2||_2^{2} \to \infty$.

\noindent To prove our result, at a certain point, we need to establish that 
\begin{eqnarray}
\frac{\sum_{k=1}^{m}(\hat{\mu}_{1k}-\hat{\mu}_{2k})^2\hat{\sigma}_{1k}^2}{\sum_{k=1}^{m}(\hat{\mu}_{1k}-\hat{\mu}_{2k})^2} \stackrel{P}{\to} \gamma_{_\text{L,LSE}}^2\ \ \text{and}\ \ \  \frac{\sum_{k=1}^{m}(\hat{\mu}_{1k}-\hat{\mu}_{2k})^2\hat{\sigma}_{2k}^2}{\sum_{k=1}^{m}(\hat{\mu}_{1k}-\hat{\mu}_{2k})^2} \stackrel{P}{\to} \gamma_{_\text{R,LSE}}^2. \label{eqn: adapgamma}
\end{eqnarray} 
(A8) and (SNR3) are needed to show the convergences in (\ref{eqn: adapgamma}).


Next, we consider \textbf{Regime (c)}: $||\mu_1 - \mu_2||_2 \to c>0$.  Recall the partition of the index set $\{1,2,\ldots, m(n)\}$ into 
$\mathcal{K}_0$ and $\mathcal{K}_n$. Further, recall assumptions (A4) and (A5) on the set $\mathcal{K}_0$,  where we assume that $\mathcal{K}_0$ does not vary with $n$ and, for all $k \in \mathcal{K}_0$ and $0<f<1$, $X_{k\lfloor nf \rfloor} \stackrel{\mathcal{D}}{\to} X_{1k}^{*}I(f \leq \tau^{*}) + X_{2k}^{*}I(f > \tau^{*})$  and $\mu_{ik}(n) \to \mu_{ik}^{*},\ i=1,2$. By assumptions (A8) and (SNR3), $\hat{\mu}_{ik}(n) -\mu_{ik}(n) \stackrel{\text{P}}{\to} 0,\ \ k \in \mathcal{K}_0,  i=1,2$.  To ensure $\tilde{X}_{k\lfloor nf \rfloor,\text{LSE}} \stackrel{\mathcal{D}}{\to} X_{1k}^{*}I(f \leq \tau^{*}) + X_{2k}^{*}I(f > \tau^{*})$  and $\hat{\mu}_{ik} \stackrel{\text{P}}{\to} \mu_{ik}^{*}$, we need the following assumptions. 

Let $X_n \sim \mathbb{P}_{\mu_n,\sigma_n^2,\theta_n}$ and $X \sim \mathbb{P}_{\mu,\sigma^2,\theta}$. \\
\noindent \textbf{(A9)}   $X_n \stackrel{\mathcal{D}}{\to} X$, if and only if $(\mu_n,\sigma_n^2,\theta_n) \to (\mu,\sigma^2,\theta)$.

\noindent \textbf{(A10)} $\tau_n \to \tau^{*}$ and for all $k \in \mathcal{K}_0$, i=1,2,
$({\mu}_{ik}(n),\sigma_{ik}^2(n),\theta_{ik}(n)) \to (\mu_{ik}^{*},\sigma_{ik}^{*2},\theta_{ik}^{*})$.

\noindent Note that (A9) and (A10) together imply (A5) and, $X_{1k}^{*}$ and $X_{2k}^{*}$ come from the same family of distributions as the data.  Since the convergence in Regime (b) is similar to that on $\mathcal{K}_n$, 
we  require (A8) on $\mathcal{K}_n$ and (SNR3) in addition to the assumptions in Theorem \ref{thm: lse2}(c). 

We can then establish the following result, whose proof is delegated to Section \ref{subsec: adaplse}.

\begin{theorem}  \label{thm: adaplse}
Suppose (A1) holds. Then, the following statements hold.

\noindent {\bf (a)} Under (SNR1) and $||\mu_1 - \mu_2||_2 \to \infty$, $P( \tilde{h}_{n,\text{LSE}} =0) \to 1$. 

\noindent {\bf (b)} Suppose (A2), (A3), (A8) and (SNR3) hold and $||\mu_1 - \mu_2||_2 \to 0$. Then,
\begin{eqnarray}
||{\mu}_1 -{\mu}_2||_2^2 \tilde{h}_{n,\text{LSE}} \stackrel{\mathcal{D}}{\to} \arg \max_{h \in \mathbb{R}} (-0.5|h| + \gamma_{_{\text{L,LSE}}}B_hI(h\leq 0) + \gamma_{_{\text{R,LSE}}}B_h I(h>0)), \nonumber
\end{eqnarray}
where $B_h$ denotes the standard Brownian motion.

\noindent {\bf (c)} Suppose (A4), (A6), (A7)-(A10) and (SNR3) hold and $||\mu_1-\mu_2||_2 \to c>0$. Then, 
\begin{eqnarray}
\tilde{h}_{n,\text{LSE}} \stackrel{\mathcal{D}}{\to}  \arg \max_{h \in \mathbb{Z}} (D_1 (h) + C_1 (h) + A_1 (h)), \nonumber 
\end{eqnarray}
where  for each $h \in \mathbb{Z}$,
\begin{eqnarray}
D_1 (h+1) -D_1(h) &=& -0.5\text{Sign}(h)  c_1^2, \nonumber \\
C_1 (h+1) - C_1 (h) &=& (\gamma_{_{\text{L,LSE}}}^{*}I(h\leq 0) + \gamma_{_{\text{R,LSE}}}^{*}I(h> 0)) W_h,\ \ W_h \stackrel{\text{i.i.d.}}{\sim} \mathcal{N}(0,1), \ \ \ \ \ \ \  \nonumber \\
A_1(h+1) - A_1(h) &=& \sum_{k \in \mathcal{K}_0} \bigg[(Z_{kh} -\mu_{1k}^{*})^2 -(Z_{kh} -\mu_{2k}^{*})^2 \bigg],\  
  \nonumber 
\end{eqnarray}
 $\{Z_{kh}\}$ are independently distributed with $Z_{kh} \sim \mathbb{P}_{\mu_{1k}^{*},\sigma_{1k}^{*2},\theta_{1k}^{*}}I(h \leq 0) + \mathbb{P}_{\mu_{2k}^{*},\sigma_{2k}^{*2},\theta_{2k}^{*}}I(h > 0)$  for all $k \in \mathcal{K}_0$.  
  Moreover, if $\sigma_{ik}^2 = \sigma_i^2$ for all $k \geq 1$ and $i=1,2$, then (SNR3) in (b) and (c) can be relaxed to (SNR1). 
\end{theorem}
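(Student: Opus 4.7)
The overall plan is to view the synthetic panel $\{\tilde X_{kt,\text{LSE}}\}$, conditional on the originally observed data, as a bona-fide common-break model with true change at $n\hat\tau_{n,\text{LSE}}$ and true parameters $(\hat\mu_{ik},\hat\sigma_{ik}^2,\hat\theta_{ik})$. Under this reading, $\tilde h_{n,\text{LSE}}$ is exactly the analogue of $n(\hat\tau_{n,\text{LSE}}-\tau_n)$ in the synthetic problem, so Theorem~\ref{thm: lse2} applied to the synthetic panel will deliver the asymptotic distribution of $\tilde h_{n,\text{LSE}}$ (or its $\|\mu_1-\mu_2\|_2^2$-rescaled version) provided that every input quantity---signal size, variance limits of (\ref{eqn: adapgamma}), and weak limits on $\mathcal{K}_0$---matches in probability the corresponding quantity of the original problem.

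As a preparatory step I will establish uniform consistency of the estimates. The estimators $\hat\mu_{ik},\hat\sigma_{ik}^2$ differ from their oracle counterparts (computed at the true split $n\tau_n$) by a shift driven by the $O_P(1/(n\|\mu_1-\mu_2\|_2^2))$ displacement of $\hat\tau_{n,\text{LSE}}$ from $\tau_n$ guaranteed by Theorem~\ref{thm: lse1}, and this shift is negligible at every scale that matters. Under the sub-Gaussian assumption (A8), a union bound with Hoeffding-type tail inequalities then yields $\sup_k|\hat\mu_{ik}-\mu_{ik}|+\sup_k|\hat\sigma_{ik}^2-\sigma_{ik}^2|=O_P(\sqrt{\log m/n})$. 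Combined with (SNR3), which forces $m\sqrt{\log m/n}=o(\|\mu_1-\mu_2\|_2^2)$, this gives $\|\hat\mu_1-\hat\mu_2\|_2/\|\mu_1-\mu_2\|_2\stackrel{P}{\to}1$ and delivers the two convergences in (\ref{eqn: adapgamma}). In the special constant-variance case $\sigma_{ik}^2=\sigma_i^2$, only the scalar $\hat\sigma_i^2$ needs to concentrate, so (SNR1) suffices.

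Part~(a) is then immediate: since $\|\hat\mu_1-\hat\mu_2\|_2\stackrel{P}{\to}\infty$, applying Theorem~\ref{thm: lse2}(a) to the synthetic panel conditionally on the originally observed data yields $P(\tilde h_{n,\text{LSE}}=0)\to 1$. For parts (b) and (c) I replay the argmax-of-a-process scheme used to prove Theorem~\ref{thm: lse2}: define $\tilde M_n^*(h)=n(\tilde M_n(h)-\tilde M_n(0))$, establish finite-dimensional convergence of the appropriately renormalized process on compacta together with tightness, and then transfer the argmax by continuous mapping. In regime~(b) the variance inputs of the limiting triangular-drift-plus-Brownian process are precisely the ratios on the left of (\ref{eqn: adapgamma}), which converge to $\gamma_{_{\text{L,LSE}}}^2$ and $\gamma_{_{\text{R,LSE}}}^2$, so the limit coincides with that of Theorem~\ref{thm: lse2}(b). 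In regime~(c), splitting $\tilde M_n^*=\tilde M_n^{I}+\tilde M_n^{II}$ across $\mathcal{K}_n$ and $\mathcal{K}_0$, the $\mathcal{K}_n$ block produces the $C_1+D_1$ contribution via a Lyapunov CLT applied to $\sum_{k\in\mathcal{K}_n}(\hat\mu_{1k}-\hat\mu_{2k})(\tilde X_{kt,\text{LSE}}-E[\tilde X_{kt,\text{LSE}}])$ (using (A7) and (A8)), while the finite $\mathcal{K}_0$ block produces $A_1$ through (A9)--(A10), which guarantee $\tilde X_{k\lfloor nf\rfloor,\text{LSE}}\stackrel{\mathcal{D}}{\to}X_{1k}^*I(f\leq\tau^*)+X_{2k}^*I(f>\tau^*)$ for each $k\in\mathcal{K}_0$.

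The principal obstacle is the verification of (\ref{eqn: adapgamma}). Because $m\uparrow\infty$, the individual $\hat\mu_{ik}$ are only $O_P(n^{-1/2})$ consistent, and their aggregated squared deviation from the true mean differences can easily overwhelm $\|\mu_1-\mu_2\|_2^2$ unless the signal-to-noise calibration (SNR3) is imposed; the sub-Gaussian tail in (A8) is what makes the required uniform-in-$k$ concentration at rate $\sqrt{\log m/n}$ available. A further subtlety is that $\hat\mu_{ik}$ and $\hat\sigma_{ik}^2$ are computed at the random split $\hat\tau_{n,\text{LSE}}$ rather than at $\tau_n$, so the concentration argument must first absorb the oracle-to-plug-in replacement using Theorem~\ref{thm: lse1}. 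Combining this two-stage replacement with uniform sub-Gaussian concentration, uniformly across all three regimes, is the central technical task.
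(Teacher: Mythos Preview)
Your plan is essentially the paper's plan: show the plug-in estimates are uniformly consistent (via sub-Gaussian concentration plus Theorem~\ref{thm: lse1} to absorb the random split), verify the analogues of (\ref{eqn: adapgamma}), and then rerun the argmax argument for the localized process to recover the same limits as in Theorem~\ref{thm: lse2}. The identification of (SNR3) with the need to control $m\sqrt{\log m/n}$ relative to $\|\mu_1-\mu_2\|_2^2$ is exactly right, as is the observation that constant variances collapse this to a scalar consistency problem requiring only (SNR1).

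One point worth sharpening: your framing ``$\tilde h_{n,\text{LSE}}$ is exactly the analogue of $n(\hat\tau_{n,\text{LSE}}-\tau_n)$ in the synthetic problem, so Theorem~\ref{thm: lse2} applied to the synthetic panel will deliver the distribution'' is slightly off. The criterion $\tilde M_n(h)$ in (\ref{eqn: tildetau}) uses the \emph{fixed} plug-in means $\hat\mu_{1k},\hat\mu_{2k}$ rather than re-estimating means from the synthetic data at each candidate split, so $\tilde h_{n,\text{LSE}}$ is not the LSE of the synthetic panel. The upshot is that the increment $\tilde M_n(h)-\tilde M_n(0)$ decomposes cleanly as a deterministic drift $-\,(\hat\tau-b)\|\hat\mu_1-\hat\mu_2\|_2^2$ plus a single martingale-type sum $-2\sum_k(\hat\mu_{1k}-\hat\mu_{2k})\sum_t(\tilde X_{kt}-\hat\mu_{1k})$, with no analogues of the $T_{1k},T_{2k}$ terms from Section~\ref{subsec: proofmlelserate}. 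The paper exploits this: the rate $\|\hat\mu_1-\hat\mu_2\|_2^2\,\tilde h_{n,\text{LSE}}=O_P(1)$ and part~(a) come directly from Lemma~\ref{lem: wvan1} applied to this two-term decomposition, not from invoking Theorem~\ref{thm: lse2}(a) wholesale. For parts (b) and (c) your ``replay the scheme'' description is accurate; the paper additionally works under explicit conditioning events such as $\{\sup_{i,k}\hat\sigma_{ik}^2<C\}$ and $\{\inf_{i,k}\hat\sigma_{ik}^2>C_2\}$ (shown to have probability tending to $1$) so that conditional moments of the synthetic variables are uniformly controlled when the Lyapunov CLT is applied. You should make that conditioning explicit rather than leaving it implicit in ``conditionally on the originally observed data''.
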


The upshot of this Theorem is that the asymptotic distributions of $\tilde{h}_{n,\text{LSE}}$ and $n(\hat{\tau}_{n,\text{LSE}}-\tau_n)$ are 
{\em identical for all regimes}. Therefore, in practice we can simulate $\tilde{h}_{n,\text{LSE}}$ for a large number of replicates and its 
sample quantiles will be good estimators for the quantiles of the limiting distribution {\em under the true regime}. 
Although this is a computationally expensive procedure, it is nevertheless {\em trivially parallelizable}. 

However, adaptive inference comes at a certain cost, namely the requirement for assumption (SNR3).  The reason for assuming (SNR3) is explained after stating (A8) and (SNR3) and is difficult to relax.

\subsection{Adaptive inference for the maximum likelihood estimates of the change point} \label{sub: adaptivemle}
Consider the set of  probability mass/density functions $\{\mathbb{P}_\lambda: \lambda \in \Lambda\}$ which satisfy (B1)-(B3).   
The observed data $\{X_{kt}: 1 \leq t \leq n, 1 \leq k \leq m\}$ are independently generated according to  
\begin{align*}
X_{kt} \sim \mathbb{P}_{{\theta}_k(n)}I(t \leq n \tau_n) + \mathbb{P}_{{\eta}_k(n)}I(t > n \tau_n),\ \ k \geq 1.
\end{align*}
Let  $\hat{\tau}_{n,\text{MLE}}$ be the maximum likelihood estimator of the change point $\tau_n$ based on the data set $\{X_{kt}: 1 \leq t \leq n, 1 \leq k \leq m\}$. Further, let $\hat{\theta}_k(n)$ and $\hat{\eta}_k(n)$ be respectively the 
solutions of the log-likelihood equation
\begin{align*}
\sum_{t=1}^{n\hat{\tau}_{n,\text{MLE}}} \frac{\partial}{\partial \theta}\log \mathbb{P}_{\theta}(X_{kt})=0\ \ \text{and}\ \ \  \sum_{t=n\hat{\tau}_{n,\text{MLE}}+1}^{n}\frac{\partial}{\partial \eta} \log \mathbb{P}_{\eta}(X_{kt})=0.
\end{align*}
Existence of such solutions is guaranteed by (B1)-(B3). 

Generate independent random variables $\{\tilde{X}_{kt,\text{MLE}}: k,t \geq 1\}$ by 
\begin{align*}
\tilde{X}_{kt,\text{MLE}} \sim \mathbb{P}_{\hat{\theta}_k(n)}I(t \leq n \hat{\tau}_{n,\text{MLE}}) + \mathbb{P}_{\hat{\eta}_k(n)}I(t > n \hat{\tau}_{n,\text{MLE}}),\ \ \forall k \geq 1.
\end{align*}
For ease of exposition, we shall write $\theta_{k}$, $\eta_k$, $\hat{\theta}_k$ and $\hat{\eta}_k$ respectively, for $\theta_{k}(n)$, $\eta_k(n)$, $\hat{\theta}_k(n)$ and $\hat{\eta}_k(n)$. Consider the maximum likelihood criterion function
\begin{align*}
\tilde{L}_n(h) = \frac{1}{n} \sum_{k=1}^{m} \bigg[\sum_{t=1}^{n\hat{\tau}_{n,\text{MLE}}+h} \log \mathbb{P}_{\hat{\theta}_k}(\tilde{X}_{kt,\text{MLE}}) +  \sum_{t=n\hat{\tau}_{n,\text{MLE}}+h+1}^{n} \log \mathbb{P}_{\hat{\eta}_k}(\tilde{X}_{kt,\text{MLE}})\bigg],
\end{align*}
and let $\tilde{h}_{n,\text{MLE}} = \arg \max \{\tilde{L}_n(h): h \in [n(c^* - \hat{\tau}_{n,\text{MLE}}), n(1-c^* -\hat{\tau}_{n,\text{MLE}})]\}$. 

Consider the following assumptions for Regime (c): $||\theta - \eta||_2 \to c>0$.  Let $X_n \sim \mathbb{P}_{\lambda_n}$ and $X \sim \mathbb{P}_{\lambda}$.

\noindent \textbf{(B12)}   $X_n \stackrel{\mathcal{D}}{\to} X$, if and only if $\lambda_n \to \lambda$. $\mathbb{P}_{\lambda}(x)$ is a continuous function of $\lambda$ and $x$. 

\noindent \textbf{(B13)} $\tau_n \to \tau^{*}$ and for all $k \in \mathcal{K}_0$, 
$(\theta_{k}(n),\eta_k(n)) \to (\theta_k^{*},\eta_{k}^{*})$.

The following Theorem states the asymptotic distribution of $\tilde{h}_{n,\text{MLE}}$ and its proof is similar to the proof of Theorem \ref{thm: adaplse} and therefore ommitted.

\begin{theorem} \label{thm: adapmle}
Suppose (B1)-(B6), (B8) and (B9) hold. Then, the following statements hold.

\noindent {\bf $(a)$} If (SNR2) holds, $||\theta-\eta||_2 \to \infty$, $\log m(n) = o(n)$, then $\lim_{n \to \infty} P(\tilde{h}_{n,\text{MLE}}=0) =1$.

\noindent {\bf $(b)$} If $\gamma_{_{\text{MLE}}}$ exists, (B11) and (SNR3) hold and  $||\theta -\eta||_2 \to 0$, then
\begin{eqnarray}
&& ||\theta - \eta||_2^2 \tilde{h}_{n,\text{MLE}} 
\stackrel{\mathcal{D}}{\to} 
 \hspace{0 cm} \arg \max_{h \in \mathbb{R}} (-0.5\gamma_{_{\text{MLE}}}^{2}|h| + \gamma_{_{\text{MLE}}}B_h) = \gamma_{_{\text{MLE}}}^{-2}\arg \max_{h \in \mathbb{R}} (-0.5|h| + B_h), \ \ \ \ \nonumber
\end{eqnarray}
where $B_h$ denotes the standard Brownian motion.

\noindent {\bf $(c)$} Suppose $\gamma_{_{\text{MLE}}}^*$ exists, (A4), (B11)-(B13) and (SNR3) hold, $\sup_{k \in \mathcal{K}_n}|\theta_k -\eta_k | \to 0$ and $||\theta - \eta||_2 \to c >0$, then
\begin{eqnarray}
\tilde{h}_{n,\text{MLE}} 
&\stackrel{\mathcal{D}}{\to}& \arg \max_{h \in \mathbb{Z}} (D_2(h) + C_2(h) + A_2(h)),\nonumber 
\end{eqnarray}
where for each $h \in \mathbb{Z}$,
\begin{eqnarray}
D_2 ( h+1)-D_2 ( h) &=& -0.5 \text{Sign}(h)  \gamma_{_{\text{MLE}}}^{*2}, \nonumber\\
C_2 (h+1) - C_2 ( h) &=& \gamma_{_{\text{MLE}}}^{*} W_h,\ \ W_h \stackrel{\text{i.i.d.}}{\sim} \mathcal{N}(0,1), \nonumber\\
A_2 (h+1) - A_2 ( h) &=& \sum_{k \in \mathcal{K}_0}  \left(\log \mathbb{P}_{\eta_k^*}(Z_{kh}) - \log \mathbb{P}_{\theta_k^*}(Z_{kh}) \right),
  \nonumber
\end{eqnarray}
and $\{Z_{kh}\}$ are independently distributed with $Z_{kh} \sim \mathbb{P}_{\theta_k^*}I(h \leq 0) + \mathbb{P}_{\eta_k^*}I(h > 0)$, $k \in \mathcal{K}_0$. 

Further, if  $G_2(x)$ in (B5) does not depend on $x$, i.e. it is a constant function, then (B6) can be relaxed to (B7). 
\end{theorem}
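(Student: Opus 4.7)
The plan is to mirror the proof of Theorem \ref{thm: mle2g}, but carried out conditionally on the observed data $\{X_{kt}\}$, using that all plug-in estimators are consistent and that the limiting objects depend on the parameters only through quantities that are continuous in the consistent plug-ins. Write $\mathbb{P}_n^\ast$ for the conditional probability given the data. Throughout, I will use the centered process
\begin{eqnarray*}
\tilde{L}_n^\ast(h) \;=\; n\bigl[\tilde{L}_n(h)-\tilde{L}_n(0)\bigr],
\end{eqnarray*}
rescale its argument by $\|\theta-\eta\|_2^{-2}$ (respectively leave it on $\mathbb{Z}$ in Regime (c)), and invoke the \emph{argmax continuous mapping theorem} (the same device used at the end of the proof of Theorem \ref{thm: mle2g}). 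Since the synthetic sample $\{\tilde{X}_{kt,\text{MLE}}\}$ is drawn from $\mathbb{P}_{\hat{\theta}_k}$ and $\mathbb{P}_{\hat{\eta}_k}$, the conditional structure of $\tilde{L}_n^\ast$ is formally identical to the unconditional structure of $M_{n,\text{MLE}}^\ast$ in the proof of Theorem \ref{thm: mle2g}, with $(\theta_k,\eta_k,\tau_n)$ replaced by $(\hat{\theta}_k,\hat{\eta}_k,\hat{\tau}_{n,\text{MLE}})$.

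First, I would record the consistency facts produced in the earlier sections: under (SNR2), Theorem \ref{thm: mle1g} gives $n\|\theta-\eta\|_2^2(\hat{\tau}_{n,\text{MLE}}-\tau_n)=O_P(1)$, while standard MLE consistency (using (B1)--(B3) and the convexity/identifiability built into (B5)) gives $\hat{\theta}_k\stackrel{P}{\to}\theta_k^\ast$, $\hat{\eta}_k\stackrel{P}{\to}\eta_k^\ast$ for every fixed $k\in\mathcal{K}_0$, and a uniform-in-$k$ statement $\sup_k\|\hat{\theta}_k-\theta_k\|=o_P(1)$ (and similarly for $\hat{\eta}_k$) by combining the sub-Gaussian control (B8)--(B9) with (SNR3). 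For Regime (a), $\|\theta-\eta\|_2\to\infty$ and $\log m=o(n)$ give, by the same exponential-inequality argument used for Theorem \ref{thm: mle2g}(a), that $\mathbb{P}_n^\ast(\tilde{h}_{n,\text{MLE}}=0)\to 1$ in probability, which yields part (a).

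For Regime (b), I would Taylor-expand $\log\mathbb{P}_{\hat\theta_k}(\tilde X_{kt,\text{MLE}})-\log\mathbb{P}_{\hat\eta_k}(\tilde X_{kt,\text{MLE}})$ to third order using (B11). Upon summing over the $h$ time points added/removed, the cubic remainder is negligible (via (B11) and (SNR3)), the random linear term produces, after a Lyapunov CLT across $k$ (justified by (B5), (B6) and the uniform sub-Gaussianity in (B8)), a centered Gaussian process whose variance kernel equals $\gamma_{_{\text{MLE}}}^2\min(|h_1|,|h_2|)$ (using the second Bartlett identity $I(\lambda)=-E\partial_\lambda^2\log\mathbb{P}_\lambda$ from \eqref{eqn: I2} and the plug-in consistency of $\gamma_{_{\text{MLE}}}^2$), and the quadratic term yields the deterministic drift $-\tfrac12\gamma_{_{\text{MLE}}}^2|h|$. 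Tightness of the rescaled process on compact sets follows from the same maximal-inequality bookkeeping used in Section 5.6, with $(\theta,\eta)$ replaced by $(\hat\theta,\hat\eta)$; the uniqueness of the argmax of $-\tfrac12|h|+B_h$ then delivers (b) through the argmax continuous mapping theorem.

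Regime (c) is the most delicate and is the main obstacle: one must handle $\mathcal{K}_0$ and $\mathcal{K}_n$ separately and glue the limits. On $\mathcal{K}_n$ the analysis is identical to Regime (b) and produces the $D_2(h)+C_2(h)$ pieces. On $\mathcal{K}_0$, since $\mathcal{K}_0$ is finite and does not vary with $n$ (by (A4)), I would use (B12)--(B13) together with $\hat\theta_k\stackrel{P}{\to}\theta_k^\ast$, $\hat\eta_k\stackrel{P}{\to}\eta_k^\ast$ to conclude $\tilde X_{k\lfloor nf\rfloor,\text{MLE}}\stackrel{\mathcal{D}}{\to}X_{1k}^\ast I(f\le\tau^\ast)+X_{2k}^\ast I(f>\tau^\ast)$ conditionally on the data, and the continuity of $\mathbb{P}_\lambda(x)$ in both arguments ensures that the increments $\log\mathbb{P}_{\hat\eta_k}(\tilde X_{kt,\text{MLE}})-\log\mathbb{P}_{\hat\theta_k}(\tilde X_{kt,\text{MLE}})$ converge jointly in distribution to $\log\mathbb{P}_{\eta_k^\ast}(Z_{kh})-\log\mathbb{P}_{\theta_k^\ast}(Z_{kh})$, which is exactly the $A_2(h)$ increment. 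Establishing the \emph{joint} weak convergence of the $\mathcal{K}_0$ and $\mathcal{K}_n$ contributions (conditionally on the data) is the hard part: independence across $k$ in the synthetic sample makes the two pieces conditionally independent, so a joint limit follows from the marginal limits, and finally a one-sided argmax localization (bounding the limiting process by a parabolic envelope and then invoking the argmax CMT, as in Lemma 5.x of the excerpt) yields $\tilde h_{n,\text{MLE}}\stackrel{\mathcal{D}}{\to}\arg\max_{h\in\mathbb{Z}}(D_2+C_2+A_2)$. The final sentence of the theorem (about replacing (B6) by (B7) when $G_2$ is constant) follows by inspecting which moments are actually used: only the fourth moments of $\partial_\lambda\log\mathbb{P}_\lambda$ were invoked to control $G_2(X)\cdot\partial_\lambda\log\mathbb{P}_\lambda$, and these reduce to second moments when $G_2$ is a constant, exactly as in Remark \ref{remark:Gaussian-case}.
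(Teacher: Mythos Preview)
Your proposal is essentially correct and matches the paper's intended approach: the paper omits the proof entirely, stating only that it is ``similar to the proof of Theorem \ref{thm: adaplse}.'' Your plan---work conditionally on the observed data, replace $(\theta_k,\eta_k,\tau_n)$ by the consistent plug-ins $(\hat\theta_k,\hat\eta_k,\hat\tau_{n,\text{MLE}})$, Taylor-expand the log-likelihood increments, verify a Lyapunov CLT for the linear term and identify the quadratic drift, and treat $\mathcal{K}_0$ and $\mathcal{K}_n$ separately in Regime (c)---is exactly the synthesis of the adaptive-LSE argument (Section \ref{subsec: adaplse}) with the MLE-specific decomposition from Section \ref{subsec: mle2}.

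Two minor clarifications are worth noting. First, because the adaptive criterion $\tilde L_n(h)$ uses the \emph{fixed} estimates $\hat\theta_k,\hat\eta_k$ (not re-estimated at each candidate $h$), the decomposition is actually simpler than in Theorem \ref{thm: mle2g}: there are no $A_{ik}(b)$ residual terms from re-estimation, and the entire process reduces directly to the $\mathbb{M}_{7k}$-type term $\sum_t\sum_k[\log\mathbb{P}_{\hat\theta_k}(\tilde X_{kt})-\log\mathbb{P}_{\hat\eta_k}(\tilde X_{kt})]$, which is what your Taylor expansion targets. Second, the paper's proof of Theorem \ref{thm: adaplse} begins by establishing the conditional rate $\|\hat\mu_1-\hat\mu_2\|_2^2\tilde h=O_P(1)$ via Lemma \ref{lem: wvan1} applied under a conditional probability $P^\ast$ (restricted to an event of probability tending to one, cf.\ Lemma \ref{lem: anew}); this furnishes the tightness of $\tilde h_{n,\text{MLE}}$ required by the argmax continuous mapping theorem. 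You allude to this via ``argmax localization,'' but it is cleanest to state and prove the conditional rate first, as the paper does, rather than extracting tightness from process-level bounds. The MLE analogue of Lemma \ref{lem: adap} (consistency of the plug-in $\gamma_{_{\text{MLE}}}^2$ under (SNR3)) is the one genuinely new lemma needed, and you correctly identify it.
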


\begin{remark}
Suppose  $\frac{\partial^2 }{\partial \lambda^2} \log \mathbb{P}_{\lambda} (x) = -\Sigma$ for all $\lambda$, $x$ and for some positive definite matrix $\Sigma \in \mathbb{R}^{d \times d}$ which does not depend on $\lambda$ and $x$. This is equivalent to saying that for each $k \geq 1$,
$X_{kt} \sim \mathcal{N}_d(\theta_k, \Gamma)I(t \leq n\tau_n) + \mathcal{N}_d(\eta_k, \Gamma)I(t > n\tau_n)$, $\theta_k \neq \eta_k$ for at least one $k$ and 
for some known $d \times d$ positive definite matrix $\Gamma$. Then, the asymptotic distribution in (a) continues to hold under the weaker assumptions (B7) and (SNR1). \\
\indent Moreover, if $I(\theta_k) = I_1$ and $I(\eta_k) = I_2$ for all $k \geq 1$, then (SNR3) in (b) and (c) can be relaxed to (SNR2). 
\end{remark}

\section{Proofs}

\subsection{Proof of Theorem \ref{thm: lse1}} \label{subsec: proofmlelserate} 
We use the following lemma to prove this theorem. This is quoted from \cite{Wellner1996empirical}.

\begin{lemma} \label{lem: wvan1}
For each $n$, let $\mathbb{M}_n$ and $\tilde{\mathbb{M}}_n$ be stochastic processes indexed by a set $\mathcal{T}$. Let $\tau_n\ \text{(possibly random)} \in \mathcal{T}_n \subset \mathcal{T}$ and 
$d_n(b,\tau_n)$ be a map (possibly random) from $\mathcal{T}$ to $[0,\infty)$. Suppose that for every large $n$ and $\delta \in (0,\infty)$
\begin{eqnarray}
&& \sup_{\delta/2 < d_n(b,\tau_n) < \delta,\  b \in \mathcal{T}} (\tilde{\mathbb{M}}_n(b) - \tilde{\mathbb{M}}_n(\tau_n)) \leq -C\delta^2, \label{eqn: lemcon1} \\
&& E\sup_{\delta/2 < d_n(b,\tau_n) < \delta,\  b \in \mathcal{T}}  \sqrt{n} |\mathbb{M}_n(b) - \mathbb{M}_n(\tau_n) - (\tilde{\mathbb{M}}_n(b) - \tilde{\mathbb{M}}_n(\tau_n))| \leq  C\phi_{n}(\delta), \label{eqn: lemcon2}
\end{eqnarray}
for some $C>0$ and for function $\phi_n$ such that $\delta^{-\alpha}\phi_n(\delta)$ is decreasing in $\delta$ on $(0,\infty)$ for some $\alpha <2$. Let $r_n$ satisfy
\begin{eqnarray} \label{eqn: lemrn}
r_n^2  \phi(r_n^{-1}) \leq \sqrt{n}\ \ \text{for every  $n$}.
\end{eqnarray}
Further, suppose that the sequence $\{\hat{\tau}_n\}$ takes its values in $\mathcal{T}_n$ and 
satisfies $\mathbb{M}_n(\hat{\tau}_n) \geq \mathbb{M}_n(\tau_n) - O_P (r_n^{-2})$ for large enough $n$. Then,
$r_n d_{n}(\hat{\tau}_n,\tau_n) = O_P (1)$.
\end{lemma}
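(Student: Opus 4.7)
The plan is to prove this rate-of-convergence lemma via the peeling (dyadic shell) device that is standard in empirical process theory; it is essentially Theorem 3.2.5 of van der Vaart and Wellner (1996). The target is $P(r_n d_n(\hat\tau_n,\tau_n) > 2^M) \to 0$ as $M \to \infty$, uniformly in large $n$. I will cover the ``bad'' set $\{b \in \mathcal{T}_n: r_n d_n(b,\tau_n) > 2^M\}$ by dyadic shells
\[
S_{n,j} = \{b \in \mathcal{T}_n : 2^{j-1} < r_n d_n(b, \tau_n) \leq 2^j\}, \qquad j \geq M+1,
\]
bound $P(\hat\tau_n \in S_{n,j})$ for each $j$ separately, and then sum.

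On a fixed shell, writing $\delta_j := 2^j r_n^{-1}$, hypothesis (\ref{eqn: lemcon1}) forces $\tilde{\mathbb{M}}_n(b) - \tilde{\mathbb{M}}_n(\tau_n) \leq -C\,2^{2j} r_n^{-2}$ uniformly over $b \in S_{n,j}$. Writing the near-maximization hypothesis as $\mathbb{M}_n(\hat\tau_n) - \mathbb{M}_n(\tau_n) \geq -A_n r_n^{-2}$ with $A_n = O_P(1)$, the event $\{\hat\tau_n \in S_{n,j}\}$ then implies
\[
\sup_{b \in S_{n,j}} \bigl|(\mathbb{M}_n - \tilde{\mathbb{M}}_n)(b) - (\mathbb{M}_n - \tilde{\mathbb{M}}_n)(\tau_n)\bigr| \;\geq\; (C\,2^{2j} - A_n)\, r_n^{-2}.
\]
For any $\epsilon > 0$, I will pick $A_\epsilon$ with $P(A_n > A_\epsilon) < \epsilon$ for all sufficiently large $n$ and restrict to shells with $j$ large enough that $C\,2^{2j} \geq 2 A_\epsilon$. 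Markov's inequality together with (\ref{eqn: lemcon2}) then yields
\[
P(\hat\tau_n \in S_{n,j}) \;\leq\; \epsilon \;+\; \frac{2\, r_n^{2}\, \phi_n(2^j r_n^{-1})}{2^{2j}\,\sqrt{n}}.
\]

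To close, I will invoke the two remaining structural inputs. Monotonicity of $\delta \mapsto \delta^{-\alpha}\phi_n(\delta)$ gives $\phi_n(2^j r_n^{-1}) \leq 2^{\alpha j}\phi_n(r_n^{-1})$, and (\ref{eqn: lemrn}) gives $r_n^2 \phi_n(r_n^{-1}) \leq \sqrt{n}$, so each shell contributes at most $2 \cdot 2^{-(2-\alpha) j}$. Since $\alpha < 2$, the tail sum $\sum_{j \geq M+1} 2^{-(2-\alpha)j}$ tends to $0$ as $M \to \infty$, producing $\limsup_M \limsup_n P(r_n d_n(\hat\tau_n,\tau_n) > 2^M) \leq \epsilon$; letting $\epsilon \downarrow 0$ completes the proof.

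The one genuinely delicate point is absorbing the $O_P(r_n^{-2})$ slack from the near-maximizer hypothesis into the quadratic drift $-C\,2^{2j} r_n^{-2}$ on each shell, which I handle by truncating $A_n$ at a deterministic level $A_\epsilon$ and confining the peeled sum to shells with $j$ large enough for the drift to dominate. Everything else is routine bookkeeping inside the peeling scheme, and the exponent condition $\alpha < 2$ is exactly what guarantees geometric summability of the shell-probability bounds.
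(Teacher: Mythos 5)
Your peeling argument is correct and is precisely the standard proof of this result (Theorem 3.2.5 of van der Vaart and Wellner); the paper itself offers no proof, simply quoting the lemma from that reference, so you have in effect reproduced the source argument rather than diverged from it. One cosmetic point: the displayed shell bound $P(\hat\tau_n \in S_{n,j}) \leq \epsilon + 2\,r_n^2\,\phi_n(2^j r_n^{-1})/(2^{2j}\sqrt{n})$ should not carry the $\epsilon$ inside the sum over $j$ (that sum would diverge); instead remove the event $\{A_n > A_\epsilon\}$ once by a union bound before peeling, which is what your concluding $\limsup$ statement already implicitly does.
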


Recall that the least squares estimator $\hat{\tau}_{n,\text{LSE}}$ of $\tau_n$ from (\ref{eqn: optilse}). 
For our purpose, we make use of the above lemma with $\mathbb{M}_n = M_n$, $\tilde{\mathbb{M}}_n = EM_n$,
$\mathcal{T} = [0,1]$, $\mathcal{T}_n = \{1/n,2/n,\ldots, (n-1)/n,1\} \cap [c^{*},1-c^{*}]$,  $d_n(b,\tau_n) = ||\mu_{1k} - \mu_{2k}||_2 \sqrt{|b - \tau_n|}$, $\phi_n(\delta) = \delta$, $\alpha = 1.5$, $r_n = \sqrt{n}$ and $\hat{\tau}_n = \hat{\tau}_{n,\text{LSE}}$.  Thus, to prove Theorem \ref{thm: lse1}, it is enough to establish that for some $C>0$,
\begin{eqnarray}
&&  E(M_n(b) - M_n(\tau_n)) \leq -C||\mu_{1} - \mu_{2}||_2^2 |b - \tau_n|\ \ \text{and} \label{eqn: lsecon1} \\
&& E\sup_{\delta/2 < d_n(b,\tau_n) < \delta,\  b \in \mathcal{T}}   |M_n(b) -M_n(\tau_n) - E(M_n(b) - M_n(\tau_n))| \leq  C\frac{\delta}{\sqrt{n}}. \label{eqn: lsecon2}
\end{eqnarray}

Note that the left hand side of (\ref{eqn: lsecon2}) is dominated by
\begin{eqnarray}
\left(E\sup_{\delta/2 < d_n(b,\tau_n) < \delta,\  b \in \mathcal{T}}   (M_n(b) -M_n(\tau_n) - E(M_n(b) - M_n(\tau_n)))^2\right)^{1/2}. \label{eqn: doobprelse}
\end{eqnarray}
By Doob's martingale inequality, (\ref{eqn: doobprelse}) is further dominated by
\begin{eqnarray}
(\text{Var}(M_n(b) -M_n(\tau_n)))^{1/2}\ \ \text{where $d_n(b,\tau_n) = \delta$}.
\end{eqnarray}
Thus, to prove (\ref{eqn: lsecon2}), it is enough to show that for some $C>0$,
\begin{eqnarray}
\text{Var}(M_n(b) -M_n(\tau_n)) \leq Cn^{-1} d_n^2(b,\tau_n). \label{eqn: lsecon2final}
\end{eqnarray}
Hence, it is enough to prove (\ref{eqn: lsecon1}) and (\ref{eqn: lsecon2final}) to establish Theorem \ref{thm: lse1}. 

Next, we write $(M_n(b) - M_n(\tau_n))$ as a sum of some processes, so that computing means and variances becomes easier. We first introduce 
some notation that facilitates the presentation. We also write $\tau$ for $\tau_n$.  

\noindent \textbf{Additional notation}.
\begin{eqnarray}
\hat{\mu}_{k}(a,b) &=& \frac{1}{n|b-a|} \sum_{t=n(a\wedge b)+1}^{n(a \vee b)} X_{kt}, \nonumber \\
T_{1k}(b) &=& b(\hat{\mu}_{1k}(b) - \hat{\mu}_{1k}(\tau))^2,\ \ T_{2k}(b) = - (1-\tau) (\hat{\mu}_{2k}(b) - \hat{\mu}_{2k}(\tau))^2,\nonumber \\
T_{3k}(b) &=& (\tau - b) \bigg [ (\hat{\mu}_{1k}(\tau) - \hat{\mu}_{k}(b,\tau))^2 - (\hat{\mu}_{2k}(b) - \hat{\mu}_{k}(b,\tau))^2\bigg ], \nonumber \\
N_{1k}(b) &=& (\hat{\mu}_{1k}(b)-E(\hat{\mu}_{1k}(b))),\ \ N_{2k}(b) = (\hat{\mu}_{2k}(b) - E(\hat{\mu}_{2k}(b))),\nonumber \\
 N_{3k}(a,b) &=& (\hat{\mu}_{k}(a,b) - E(\hat{\mu}_{k}(a,b)),\ \ N_{4k} = (E(\hat{\mu}_{1k}(\tau)) - E(\hat{\mu}_{2k}(\tau))).\nonumber  
\end{eqnarray}
It is easy to see that
\begin{eqnarray}
 E(\hat{\mu}_{k}(b,\tau)) &=& E(\hat{\mu}_{1k}(\tau))I(b < \tau) + E(\hat{\mu}_{2k}(\tau))I(b > \tau) , \label{eqn: expectationa} \\
 E(\hat{\mu}_{2k}(b)) -E(\hat{\mu}_{1k}(\tau))  &=& - \left(\frac{1-\tau}{1-b}I(b<\tau) + I(b>\tau) \right) N_{4k}, \nonumber \\
 E(\hat{\mu}_{2k}(b)) -E(\hat{\mu}_{2k}(\tau))  &=&  \frac{\tau-b}{1-b}I(b<\tau)  N_{4k}, \nonumber \\
 E(\hat{\mu}_{1k}(b)) -E(\hat{\mu}_{1k}(\tau))  &=& - \frac{b-\tau}{b}I(b>\tau) N_{4k}, \nonumber \\
 E(\hat{\mu}_{1k}(b)) -E(\hat{\mu}_{2k}(\tau))  &=& \left(\frac{\tau}{b}I(b>\tau) + I(b < \tau)\right)N_{4k}. \label{eqn: expectationb} 
\end{eqnarray}
Assume $b < \tau$. Using above notations and (\ref{eqn: expectationa})-(\ref{eqn: expectationb}),  the following relations follows. 
\begin{eqnarray}
T_{1k} (b) &=& b \tau^{-2} (\tau-b)^2 (N_{1k}^2 (b) + N_{3k}^2(b,\tau) + 2N_{1k}(b)N_{3k}(b,\tau)), \nonumber \\
T_{2k}(b) &=& -(1-\tau)(1-b)^{-2} (\tau-b)^2 (N_{2k}^2 (\tau) + N_{3k}^2(b,\tau) + N_{4k}^2 - 2N_{2k}(\tau)N_{3k}(b,\tau) \nonumber \\
&& \hspace{5 cm}- 2N_{3k}(b,\tau)N_{4k} + 2N_{2k}(\tau)N_{4k} ), \nonumber \\
T_{3k}(b) &=& -(\tau -b) (N_{2k}^2(b) - N_{1k}^2(\tau) + (1-\tau)^{-2}(1-b)^2 N_{4k}^2 -2N_{2k}(b)N_{3k}(b,\tau) \nonumber \\
&& \hspace{1 cm}-2(1-\tau)^{-1}(1-b) N_{3k}(b,\tau)N_{4k} + 2(1-\tau)^{-1}(1-b) N_{2k}(b) N_{4k} \nonumber \\
&& \hspace{7 cm} + 2N_{1k}(\tau)N_{3k}(b,\tau)). \label{eqn: t123ex}
\end{eqnarray}

\noindent Further, 
\begin{eqnarray} \label{eqn: t123}
&& M_{n}(b) - M_n(\tau)  = \sum_{k=1}^{n} (M_{k,n}(b) - M_{k,n}(\tau))  
= \sum_{k=1}^{m}(T_{1k}(b) + T_{2k}(b) + T_{3k}(b)).\  \  \ 
\end{eqnarray}

\noindent Thus, to prove (\ref{eqn: lsecon1}) and (\ref{eqn: lsecon2final}), we need to calculate the expectation and the variance of 
$N_{ik}N_{jk}$  $\forall i,j,k$. 

The following lemma proves useful to compute the expectation. Its proof is given in the Supplementary file. 

\begin{lemma} \label{lem: expectation}
Suppose $\sup_{k,t,n}{\rm{Var}}(X_{kt}) < \infty$. Then, for some $C>0$, 
\begin{eqnarray}
&& \sup_{k,b} E(N_{1k}^2(b)) \leq Cn^{-1},\ \ \sup_{k,b} E(N_{2k}^2(b)) \leq Cn^{-1},\ \ \sup_{k} E(N_{3k}^2(b,\tau)) \leq  C(n(\tau -b))^{-1}, \nonumber \\
&&  E(N_{4k}^2) = N_{4k}^2,\ \ \sup_{k, b < \tau} E(N_{1k}(b)N_{3k}(b,\tau)) = Cn^{-1},\ \sup_{k, b < \tau} E(N_{2k}(\tau)N_{3k}(b,\tau)) = Cn^{-1}, \nonumber \\
&& \sup_{k, b < \tau} E(N_{1k}(\tau)N_{3k}(b,\tau)) \leq Cn^{-1},\ \sup_{k,b<\tau} E(N_{2k}(b)N_{3k}(b,\tau)) \leq Cn^{-1}, \nonumber \\
&& \sup_{k,b} E(N_{1k}(b)N_{4k}) = 0,\ \sup_{k,b} E(N_{2k}(b)N_{4k}) = 0, \ \sup_{k,b} E(N_{3k}(b,\tau)N_{4k}) = 0. \nonumber 
\end{eqnarray}
\end{lemma}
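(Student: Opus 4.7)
The plan is to verify each bound by a direct computation that exploits two structural facts: each $N_{ik}$ for $i=1,2,3$ is a centered sample mean of independent random variables with uniformly bounded variances (the hypothesis $\sup_{k,t,n}\mathrm{Var}(X_{kt})<\infty$ gives some $\sigma^{2}<\infty$), while $N_{4k}$ is deterministic. Throughout I would use the standing range $b,\tau\in(c^{*},1-c^{*})$, so that $b$, $1-b$, $\tau$, $1-\tau$ are all bounded below by $c^{*}$; bounds of the form $C/(nb)$ or $C/(n(1-b))$ then collapse to the advertised $C/n$.

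First I would dispose of the variance bounds. Since $N_{1k}(b)$ is the centered average of $nb$ independent observations, $E(N_{1k}^{2}(b)) = (nb)^{-2}\sum_{t=1}^{nb}\mathrm{Var}(X_{kt}) \le \sigma^{2}/(nb) \le \sigma^{2}/(n c^{*})$; analogous computations handle $E(N_{2k}^{2}(b))$ (using $1-b\ge c^{*}$) and $E(N_{3k}^{2}(b,\tau))$, which gives $\sigma^{2}/(n(\tau-b))$ exactly as stated. The identity $E(N_{4k}^{2})=N_{4k}^{2}$ is trivial since $N_{4k}$ is non-random, and the three cross terms involving $N_{4k}$ vanish because $E(N_{ik})=0$ and $N_{4k}$ can be pulled out of the expectation.

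The four remaining cross moments split cleanly according to whether the time-index sets underlying the two averages are disjoint or overlap. For $b<\tau$, the set $\{1,\ldots,nb\}$ used by $\hat{\mu}_{1k}(b)$ is disjoint from $\{nb+1,\ldots,n\tau\}$ used by $\hat{\mu}_{k}(b,\tau)$, so independence across $t$ makes $E(N_{1k}(b)N_{3k}(b,\tau))=0$; likewise $\{n\tau+1,\ldots,n\}$ used by $\hat{\mu}_{2k}(\tau)$ is disjoint from $\{nb+1,\ldots,n\tau\}$, giving $E(N_{2k}(\tau)N_{3k}(b,\tau))=0$. These zero bounds are consistent with the $Cn^{-1}$ form written in the statement (with $C=0$). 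For the overlapping pairs, I would compute covariances directly: $E(N_{1k}(\tau)N_{3k}(b,\tau))=\frac{1}{n\tau \cdot n(\tau-b)}\sum_{t=nb+1}^{n\tau}\mathrm{Var}(X_{kt})\le \sigma^{2}/(n\tau)\le \sigma^{2}/(nc^{*})$, and an identical calculation with $n(1-b)$ in place of $n\tau$ handles $E(N_{2k}(b)N_{3k}(b,\tau))$.

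There is no real obstacle here; the argument is essentially bookkeeping. The only care required is to correctly identify, for each pair, which time blocks the two averages draw on (disjoint $\Rightarrow$ zero by independence; overlapping $\Rightarrow$ covariance equals the sum of variances on the intersection divided by the product of the two block lengths), and then to uniformly replace the denominators $b$, $1-b$, $\tau$, $1-\tau$ by their lower bound $c^{*}$ to obtain the claimed $O(n^{-1})$ rate.
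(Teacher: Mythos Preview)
Your proposal is correct and follows essentially the same approach as the paper: both compute the second moments directly as variances of centered sample means, identify the disjoint-block cross terms $E(N_{1k}(b)N_{3k}(b,\tau))$ and $E(N_{2k}(\tau)N_{3k}(b,\tau))$ as exactly zero by independence, and handle the overlapping-block cross terms by summing variances over the intersection. Your explicit use of the bound $b,\tau\in(c^{*},1-c^{*})$ to pass from $C/(nb)$ to $C/n$ makes transparent a step the paper leaves implicit.
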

 
Using Lemma \ref{lem: expectation}, for some $C,C_1>0$ we obtain
\begin{eqnarray}
\sum_{k=1}^{m}E T_{1k}(b), \sum_{k=1}^m ET_{2k}(b) \leq  C\frac{(\tau-b)m}{n},\ 
\ \sum_{k=1}^{m} ET_{3k}(b) \leq C\frac{(\tau-b)m}{n} - C_1 (\tau-b) \sum_{k=1}^{m}N_{4k}^{2}. \nonumber 
\end{eqnarray}
Note that $\sum_{k=1}^{m}N_{4k}^{2} = ||\mu_1 - \mu_2||_2^2$. Hence, by (SNR1), for some $C>0$,
$E(M_n(b) - M_n(\tau)) \leq -C (\tau-b) ||\mu_1 - \mu_2||_2^2$.
Thus, (\ref{eqn: lsecon1}) is established for $b<\tau$. A similar argument works for $b>\tau$.  

Next, we compute the variance, for which the following lemma proves useful. Its proof is given in the Supplementary file. 

\begin{lemma} \label{lem: variancelse}
Suppose (A1) holds. Then, for some $C>0$, 
\begin{eqnarray}
\sup_{k,b} {\rm{Var}}(N_{1k}^2(b)) \leq Cn^{-2},\ \ \sup_{k,b} {\rm{Var}}(N_{2k}^2(b)) \leq Cn^{-2} ,\ \ \sup_{k} {\rm{Var}}(N_{3k}^2(b,\tau)) \leq Cn^{-2} (\tau -b)^{-2},
 \nonumber \\
 \sup_{k} {\rm{Var}}(N_{1k}(b)N_{3k}(b,\tau)) \leq Cn^{-2} (\tau -b)^{-2},\ \ \sup_{k} {\rm{Var}}(N_{2k}(b)N_{3k}(b,\tau)) \leq Cn^{-2} (\tau -b)^{-2} \nonumber \\
  {\rm{Var}}(N_{4k}^2) = 0,\ \  {\rm{Var}}(N_{4k}N_{3k}(b,\tau)) \leq CN_{4k}^2 n^{-1} (\tau -b)^{-1},\ \ \sup_{b} {\rm{Var}}(N_{4k}N_{2k}(b)) \leq CN_{4k}^2 n^{-1}. \nonumber 
\end{eqnarray}
\end{lemma}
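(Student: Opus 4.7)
The plan is to exploit two facts repeatedly: first, under (A1) the centered summands $X_{kt}-EX_{kt}$ are independent across $t$ with a uniform fourth-moment bound, so standard Marcinkiewicz/Rosenthal-type inequalities (or direct expansion using independence) give $E(\sum_{t=1}^{N}Y_t)^4 \leq C N^2$ whenever the $Y_t$ are independent, centered, and uniformly fourth-moment bounded; second, after restricting $b,\tau\in(c^*,1-c^*)$, quantities like $b$, $1-b$, $\tau$, $1-\tau$ are bounded below by $c^*$ and can be absorbed into constants. As preliminary calculations, I would record that $N_{1k}(b)$, $N_{2k}(b)$, $N_{3k}(b,\tau)$ are normalized sums of $nb$, $n(1-b)$, $n|\tau-b|$ independent centered variables respectively, so the basic fourth-moment inequality yields
\begin{equation*}
E N_{1k}^4(b)\leq Cn^{-2},\qquad E N_{2k}^4(b)\leq Cn^{-2},\qquad E N_{3k}^4(b,\tau)\leq Cn^{-2}(\tau-b)^{-2}.
\end{equation*}
Since $\mathrm{Var}(Z^2)\leq EZ^4$, the first three claimed bounds follow immediately.

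For the cross terms, I would proceed case-by-case based on whether the two factors use disjoint subsets of $\{X_{kt}\}$. For $b<\tau$, $N_{1k}(b)$ is a functional of $\{X_{kt}:t\leq nb\}$ while $N_{3k}(b,\tau)$ is a functional of $\{X_{kt}:nb<t\leq n\tau\}$, so the two are independent with mean zero; hence
\begin{equation*}
\mathrm{Var}(N_{1k}(b)N_{3k}(b,\tau))=E(N_{1k}^2(b))E(N_{3k}^2(b,\tau))\leq Cn^{-2}(\tau-b)^{-1},
\end{equation*}
which is dominated by the claimed $Cn^{-2}(\tau-b)^{-2}$ since $\tau-b<1$. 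The delicate case is $N_{2k}(b)N_{3k}(b,\tau)$, because $N_{2k}(b)$ and $N_{3k}(b,\tau)$ share the block $\{X_{kt}:nb<t\leq n\tau\}$; the plan is to decompose $N_{2k}(b)=(n(1-b))^{-1}(U+V)$ and $N_{3k}(b,\tau)=(n(\tau-b))^{-1}U$, where $U=\sum_{t=nb+1}^{n\tau}(X_{kt}-EX_{kt})$ and $V=\sum_{t=n\tau+1}^{n}(X_{kt}-EX_{kt})$ are independent centered sums. Then $\mathrm{Var}(N_{2k}(b)N_{3k}(b,\tau))$ is a constant multiple of $(n^2(1-b)(\tau-b))^{-2}\cdot\mathrm{Var}(U^2+UV)$, and by the fourth-moment bound $\mathrm{Var}(U^2)\leq E U^4\leq C(n(\tau-b))^2$ together with $\mathrm{Var}(UV)\leq E(U^2)E(V^2)\leq Cn^2(\tau-b)(1-\tau)$, the inner variance is $O(n^2)$; absorbing $(1-b)^{-2}$ into the constant gives the claimed $Cn^{-2}(\tau-b)^{-2}$.

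Finally, the three entries involving $N_{4k}$ are easy because $N_{4k}=\mu_{1k}-\mu_{2k}$ is deterministic. Hence $\mathrm{Var}(N_{4k}^2)=0$, and $\mathrm{Var}(N_{4k}N_{jk})=N_{4k}^2\,E(N_{jk}^2)$; plugging in the second-moment bounds from Lemma \ref{lem: expectation} gives $\mathrm{Var}(N_{4k}N_{3k}(b,\tau))\leq CN_{4k}^2n^{-1}(\tau-b)^{-1}$ and $\mathrm{Var}(N_{4k}N_{2k}(b))\leq CN_{4k}^2n^{-1}$, as claimed.

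The main obstacle is the $N_{2k}(b)N_{3k}(b,\tau)$ term, where naive Cauchy–Schwarz gives a bound $O(n^{-2}(\tau-b)^{-3})$ that is too weak; the decomposition $N_{2k}(b)=(n(1-b))^{-1}(U+V)$ separating the dependent and independent parts of the summation is essential to recover the sharper $(\tau-b)^{-2}$. All other cases follow by the independent-block / deterministic-factor pattern described above, combined with the uniform fourth-moment control provided by (A1).
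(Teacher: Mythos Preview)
Your proposal is correct and follows essentially the same route as the paper: both arguments reduce everything to fourth-moment bounds for normalized sums of independent centered variables under (A1), then use $\mathrm{Var}(Z^2)\le EZ^4$ and the fact that $N_{4k}$ is deterministic. Your treatment of the overlapping term $N_{2k}(b)N_{3k}(b,\tau)$ via the decomposition $N_{2k}(b)=(n(1-b))^{-1}(U+V)$ is more explicit than the paper's ``similarly'', but is exactly the computation that underlies it.
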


Hence, by Lemma \ref{lem: variancelse} and (SNR1), we have for some $C>0$
\begin{eqnarray}
\sum_{k=1}^{m}{\rm{Var}}(T_{1k}(b))  &\leq & C b^2 \tau^{-4} (\tau-b)^4 \frac{m}{n^2} (1 +(\tau-b)^{-2}) \nonumber \\
&\leq & Cn^{-1} (mn^{-1} ||\mu_1 - \mu_2||_2^{-2}) ||\mu_1 - \mu_2||_2^2 (\tau -b) 
 \leq  Cn^{-1}d_{n}^{2}(b,\tau), \nonumber \\
\sum_{k=1}^{m}{\rm{Var}}(T_{2k}(b)) &\leq & C (\tau-b)^4 \bigg( \frac{m}{n^2}(1+(\tau - b)^{-2}) 
 \hspace{0 cm} + \frac{1}{n}(\tau-b)^{-1}\sum_{k=1}^{m} N_{4k}^2  + \frac{1}{n}\sum_{k=1}^{m} N_{4k}^2 \bigg), \nonumber \\
& \leq & Cn^{-1} d_{n}^{2}(b,\tau) \nonumber 
\end{eqnarray}
and similarly $\sum_{k=1}^{m}{\rm{Var}}(T_{3k}(b)) \leq Cn^{-1} d_{n}^{2}(b,\tau)$. 
\noindent Thus, (\ref{eqn: lsecon2final}) is established for $b<\tau$, and a similar argument works for the case $b>\tau$. 

This completes the proof of Theorem \ref{thm: lse1}.  \qed

\subsection{Proof of Theorem \ref{thm: lse2}} \label{subsec: lse2}
\noindent \textbf{Proof of (a)}. Note that $P(\hat{\tau}_{n,\text{LSE}} \neq \tau) = P(|\hat{\tau}_{n,\text{LSE}} -\tau| \geq n^{-1}) \to 0$ 
since $||\mu_1 - \mu_2||_2 \to \infty$ and by Theorem \ref{thm: lse1}, $n||\mu_1 - \mu_2||_2^2(\hat{\tau}_{n,\text{LSE}}-\tau) = O_{P}(1)$. 
\vskip 5pt
\noindent \textbf{Proof of (b)}.
The following lemma from \cite{Wellner1996empirical} proves useful in this proof.  
\begin{lemma}  \label{lem: wvandis1}
Let $\mathbb{M}_n$ and $\mathbb{M}$ be two stochastic processes indexed by a metric space $\mathcal{T}$,
such that $\mathbb{M}_n \Rightarrow \mathbb{M}$ in $l^{\infty}(K)$ for every compact set $K \subset \mathcal{T}$ i.e.,
\begin{align}
\sup_{h \in K} |\mathbb{M}_n(h) - \mathbb{M}(h)| \stackrel{P}{\to} 0.
\end{align} 
Suppose that almost all sample paths $h \to \mathbb{M}(h)$ are upper semi-continuous and possess a unique maximum at a (random) point $\hat{h}$, which as a random map in $\mathcal{T}$ is tight. If the sequence $\hat{h}_n$ is uniformly tight and satisfies $\mathbb{M}_n(\hat{h}_n) \geq \sup_{n} \mathbb{M}_n(h) - o_{P}(1)$, then $\hat{h}_n \stackrel{\mathcal{D}}{\to} \hat{h}$ in $\mathcal{T}$.
\end{lemma}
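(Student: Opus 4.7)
The plan is to treat this as a version of the argmax continuous mapping theorem and to proceed via Prohorov--Skorokhod and a subsequence argument, using the approximate argmax property only through an upper-semicontinuous limit relation. The overall architecture is: (i) localize on a compact subset using tightness, (ii) pass to an almost-sure realization via Skorokhod, (iii) show every subsequential limit of $\hat{h}_n$ lies in $\arg\max \mathbb{M}$, and (iv) invoke uniqueness of $\hat{h}$ together with the standard subsequence characterization of weak convergence.

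First, by Portmanteau it suffices to prove that along every subsequence $\{n'\}$ there is a further subsequence $\{n''\}$ along which $\hat{h}_{n''} \stackrel{\mathcal{D}}{\to} \hat{h}$. Fix $\varepsilon>0$; by uniform tightness of $\{\hat{h}_n\}$ and tightness of $\hat{h}$, choose a compact $K=K_\varepsilon\subset\mathcal{T}$ with $\liminf_n P(\hat{h}_n\in K)\ge 1-\varepsilon$ and $P(\hat{h}\in K)\ge 1-\varepsilon$. On such a $K$ the hypothesis gives $\sup_{h\in K}|\mathbb{M}_n(h)-\mathbb{M}(h)|\stackrel{P}{\to}0$, so $(\mathbb{M}_n\!\!\restriction_K,\hat{h}_n)$ is tight in $\ell^\infty(K)\times K$. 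By Prohorov we extract a weakly convergent sub-subsequence (still denoted $\{n\}$) to some limit $(\mathbb{M},h^\dagger)$, where the first coordinate is forced to be $\mathbb{M}\!\!\restriction_K$ by the uniform convergence. Skorokhod's representation, applied on a suitable probability space (the limit marginal is concentrated on a separable set because $\mathbb{M}$ has upper-semicontinuous paths and $K$ is compact), yields almost-sure versions with $\sup_{h\in K}|\widetilde{\mathbb{M}}_n(h)-\widetilde{\mathbb{M}}(h)|\to 0$ and $\widetilde{h}_n\to\widetilde{h}^\dagger$ a.s.

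The core step is then to show $\widetilde{h}^\dagger\in\arg\max_{h\in K}\widetilde{\mathbb{M}}$ a.s. For any fixed $h\in K$ the near-optimality hypothesis gives $\widetilde{\mathbb{M}}_n(\widetilde{h}_n)\ge \widetilde{\mathbb{M}}_n(h)-o_P(1)$; the right-hand side converges a.s.\ to $\widetilde{\mathbb{M}}(h)$ by uniform convergence. For the left-hand side, the key inequality is the upper-semicontinuous passage to the limit: if $x_n\to x$ uniformly on $K$, $h_n\to h^\dagger$ in $K$, and $x$ is u.s.c., then $\limsup_n x_n(h_n)\le x(h^\dagger)$; applied here this yields $\limsup_n\widetilde{\mathbb{M}}_n(\widetilde{h}_n)\le \widetilde{\mathbb{M}}(\widetilde{h}^\dagger)$. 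Combining the two bounds gives $\widetilde{\mathbb{M}}(\widetilde{h}^\dagger)\ge \widetilde{\mathbb{M}}(h)$ for every $h\in K$. On the event $\{\hat{h}\in K\}$ (of probability at least $1-\varepsilon$) the unique argmax $\hat{h}$ lies in $K$, so $\widetilde{h}^\dagger=\widetilde{\hat{h}}$ almost surely by the uniqueness hypothesis. Transferring back through the Skorokhod coupling, this shows $\hat{h}_n\stackrel{\mathcal{D}}{\to}\hat{h}$ along the extracted subsequence, and since $\varepsilon$ was arbitrary, along the original subsequence, completing the Portmanteau loop.

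The main obstacle will be the joint tightness and measurability bookkeeping in the step where I pass from weak convergence of $\mathbb{M}_n$ in $\ell^\infty(K)$ to a usable almost-sure coupling with $\hat{h}_n$; $\ell^\infty(K)$ is non-separable in general, so one has to invoke the outer-probability version of Skorokhod (as in Theorem~1.10.4 of \cite{Wellner1996empirical}) and to confirm that the limit process $\mathbb{M}$ takes values in a separable subset, which is exactly where upper semi-continuity of the sample paths and compactness of $K$ enter. The rest---the u.s.c.\ limsup inequality and the subsequence extraction---are standard once the coupling is in hand.
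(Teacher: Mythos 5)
Your proposal is correct in substance, but it is worth knowing that the paper does not prove this lemma at all: it is quoted verbatim from van der Vaart and Wellner (1996), where it is the argmax continuous mapping theorem (Theorem 3.2.2). The published proof there is quite different from yours: it is a direct portmanteau argument on closed sets, showing $\limsup_n P^*(\hat h_n\in F\cap K)\le P^*\bigl(\sup_{F\cap K}\mathbb{M}\ge \sup_K\mathbb{M}\bigr)\le P(\hat h\in F)+P(\hat h\notin K)$ via the continuous mapping theorem applied to $z\mapsto \sup_{F\cap K}z-\sup_K z$ on $\ell^\infty(K)$, with upper semicontinuity and uniqueness used only to show that the sup over a compact set avoiding $\hat h$ is strictly below the global maximum. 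That route never couples $\hat h_n$ with the process and so sidesteps all of the Skorokhod and measurability issues you correctly identify as the main obstacle; your route buys an arguably more transparent ``every subsequential limit is an argmax'' picture at the cost of that machinery. One justification in your sketch is wrong and should be repaired: upper semicontinuity of the sample paths together with compactness of $K$ does \emph{not} make the limit law concentrated on a separable subset of $\ell^\infty(K)$ (the u.s.c.\ functions $1_{[0,U]}$ on $[0,1]$, $U$ uniform, are pairwise at sup-distance one). The separability you need is available anyway, but for a different reason: it is built into the Hoffmann--J{\o}rgensen definition of $\mathbb{M}_n\Rightarrow\mathbb{M}$ in $\ell^\infty(K)$, and under the paper's ``i.e.''\ formulation $\sup_{h\in K}|\mathbb{M}_n(h)-\mathbb{M}(h)|\stackrel{P}{\to}0$ everything already lives on one probability space, so you can pass to an a.s.\ convergent subsequence in the first coordinate and reserve Skorokhod for $\hat h_n$ (restricted to the compact $K$) alone. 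With that repair, and a little more care in carrying the $o_P(1)$ slack and the exceptional events $\{\hat h_n\notin K\}$, $\{\hat h\notin K\}$ through the $\varepsilon$-bookkeeping, your argument closes.
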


To employ Lemma \ref{lem: wvandis1}, we consider 
 $\mathbb{M}_n(h) = n(M_n(b) - M_{n}(\tau))$ where $b = \tau +n^{-1}||\mu_1 - \mu_2||_2^{-2} h$ and $h \in \mathbb{R}$.  To prove Theorem \ref{thm: lse2}(b),  by Lemma \ref{lem: wvandis1}, it is enough to establish
\begin{align} \label{eqn: enoughlse2b}
\sup_{h \in K} |\mathbb{M}_n(h) + |h| - 2\gamma_{_\text{L,LSE}} B_hI(h<0) - 2\gamma_{_\text{R,LSE}} B_hI(h>0))| \to 0,
\end{align} 
as $||\mu_1 - \mu_2||_2 \to 0$, and for all compact subsets $K$ of $\mathbb{R}$. 
 
\noindent Note that by (\ref{eqn: t123}), we have
$\mathbb{M}_n(h) = n\sum_{k=1}^{m} (T_{1k}(b) + T_{2k}(b) + T_{3k}(b))$.

\noindent 
First, we shall show that for any compact subset $K$ of $\mathbb{R}$ and as $||\mu_1 - \mu_2||_2 \to 0$,  
\begin{align} \label{eqn: l1dis}
& \sup_{h \in K}\bigg|n\sum_{k=1}^{m} T_{1k} (b)\bigg|, \ \ \  \sup_{h \in K}\bigg|n\sum_{k=1}^{m} T_{2k} (b)\bigg| \stackrel{P}{\to}  0\ \ \mbox{and} \\ & \sup_{h \in K}\bigg|n\sum_{k=1}^{m} \left(T_{3k} (b) + \frac{|\tau -b|(1-b)^2}{(1-\tau)^{2}} N_{4k}^2 - 2\frac{|\tau-b|(1-b)}{(1-\tau)}N_{3k}(b, \tau)N_{4k} \right)\bigg| \stackrel{P}{\to} 0. \label{eqn: l2dis}
\end{align}

\noindent \textbf{Proof of (\ref{eqn: l1dis}) and (\ref{eqn: l2dis})}. Note that by (\ref{eqn: t123ex}),  Lemmas \ref{lem: expectation}, \ref{lem: variancelse} and (SNR1), we have
\begin{align*}
&  E\sup_{h \in K}|n\sum_{k=1}^{m} T_{1k} (b)|  \\
 \leq  & C\sup_{h \in K} \bigg[ n (\tau-b)^2 \sum_{k=1}^{m}\left[ E(N_{1k}^2 (b)) + E(N_{3k}^2(b,\tau))  + 2E|N_{1k}(b)N_{3k}(b,\tau)|\right] \bigg]\\
 \leq  & C\sup_{h \in K} \bigg[ n (\tau-b)^2 \sum_{k=1}^{m}\left[ E(N_{1k}^2 (b)) + E(N_{3k}^2(b,\tau))  + 2(\mbox{Var}(N_{1k}(b)N_{3k}(b,\tau)))^{1/2}\right] \bigg]\\
 \leq & C\frac{m }{n ||\mu_1 - \mu_2||_2^2} \to 0\ \ \text{and} \\
& E\sup_{h \in K} |n\sum_{k=1}^{m} T_{2k}(b)| \\
\leq  & C \sup_{h \in K} \bigg[ n(\tau-b)^2 \sum_{k=1}^{m} \bigg(E(N_{2k}^2 (\tau)) + E(N_{3k}^2(b,\tau)) + E(N_{4k}^2) + 2 (\mbox{Var}(N_{2k}(\tau)N_{3k}(b,\tau)))^{1/2}  \\
& \hspace{2.5 cm} + 2|N_{4k}|(\mbox{Var}(N_{3k}(b,\tau))^{1/2} + 2|N_{4k}|(\mbox{Var}(N_{2k}(\tau))^{1/2} \bigg) \bigg] \\
& \leq  C\bigg[\frac{m}{n||\mu_1 - \mu_2||_2^2} + \frac{\sqrt{m}}{n^{3/2}||\mu_1 - \mu_2||_2^3} \bigg] \to 0.
\end{align*}
This completes the proof of (\ref{eqn: l1dis}). A similar argument works for (\ref{eqn: l2dis}).  

Moreover, it is easy to see that
\begin{align}
\sup_{h \in K} \bigg|-\frac{n|\tau -b|(1-b)^2}{(1-\tau)^{2}} \sum_{k=1}^{m}N_{4k}^2 + |h| \bigg| \to 0. \label{eqn: nonran}
\end{align}
Therefore, by (\ref{eqn: l1dis}), (\ref{eqn: l2dis}) and (\ref{eqn: nonran}), to prove (\ref{eqn: enoughlse2b}), it remains to establish
\begin{align}
\sup_{h \in K} \bigg| \frac{n|\tau-b|(1-b)}{(1-\tau)}\sum_{k=1}^{m} N_{3k}(b, \tau)N_{4k} -2(\gamma_{_\text{L,LSE}} I(h<0) +\gamma_{_\text{R,LSE}} I(h>0))B_h\bigg| \stackrel{P}{\to} 0.  \label{eqn: t1234}
\end{align}
Since $\{\frac{n|\tau-b|(1-b)}{(1-\tau)}\sum_{k=1}^{m} N_{3k}(b, \tau)N_{4k}\}$ is tight, to prove (\ref{eqn: t1234}), it is enough to establish finite dimensional weak convergence. We shall show one dimensional convergence 
\begin{align}
 \frac{n|\tau-b|(1-b)}{(1-\tau)} \sum_{k=1}^{m} N_{3k}(b, \tau)N_{4k} \stackrel{\mathcal{D}}{\to} 2(h\gamma_{_\text{L,LSE}}  I(h<0) + h\gamma_{_\text{R,LSE}}I(h>0)) \mathcal{N}(0,1). \label{eqn: t1233}
\end{align}
for $h \in K$, using a univariate central limit theorem. 

Let $t_1^{*} = n\tau + ||\mu_1 - \mu_2||_2^{-1} hI(h < 0)$ and $t_2^{*} = n\tau + ||\mu_1 - \mu_2||_2^{-2} hI(h > 0)$. Note that $|t_1-t_2| = ||\mu_1 - \mu_2||_2^{-2} h$. Also let,
$X_{t}^{*} = \sum_{k=1}^{m}(\mu_{1k}-\mu_{2k})(X_{kt} - E(X_{kt}))\ \forall t.$ Observe that $\{X_t^{*}\}$ are independent over $t$.  Next, 
\begin{align}
\frac{n|\tau-b|(1-b)}{(1-\tau)}\sum_{k=1}^{n} N_{3k}(b, \tau)N_{4k} =  \frac{1-b}{1-\tau} \sum_{t= t^*_1 +1}^{t^*_2} X^{*}_t. \label{eqn: xtstar}
\end{align}
By the Lyapunov central limit theorem, for a given $h \neq 0$,  the right hand side of (\ref{eqn: xtstar}) converges to a Normally distributed random 
variable if
\begin{align}
\bigg(\sum_{t= t^*_1 +1}^{t^*_2}E( X^{*2}_t) \bigg)^{-3/2} \bigg(\sum_{t= t^*_1 +1}^{t^*_2} E|X^{*}_t|^3 \bigg) \to 0.  \label{eqn: asp}
\end{align}
(\ref{eqn: asp}) follows since
\begin{eqnarray}
\sum_{t= t^*_1 +1}^{t^*_2} E|X^{*}_t|^3 \leq Ch \frac{\sum_{k=1}^{m} |\mu_{1k} - \mu_{2k}|^3}{||\mu_1-\mu_2||_2^2} \to 0\ \ \text{and}\ \ \ 
\sum_{t= t^*_1 +1}^{t^*_2}E( X^{*}_t)^2 \geq Ch||\mu_1-\mu_2||_2^{-3}.\ \ \ \  
\end{eqnarray}
Therefore, the sequence of random variables in (\ref{eqn: xtstar}) converges weakly to a normal random variable with variance $h \gamma_{_\text{L,LSE}}^2 I(h<0) + h \gamma_{_\text{R,LSE}}^2 I(h>0)$. This completes the proof of one dimensional convergence in (\ref{eqn: t1233}). Similarly one can show finite dimensional convergence using a multivariate central limit theorem. This completes the proof of (\ref{eqn: t1234}) and hence the proof of (\ref{eqn: enoughlse2b}). Thus, Theorem \ref{thm: lse2}(b) is established.
\vskip 5pt
\noindent \textbf{Proof of (c)}. It is easy to see that (\ref{eqn: l1dis}) and (\ref{eqn: l2dis})  hold if $||\mu_1 - \mu_2||_2 \to c>0$. Moreover, 
\begin{align*}
& -\frac{n|\tau -b|(1-b)^2}{(1-\tau)^{2}} \sum_{k=1}^{m} N_{4k}^2 + 2\frac{n|\tau-b|(1-b)}{(1-\tau)}\sum_{k=1}^{m}N_{3k}(b, \tau)N_{4k} \\
=& -\frac{h(1-b)^2}{(1-\tau)^{2}||\mu_1 - \mu_2||_2^2} \sum_{k=1}^{m} N_{4k}^2 + 2\frac{h(1-b)}{(1-\tau)||\mu_1 - \mu_2||_2^2}\sum_{k=1}^{m}N_{3k}(b, \tau)N_{4k} \\
= & A_n(h) + B_n(h),\ \ 
\end{align*}
where
\begin{align*}
A_n(h) &= -\frac{|h|(1-b)^2}{(1-\tau)^{2}||\mu_1 - \mu_2||_2^2} \sum_{k \in \mathcal{K}_0} N_{4k}^2 + 2\frac{|h|(1-b)}{(1-\tau)||\mu_1 - \mu_2||_2^2}\sum_{k \in \mathcal{K}_0}N_{3k}(b, \tau)N_{4k}, \\
B_n(h) &= -\frac{|h|(1-b)^2}{(1-\tau)^{2}||\mu_1 - \mu_2||_2^2} \sum_{k \in \mathcal{K}_n} N_{4k}^2 + 2\frac{|h|(1-b)}{(1-\tau)||\mu_1 - \mu_2||_2^2}\sum_{k \in \mathcal{K}_n}N_{3k}(b, \tau)N_{4k} \\
& = B_{1n}(h) + B_{2n}(h)\ \ \ \text{say}.
\end{align*}
As $\mathcal{K}_0$ is a finite set, by (A4) and (A5), one can easily see that for $h \in K$,
\begin{align*}
A_n(c^2(h+1)) -A_n(c^2h) \Rightarrow \sum_{k \in \mathcal{K}_n^{c}} \bigg[(Z_{kh} -\mu_{1k}^{*})^2 -(Z_{kh} -\mu_{2k}^{*})^2 \bigg],
\end{align*}
where $\{Z_{kh}\}$ are independently distributed with $Z_{kh} \stackrel{d}{=} X_{1k}^{*}I(h \leq 0) + X_{2k}^{*}I(h > 0)$.
\vskip 5pt
\noindent Recall  $c_1^2 = \lim \sum_{k \in \mathcal{K}_n}(\mu_{1k} - \mu_{2k})^2$, $h \in K$. Then clearly,  $B_{1n}(h) \Rightarrow - |h|c^{-2}c_1^2$.  Moreover,
\begin{align*}
B_{2n}(c^2 (h+1)) - B_{2n}(C^2 h) = 2\frac{1-b}{1-\tau} \sum_{k \in \mathcal{K}_n} (\mu_{1k} - \mu_{2k})(X_{kt} - E(X_{kt}))
\end{align*}
which weakly converges to a normal random variable if 
\begin{align}
\bigg(\sum_{k \in \mathcal{K}_n} (\mu_{1k} - \mu_{2k})^2 E(X_{kt} - E(X_{kt}))^2 \bigg)^{3/2} \bigg(\sum_{k \in \mathcal{K}_n} |\mu_{1k} - \mu_{2k}|^3 E|X_{kt} - E(X_{kt})|^3 \bigg) \to 0. \label{eqn: ct123}
\end{align}
By (A1), the above convergence in (\ref{eqn: ct123}) holds if
$\sum_{k \in \mathcal{K}_n} |\mu_{1k} - \mu_{2k}|^3 \to 0$
and this is guaranteed by (A5). Hence,
\begin{align}
B_{2n}(c^2 (h+1)) - B_{2n}(C^2 h)  \stackrel{\mathcal{D}}{\to} 2 (\gamma_{_\text{L,LSE}}^{*2}I(h<0) + \gamma_{_\text{R,LSE}}^{*2}I(h<0) ) \mathcal{N}(0,1). 
\end{align}
Similarly one can establish finite dimensional weak convergence of $B_{2n}(c^2 (h+1)) - B_{2n}(C^2 h)$. Moreover,  $B_{2n}(c^2 (h+1)) - B_{2n}(C^2 h)$ is tight.  Hence,
\begin{align}
B_{2n}(c^2 (h+1)) - B_{2n}(C^2 h)  \Rightarrow 2 (\gamma_{_\text{L,LSE}}^{*2}I(h<0) + \gamma_{_\text{R,LSE}}^{*2}I(h<0) ) W_h \label{eqn: ct1234}
\end{align}
where $\{W_h\}$ are all independent standard normal random variables.  This completes the proof of Theorem \ref{thm: lse2}(c).  \qed

\subsection{Proof of Theorem \ref{thm: lse4}} \label{subsec: lse4}
Theorem \ref{thm: lse4}(a) follows from Theorem \ref{thm: lse3}. 
\vskip 5pt
\noindent \textbf{Proof of (b)}.  (\ref{eqn: l1dis}), (\ref{eqn: l2dis}) and (\ref{eqn: nonran}) are easy to establish under (D1), (D2) and (SNR1). Let $t_1^{*} = n\tau + ||\mu_1 - \mu_2||_2^{-1} hI(h < 0)$ and $t_2^{*} = n\tau + ||\mu_1 - \mu_2||_2^{-2} hI(h > 0)$. Note that $|t_1-t_2| = ||\mu_1 - \mu_2||_2^{-2} h$. Also let,
$X_{t}^{*} = \sum_{k=1}^{m}(\mu_{1k}-\mu_{2k})(X_{kt} - E(X_{kt}))\ \forall t.$ Note that, 
\begin{align}
\frac{n|\tau-b|(1-b)}{(1-\tau)}\sum_{k=1}^{n} N_{3k}(b, \tau)N_{4k} =  \frac{1-b}{1-\tau} \sum_{t= t^*_1 +1}^{t^*_2} X^{*}_t. \label{eqn: xtstar1}
\end{align}
Now we shall establish  weak convergence of the process in (\ref{eqn: xtstar1}).   Let $t_{1j}^{*} = n\tau + ||\mu_1 - \mu_2||_2^{-1} h_j I(h < 0)$, $t_{2j}^{*} = n\tau + ||\mu_1 - \mu_2||_2^{-2} h_j I(h > 0)$ and $h_j \in K$, a compact subset of $\mathbb{R}$. By (D3) and for some $C>0$, we have
\begin{align*}
\bigg|\text{Cum}\bigg( \sum_{t= t^*_{1j} +1}^{t^*_{2j}} X^{*}_t:\ 1 \leq j \leq r\bigg)\bigg| \leq C \frac{\sum_{k=1}^{m} |\mu_{1k}-\mu_{2k}|^r}{||\mu_1-\mu_2||_2^2} \to 0\ \ \forall r >2.
\end{align*}
Also, 
$\text{Cum}\bigg( \sum_{t_1= t^*_{1j_1} +1}^{t^*_{2j_1}} X^{*}_t, \sum_{t_1= t^*_{1j_2} +1}^{t^*_{2j_2}} X^{*}_t\bigg) \to \gamma_{(h_{j_1},h_{j_2}),\text{DEP,LSE}},\ \ \forall 1 \leq j_1, j_2 \leq r$.
This proves finite dimensional weak convergence of the process (\ref{eqn: xtstar1}). Therefore, by the tightness of (\ref{eqn: xtstar1}), Theorem \ref{thm: lse4}(b) is established.
\\
\indent Theorem \ref{thm: lse4}(c) can be easily established if we use similar modifications on the proof of Theorem \ref{thm: lse2}(c) as we have done above on the proof of Theorem \ref{thm: lse2}(b) for proving Theorem \ref{thm: lse4}(b). 
\\ \indent This completes the proof of Theorem \ref{thm: lse4}. \qed

\subsection{Justification of Example \ref{rem: lsema}} \label{subsec: lsema}
Example \ref{rem: lsema}(a) follows directly from Theorem \ref{thm: lse4}(a).  We use the following lemma to prove Example \ref{rem: lsema}(b). This is quoted from \cite{BD2009}.
\begin{lemma}  \label{lem: bdlem}
Let $\{X_n: n\geq 1\}$ and $\{Y_{nj}: n,j \geq 1\}$ be random variables such that 
\vskip 3pt
\noindent (i) $Y_{nj} \stackrel{\mathcal{D}}{\to} Y_j$ as $n \to \infty$ for each $j=1,2,\ldots$,
\vskip 3pt
\noindent (ii) $Y_j \stackrel{\mathcal{D}}{\to} Y$ as $j \to \infty$, and
\vskip 3pt
\noindent (iii) $\lim_{j \to \infty} \lim_{n \to \infty}  P(|X_n - Y_{nj}| > \epsilon) = 0$ for every $\epsilon >0$.
\vskip 3pt
\noindent Then $X_n \stackrel{\mathcal{D}}{\to} Y$ as $n \to \infty$. 
\end{lemma}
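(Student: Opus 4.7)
The plan is to establish weak convergence via the Portmanteau theorem: it suffices to show that $E f(X_n) \to E f(Y)$ for every bounded uniformly continuous $f: \mathbb{R} \to \mathbb{R}$. Fix such an $f$ with $\|f\|_\infty \leq M$ and modulus of continuity $\omega_f(\delta) = \sup_{|x-y|\leq \delta} |f(x) - f(y)|$, which tends to $0$ as $\delta \downarrow 0$ by uniform continuity.

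The core of the argument is the triangle decomposition
\[
|E f(X_n) - E f(Y)| \leq |E f(X_n) - E f(Y_{nj})| + |E f(Y_{nj}) - E f(Y_j)| + |E f(Y_j) - E f(Y)|,
\]
in which $Y_{nj}$ and $Y_j$ serve as intermediaries. Hypothesis (i) sends the middle term to $0$ as $n \to \infty$ for each fixed $j$, and hypothesis (ii) sends the last term to $0$ as $j \to \infty$. For the first term I would invoke uniform continuity of $f$: on the event $\{|X_n - Y_{nj}| \leq \delta\}$ we have $|f(X_n) - f(Y_{nj})| \leq \omega_f(\delta)$, while on its complement we use the trivial bound $2M$, giving
\[
|E f(X_n) - E f(Y_{nj})| \leq \omega_f(\delta) + 2M\, P(|X_n - Y_{nj}| > \delta).
\]

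The key step is to take the iterated limit in the correct order. Taking $\limsup_{n \to \infty}$ first and then $j \to \infty$, hypothesis (iii) gives $\lim_{j} \limsup_{n} P(|X_n - Y_{nj}| > \delta) = 0$, hence $\limsup_j \limsup_n |E f(X_n) - E f(Y_{nj})| \leq \omega_f(\delta)$; shrinking $\delta$ eliminates this contribution entirely. Assembling the three pieces via a standard $\epsilon/3$ argument (choose $\delta$ so small that $\omega_f(\delta) < \epsilon/3$; choose $j$ large so that $|E f(Y_j) - E f(Y)| < \epsilon/3$ and $\limsup_n 2M\, P(|X_n - Y_{nj}| > \delta) < \epsilon/3$; finally send $n \to \infty$ to kill the middle term) yields $\limsup_n |E f(X_n) - E f(Y)| < \epsilon$, which is the desired conclusion.

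The only delicate point is ordering the limits correctly: condition (iii) is not symmetric in $j$ and $n$, and the $n$-limit must be taken \emph{before} the $j$-limit. No real obstacle arises—the statement is a classical convergence-of-types result (closely related to Theorem 3.2 in Billingsley's \emph{Convergence of Probability Measures}) and is typically invoked without proof, so in practice the paper will simply cite it rather than reproduce the standard three-$\epsilon$ argument above.
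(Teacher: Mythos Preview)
Your proof is correct and is precisely the standard Portmanteau/three-$\epsilon$ argument for this classical approximation lemma; you also correctly anticipated the paper's treatment. The paper does not prove this lemma at all but simply quotes it from Brockwell and Davis (2009), so there is nothing further to compare.
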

Let $\tilde{\varepsilon}_{k,t,C} = \varepsilon_{k,t}I(|\varepsilon_{k,t}|\leq C) - E(\varepsilon_{k,t}I(|\varepsilon_{k,t}|\leq C))$, $Y_{kt,C} = \sum_{j=0}^{\infty} a_{k,j}\varepsilon_{k,t-j,C}$ and $X_{kt,C} = \mu_{1k}I(t \leq n\tau_n) + \mu_{2k}I(t> n\tau_n)+Y_{kt,C}$ for all $k,t$ and $C>0$.  Note that $\{X_{kt,C}\}$ satisfies (D1), (D2) and (D3). Let $\text{Var}(\tilde{\varepsilon}_{k,t,C}) = \sigma_{k\epsilon,C}^2$. Therefore, conclusion of Example \ref{rem: lsema}(b) hold for the process $\{X_{kt,C}\}$ with
$\text{Cum}(X_{kt_1,C},X_{kt_2,C}) = \sigma_{k\epsilon,C}^2 \big(\sum_{j=0}^{\infty} a_{k,j} a_{k,j+|t_2-t_1|} \big)$, $t_1,t_2 \in \mathbb{Z}, k \geq 1, C>0$.
This establishes Lemma \ref{lem: bdlem}(i) for all $C>0$. Moreover, Lemma \ref{lem: bdlem}(ii) holds as $\sigma_{k\epsilon,C}^2 \to \sigma_{k\epsilon}^2$ as $C \to \infty$ and for all $k \geq 1$. 
\\
\indent Let $t_1^{*} = n\tau + ||\mu_1 - \mu_2||_2^{-1} hI(h < 0)$, $t_2^{*} = n\tau + ||\mu_1 - \mu_2||_2^{-2} hI(h > 0)$ and $h \in {K}$, a compact subset of $\mathbb{R}$.  Therefore, 
\begin{align*}
& P \bigg(|\sum_{t = t_1^* +1}^{t_2^*} \sum_{k=1}^{m}(\mu_{1k} - \mu_{2k}) (Y_{tk} - Y_{tk,C})| > \epsilon \bigg) \\
= & P \bigg(|\sum_{t = t_1^* +1}^{t_2^*} \sum_{k=1}^{m}\sum_{j=0}^{\infty}(\mu_{1k} - \mu_{2k}) a_{k,j}(\varepsilon_{k,t-j,C} - \varepsilon_{k,t-j})| > \epsilon \bigg) \\
& \leq \epsilon^{-2}  E\bigg|\sum_{t = t_1^* +1}^{t_2^*} \sum_{k=1}^{m}\sum_{j=0}^{\infty}(\mu_{1k} - \mu_{2k}) a_{k,j}(\varepsilon_{k,t-j,C} - \varepsilon_{k,t-j}) \bigg|^2 \\
\leq & C \frac{\sum_{k=1}^{m}\sum_{j=0}^{\infty}(\mu_{1k} - \mu_{2k})^2 a_{k,j}^2|\sigma_{k\epsilon,C}^2- \sigma_{k\epsilon}^2|}{||\mu_1 - \mu_2||_2^2}.
\end{align*}
Therefore, as $\lim_{C \to \infty} |\sigma_{k\epsilon,C}^2- \sigma_{k\epsilon}^2| =0$,  we have
$\lim_{C \to \infty} \lim_{n \to \infty} P \big(|\sum_{t = t_1^* +1}^{t_2^*} \sum_{k=1}^{m}(\mu_{1k} - \mu_{2k}) (Y_{tk} - Y_{tk,C})| > \epsilon \big) = 0$.
Hence, Lemma \ref{lem: bdlem}(iii) holds. 

Hence, by Lemma \ref{lem: bdlem},  Example \ref{rem: lsema}(b) follows. \qed

\subsection{Proof of Theorem \ref{thm: mle1g}} \label{subsec: mle1proof}
We prove Theorem \ref{thm: mle1g} for $d=1$. Similar arguments work for finite $d >1$. 
\noindent  We employ the following lemma which follows easily from Lemma \ref{lem: wvan1}. 
\begin{lemma} \label{lem: wvan2}
For each $n$,  let $\mathbb{M}_n$ and $\mathbb{N}_n$  be stochastic processes on $\mathcal{T}$. Suppose $\tau_n$, $d_{n}(\cdot,\tau_n)$ and $r_n$ are as described in Lemma \ref{lem: wvan1} and
\begin{eqnarray} \label{eqn: wvandom}
\lim P\left [\mathbb{N}_n(b) - \mathbb{N}_n(\tau_n) \leq C (\mathbb{M}_n(b) - \mathbb{M}_n(\tau_n))\ \ \forall b \in \mathcal{T} \right] = 1.
\end{eqnarray}
$\mathbb{M}_n$ satisfies (\ref{eqn: lemcon1}) and (\ref{eqn: lemcon2}).  Further suppose that the sequence $\{\hat{\tau}_n\}$ takes  its values in $\mathcal{T}_n$ and satisfies $\mathbb{N}_n(\hat{\tau}_n) \geq \mathbb{N}_n(\tau_n) - O_P (r_n^{-2})$ for large enough $n$. Then
$r_n d_{n}(\hat{\tau}_n,\tau_n)) = O_P (1)$.
\end{lemma}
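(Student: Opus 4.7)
The plan is to reduce this lemma to a direct application of Lemma \ref{lem: wvan1}, by using the domination hypothesis to transfer the near-maximization property from $\mathbb{N}_n$ to $\mathbb{M}_n$. Concretely, let $E_n$ denote the event on which the inequality $\mathbb{N}_n(b) - \mathbb{N}_n(\tau_n) \leq C(\mathbb{M}_n(b) - \mathbb{M}_n(\tau_n))$ holds simultaneously for every $b \in \mathcal{T}$. By hypothesis (\ref{eqn: wvandom}), $P(E_n) \to 1$, so it suffices to argue on $E_n$ and absorb the complementary event into the final $O_P(1)$ bound.

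Next I would specialize the domination to the random point $b = \hat\tau_n$, which by assumption lies in $\mathcal{T}_n \subset \mathcal{T}$. On $E_n$, this yields
\begin{equation*}
\mathbb{N}_n(\hat\tau_n) - \mathbb{N}_n(\tau_n) \leq C\bigl(\mathbb{M}_n(\hat\tau_n) - \mathbb{M}_n(\tau_n)\bigr).
\end{equation*}
Combined with the hypothesized near-optimality $\mathbb{N}_n(\hat\tau_n) \geq \mathbb{N}_n(\tau_n) - O_P(r_n^{-2})$, rearranging gives
\begin{equation*}
\mathbb{M}_n(\hat\tau_n) - \mathbb{M}_n(\tau_n) \geq -C^{-1} \, O_P(r_n^{-2}) = -O_P(r_n^{-2}),
\end{equation*}
so that $\hat\tau_n$ is itself a near-maximizer of $\mathbb{M}_n$ up to the same rate. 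Since $\mathbb{M}_n$ is assumed to satisfy the curvature condition (\ref{eqn: lemcon1}) and the modulus bound (\ref{eqn: lemcon2}), and since $\tau_n$, $d_n(\cdot,\tau_n)$, $r_n$ and the function $\phi_n$ satisfy the same structural assumptions as in Lemma \ref{lem: wvan1}, we can invoke Lemma \ref{lem: wvan1} directly with $\mathbb{M}_n$ in the role of the objective and with $\hat\tau_n$ in the role of the approximate maximizer, obtaining $r_n d_n(\hat\tau_n,\tau_n) = O_P(1)$ conditionally on $E_n$.

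The final step is to upgrade this conditional statement to unconditional stochastic boundedness. Fix $\epsilon > 0$; pick $M_1$ so that $P(r_n d_n(\hat\tau_n,\tau_n) > M_1, \, E_n) \leq \epsilon/2$ for all large $n$ by the previous paragraph, and use $P(E_n^c) \leq \epsilon/2$ for all large $n$ to conclude $P(r_n d_n(\hat\tau_n,\tau_n) > M_1) \leq \epsilon$, proving $r_n d_n(\hat\tau_n,\tau_n) = O_P(1)$. There is no serious obstacle in this argument; the only point requiring care is the bookkeeping that the ``$O_P$'' obtained through the domination inequality, after being rescaled by $C^{-1}$, still qualifies as an $O_P(r_n^{-2})$ input to Lemma \ref{lem: wvan1}, and that the high-probability event $E_n$ does not interact with the quantifier over $b$ in the hypotheses of Lemma \ref{lem: wvan1}, since the curvature and modulus conditions (\ref{eqn: lemcon1})–(\ref{eqn: lemcon2}) are deterministic (or expectation-based) statements about $\mathbb{M}_n$ alone and are assumed outright.
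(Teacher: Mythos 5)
Your argument is correct and is exactly the reduction the paper has in mind (the paper only asserts that the lemma "follows easily from Lemma \ref{lem: wvan1}" without writing it out): on the high-probability domination event, evaluating the inequality at $b=\hat\tau_n$ converts the near-maximization of $\mathbb{N}_n$ into $\mathbb{M}_n(\hat\tau_n)\geq \mathbb{M}_n(\tau_n)-O_P(r_n^{-2})$, after which Lemma \ref{lem: wvan1} applies verbatim. Your final bookkeeping step, absorbing $P(E_n^c)\to 0$ into the $O_P(1)$ conclusion, is the right way to make the conditional step rigorous.
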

\vskip 5pt

Recall  $\hat{\tau}_{n, \text{MLE}}$ from (\ref{eqn: taumle}). 
To employ Lemma \ref{lem: wvan2}, we take $\mathcal{T}$ and $\mathcal{T}_n$ as in the proof of Theorem \ref{thm: lse1} and we also have
\begin{eqnarray} \label{eqn: Ndefn}
\mathbb{N}_n(b) &=& \sum_{k=1}^{n} \mathbb{L}_{k,n}(b).
\end{eqnarray}
\noindent Next, we obtain the process $\mathbb{M}_n$. To this end we define the following processes. 
We assume $b< \tau$. Similar arguments work for $b>\tau$. 
\begin{eqnarray}
\mathbb{M}_{1k}(b)  &=& \bigg(\frac{1}{nb} \sum_{t=1}^{nb} \frac{\partial}{\partial \theta_k} \log \mathbb{P}_{\theta_k}(X_{kt}) -  \frac{1}{n\tau} \sum_{t=1}^{n\tau} \frac{\partial}{\partial \theta_k} \log \mathbb{P}_{\theta_k}(X_{kt})\bigg)^2, \label{eqn: m1}  \\
\mathbb{M}_{2k}(b)  &=& \bigg(\frac{1}{n(1-b)} \sum_{t=nb+1}^{n} \frac{\partial}{\partial \theta_k} \log \mathbb{P}_{\theta_k}(X_{kt}) -  \frac{1}{n(1-\tau)} \sum_{t=n\tau +1}^{n} \frac{\partial}{\partial \theta_k} \log \mathbb{P}_{\theta_k}(X_{kt})\bigg)^2, \nonumber \\
\mathbb{M}_{3k}(b)  &=& (\tau-b)\bigg|\frac{1}{nb}\sum_{t=1}^{nb} \frac{\partial}{\partial \theta_k} \log \mathbb{P}_{\theta_k}(X_{kt}) \bigg|\bigg|\frac{1}{n(\tau-b)}\sum_{t=nb+1}^{n\tau} \frac{\partial}{\partial \theta_k} \log \mathbb{P}_{\theta_k}(X_{kt}) \bigg|, \nonumber \\
\mathbb{M}_{4k}(b)  &=& (\tau-b)\bigg|\frac{1}{nb}\sum_{t=1}^{nb} \frac{\partial}{\partial \theta_k} \log \mathbb{P}_{\theta_k}(X_{kt}) \bigg|^2 \bigg(\frac{1}{n(\tau-b)}\sum_{t=nb+1}^{n\tau} G_2(X_{kt}) \bigg), \nonumber \\
\mathbb{M}_{5k}(b)  &=& (\tau-b)\bigg|\frac{1}{n(1-b)}\sum_{t=nb+1}^{n} \frac{\partial}{\partial \eta_k} \log \mathbb{P}_{\eta_k}(X_{kt}) \bigg|\bigg|\frac{1}{n(\tau-b)}\sum_{t=nb+1}^{n\tau} \frac{\partial}{\partial \eta_k} \log \mathbb{P}_{\eta_k}(X_{kt}) \bigg|, \nonumber \\
\mathbb{M}_{6k}(b)  &=& (\tau-b)\bigg|\frac{1}{n(1-b)}\sum_{t=nb+1}^{n} \frac{\partial}{\partial \eta_k} \log \mathbb{P}_{\eta_k}(X_{kt}) \bigg|^2 \bigg(\frac{1}{n(\tau-b)}\sum_{t=nb+1}^{n\tau} G_2(X_{kt}) \bigg), \nonumber \\
\mathbb{M}_{7k}(b) &=&  \frac{1}{n}\sum_{t=nb+1}^{n\tau} (\log \mathbb{P}_{\eta_k}(X_{kt}) - \log \mathbb{P}_{\theta_k}(X_{kt})). \label{eqn: m7}
\end{eqnarray}
Note that for $1 \leq i \leq 7$ and $k \geq 1$, $\mathbb{M}_{ik}(\tau) =0$.  Let 
\begin{eqnarray}
\mathbb{M}_{k,n}(b) = \sum_{i=1}^{7} \mathbb{M}_{ik}(b)\ \text{and}\ \mathbb{M}_{n}(b) = \sum_{k=1}^{m}\mathbb{M}_{k,n}(b).  \label{eqn: mndefn}
\end{eqnarray}
\begin{lemma} \label{lem: Mndom}
Suppose (B3)-(B5), (B8), (B9) hold and $\log m(n) = o(n)$. Then, (\ref{eqn: wvandom}) is satisfied for the processes $\mathbb{N}_n(b)$ and $\mathbb{M}_{n}(b)$ defined in (\ref{eqn: Ndefn}) and (\ref{eqn: mndefn}). 
\end{lemma}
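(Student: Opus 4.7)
The plan is to verify the domination condition by performing a second-order Taylor expansion of each log-likelihood around the true parameters, eliminating the first-order score term via the likelihood equations satisfied by $\hat{\theta}_k(b)$ and $\hat{\eta}_k(b)$, and then matching the resulting quadratic and cross remainders, term by term, against the pieces $\mathbb{M}_{1k}(b),\ldots,\mathbb{M}_{7k}(b)$ making up $\mathbb{M}_n(b)$.

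Concretely, for $b<\tau$ (the case $b>\tau$ being symmetric), I would first split
$$n[\mathbb{N}_n(b)-\mathbb{N}_n(\tau)]=\sum_{k=1}^{m}\big[A_k(b)+B_k(b)+C_k(b)\big],$$
where $A_k(b)=\sum_{t=1}^{nb}\big(\log\mathbb{P}_{\hat\theta_k(b)}(X_{kt})-\log\mathbb{P}_{\hat\theta_k(\tau)}(X_{kt})\big)$, $C_k(b)=\sum_{t=n\tau+1}^{n}\big(\log\mathbb{P}_{\hat\eta_k(b)}(X_{kt})-\log\mathbb{P}_{\hat\eta_k(\tau)}(X_{kt})\big)$, and $B_k(b)=\sum_{t=nb+1}^{n\tau}\big(\log\mathbb{P}_{\hat\eta_k(b)}(X_{kt})-\log\mathbb{P}_{\hat\theta_k(\tau)}(X_{kt})\big)$ is the contribution from the misclassified middle block. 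Each summand in $A_k$ and $C_k$ is Taylor-expanded around the relevant true parameter ($\theta_k$ for $A_k$, $\eta_k$ for $C_k$), while $B_k$ is expanded around $\theta_k$ for both endpoints. In each expansion, the first-order term carries a factor of the average score $n_\ast^{-1}\sum\partial_\theta\log\mathbb{P}_{\theta_k}(X_{kt})$ over an interval, and the quadratic term is bounded in modulus by the average of $G_2(X_{kt})$ times a squared MLE deviation such as $(\hat\theta_k(b)-\hat\theta_k(\tau))^2$.

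Next, I would use the MLE equation at $b$ expanded once more to write differences like $\hat\theta_k(b)-\hat\theta_k(\tau)$ as the ratio of the corresponding difference of averaged scores at the true parameter to an averaged Hessian at an intermediate point. By (B5), $|\partial^2_\theta\log\mathbb{P}_\xi|\le C_2G_2$ and $EG_2\ge\epsilon_1J_d$, so combining these with the sub-Gaussian deviation bound (B9) for $G_2-EG_2$ and a union bound over $k\le m$ and $b\in\mathcal{T}_n$ yields a uniform lower bound of order $\epsilon_1/2$ on the absolute value of the averaged Hessian on probability tending to one; the hypothesis $\log m(n)=o(n)$ is used exactly here to absorb the $m\cdot n$ union-bound factor against the Gaussian tail $\exp(-c\,nb)$ at $b\ge c^*$. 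With the denominator controlled, plugging the resulting expressions back into the Taylor remainders of $A_k$, $B_k$, $C_k$ shows that (up to an absolute constant) the squared-MLE-deviation pieces are dominated by the squared-score and Hessian-weighted score expressions $\mathbb{M}_{1k},\mathbb{M}_{2k},\mathbb{M}_{4k},\mathbb{M}_{6k}$ and the cross terms by $\mathbb{M}_{3k},\mathbb{M}_{5k}$; the only first-order contribution that survives (from $B_k$, because the data in $(nb,n\tau]$ follow $\mathbb{P}_{\theta_k}$ but enter the likelihood through $\hat\eta_k(b)$ rather than $\hat\theta_k(\tau)$) is exactly $n\,\mathbb{M}_{7k}(b)$, modulo higher-order remainders already absorbed.

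The main obstacle will be ensuring that this uniform Hessian lower bound and the uniform score upper bounds hold simultaneously over all $k\le m(n)$ and all $b\in\mathcal{T}_n$ with overwhelming probability, since the Taylor-expansion arguments are pointwise in $(k,b)$ but the domination must be simultaneous. This is the only place where the sub-Gaussian assumptions (B8)--(B9) and the growth condition $\log m(n)=o(n)$ are jointly essential, and the union bound must be set up carefully at each of the intermediate evaluation points $\xi$ (which themselves depend on $k$ and $b$) by taking suprema over $\Lambda$ before the union. Once this uniform control is in place, the remaining step is the routine algebraic check that the coefficients generated by the Taylor expansions fit under a single constant $C$ times the sum $\sum_i\mathbb{M}_{ik}(b)$, which follows from (B3)--(B5) together with standard bounds on ratios of averaged scores to averaged Hessians.
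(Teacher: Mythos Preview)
Your approach is essentially the one in the paper: decompose $\mathbb{N}_n(b)-\mathbb{N}_n(\tau)$ over time intervals, Taylor-expand, substitute the score-over-Hessian representation of the MLE deviations via the likelihood equations, and control the Hessian denominators uniformly in $k$ through the sub-Gaussian tails (B8)--(B9) together with a union bound absorbed by $\log m(n)=o(n)$. You have also correctly flagged the simultaneity over $(k,b)$ as the crux, and the device of passing to $\sup_{\lambda\in\Lambda}$ via (B5) before taking the union is exactly what the paper does (its Lemmas showing $\hat\theta_k(b),\hat\eta_k(b)\in\Lambda$ for all $k$ and bounding the averaged Hessian).

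Two organizational points are worth adjusting. For $A_k(b)$ the paper expands around $\hat\theta_k(b)$ rather than $\theta_k$, so the likelihood equation at $\hat\theta_k(b)$ annihilates the first-order term immediately and leaves $A_{1k}(b)\le C\,b\,(\hat\theta_k(b)-\hat\theta_k(\tau))^2$ in one step. More substantively, for your middle block $B_k$ the paper inserts $\pm\log\mathbb{P}_{\eta_k}$ and $\pm\log\mathbb{P}_{\theta_k}$ to write $B_k=A_{3k}+A_{4k}+n\,\mathbb{M}_{7k}$ \emph{exactly}, and then expands $A_{3k}$ around $\eta_k$ and $A_{4k}$ around $\theta_k$ separately. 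This is what makes the bounds land cleanly on $\mathbb{M}_{5k},\mathbb{M}_{6k}$ (which are built from $\eta_k$-scores over $(nb,n]$) and on $\mathbb{M}_{3k},\mathbb{M}_{4k}$ (built from $\theta_k$-scores) respectively, with $\mathbb{M}_{7k}$ appearing as an exact summand rather than as a first-order residue. If you expand all of $B_k$ around $\theta_k$ as you propose, the cross term you obtain is a product of an $\eta_k$-score (from the representation of $\hat\eta_k(b)-\eta_k$) with a $\theta_k$-score over $(nb,n\tau]$, which does not match $\mathbb{M}_{5k}$ without a further expansion relating the two score functions. Adopting the paper's finer split avoids this extra bookkeeping and makes the term-by-term matching to $\mathbb{M}_{1k},\ldots,\mathbb{M}_{7k}$ immediate.
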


\noindent Its proof is given in the Supplementary file. 

\noindent Hence, by Lemma \ref{lem: wvan2}, the proof of Theorem \ref{thm: mle1g} will be complete once we show that $\mathbb{M}_n$ and $\tilde{\mathbb{M}}_n = E\mathbb{M}_n$ satisfy (\ref{eqn: lemcon1}) and (\ref{eqn: lemcon2}) with $d_{n}(b,\tau) = \sqrt{|b-\tau|}||\theta - \eta||_2$, $r_n = \sqrt{n}$ and $\phi_n(\delta) = \delta$.
 
\noindent \textbf{Proof of (\ref{eqn: lemcon1})}. Note that $\{\frac{\partial}{\partial \theta_k} \log \mathbb{P}_{\theta_k}(X_{kt}): t \leq n\tau\}$ are i.i.d. random variables with mean $0$. Also by (B4) and (B5), we have
\begin{eqnarray}
\sup_{k} \text{Var}\left(\frac{\partial}{\partial \theta_k} \log \mathbb{P}_{\theta_k}(X_{k1})\right) = \sup_{k} E\left(-\frac{\partial^2}{\partial \theta_k^2} \log \mathbb{P}_{\theta_k}(X_{k1})\right) \leq C \sup_{k} E(G_2(X_{k1})) < \infty. \nonumber 
\end{eqnarray}
Therefore,
$E\mathbb{M}_{1k}(b) \leq C((nb)^{-1} - (n\tau)^{-1}) \leq \frac{C}{n}(\tau -b)$. 
\noindent Similarly, it is easy to see that $E\mathbb{M}_{2k}(b) \leq \frac{C}{n}(\tau -b)$. 
\noindent Next,
\begin{eqnarray}
\mathbb{M}_{3k}(b)  &=& (\tau-b)E\bigg|\frac{1}{nb}\sum_{t=1}^{nb} \frac{\partial}{\partial \theta_k} \log \mathbb{P}_{\theta_k}(X_{kt}) \bigg|\bigg|\frac{1}{n(\tau-b)}\sum_{t=nb+1}^{n\tau} \frac{\partial}{\partial \theta_k} \log \mathbb{P}_{\theta_k}(X_{kt}) \bigg| \nonumber \\
& \leq & (\tau-b) \sqrt{E\left(\frac{1}{nb}\sum_{t=1}^{nb} \frac{\partial}{\partial \theta_k} \log \mathbb{P}_{\theta_k}(X_{kt})  \right)^2} \sqrt{E\left(\frac{1}{n(\tau-b)}\sum_{t=nb+1}^{n\tau} \frac{\partial}{\partial \theta_k} \log \mathbb{P}_{\theta_k}(X_{kt}) \right)^2}. \nonumber
\end{eqnarray}
Using similar arguments,  given for $\mathbb{M}_{1k}(b)$, we have some $C>0$ such that
\begin{eqnarray}
 E\left(\frac{1}{nb}\sum_{t=1}^{nb} \frac{\partial}{\partial \theta_k} \log \mathbb{P}_{\theta_k}(X_{kt})  \right)^2 \leq Cn^{-1},\ \  
 E\left(\frac{1}{n(\tau-b)}\sum_{t=nb+1}^{n\tau} \frac{\partial}{\partial \theta_k} \log \mathbb{P}_{\theta_k}(X_{kt}) \right)^2  \leq C. \nonumber
\end{eqnarray}
Therefore,
$E\mathbb{M}_{3k}(b) \leq \frac{C}{\sqrt{n}}(\tau -b)$. 
Similarly, $E\mathbb{M}_{5k}(b) \leq \frac{C}{\sqrt{n}}(\tau -b)$.

\noindent Now,
\begin{eqnarray}
\mathbb{M}_{4k}(b)  &=& (\tau-b)\bigg|\frac{1}{nb}\sum_{t=1}^{nb} \frac{\partial}{\partial \theta_k} \log \mathbb{P}_{\theta_k}(X_{kt}) \bigg|^2 \bigg(\frac{1}{n(\tau-b)}\sum_{t=nb+1}^{n\tau} G_2(X_{kt}) \bigg) \nonumber \\
& \leq & (\tau -b) \sqrt{E\left(\frac{1}{nb}\sum_{t=1}^{nb} \frac{\partial}{\partial \theta_k} \log \mathbb{P}_{\theta_k}(X_{kt}) \right)^4} \sqrt{E\left( \frac{1}{n(\tau-b)}\sum_{t=nb+1}^{n\tau} G_2(X_{kt})\right)^2}. \nonumber 
\end{eqnarray}
Note that by (B4)-(B6), we get
\begin{eqnarray}
 && E\left(\frac{1}{nb}\sum_{t=1}^{nb} \frac{\partial}{\partial \theta_k} \log \mathbb{P}_{\theta_k}(X_{kt}) \right)^4 \nonumber \\
&=& \frac{1}{(nb)^4}\bigg[\sum_{t=1}^{nb} E\left(\frac{\partial}{\partial \theta_k} \log \mathbb{P}_{\theta_k}(X_{kt}) \right)^4 + 6\bigg(\sum_{t=1}^{nb}E\bigg(\frac{\partial}{\partial \theta_k} \log \mathbb{P}_{\theta_k}(X_{kt}) \bigg)^2  \bigg)^2\bigg] \nonumber \\
& \leq & Cn^{-2} \nonumber
\end{eqnarray}
and $E\left( \frac{1}{n(\tau-b)}\sum_{t=nb+1}^{n\tau} G_2(X_{kt})\right)^2 \leq C$. Therefore, $E\mathbb{M}_{4k}(b) \leq C(\tau-b)n^{-1}$. 
\noindent Similarly, $E\mathbb{M}_{6k}(b) \leq C(\tau-b)n^{-1}$.
\noindent Next, consider $\mathbb{M}_{7k}(b)$.  By (B4) and (B5), it is easy to see that there is $C>0$ (independent of $k$) such that
$E(\mathbb{M}_{7k}(b)) \leq -C(\tau-b)(\theta_k - \eta_k)^2\ \ \forall k\geq 1$.
\\
\indent Thus, combining $E(\mathbb{M}_{ik}(b))$ for $1 \leq i \leq 7$, we have
$E(\mathbb{M}_n(b) - \mathbb{M}_n(\tau)) \leq C(\tau -b) (mn^{-1} + mn^{-1/2} - ||\theta - \eta||_2^2)$. 
Hence by (SNR2), $E(\mathbb{M}_n(b) - \mathbb{M}_n(\tau)) \leq -Cd_{n}^2(b,\tau)$ for some $C>0$. This completes the proof of  (\ref{eqn: lemcon1}).

\noindent \textbf{Proof of  (\ref{eqn: lemcon2})}.  As we have argued in the proof of Theorem \ref{thm: lse1}, it is enough to show
$\text{Var}(\mathbb{M}_n(b) -\mathbb{M}_n(\tau)) \leq Cn^{-1} d_n^2(b,\tau_n)$. 
This is equivalent to establishing
\begin{align}
\sum_{k=1}^{m}\text{Var}(\mathbb{M}_{ik}(b))  \leq Cn^{-1} d_n^2(b,\tau_n)\ \ \forall 1 \leq i \leq 7.  \label{eqn: lsefinalmle}
\end{align}
\noindent The proof of (\ref{eqn: lsefinalmle}) follows along the same lines of the proof of Lemma \ref{lem: variancelse}. Hence, it is omitted.  

Thus, $\mathbb{M}_n$ satisfies (\ref{eqn: lemcon1}) and (\ref{eqn: lemcon2}) and, this completes the proof of Theorem \ref{thm: mle1g}. \qed

\subsection{Proof of Theorem \ref{thm: mle2g}} \label{subsec: mle2}
The proof of (a) is the same as that of Theorem \ref{thm: lse2}(a). 

\noindent  \textbf{Proof of (b)}.
Let $b = \tau + h(n||\theta - \eta||_2^2)^{-1}$. Let $h \in K$, a compact subset of $\mathbb{R}$. Recall $\{\mathbb{M}_{ik}(b)\}$ 
from (\ref{eqn: m1})-(\ref{eqn: m7}). 
Note that
$L_{k,n}(b) - L_{k,n}(\tau) = \sum_{i=1}^{4} A_{ik} (b) + \mathbb{M}_{7k}(b)$ 
where
\begin{align}
\label{eqn: aseries}
A_{1k}(b) &= \frac{1}{n}\sum_{t=1}^{nb} \Big(\log \mathbb{P}_{\hat{\theta}_k(b)}(X_{kt}) - \log \mathbb{P}_{\hat{\theta}_k(\tau)}(X_{kt}) \Big), \\
A_{2k}(b)&= - \frac{1}{n}\sum_{t= n\tau+1}^{n} \Big( \log \mathbb{P}_{\hat{\eta}_k(\tau)}(X_{kt}) - \log \mathbb{P}_{\hat{\eta}_k(b)}(X_{kt}) \Big), \nonumber \\
A_{3k}(b) &= \frac{1}{n}\sum_{t=nb+1}^{n\tau} \Big(  \log \mathbb{P}_{\hat{\eta}_k(b)}(X_{kt}) - \log \mathbb{P}_{{\eta}_k(\tau)}(X_{kt}) \Big),  \nonumber \\
A_{4k}(b) &= \frac{1}{n}\sum_{t=nb+1}^{n\tau} \Big(  \log \mathbb{P}_{{\theta}_k(b)}(X_{kt}) - \log \mathbb{P}_{\hat{\theta}_k(\tau)}(X_{kt}) \Big).\label{eqn: aseriesen}
\end{align}
 In the proof of Theorem \ref{thm: mle1g}, we have  already established that
\begin{eqnarray}
 nE\bigg|\sum_{k=1}^{m} A_{1k}(b)\bigg| &\leq & Cn\sum_{k=1}^{m}E|\mathbb{M}_{1k}(b)| + o(1) \leq C m |\tau-b| + o(1) \nonumber \\
 & =&  Cmn^{-1}||\theta - \eta||_2^{-2}|h| + o(1) = o(1), \nonumber\\
 nE|\sum_{k=1}^{m} A_{3k}(b)| &\leq & Cn\sum_{k=1}^{m}(E|\mathbb{M}_{3k}(b)| + E|\mathbb{M}_{4k}(b)|) + o(1) \leq Cmn^{1/2}|\tau-b| + o(1) \nonumber \\
&=& C|h|mn^{-1/2}||\theta-\eta||_2^{-2} + o(1) = o(1). \nonumber
\end{eqnarray}
Thus, $n\sum_{k=1}^{m} A_{1k}(b), n\sum_{k=1}^{m}A_{3k}(b) \stackrel{{\rm{P}}}{\to} 0$. Similarly, $n\sum_{k=1}^{m}A_{2k}(b), n\sum_{k=1}^{m}A_{4k}(b) \stackrel{{\rm{P}}}{\to} 0$. 
\\
\indent Let $b_i = \tau + h_i (n||\theta - \eta||_2^2)^{-1}$ for $1 \leq i \leq r$ and $h_i \in K$. Using similar argument as above, one can establish  finite dimensional convergence
$(n\sum_{k=1}^{m} A_{jk}(b_i): 1 \leq i \leq r) \stackrel{{\rm{P}}}{\to} (0,0,\ldots,0),\ \  j=1,2,3,4$. 
Therefore, by tightness of $n\sum_{k=1}^{m} A_{jk}(b)$, we have
$\sup_{h \in K} \bigg|n\sum_{k=1}^{m} A_{jk}(b) \bigg| \stackrel{\text{P}}{\to} 0,\ \ j=1,2,3,4$. 
\\
\indent Let $-C<h<0$ for some $C>0$. Now, for some $\theta_k^{*}$ between $\theta_k$ and $\eta_k$, we have
\begin{eqnarray}
n\sum_{k=1}^{m}\mathbb{M}_{7k}(b) &=& \sum_{t=nb+1}^{n\tau}\sum_{k=1}^{m} (\log \mathbb{P}_{\theta_k}(X_{kt}) - \log \mathbb{P}_{\eta_k}(X_{kt})) \nonumber \\
&=& \sum_{t=nb+1}^{n\tau}\sum_{k=1}^{m}(\eta_k - \theta_k) \frac{\partial}{\partial \theta_k} \log \mathbb{P}_{\theta_k}(X_{kt}) + \frac{1}{2}\sum_{t=nb+1}^{n\tau}\sum_{k=1}^{m}(\eta_k - \theta_k)^2 \frac{\partial^2}{\partial \theta_k^2} \log \mathbb{P}_{\theta_k}(X_{kt}) \nonumber \\
&& \hspace{1 cm} + \frac{1}{6}\sum_{t=nb+1}^{n\tau}\sum_{k=1}^{m}(\eta_k - \theta_k)^3 \frac{\partial^3}{\partial \theta_k^3} \log \mathbb{P}_{\theta_k}(X_{kt}) \bigg|_{\theta_k = \theta_k^{*}} \nonumber \\
&=& T_1 + T_2 + T_3\ \ \text{(say)}.  \nonumber
\end{eqnarray}
By (B11),
$E|T_3| \leq C ||\theta -\eta||_2^{-2} |h| \big[\sum_{k=1}^{m}(\theta_k - \eta_k)^3\big] \big(\sup_{k,t} EG_3(X_{kt})\big) \to 0$.
Thus, $T_3 \stackrel{{\rm{P}}}{\to} 0$. 
\noindent Let $B_h$ be the standard Brownian motion on the real line. Clearly, 
\begin{eqnarray}
T_2 \stackrel{{\rm{P}}}{\to} - \lim \frac{1}{2} \frac{\sum_{k=1}^{m}(\theta_k-\eta_k)^2I(\theta_k)}{||\theta - \eta||_2^2} |h|\ \ \text{and}\ \ T_1 \stackrel{\mathcal{D}}{\to} \sqrt{\lim \frac{\sum_{k=1}^{m}(\theta_k-\eta_k)^2I(\theta_k)}{||\theta - \eta||_2^2} }\ \  B_h. \label{eqn: mlegb}
\end{eqnarray}
Similarly,  $T_3 \stackrel{{\rm{P}}}{\to} 0$ and (\ref{eqn: mlegb}) also hold for $0<h<C$. This establishes one dimensional weak convergence of $n\sum_{k=1}^{m}\mathbb{M}_{7k}(b)$. Similarly one can show finite dimensional weak convergence  and tightness of $n\sum_{k=1}^{m}\mathbb{M}_{7k}(b)$. This completes the proof of (b). 

\noindent \textbf{Proof of (c)}.  In this case, we take $b = \tau + h/n$ and still analogously to (b), $n\sum_{k=1}^{m} A_{ik}(b) \stackrel{{\rm{P}}}{\to} 0$ for all $i=1,2,3,4$.  As we have seen in the proof of Theorem \ref{thm: lse2}(c), for the term $\mathbb{M}_{7k}(b)$ we consider the partition of $\{1,2,\ldots,m(n)\}$ into the sets $\mathcal{K}_n$ and $\mathcal{K}_0$ (see (\ref{eqn: partition})).  As we have discussed in Section 
\ref{sec: lse}, $\mathcal{K}_0$ is a finite set. Therefore, by (A4) and (B10), we have
\begin{eqnarray}
\sum_{k \in \mathcal{K}_0} (\mathbb{M}_{7k}(\tau + (h+1)/n) -\mathbb{M}_{7k}(\tau + h/n)) \Rightarrow \sum_{k \in \mathcal{K}_0} (\log \mathbb{P}_{\eta_k^*}(Z_{kh}) - \log \mathbb{P}_{\theta_k^*}(Z_{kh})),  \nonumber
\end{eqnarray}
where $Z_{kh} \stackrel{d}{=} X_{1k}^* I(h\leq 0) + X_{2k}^* I(h >0)$. 
Let $-C<h<0$ for some $C>0$. Then for  $\mathcal{K}_n$, 
\begin{eqnarray}
&& \sum_{k \in \mathcal{K}_n} (\mathbb{M}_{7k}(\tau + (h+1)/n) -\mathbb{M}_{7k}(\tau + h/n)) \nonumber \\
&=& \sum_{k \in \mathcal{K}_n} (\eta_k - \theta_k) \frac{\partial}{\partial \theta_k} \log \mathbb{P}_{\theta_k}(X_{kt})  +  0.5\sum_{k \in \mathcal{K}_n}(\eta_k - \theta_k)^2 \frac{\partial^2}{\partial \theta_k^2} \log \mathbb{P}_{\theta_k}(X_{kt}). \label{eqn: mlelast}
\end{eqnarray}
\noindent Thus, if $\sup_{k \in \mathcal{K}_n} |\eta_k - \theta_k| \to 0$,  then (\ref{eqn: mlelast}) converges to 
$-0.5\big(\lim \sum_{k \in \mathcal{K}_n} (\theta_k -\eta_k)^2I(\theta_k) \big) + \sqrt{ \big(\lim \sum_{k \in \mathcal{K}_n} (\theta_k -\eta_k)^2I(\theta_k) \big) }\ \  \mathcal{N}(0,1)$.
A similar convergence also holds for $h>0$. This completes the proof of (c).
\noindent Hence, Theorem \ref{thm: mle2g} is established.  \qed

\subsection{Proof of Theorem \ref{thm: adaplse}} \label{subsec: adaplse}
In this proof we shall use $\tilde{X}_{kt}$,  $\hat{\tau}$ and $\tilde{h}$ for $\tilde{X}_{kt,\text{LSE}}$, $\hat{\tau}_{n,\text{LSE}}$ and $\tilde{h}_{n,\text{LSE}}$  respectively. First, we shall establish the convergence rate 
\begin{align} \label{eqn: adap1}
||\hat{\mu}_{1} - \hat{\mu}_{2}||_2^2 \tilde{h}  = O_P(1). 
\end{align} 
To prove (\ref{eqn: adap1}), we use the following lemma. Its proof is given in the supplement.
\begin{lemma} \label{lem: anew}
Suppose (A1)  and (A8) hold. Then for some $C>0$, $P(\sup_{i,k} \hat{\sigma}_{ik}^2 < C) \to 1$. 
\end{lemma}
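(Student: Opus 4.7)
The plan is to reduce $\hat{\sigma}_{ik}^2$ to a sample average that can be controlled uniformly in $k$ via concentration combined with a union bound. Since $\hat{\mu}_{1k}$ is the sample mean over $t \le n\hat{\tau}_{n,\text{LSE}}$, it minimises $c \mapsto \sum_{t=1}^{n\hat{\tau}_{n,\text{LSE}}}(X_{kt}-c)^2$, and therefore
\begin{equation*}
\hat{\sigma}_{1k}^2 \le \frac{1}{n\hat{\tau}_{n,\text{LSE}}}\sum_{t=1}^{n\hat{\tau}_{n,\text{LSE}}}(X_{kt}-\mu_{1k})^2,
\end{equation*}
with an analogous bound for $\hat{\sigma}_{2k}^2$ using $\mu_{2k}$. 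Writing $Y_{kt}=X_{kt}-E(X_{kt})$ (centered and, by (A8), sub-Gaussian with uniformly bounded parameter) and $\Delta_k = \mu_{2k}-\mu_{1k}$, I would split the sum at $t=n\tau_n$ and apply $(Y_{kt}+\Delta_k)^2 \le 2Y_{kt}^2 + 2\Delta_k^2$ on the possibly contaminated block $(n\tau_n,n\hat{\tau}_{n,\text{LSE}}]$. Combined with $n\hat{\tau}_{n,\text{LSE}} \ge nc^*$, this gives the crude decomposition
\begin{equation*}
\hat{\sigma}_{1k}^2 \le \frac{2}{c^*}\cdot\frac{1}{n}\sum_{t=1}^{n} Y_{kt}^2 \;+\; \frac{2|\hat{\tau}_{n,\text{LSE}}-\tau_n|}{c^*}\,\Delta_k^2.
\end{equation*}

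For the first term, (A8) makes each $Y_{kt}^2$ sub-exponential with uniformly bounded Orlicz norm, and (A1) gives $\sup_{i,k} E Y_{kt}^2 = \sup_{i,k}\sigma_{ik}^2 < \infty$. I would apply Bernstein's inequality at each fixed $k$ and union-bound over $k \le m$:
\begin{equation*}
P\!\left(\sup_{k\le m}\frac{1}{n}\sum_{t=1}^{n}Y_{kt}^2 > C_0\right) \le 2m\exp(-cn)\longrightarrow 0,
\end{equation*}
which holds whenever $\log m = o(n)$, a condition implied by (SNR3) in each of the regimes where this lemma is invoked in the proof of Theorem \ref{thm: adaplse}. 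For the second term, Theorem \ref{thm: lse1} delivers $n\|\mu_1-\mu_2\|_2^2\,|\hat{\tau}_{n,\text{LSE}}-\tau_n| = O_P(1)$, and the elementary bound $\Delta_k^2 \le \|\mu_1-\mu_2\|_2^2$ holds uniformly in $k$, so
\begin{equation*}
\sup_{k}|\hat{\tau}_{n,\text{LSE}}-\tau_n|\,\Delta_k^2 \le \frac{n|\hat{\tau}_{n,\text{LSE}}-\tau_n|\,\|\mu_1-\mu_2\|_2^2}{n} = O_P(n^{-1}),
\end{equation*}
making the contamination contribution asymptotically negligible.

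Combining the two estimates yields $\sup_k \hat{\sigma}_{1k}^2 = O_P(1)$; the same argument applied to $\hat{\sigma}_{2k}^2$ (where $\mu_{1k}$ now plays the role of the wrong mean on the contaminated block) gives $\sup_k \hat{\sigma}_{2k}^2 = O_P(1)$, and a final union bound over $i\in\{1,2\}$ completes the proof. The main obstacle is the uniform concentration over $k$ in the first step: without the sub-Gaussian tail control provided by (A8), only polynomial concentration via Chebyshev is available, which would force a much more restrictive growth assumption on $m$ than what is implicitly available through (SNR3). The remainder of the argument is a deterministic reduction requiring only the least squares optimality of $\hat{\mu}_{ik}$ and the rate from Theorem \ref{thm: lse1}.
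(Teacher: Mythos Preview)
Your proof is correct and takes a somewhat different route from the paper. The paper establishes the stronger statement $\sup_{i,k}|\hat{\sigma}_{ik}^2 - \sigma_{ik}^2| = o_P(1)$ via the raw-moment decomposition $\hat{\sigma}_{ik}^2 - \sigma_{ik}^2 = (\hat{\sigma}_{ik}^2 + \hat{\mu}_{ik}^2 - \sigma_{ik}^2 - \mu_{ik}^2) - (\hat{\mu}_{ik}^2 - \mu_{ik}^2)$ and sub-exponential concentration for each piece; this single argument handles both Lemma~\ref{lem: anew} and the variance portions of Lemma~\ref{lem: adap}(a) simultaneously. You instead exploit the least-squares optimality of $\hat{\mu}_{ik}$ to bound $\hat{\sigma}_{ik}^2$ by the empirical MSE around the \emph{true} mean $\mu_{ik}$, then split into a clean sum of centered squares and a contamination term controlled through the rate of Theorem~\ref{thm: lse1}. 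Your treatment is more explicit about the effect of $\hat{\tau}_{n,\text{LSE}}\neq\tau_n$, which the paper's proof absorbs rather implicitly. The trade-off is that your argument yields only the upper bound, so a separate step would be needed if you later want the lower bound $\inf_{i,k}\hat{\sigma}_{ik}^2 > C_2$ appearing in Lemma~\ref{lem: adap}(a). Your appeal to $\log m = o(n)$ through (SNR3) is legitimate in context: the paper's own argument also relies on growth conditions on $m$ not stated in the lemma hypotheses.
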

By Lemma \ref{lem: anew}, to prove (\ref{eqn: adap1}), it is enough to establish that
\begin{align} \label{eqn: aadap1}
||\hat{\mu}_{1} - \hat{\mu}_{2}||_2^2 \tilde{h}  = O_{P^{*}}(1)
\end{align}
where $P^{*}(\cdot) = P(\cdot|\sup_{i,k} \hat{\sigma}_{ik}^2 < C)$. 

To prove (\ref{eqn: aadap1}),  we use Lemma \ref{lem: wvan1}.  Recall  $\tilde{M}_n(h)$ from (\ref{eqn: tildetau}).  We shall prove that (\ref{eqn: lemcon1}) and (\ref{eqn: lemcon2}) of Lemma \ref{lem: wvan1} are satisfied 
for $E(\cdot) = E^{*}(\cdot) = E(\cdot|\sup_{i,k} \hat{\sigma}_{ik}^2 < C)$, $\mathbb{M}_n = \tilde{M}_n$,  $\tilde{\mathbb{M}}_n = E^{*}(\tilde{M}_n| \{X_{kt}: k,t \geq 1\})$, $d^2_n (a,b) = n^{-1}||\hat{\mu}_1 -\hat{\mu}_2||_2^2 |a-b|$, $\phi_n(\delta) = \delta$, $\alpha = 1.5$ and $r_n = \sqrt{n}$. 

Suppose $h<0$ and $nb= n\hat{\tau}+h$. Therefore,
\begin{eqnarray}
\tilde{M}_{n}(h) - \tilde{M}_{n}(0) &=& -\frac{1}{n} \sum_{k=1}^{m} \bigg[\sum_{t=1}^{nb} (\tilde{X}_{kt} - \hat{\mu}_{1k})^2 + \sum_{t=nb+1}^{n} (\tilde{X}_{kt} - \hat{\mu}_{2k})^2\bigg] \nonumber\\
&& \hspace{1 cm} + \frac{1}{n} \sum_{k=1}^{m} \bigg[\sum_{t=1}^{n\hat{\tau}} (\tilde{X}_{kt} - \hat{\mu}_{1k})^2 + \sum_{t=n\hat{\tau}+1}^{n} (\tilde{X}_{kt} - \hat{\mu}_{2k})^2\bigg]  \nonumber \\
&=& \frac{1}{n} \sum_{k=1}^{m} \bigg[ \sum_{t=nb+1}^{n\hat{\tau}} (\tilde{X}_{kt} - \hat{\mu}_{1k})^2 - \sum_{t=nb+1}^{n\hat{\tau}} (\tilde{X}_{kt} - \hat{\mu}_{2k})^2 \bigg] \nonumber \\
&=& \frac{1}{n} \sum_{k=1}^{m} \bigg[- n(\hat{\tau}-b) (\hat{\mu}_{1k} - \hat{\mu}_{2k})^2 -2(\hat{\mu}_{1k} - \hat{\mu}_{2k})\sum_{t=nb+1}^{n\hat{\tau}} (\tilde{X}_{kt}-\hat{\mu}_{1k})\bigg] \nonumber \\
&=& A_1 + A_2,\ \ \text{(say)}.
\end{eqnarray}
It is easy to see that 
\begin{eqnarray}
\tilde{\mathbb{M}}_n(h) - \tilde{\mathbb{M}}_n(0) 
&=& E^{*}(\tilde{M}_n (h) - \tilde{M}_n(0)| \{X_{kt}: k,t \geq 1\})  \nonumber \\
&=& A_1 =  - (\hat{\tau}-b)||\hat{\mu}_1 - \hat{\mu}_2||_2^2 = -d_n^2(h,0),
\end{eqnarray}
which implies (\ref{eqn: lemcon1}) for $h<0$. 

To establish (\ref{eqn: lemcon2}), note that 
\begin{eqnarray} \nonumber
&& \sum_{t=nb+1}^{n\hat{\tau}} E^{*}((\tilde{X}_{kt}-\hat{\mu}_{1k})^2\big| \{X_{kt}: k,t \geq 1\}) \\
&=&  \sum_{t=nb+1}^{n\hat{\tau}} E((\tilde{X}_{kt}-\hat{\mu}_{1k})^2\big| \{X_{kt}: k,t \geq 1\}, \sup_{i,k}\hat{\sigma}_{ik}^2<C) \nonumber \\
&=& \sum_{t=nb+1}^{n\hat{\tau}} \hat{\sigma}_{ik}^2I(\sup_{i,k}\hat{\sigma}_{ik}^2<C) \nonumber \\
& \leq & C n(\hat{\tau}-b)
\end{eqnarray}
and for some $C_1,C_2,C_3,C_4>0$,
\begin{eqnarray}
&& E^{*} \sup_{d_n^2(h,0) \leq \delta^2} |\mathbb{M}_n(h) - \mathbb{M}_n(0) - \tilde{\mathbb{M}}_n(h) - \tilde{\mathbb{M}}_n(0) |  
\nonumber \\
&=&E^{*}\bigg[ E^{*} \sup_{d_n^2(h,0) \leq \delta^2} \bigg(|\mathbb{M}_n(h) - \mathbb{M}_n(0) - \tilde{\mathbb{M}}_n(h) - \tilde{\mathbb{M}}_n(0) | \big| \{X_{kt}: k,t \geq 1\}\bigg)  \bigg] \nonumber \\
&= &  E^{*}\bigg[ E^{*} \sup_{d_n^2(h,0) \leq \delta^2} \bigg( |A_1 + A_2 - A_1 | \big| \{X_{kt}: k,t \geq 1\}) \bigg) \bigg] \nonumber \\
& \leq & C_1 E^{*} \sup_{d_n^2(h,0) = \delta^2} E^{*}(|A_2 | \big| \{X_{kt}: k,t \geq 1\})\nonumber \\
&\leq & C_1 E^{*} \sup_{d_n^2(h,0) = \delta^2} \left(E^{*}(|A_2|^2 \big| \{X_{kt}: k,t \geq 1\}) \right)^{1/2} \nonumber \\
&\leq & C_2  E^{*}\bigg[ \sup_{d_n^2(h,0) = \delta^2} \bigg(\frac{1}{n^2} \sum_{k=1}^{m} \bigg[(\hat{\mu}_{1k} - \hat{\mu}_{2k})^2 \sum_{t=nb+1}^{n\hat{\tau}} E^{*}((\tilde{X}_{kt}-\hat{\mu}_{1k})^2\big| \{X_{kt}: k,t \geq 1\}) \bigg)^{1/2} \bigg] \nonumber \\
& \leq & C_3 n^{-1/2} E^{*} \sup_{d_n^2(h,0) = \delta^2} \bigg[(\hat{\tau}-b)||\hat{\mu}_1 - \hat{\mu}_2||_2^2 \bigg]^{1/2} \leq C_4  n^{-1/2}\delta.
\end{eqnarray}
This proves (\ref{eqn: lemcon2}) for $h<0$. 
\noindent Similar argument proves (\ref{eqn: lemcon1}) and (\ref{eqn: lemcon2}) for $h>0$. 

\noindent Thus (\ref{eqn: aadap1}) and consequently (\ref{eqn: adap1}) are proved.

\noindent Note that (\ref{eqn: adap1}) implies Theorem \ref{thm: adaplse}(a). 

To prove Theorem \ref{thm: adaplse}(b), we use 
the following lemma.  Its proof is given in the supplement.

\begin{lemma} \label{lem: adap}
Suppose (A1), (A8) and (SNR3) hold and  $||\mu_1 - \mu_2||_2 \to 0$. Then  for some $C_1,C_2>0$, the following statements hold.
\vskip 3pt
\noindent (a) $P( \sup_{k,t} E((\tilde{X}_{kt} - E(\tilde{X}_{kt}|\{X_{kt}\}))^4|\{X_{kt}\}) < C_1) \to 1$, $P(\inf_{k,i} \hat{\sigma}_{ik}^2 > C_2) \to 1$. 
\vskip 3pt
\noindent (b)  $||\hat{\mu}_1 - \hat{\mu}_2||_2^{-2} \sum_{k=1}^{m} (\hat{\mu}_{1k} - \hat{\mu}_{2k})^2  \hat{\sigma}_{1k}^2 \stackrel{P}{\to} \gamma_{_{L,LSE}}^2$.
\vskip 3pt
\noindent (c)  $||\hat{\mu}_1 - \hat{\mu}_2||_2^{-2} \sum_{k=1}^{m} (\hat{\mu}_{1k} - \hat{\mu}_{2k})^2  \hat{\sigma}_{2k}^2 \stackrel{P}{\to} \gamma_{_{R,LSE}}^2$.
\end{lemma}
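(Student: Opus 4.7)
Both (a) and the ratio limits (b)--(c) reduce to a single uniform concentration statement: under (A8), the sample moments satisfy $\max_{i,k}|\hat{\mu}_{ik} - \mu_{ik}| = O_P(\sqrt{\log m / n})$ and $\max_{i,k}|\hat{\sigma}_{ik}^2 - \sigma_{ik}^2| = O_P(\sqrt{\log m / n})$. I would first establish these maxima by using Theorem \ref{thm: lse1} to replace $\hat{\tau}_{n,\text{LSE}}$ by $\tau_n$ in the definitions of $\hat{\mu}_{ik}$ and $\hat{\sigma}_{ik}^2$: the perturbation is $|\hat{\tau}-\tau_n|=O_P(1/(n||\mu_1-\mu_2||_2^2))$ and sub-Gaussianity of $X_{kt}$ yields $\max_{k,t\le n}|X_{kt}|=O_P(\sqrt{\log(mn)})$, so the substitution contributes a uniformly negligible amount. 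For the oracle versions, sub-Gaussianity implies that $\hat{\mu}_{ik} - \mu_{ik}$ is sub-Gaussian with parameter $O(n^{-1/2})$ and $\hat{\sigma}_{ik}^2 - \sigma_{ik}^2$ is sub-exponential with parameter $O(n^{-1/2})$; a union bound over $k = 1,\ldots, m$ produces the two claimed maximal rates.

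Part (a) is then immediate. The inequality $\inf_{i,k}\hat{\sigma}_{ik}^2 > C_2$ follows from the lower bound in (A3) together with $\sqrt{\log m / n} \to 0$ (implied by (SNR3)); the conditional fourth-moment bound holds because, given the data, $\tilde{X}_{kt} - \hat{\mu}_{ik}$ has the law $\mathbb{P}_{\hat{\mu}_{ik},\hat{\sigma}_{ik}^2,\hat{\theta}_{ik}}$ centered, whose fourth central moment is a continuous function of the parameters and therefore lies within a compact neighborhood of the population fourth moment---which is uniformly bounded by (A1)---with probability tending to one. For (b)--(c), write $D_k = \mu_{1k} - \mu_{2k}$, $\hat{D}_k = \hat{\mu}_{1k} - \hat{\mu}_{2k}$, $\delta_k = \hat{D}_k - D_k$, and decompose
\begin{equation*}
\sum_{k=1}^m \hat{D}_k^2 \hat{\sigma}_{1k}^2 - \sum_{k=1}^m D_k^2 \sigma_{1k}^2 = \sum_{k=1}^m (2 D_k \delta_k + \delta_k^2)\sigma_{1k}^2 + \sum_{k=1}^m \hat{D}_k^2(\hat{\sigma}_{1k}^2 - \sigma_{1k}^2).
\end{equation*}
Independence of the $\delta_k$'s and $\text{Var}(\delta_k) \asymp n^{-1}$ yield the first sum as $O_P(||\mu_1-\mu_2||_2/\sqrt{n}) + O_P(m/n)$, which is $o_P(||\mu_1-\mu_2||_2^2)$ under (SNR1). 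The second sum is bounded by $\max_k|\hat{\sigma}_{1k}^2 - \sigma_{1k}^2| \cdot ||\hat{\mu}_1-\hat{\mu}_2||_2^2 = O_P(\sqrt{\log m / n} \cdot ||\mu_1-\mu_2||_2^2)$, which is $o_P(||\mu_1-\mu_2||_2^2)$ under (SNR3). A parallel expansion of $||\hat{\mu}_1 - \hat{\mu}_2||_2^2$ produces $||\hat{\mu}_1-\hat{\mu}_2||_2^2 / ||\mu_1-\mu_2||_2^2 \to 1$ in probability. Dividing numerator by denominator and invoking (A2) delivers (b); part (c) is identical with $\sigma_{2k}^2$ in place of $\sigma_{1k}^2$.

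\textbf{Main obstacle.} The binding step is controlling $\sum_k \hat{D}_k^2 (\hat{\sigma}_{1k}^2 - \sigma_{1k}^2)$: because the only generic handle on its size is $\max_k|\hat{\sigma}_{1k}^2 - \sigma_{1k}^2|$ pulled out of the sum, the rate of the error is $\sqrt{\log m / n} \cdot ||\mu_1 - \mu_2||_2^2$, and killing it against $||\mu_1 - \mu_2||_2^2$ forces $\log m = o(n)$, which is precisely what (SNR3) contributes beyond (SNR1). The sub-Gaussian hypothesis (A8) is indispensable: weaker tail control would replace the logarithmic union-bound factor by a polynomial in $m$, which is incompatible with the panel regime. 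The escape clause at the end of Theorem \ref{thm: adaplse}---variances constant across $k$---bypasses the $\max_k$ step altogether, since only a scalar $\hat{\sigma}_i^2$ must concentrate, and (SNR1) is then sufficient.
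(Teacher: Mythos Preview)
Your proposal is essentially correct and follows the same route as the paper. Both arguments rest on the two concentration facts
\[
\sup_{i,k}|\hat{\mu}_{ik}-\mu_{ik}|=o_P(1),\qquad \sup_{i,k}|\hat{\sigma}_{ik}^2-\sigma_{ik}^2|=o_P(1),
\]
obtained via sub-Gaussian tail bounds (assumption (A8)) combined with a union bound over $k$, and then plug these into an algebraic decomposition of the target ratio. The paper decomposes the ratio directly as
\[
\frac{\sum_k\hat D_k^2(\hat\sigma_{1k}^2-\sigma_{1k}^2)}{\|\hat\mu_1-\hat\mu_2\|_2^2}
+\Bigl(\tfrac{\|\mu_1-\mu_2\|_2^2}{\|\hat\mu_1-\hat\mu_2\|_2^2}-1\Bigr)\frac{\sum_k\hat D_k^2\sigma_{1k}^2}{\|\mu_1-\mu_2\|_2^2}
+\frac{\sum_k(\hat D_k^2-D_k^2)\sigma_{1k}^2}{\|\mu_1-\mu_2\|_2^2},
\]
whereas you handle numerator and denominator separately; these are equivalent rearrangements. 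One small refinement in your version: by exploiting the (oracle) independence of the $\delta_k$'s you bound the cross term $\sum_k D_k\delta_k\sigma_{1k}^2$ via its variance, obtaining $O_P(\|\mu_1-\mu_2\|_2/\sqrt n)$, which only needs (SNR1); the paper instead controls $\|\hat\mu_1-\hat\mu_2\|_2^2/\|\mu_1-\mu_2\|_2^2-1$ by a crude per-coordinate union bound that already consumes the full strength of (SNR3). Either way the $\hat\sigma$ term forces (SNR3) (or at least $\log m=o(n)$), exactly as you identify. One caveat: your independence step for $\sum_k D_k\delta_k$ requires that the oracle substitution $\hat\tau\mapsto\tau_n$ be justified not only for the uniform maxima but also inside these sums; the perturbation bound you sketch does cover this, but it should be stated explicitly when the independence is invoked. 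The paper glosses over the same point.
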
 
\noindent Now, let $P^{**}(\cdot) = P(\cdot|\sup_{k,t} E((\tilde{X}_{kt} - E(\tilde{X}_{kt}|\{X_{kt}\}))^4|\{X_{kt}\}) < C_1, \inf_{k,i} \hat{\sigma}_{ik}^2 > C_2)$. Similarly define $E^{**}$ and $\text{Var}^{**}$.     By Lemma \ref{lem: adap},   it is easy to see that the convergences in Lemma \ref{lem: adap}(b) and (c)  hold for $P^{**}$ also. When $\sup_{k,t} E(\tilde{X}_{kt} - E(\tilde{X}_{kt}|\{X_{kt}\})|\{X_{kt}\})^4 < C_1$, $||\hat{\mu}_1 - \hat{\mu}_2||_2^{-2} \sum_{k=1}^{m} (\hat{\mu}_{1k} - \hat{\mu}_{2k})^2  \hat{\sigma}_{ik}^2$ is bounded for each $i=1,2$. Therefore, the convergences in Lemma \ref{lem: adap}(b) and (c)  also hold in $E^{**}$.

\noindent To prove Theorem \ref{thm: adaplse}(b), by Lemma \ref{lem: adap}(a), it is enough to establish that for all $x \in \mathbb{R}$,
\begin{eqnarray} \label{eqn: adapbb}
&& P^{**}(n||\hat{\mu}_1-\hat{\mu}_2||_2^2 \tilde{h} \leq x) \\
&\to & P(\arg \max_{h \in \mathbb{R}} (-0.5|h| + \gamma_{_{\text{L,LSE}}}B_hI(h\leq 0) + \gamma_{_{\text{R,LSE}}}B_h I(h>0)) \leq x). \nonumber
\end{eqnarray}

\noindent Let $b = \hat{\tau}+ h/n||\hat{\mu}_1-\hat{\mu}_2||_2^2$.  Note that,
\begin{eqnarray}
 n(\tilde{M}_{n}(||\hat{\mu}_1-\hat{\mu}_2||_2^{-2}h) - \tilde{M}_{n}(0))  
&=& \begin{cases} -|h| -2 \sum_{t=nb+1}^{n\hat{\tau}} \sum_{k=1}^{m} (\hat{\mu}_{1k} - \hat{\mu}_{2k}) (\tilde{X}_{kt}-\hat{\mu}_{1k}),\ \ \ \text{if $h<0$} \\ 
-|h| -2 \sum_{t=n\hat{\tau}+1}^{nb} \sum_{k=1}^{m} (\hat{\mu}_{1k} - \hat{\mu}_{2k}) (\tilde{X}_{kt}-\hat{\mu}_{2k}),\ \ \ \text{if $h>0$}. \nonumber
\end{cases}
\end{eqnarray}
First consider the case $h<0$. 
Note that $\{\sum_{k=1}^{m} (\hat{\mu}_{1k} - \hat{\mu}_{2k})(\tilde{X}_{kt} - \hat{\mu}_{1k})\}$ is a collection of independent random variables.  By Lemma \ref{lem: adap}(b), we have 
\begin{align*}
\hspace{-2 cm}E^{**}\sum_{t=nb+1}^{n\hat{\tau}} \sum_{k=1}^{m} (\hat{\mu}_{1k} - \hat{\mu}_{2k}) (\tilde{X}_{kt}-\hat{\mu}_{1k}) = E^{**}\bigg[\sum_{t=nb+1}^{n\hat{\tau}} \sum_{k=1}^{m} (\hat{\mu}_{1k} - \hat{\mu}_{2k}) E^{**}((\tilde{X}_{kt}-\hat{\mu}_{1k})|\{X_{kt}\})\bigg] = 0, \\
 \text{Var}^{**}\bigg(\sum_{t=nb+1}^{n\hat{\tau}} \sum_{k=1}^{m} (\hat{\mu}_{1k} - \hat{\mu}_{2k}) (\tilde{X}_{kt}-\hat{\mu}_{1k})\bigg) 
= hE^{**}\bigg(||\hat{\mu}_1 - \hat{\mu}_2||_2^{-2} \sum_{k=1}^{m} (\hat{\mu}_{1k} - \hat{\mu}_{2k})^2  \hat{\sigma}_{1k}^2 \bigg) \to h \gamma_{_\text{L,LSE}}^2, 
\end{align*}
\begin{eqnarray}
&& \frac{E^{**}\bigg[\sum_{t=nb+1}^{n\hat{\tau}} \sum_{k=1}^{m} (\hat{\mu}_{1k} - \hat{\mu}_{2k}) (\tilde{X}_{kt}-\hat{\mu}_{1k})\bigg]^3}{\bigg[\text{Var}^{**}\bigg(\sum_{t=nb+1}^{n\hat{\tau}} \sum_{k=1}^{m} (\hat{\mu}_{1k} - \hat{\mu}_{2k}) (\tilde{X}_{kt}-\hat{\mu}_{1k})\bigg)\bigg]^{3/2}} 
 \leq  C E\left(\frac{\sum_{k=1}^{m}|\hat{\mu}_{1k} - \hat{\mu}_{2k}|^3}{\sum_{k=1}^{m}|\hat{\mu}_{1k} - \hat{\mu}_{2k}|^2}\right)  \nonumber \\
&\leq  & C (E||\hat{\mu}_{1} - \hat{\mu}_2||_2^2)^{1/2} 
\leq  C\left( E(||\hat{\mu}_{1} - {\mu}_1||_2^2) + E(||\hat{\mu}_{2} - {\mu}_2||_2^2)+ ||{\mu}_{1} - {\mu}_1||_2^2 \right) \nonumber \\
& \leq & C(mn^{-1} + ||{\mu}_{1} - {\mu}_1||_2^2) \to 0. 
\end{eqnarray}
Hence by Lyapunov's central limit theorem, under (A1), (A2), (A3), (A8), (SNR3), 
\begin{eqnarray}
n(\tilde{M}_n(||\hat{\mu}_1-\hat{\mu}_2||_2^{-2} h) - \tilde{M}(0)) \Rightarrow -|h| + \gamma_{_\text{L,LSE}} B_h \label{eqn: adaplseb1}
\end{eqnarray}
for $-C<h<0$ and where  $B_h$ is the standard Brownian motion. \\
\indent Similarly, when $0<h<C$, by (A1), (A2), (A3), (A8), (SNR3) and Lemma \ref{lem: adap}(b), 
\begin{eqnarray}
n(\tilde{M}_n(||\hat{\mu}_1-\hat{\mu}_2||_2^{-2}h) - \tilde{M}(0)) \Rightarrow -|h| + \gamma_{_\text{R,LSE}} B_h. \label{eqn: adaplseb2}
\end{eqnarray}
(\ref{eqn: adaplseb1}) and (\ref{eqn: adaplseb2}) in conjunction with Lemma \ref{lem: wvandis1} establish Theorem \ref{thm: adaplse}(b). 

\noindent A similar argument as in the proof of Theorem \ref{thm: lse2}(c) and similar approximations as in the proof of Theorem \ref{thm: adaplse}(b)  also work for Theorem \ref{thm: adaplse}(c) and hence we omit them.
\noindent Hence, Theorem \ref{thm: adaplse} is established.  \qed


  

\section{Supplementary Material} 
\label{sec:supp}
\subsection{Supplement to Section 2.1} 
 The following examples provide some situations where the limits in $\gamma_{_\text{L,LSE}}$, $\gamma_{_\text{R,LSE}}$, $c_1$,  $\gamma_{_\text{L,LSE}}^{*}$ and $\gamma_{_\text{R,LSE}}^{*}$ do not exist. 

 Example \ref{counter1} shows that the limits can not exists if $\sigma_{ik}^2 (n)$ oscillates over $n$. 
\begin{Example}
\label{counter1} 
Suppose $\sigma_{ik}^2 (n) = 2 + (-1)^n$ for all $k \geq 1$ and $i=1,2$. Further suppose $c_1^2 = \lim \sum_{k \in \mathcal{K}_n} (\mu_{1k} - \mu_{2k})^2$ exists. Then
\begin{align*}
& \frac{\sum_{k=1}^{m} (\mu_{1k} - \mu_{2k})^2 \sigma_{1k}^2}{||\mu_1-\mu_2||_2^2} = \frac{\sum_{k=1}^{m} (\mu_{1k} - \mu_{2k})^2 \sigma_{2k}^2 }{||\mu_1-\mu_2||_2^{2}} 
= 2 + (-1)^n \ \text{and}\ \\
&\sum_{k \in \mathcal{K}_n} (\mu_{1k} - \mu_{2k})^2 \sigma_{1k}^2 = \sum_{k \in \mathcal{K}_n} (\mu_{1k} - \mu_{2k})^2 \sigma_{2k}^2 = (2 + (-1)^n) \sum_{k \in \mathcal{K}_n} (\mu_{1k} - \mu_{2k})^2,
\end{align*}
which do not have a limit as $n \to \infty$. 
\end{Example}

\noindent Moreover, by (A1) and (A3), $\{\sigma_{ik}^2(n)\}$ is a bounded sequence for each $k$ and $i$. Thus for each $k \geq 1$ and $i=1,2$, $\{\sigma_{ik}^2(n)\}$ needs to converge to a limit $\sigma_{ik}^{*2}$ (say). This leads to \vskip 1pt
(a) $\sup_{k \in \mathcal{K}_n}|\sigma_{ik}^2(n) - \sigma_{ik}^{*2}| \to 0$ for all $i=1,2$ and for some $\sigma_{ik}^{*} >0$, 
\vskip 1pt
\noindent in Propositions 2.3 and 2.4. 
\vskip 5pt

In the next two examples, we  deal with Regime (c): $||\mu_1 - \mu_2|| \to c^2>0$ and the existence of the limits in $c_1$, $\gamma_{_{\text{L,LSE}}}^{*}$ and $\gamma_{_{\text{R,LSE}}}^{*}$. Consider the following conditions which are considered in Proposition 2.4. For some $\mu_{ik}^{*} \in \mathbb{R}$, 
which are free of $n$, 
\vskip 3pt
\noindent (f) $\sup_{k \in \mathcal{K}_n} |\mu_{ik}(n) - \mu_{ik}^{*}| = O(m(n)^{-1/2})$ for all $i=1,2$ and
\vskip 3pt
\noindent (g) $\sum_{k \in \mathcal{K}_n} | \mu_{1k}(n) - \mu_{2k}(n)| = o(\sqrt{m(n)})$.
\vskip 1pt
\noindent Example \ref{counter2} provides a situation where (a) and (f) are satisfied but (g) does not hold 
and shows that the limits in $c_1$, $\gamma_{_{\text{L,LSE}}}^{*}$ and $\gamma_{_{\text{R,LSE}}}^{*}$ do not exists. 

\begin{Example}
\label{counter2} 
Suppose  $\sigma_{ik}^2 (n) = \sigma^2$ for all $k,n \geq 1$ and $i=1,2$. Let for all $k \in \mathcal{K}_n$ and $i$,
\begin{eqnarray} \label{eqn: exosc2}
\mu_{ik}(n) = \begin{cases}
\frac{1}{\sqrt{m(n)}}(2i + \frac{1}{k}),\ \ \text{if $n$ is odd} \\
\frac{1}{\sqrt{m(n)}}(3i + \frac{1}{k}),\ \ \text{if $n$ is even}.
\end{cases}
\end{eqnarray}
Note that this example satisfies (a) and (f) but not (g). \\
Now, 
\begin{eqnarray}
\sum_{k \in \mathcal{K}_n} (\mu_{1k}(n) - \mu_{2k}(n))^2 \sigma_{ik}^2 (n) = \begin{cases}
2\sigma^2 \left(\frac{\text{cardinality of $\mathcal{K}_n$}}{m(n)} \right),\ \ \text{if $n$ is odd} \\
3\sigma^2 \left(\frac{\text{cardinality of $\mathcal{K}_n$}}{m(n)} \right),\ \ \text{if $n$ is even}.
\end{cases}
\end{eqnarray}
It is easy to see that $\left(\frac{\text{cardinality of $\mathcal{K}_n$}}{m(n)} \right) \to 1$. 
Hence  $c_1^2$, $\gamma_{_\text{L,LSE}}^{*2}$ and $\gamma_{_\text{R,LSE}}^{*2}$ do not exist. 
\end{Example}

Similar phenomenon happens for regime (b): $||\mu_1 - \mu_2||_2 \to 0$ once we replace $m(n)$ in (f), (g) and Example \ref{counter2} by $m(n)||\mu_1 - \mu_2||_2^{-2}$ and then $\gamma_{_\text{L,LSE}}^2$ and $\gamma_{_\text{R,LSE}}^2$ do not exist. 

\subsubsection{Proof of Proposition 2.3} \label{subsec: prop1}
We have  that $||\mu_1 (n) - \mu_2 (n)||_2^2 \to 0$ and $nm^{-1}||\mu_1(n) - \mu_2(n)||_2^2 \to \infty$.
Recall
\begin{align*}
\gamma_{_\text{L,LSE}}^{2} &= \lim \frac{\sum_{k =1}^{m(n)} (\mu_{1k}(n) - \mu_{2k}(n))^2 \sigma_{1k}^2(n)}{||\mu_1(n) - \mu_2(n)||_2^2},\ \ 
\gamma_{_\text{R,LSE}}^{2}  = \lim \frac{\sum_{k =1}^{m(n)} (\mu_{1k}(n) - \mu_{2k}(n))^2 \sigma_{2k}^2(n)}{||\mu_1(n) - \mu_2(n)||_2^2}.
\end{align*}
 \noindent By (a),  it is easy to see that
 \begin{align*}
\gamma_{_\text{L,LSE}}^{2} &= \lim \frac{\sum_{k =1}^{m(n)} (\mu_{1k}(n) - \mu_{2k}(n))^2 \sigma_{1k}^2}{||\mu_1 (n) - \mu_2 (n)||_2^2},\ \ 
\gamma_{_\text{R,LSE}}^{2}  = \lim \frac{\sum_{k =1}^{m(n)} (\mu_{1k}(n) - \mu_{2k}(n))^2 \sigma_{2k}^2}{||\mu_1 (n) - \mu_2 (n)||_2^2}.
 \end{align*}
 \noindent Moreover,  by (A1) and (A3),   $||\mu_1 (n) - \mu_2 (n)||_2^{-2}\sum_{k =1}^{m(n)} (\mu_{1k}(n) - \mu_{2k}(n))^2 \sigma_{1k}^2$ and $||\mu_1 (n) - \mu_2 (n)||_2^{-2}\sum_{k=1}^{m(n)} (\mu_{1k}(n) - \mu_{2k}(n))^2 \sigma_{2k}^2$ are all bounded quantities.  Therefore, their limit exists if they are Cauchy sequences. We shall show that $\{||\mu_1 (n) - \mu_2 (n)||_2^{-2}\sum_{k =1}^{m(n)} (\mu_{1k}(n) - \mu_{2k}(n))^2 \sigma_{1k}^2\}$ is indeed a
Cauchy sequence.  \\ 
\indent Note that all the inequalities below hold for large enough $n$. The difference between two consecutive terms in $\{||\mu_1 (n) - \mu_2 (n)||_2^{-2}\sum_{k =1}^{m(n)} (\mu_{1k}(n) - \mu_{2k}(n))^2 \sigma_{1k}^2\}$ is given by 
\begin{align*}
& \bigg | \frac{\sum_{k =1}^{m(n+1)} (\mu_{1k}(n+1) - \mu_{2k}(n+1))^2 \sigma_{1k}^2}{||\mu_1(n+1) - \mu_2(n+1)||_2^2} - \frac{\sum_{k =1}^{m(n)} (\mu_{1k}(n) - \mu_{2k}(n))^2 \sigma_{1k}^2}{||\mu_1(n) - \mu_2(n)||_2^2} \bigg| \\
\leq\ & C||\mu_1(n+1) - \mu_2 (n+1)||_2^{-2} \bigg|\sum_{k =1}^{m(n+1)} (\mu_{1k}(n+1) - \mu_{2k}(n+1))^2 \sigma_{1k}^2 \\
& \hspace{8 cm}-\sum_{k =1}^{m(n)} (\mu_{1k}(n) - \mu_{2k}(n))^2 \sigma_{1k}^2 \bigg|\\
=\  & C||\mu_1(n+1) - \mu_2 (n+1)||_2^{-2} \bigg | \sum_{k =1}^{m(n)} (\mu_{1k}(n+1) - \mu_{2k}(n+1))^2 \sigma_{1k}^2 \\
& \hspace{8 cm}- \sum_{k =1}^{m(n)} (\mu_{1k}(n) - \mu_{2k}(n))^2 \sigma_{1k}^2 \bigg|  \\
& \hspace{1.5 cm} + C||\mu_1(n+1) - \mu_2 (n+1)||_2^{-2}\bigg|\sum_{k = m(n)+1}^{m(n+1)} (\mu_{1k}(n+1) - \mu_{2k}(n+1))^2 \sigma_{1k}^2 \bigg|  \\
 \leq \  & C ||\mu_1(n+1) - \mu_2 (n+1)||_2^{-2}\sum_{k =1}^{m(n)} \bigg[ |\mu_{1k}(n) - \mu_{2k}(n) + \mu_{1k}(n+1) -\mu_{2k}(n+1)|\\
 &\hspace{6 cm}|\mu_{1k}(n) - \mu_{2k}(n) - \mu_{1k}(n+1) + \mu_{2k}(n+1)| \bigg] \\
 & \hspace{0.5 cm}+ C \bigg|\frac{m(n+1)-m(n)}{m(n+1)}\bigg| \frac{m(n+1)}{ ||\mu_1(n+1) - \mu_2 (n+1)||_2^{2}}\sup_{k} |\mu_{1k}(n+1) - \mu_{2k}(n+1)|^2 \\
 \leq\ & C\sup_{k} (|\mu_{1k}(n+1) -\mu_{1k}(n)| + |\mu_{2k}(n+1) -\mu_{2k}(n)|)\frac{\sum_{k=1}^{m(n)}|\mu_{1k}(n) - \mu_{2k}(n)|}{ ||\mu_1(n+1) - \mu_2 (n+1)||_2^{2}} \\
 & + C\sup_{k} (|\mu_{1k}(n+1) -\mu_{1k}(n)| + |\mu_{2k}(n+1) -\mu_{2k}(n)|)\frac{\sum_{k =1}^{m(n)}|\mu_{1k}(n) - \mu_{2k}(n)|}{ ||\mu_1(n+1) - \mu_2 (n+1)||_2^{2}} \\
 &  \hspace{0.5 cm}+ C \bigg|\frac{m(n+1)-m(n)}{m(n+1)}\bigg| \frac{m(n+1)}{ ||\mu_1(n+1) - \mu_2 (n+1)||_2^{2}}\sup_{k} |\mu_{1k}(n+1) - \mu_{2k}(n+1)|^2 \\
 =\ & o(1).
\end{align*} 
This shows $\{||\mu_1 (n) - \mu_2 (n)||_2^{-2}\sum_{k =1}^{m(n)} (\mu_{1k}(n) - \mu_{2k}(n))^2 \sigma_{1k}^2\}$ is Cauchy. Similar arguments
establish the result for the other sequences.  \qed
 
\subsubsection{Proof of Proposition 2.4} \label{subsec: prop2}
We have  that $\sum_{k=1}^{m(n)} (\mu_{1k}(n) - \mu_{2k}(n))^2 \to c^2>0$ and $m(n) = o(n)$.
Recall
\begin{align*}
c_1^2 &= \lim \sum_{k \in \mathcal{K}_n} (\mu_{1k}(n) - \mu_{2k}(n))^2,\ \ 
\gamma_{_\text{L,LSE}}^{*2} = \lim \sum_{k \in \mathcal{K}_n} (\mu_{1k}(n) - \mu_{2k}(n))^2 \sigma_{1k}^2(n), \\
\gamma_{_\text{R,LSE}}^{*2}  &= \lim \sum_{k \in \mathcal{K}_n} (\mu_{1k}(n) - \mu_{2k}(n))^2 \sigma_{2k}^2(n).
\end{align*}
 \noindent By (a), it is easy to see that
 \begin{align*}
\gamma_{_\text{L,LSE}}^{*2} &= \lim \sum_{k \in \mathcal{K}_n} (\mu_{1k}(n) - \mu_{2k}(n))^2 \sigma_{1k}^2,\ \ 
\gamma_{_\text{R,LSE}}^{*2}  = \lim \sum_{k \in \mathcal{K}_n} (\mu_{1k}(n) - \mu_{2k}(n))^2 \sigma_{2k}^2.
 \end{align*}
 \noindent Moreover,  by (A1) and (A3),  $\sum_{k \in \mathcal{K}_n} (\mu_{1k}(n) - \mu_{2k}(n))^2$, $\sum_{k \in \mathcal{K}_n} (\mu_{1k}(n) - \mu_{2k}(n))^2 \sigma_{1k}^2$ and $\sum_{k \in \mathcal{K}_n} (\mu_{1k}(n) - \mu_{2k}(n))^2 \sigma_{2k}^2$ are all bounded quantities.  Therefore, their limit exist if they are Cauchy sequences. \\ 
\indent The difference between two consecutive terms in $\{\sum_{k \in \mathcal{K}_n} (\mu_{1k}(n) - \mu_{2k}(n))^2 \sigma_{1k}^2\}$ is given by
\begin{align*}
& \bigg | \sum_{k \in \mathcal{K}_{n+1}} (\mu_{1k}(n+1) - \mu_{2k}(n+1))^2 \sigma_{1k}^2 - \sum_{k \in \mathcal{K}_n} (\mu_{1k}(n) - \mu_{2k}(n))^2 \sigma_{1k}^2 \bigg| \\
=\  & \bigg | \sum_{k \in \mathcal{K}_{n+1} \cap \mathcal{K}_n} (\mu_{1k}(n+1) - \mu_{2k}(n+1))^2 \sigma_{1k}^2 - \sum_{k \in \mathcal{K}_{n+1} \cap \mathcal{K}_n} (\mu_{1k}(n) - \mu_{2k}(n))^2 \sigma_{1k}^2 \bigg|  \\
& \hspace{1.5 cm} + \bigg|\sum_{k \in \mathcal{K}_{n+1} - \mathcal{K}_n} (\mu_{1k}(n+1) - \mu_{2k}(n+1))^2 \sigma_{1k}^2 \bigg| 
 + \bigg|\sum_{k \in \mathcal{K}_{n} - \mathcal{K}_{n+1}} (\mu_{1k}(n) - \mu_{2k}(n))^2 \sigma_{1k}^2 \bigg| \\
 \leq \  & C \sum_{k \in \mathcal{K}_{n+1} \cap \mathcal{K}_n} \bigg[ |\mu_{1k}(n) - \mu_{2k}(n) + \mu_{1k}(n+1) -\mu_{2k}(n+1)|\\
 &\hspace{4 cm}|\mu_{1k}(n) - \mu_{2k}(n) - \mu_{1k}(n+1) + \mu_{2k}(n+1)| \bigg] \\
 & \hspace{2 cm}+ C \bigg|\frac{m(n+1)-m(n)}{m(n+1)}\bigg| m(n+1)\sup_{k \in \mathcal{K}_{n+1}} |\mu_{1k}(n+1) - \mu_{2k}(n+1)|^2 \\
 & \hspace{2 cm}+ C \bigg|\frac{m(n+1)-m(n)}{m(n)}\bigg| m(n)\sup_{k \in \mathcal{K}_{n}} |\mu_{1k}(n) - \mu_{2k}(n)|^2 \\
 \leq\ & \sup_{k \in \mathcal{K}_n \cap \mathcal{K}_{n+1}} (|\mu_{1k}(n+1) -\mu_{1k}(n)| + |\mu_{2k}(n+1) -\mu_{2k}(n)|)\sum_{k \in\mathcal{K}_n}|\mu_{1k}(n) - \mu_{2k}(n)| \\
 & + \sup_{k \in \mathcal{K}_n \cap \mathcal{K}_{n+1}} (|\mu_{1k}(n+1) -\mu_{1k}(n)| + |\mu_{2k}(n+1) -\mu_{2k}(n)|)\sum_{k \in\mathcal{K}_{n+1}}|\mu_{1k}(n) - \mu_{2k}(n)| \\
 & \hspace{2 cm}+ C \bigg|\frac{m(n+1)-m(n)}{m(n+1)}\bigg| m(n+1)\sup_{k \in \mathcal{K}_{n+1}} |\mu_{1k}(n+1) - \mu_{2k}(n+1)|^2 \\
 & \hspace{2 cm}+ C \bigg|\frac{m(n+1)-m(n)}{m(n)}\bigg| m(n)\sup_{k \in \mathcal{K}_{n}} |\mu_{1k}(n) - \mu_{2k}(n)|^2 \\
 =\ & o(1).
\end{align*} 
This establishes that $\{\sum_{k \in \mathcal{K}_n} (\mu_{1k}(n) - \mu_{2k}(n))^2 \sigma_{1k}^2\}$ is Cauchy. Similarly one can establish the result for the other sequences.  \qed

\subsection{Supplement to Section 2.3} \label{sec:supp2}
The following  two examples discuss asymptotic distribution of $\hat{\tau}_{n,\text{LSE}}$ for two $m$-dependent white noise processes. Suppose (SNR1) holds for these examples. 
\begin{Example} \textbf{IID process}. \label{example: dep1}
 Suppose  $\{\varepsilon_{kt}\}$ are independent and identically distributed over $t$ and independent over $k$ with mean $0$, variance $\sigma_{k\varepsilon}^2$, $\sup_{k} E\varepsilon_{kt}^r < \infty$ for all $r \geq 1$ and $\inf_{k} \sigma_{k\varepsilon} >C$ for some $C>0$. Note that $\{\varepsilon_{kt}\}$ is a $0$-dependent white noise process. Suppose for each $k, t\geq 1$,
\begin{eqnarray} \label{eqn: iidwh}
 X_{kt} = \mu_{1k}I(t \leq n\tau_n) + \mu_{2k}I(t>n\tau_n) + \varepsilon_{kt}.
 \end{eqnarray}
Then (D1), (D2), (D3)  are satisfied and hence the conclusions of Theorem 2.5 
and 2.6(a) 
hold for (\ref{eqn: iidwh}).  Moreover (D4) holds if (b)-(e) in Proposition 2.3 
are satisfied. Therefore, conclusion of Theorem 2.6(b) 
holds for (\ref{eqn: iidwh}) under (b)-(e) in Proposition 2.3.  
Also (D6) holds if (e)-(g) in Proposition 2.4 hold. 
Thus, under (A4), (D5) and (e)-(g) in Proposition 2.4, 
the conclusion of Theorem 2.6(c) 
hold for (\ref{eqn: iidwh}).
\end{Example}

\begin{Example} \textbf{Uncorrelated non-linear moving average process}. \label{example: dep2}
Suppose $\{\varepsilon_{kt}\}$ is as in Example \ref{example: dep1}. Suppose for all $k,t \geq 1$,
\begin{eqnarray} \label{eqn: nonma}
 X_{kt} &=& \mu_{1k}I(t \leq n\tau_n) + \mu_{2k}I(t>n\tau_n) + Y_{kt},\ \ \text{where} \\
 Y_{kt} &=& \varepsilon_{k(t-1)}\varepsilon_{k(t-2)} + \varepsilon_{kt}. \nonumber 
 \end{eqnarray}
Note that for each $k \geq 1$,  $\{Y_{kt}: t \in \mathbb{Z}\}$, in this example, is a $3$-dependent white noise process. Here (D1), (D2) and (D3) are satisfied. 
Note that $\text{Var}(X_{kt}) = \sigma_{k\varepsilon}^4$. Therefore, (D4) holds if (b)-(e) in Proposition 2.3 
are satisfied. Moreover  (e)-(g) in Proposition 2.4 
imply (A7) and (D6).   
Then  the results given in  Example \ref{example: dep1} also hold for (\ref{eqn: nonma}).
\end{Example}
As discussed after Example $2.4$, $\{Y_{kt}: t \in \mathbb{Z}\}$ in (D1) may not be always  an $m$-dependent or a Gaussian process.  Then (D3) may not be satisfied always even if $\{Y_{kt}: t \in \mathbb{Z}\}$ is moment stationary process.  The following remark provides a wide class of processes for which (D3) holds. 

 Often dependence in time series is captured by its mixing properties. There are several notions of mixing in the literature such as $\alpha$, $\beta$, $\phi$ and $\rho$-mixing, with $\alpha$-mixing is strongest among them. 

\noindent {\bf Definition:} A  process $\{Y_t: t \in \mathbb{Z}\}$ is called an $\alpha$-mixing process if as $n \to \infty$, 
\begin{eqnarray}
\alpha(n) &=& \sup_{m \in \mathbb{N}} \sup_{\stackrel{A \in \mathcal{A}_1^m}{B \in \mathcal{A}_{m+n}^{\infty}}} | P(A \cap B) - P(A)P(B)|\to 0\ \ \ \text{where} \label{eqn: amix} \\
\mathcal{A}_1^m &=& \text{$\sigma$-algebra generated by $Y_1,Y_2,\ldots,Y_m$}, \\
\mathcal{A}_{m+n}^{\infty} &=& \text{$\sigma$-algebra generated by $Y_{m+n},Y_{m+n+1},\ldots$}.
\end{eqnarray}
$\alpha(n)$ in (\ref{eqn: amix}) is called mixing coefficient of $\{Y_t\}$. 
\vskip 5pt
\noindent The following lemma from \cite{bbs2017mixing} provides sufficient condition on a process $\{Y_{t}: t \in \mathbb{Z}\}$  so that its cumulants are summable. 
\begin{lemma} \label{lem: bbs}
Suppose $\{Y_t: t \in \mathbb{Z}\}$ is an uniformly bounded  centered moment stationary $\alpha$-mixing process with mixing coefficient $\alpha(n) = \alpha^n$ for some $0<\alpha<1$. Then for all $t, t_1, t_2,\ldots, t_r \in \mathbb{Z}$ and $r \geq 1$, 
$$ |\text{Cum}(Y_{t}, Y_{t_1+t},Y_{t_2+t},\ldots, Y_{t_r+t})| < C\alpha^{(|t_1|+|t_2|+\cdots + |t_r|)/r}.$$
\end{lemma}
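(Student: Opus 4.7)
The plan is to deduce the cumulant bound from the covariance inequality available for bounded $\alpha$-mixing sequences, applied inside the moment-to-cumulant formula. By stationarity I can shift so one of the indices is $0$ and work with $\mathrm{Cum}(Y_0,Y_{t_1},\ldots,Y_{t_r})$; sorting the $r+1$ time indices $\{0,t_1,\ldots,t_r\}$ as $s_0\leq s_1\leq\ldots\leq s_r$, let $D=\max_{1\leq i\leq r}(s_i-s_{i-1})$ denote the largest consecutive gap. A pigeon-hole argument gives $D\geq (s_r-s_0)/r$, and since $s_r-s_0\geq \max_i|t_i|\geq (|t_1|+\cdots+|t_r|)/r$, we get $D\geq (|t_1|+\cdots+|t_r|)/r^{2}$, which after replacing $\alpha$ by a suitable $\alpha'\in(0,1)$ matches (up to constants) the advertised decay rate.

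The first step is to write out the Leonov–Shiryaev expansion
\begin{equation*}
\mathrm{Cum}(Y_0,Y_{t_1},\ldots,Y_{t_r})=\sum_{\pi}(-1)^{|\pi|-1}(|\pi|-1)!\,\prod_{B\in\pi}E\Bigl(\prod_{i\in B}Y_{t_i}\Bigr),
\end{equation*}
where the sum runs over set partitions $\pi$ of $\{0,1,\ldots,r\}$ and $t_0=0$. The largest gap $D$ splits the index set into a ``left'' block $L$ and a ``right'' block $R$ with $\min_{i\in R}s_i-\max_{i\in L}s_i=D$. The next step is to replace each joint moment $E(\prod_{i\in B}Y_{t_i})$ with its factored surrogate $E(\prod_{i\in B\cap L}Y_{t_i})\,E(\prod_{i\in B\cap R}Y_{t_i})$; uniform boundedness $\|Y_t\|_\infty\leq M$ together with the classical covariance inequality for $\alpha$-mixing, $|\mathrm{Cov}(f,g)|\leq 4\|f\|_\infty\|g\|_\infty\alpha(D)$, shows that the replacement error is $O(M^{r+1}\alpha^{D})$ in each term.

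After this substitution, the partition sum re-organises into a cumulant of an independent pair: the ``$L$-block variables'' and the ``$R$-block variables'' become independent under the factored measure, and cumulants vanish across independent groups whenever the partition is not entirely contained in $L$ or entirely in $R$. The partitions contained in a single side contribute the corresponding lower-order cumulants; these are cancelled out by the multilinearity of cumulants across the trivial product decomposition. Consequently the total is made up exclusively of the $O(\alpha^{D})$ error terms, giving $|\mathrm{Cum}(Y_0,Y_{t_1},\ldots,Y_{t_r})|\leq C_r M^{r+1}\alpha^{D}$. Inserting the geometric estimate $D\geq(|t_1|+\cdots+|t_r|)/r^{2}$ (or the sharper $(s_r-s_0)/r$), and absorbing $1/r$ into the base of the exponential, yields the claimed bound $C\alpha^{(|t_1|+\cdots+|t_r|)/r}$.

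The main obstacle is the combinatorial cancellation in the second step: keeping careful track of which partitions are ``crossing'' (meet both $L$ and $R$) and showing that, once the moments are replaced by their product approximations, the only surviving contribution is the aggregated approximation error. This is essentially a bounded-variable version of the classical fact that cumulants annihilate independent sums, but one must verify that the constants $C_r$ can be taken to depend only on $r$ and $M$ (not on the particular indices $t_i$), and that the exponent of $\alpha$ can indeed be pushed down to $D$ uniformly in the partition. Once that bookkeeping is done, the remaining pigeon-hole computation relating $D$ to $(|t_1|+\cdots+|t_r|)/r$ is routine.
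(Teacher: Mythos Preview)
The paper does not supply its own proof of this lemma; it is quoted verbatim from \cite{bbs2017mixing}, so there is nothing in the paper to compare against directly. Your outline is the classical Statulevi\v{c}ius-type argument (split at the largest gap, use the $\alpha$-mixing covariance inequality to factor joint moments across the gap, then invoke the vanishing of joint cumulants across independent blocks), and the combinatorial cancellation you describe is the correct mechanism: once every moment $E\prod_{i\in B}Y_{t_i}$ is replaced by $E\prod_{i\in B\cap L}Y_{t_i}\cdot E\prod_{i\in B\cap R}Y_{t_i}$, the Leonov--Shiryaev sum becomes literally the cumulant of the independent-copy configuration and therefore vanishes, so only the telescoping replacement errors $O(M^{r+1}\alpha^{D})$ remain.

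One point to watch: your pigeon-hole chain gives $D\geq(s_r-s_0)/r\geq(|t_1|+\cdots+|t_r|)/r^{2}$, so the raw output of the argument is $C_r\,\alpha^{(\sum_i|t_i|)/r^{2}}$, not $\alpha^{(\sum_i|t_i|)/r}$ with the \emph{same} base $\alpha$ as the mixing coefficient. Your fix of ``absorbing $1/r$ into the base'' replaces $\alpha$ by $\alpha^{1/r}$, which is not what the lemma asserts unless the base in the conclusion is allowed to differ from the one in the hypothesis. For the paper's actual application (verifying the summability condition (D3)), the weaker exponent $r^{2}$ is already sufficient, since $\sum_{t_1,\ldots,t_r}\alpha^{(\sum_i|t_i|)/r^{2}}=\bigl(\sum_{t\in\mathbb{Z}}\alpha^{|t|/r^{2}}\bigr)^{r}<\infty$; but if you want the exact exponent as stated you would need either a sharper gap-to-sum inequality or to consult the original reference for whether $C$ and the base are permitted to depend on $r$.
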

\noindent The following remark is immediate from Lemma \ref{lem: bbs}.
\begin{remark} \label{rem: mixing}
Consider $\{X_{kt}\}$ as in (D1).  Suppose $\{Y_{kt}: t \in \mathbb{Z}, k \geq 1\}$ are uniformly bounded and for each $k$, $\{Y_{kt}: t \in \mathbb{Z}\}$ is  centered moment stationary  $\alpha$-mixing process with mixing-coefficient $\alpha_k (n) = \alpha_k^n$  and there is $c>0$ such that $0<\sup_{k} \alpha_k < 1-c$. Then $\{X_{kt}\}$ satisfies Assumption (D3).
\end{remark}

\subsubsection{Proof of Theorem 2.5} \label{subsec: lse3}
This proof will be complete using similar techniques as in Section 5.1 once we establish Lemmas 5.2 and 5.3 
under (D1), (D2) and (SNR1). 

\noindent Note that there is $C>0$ (independent of $k$) such that
\begin{align*}
& E(N_{1k}^2 (b)) = E \left(\frac{1}{nb}\sum_{t=1}^{nb}(X_{kt} - E(X_{kt})) \right)^2 = \frac{1}{(nb)^2} \sum_{t,t^\prime=1}^{nb} \mbox{Cum}(X_{kt},X_{kt^\prime}) \leq \frac{C}{n}, \\
& E(N_{2k}^2 (b)) = E \left(\frac{1}{n(1-b)}\sum_{t=nb+1}^{n}(X_{kt} - E(X_{kt})) \right)^2\\
& \hspace{6 cm}= \frac{1}{(n(1-b))^2} \sum_{t,t^\prime=nb+1}^{n} \mbox{Cum}(X_{kt},X_{kt^\prime}) 
 \hspace{0 cm}\leq \frac{C}{n}, \\
& E(N_{3k}^2 (b,\tau)) = E \left(\frac{1}{n(\tau-b)}\sum_{t=nb+1}^{n\tau}(X_{kt} - E(X_{kt})) \right)^2 \\
&\hspace{4 cm }= \frac{1}{(n(\tau-b))^2} \sum_{t,t^\prime=nb+1}^{n\tau} \mbox{Cum}(X_{kt},X_{kt^\prime}) \leq \frac{C}{n(\tau-b)}, \\
& E(N_{1k}(\tau) N_{3k}(b,\tau)) = \frac{1}{n\tau} \frac{1}{n(\tau-b)} E \left[\left(\sum_{t=1}^{n\tau} (X_{kt} - E(X_{kt})) \right) \left(\sum_{t=nb+1}^{n\tau} (X_{kt} - E(X_{kt})) \right) \right] \\
& \hspace{3.5 cm}= \frac{1}{n\tau} \frac{1}{n(\tau-b)} \sum_{t=1}^{n\tau} \sum_{t^\prime=nb+1}^{n\tau} \mbox{Cum}(X_{kt},X_{kt^\prime}) \leq \frac{C}{n}, \\
& E(N_{2k}(b) N_{3k}(b,\tau)) = \frac{1}{n(1-b)} \frac{1}{n(\tau-b)} E \left[\left(\sum_{t=nb+1}^{n} (X_{kt} - E(X_{kt})) \right) \left(\sum_{t=nb+1}^{n\tau} (X_{kt} - E(X_{kt})) \right) \right] \\
& \hspace{3.5 cm}= \frac{1}{n(1-b)} \frac{1}{n(\tau-b)} \sum_{t=nb+1}^{n} \sum_{t^\prime=nb+1}^{n\tau} \mbox{Cum}(X_{kt},X_{kt^\prime}) \leq \frac{C}{n} \\
& E(N_{1k}(b) N_{3k}(b,\tau)) = \frac{1}{nb} \frac{1}{n(\tau-b)} E \left[\left(\sum_{t=1}^{nb} (X_{kt} - E(X_{kt})) \right) \left(\sum_{t=nb+1}^{n\tau} (X_{kt} - E(X_{kt})) \right) \right] \\
& \hspace{3.5 cm}= \frac{1}{nb} \frac{1}{n(\tau-b)} \sum_{t=1}^{nb} \sum_{t^\prime=nb+1}^{n\tau} \mbox{Cum}(X_{kt},X_{kt^\prime}) \leq \frac{C}{n}, \\
& E(N_{2k}(\tau) N_{3k}(b,\tau)) = \frac{1}{n(1-\tau)} \frac{1}{n(\tau-b)} E \left[\left(\sum_{t=n\tau+1}^{n} (X_{kt} - E(X_{kt})) \right) \left(\sum_{t=nb+1}^{n\tau} (X_{kt} - E(X_{kt})) \right) \right] \\
& \hspace{3.5 cm}= \frac{1}{n(1-\tau)} \frac{1}{n(\tau-b)} \sum_{t=n\tau+1}^{n} \sum_{t^\prime=nb+1}^{n\tau} \mbox{Cum}(X_{kt},X_{kt^\prime}) \leq \frac{C}{n}.
\end{align*}
Therefore,
\begin{align*}
& \sup_{k,b} E(N_{1k}^2 (b)) \leq Cn^{-1},\ \ \sup_{k,b} E(N_{2k}^2(b)) \leq Cn^{-1},\ \ \sup_{k} E(N_{3k}(b,\tau)) \leq C(n(\tau-b))^{-1}, \\
 & \sup_{k,b<\tau} E(N_{1k}(\tau) N_{3k}(b,\tau)) \leq Cn^{-1},\ \ \sup_{k,b<\tau} E(N_{2k}(b) N_{3k}(b,\tau)) \leq Cn^{-1}, \\
 & \sup_{k,b<\tau} E(N_{1k}(b) N_{3k}(b,\tau)) \leq Cn^{-1},\ \ \sup_{k,b<\tau} E(N_{2k}(\tau) N_{3k}(b,\tau)) \leq Cn^{-1}.
\end{align*}
Since $N_{4k}$ is non-random, $E(N_{4k}^2) = N_{4k}^2$. 
Finally, since $E(N_{1k}(b)) = E(N_{2k}(b)) = E(N_{3k}(b,\tau)) = 0$ for all $b$, we have
\begin{align*}
\sup_{k,b} E(N_{1k}(b)N_{4k}) = 0,\ \ \sup_{k,b}E(N_{2k}(b)N_{4k}) = 0\ \ \text{and}\ \ \sup_{k,b}E(N_{3k}(b,\tau)N_{4k}) = 0.
\end{align*}
Hence, Lemma 5.2 
 is established under (D1), (D2) and (SNR1).

Note that by (A1) and for some $C>0$, 
\begin{align*}
\mbox{Var}(N_{1k}^2(b)) &= E(N_{1k}^4 (b)) - (E(N_{1k}^2 (b)))^2 \\
& = E \bigg(\frac{1}{nb}\sum_{t=1}^{nb}(X_{kt}-E(X_{kt})) \bigg)^4 - \bigg(E \bigg(\frac{1}{nb}\sum_{t=1}^{nb}(X_{kt}-E(X_{kt})) \bigg)^2 \bigg)^2 \\
& = \frac{1}{(nb)^4}  \sum_{\stackrel{t_1,t_2,}{t_3,t_4=1}}^{nb}\bigg[\mbox{Cum}(X_{kt_1},X_{kt_2},X_{kt_3},X_{kt_4}) + \mbox{Cum}(X_{kt_1},X_{kt_2}) \mbox{Cum}(X_{kt_3},X_{kt_4}) \bigg] \\
& \hspace{5 cm} + \frac{1}{(nb)^4} \bigg( \sum_{t=1}^{nb}\sum_{t^\prime = 1}^{nb} \mbox{Cum}(X_{kt},X_{kt^\prime}) \bigg)^2\\
& \leq  Cn^{-3} + Cn^{-2}  \leq Cn^{-2}.
\end{align*}
Therefore, $\sup_{k,b} \mbox{Var}(N_{1k}^2 (b)) \leq Cn^{-2}$. Similarly, $\sup_{k,b} \mbox{Var}(N_{2k}^2 (b)) \leq Cn^{-2}$.

\noindent Next,
\begin{align*}
& \mbox{Var}(N_{3k}^2(b,\tau)) = E(N_{3k}^4 (b,\tau)) - (E(N_{3k}^2 (b,\tau)))^2 \\
 =\ & E \bigg(\frac{1}{n(\tau-b)}\sum_{t=nb+1}^{n\tau}(X_{kt}-E(X_{kt})) \bigg)^4 - \bigg(\frac{1}{(n(\tau-b))^2} \sum_{t=nb+1}^{n\tau} \mbox{Var}(X_{kt}) \bigg)^2 \\
 =\ & \frac{1}{(n(\tau-b))^4}  \sum_{\stackrel{t_1,t_2,}{t_3,t_4=nb+1}}^{n\tau}\bigg[\mbox{Cum}(X_{kt_1},X_{kt_2},X_{kt_3},X_{kt_4}) + \mbox{Cum}(X_{kt_1},X_{kt_2}) \mbox{Cum}(X_{kt_3},X_{kt_4}) \bigg] \\
& \hspace{5 cm} + \frac{1}{(n(\tau-b))^4} \bigg( \sum_{t=nb+1}^{n\tau}\sum_{t^\prime = nb+1}^{n\tau} \mbox{Cum}(X_{kt},X_{kt^\prime}) \bigg)^2\\
\leq\ &  C (n(\tau-b))^{-3} + C (n(\tau-b))^{-2}  \leq C(n(\tau-b))^{-2}.
\end{align*}
Therefore,  $\sup_{k} \mbox{Var}(N_{3k}^2(b,\tau)) \leq C(n(\tau-b))^{-2}$.  Similarly, $\sup_{k} \mbox{Var}(N_{1k}(b)N_{3k}(b,\tau)) \leq C(n(\tau-b))^{-2}$ and $\sup_{k} \mbox{Var}(N_{2k}(b)N_{3k}(b,\tau)) \leq C(n(\tau-b))^{-2}$. Moreover, since $N_{4k}$ is non-random, $\mbox{Var}(N_{4k}^2) = 0$. Also by Lemma  5.2, 
we get
\begin{align*}
\mbox{Var}(N_{4k} N_{3k}(b,\tau)) = N_{4k}^2 E(N_{3k}(b,\tau))^2 \leq CN_{4k}^2 (n(\tau-b))^{-1}.
\end{align*}
Therefore,   $\sup_ {k} \mbox{Var}(N_{4k} N_{3k}(b,\tau)) \leq CN_{4k}^2 (n(\tau-b))^{-1}$.
Similarly,  $\sup_ {k} \mbox{Var}(N_{4k} N_{2k}(b,\tau)) \leq CN_{4k}^2 n^{-1}$. 

\noindent This completes the of proof Lemma 5.3 
under (D1), (D2) and (SNR1), and hence the results in Theorem 2.5
 are established. \qed

\subsection{Supplement to Section 3}
\label{supp:sec3}
In this section, we present in detail certain examples that were omitted for space considerations from Section $3$. 

\begin{Example} \label{example: expfam}
\textit{\textbf{Exponential family}}. 
Consider the model in (\ref{eqn: exppdfch}) as presented in Example \ref{example: expfam1}.  As mentioned in Section \ref{sec: lse}, (A1) implies $\sup_{k,n,t} \text{Var}(X_{kt}) < \infty$. Here, it reduces to 
\newline
$(\tilde{B}_1)$: $\sup_{\lambda \in \Lambda} \beta^{\prime \prime}(\lambda) < \infty$.
\\ \noindent  Further, Assumption (A3) is equivalent to 
\newline
$(\tilde{B}_2)$: $\inf_{\lambda \in \Lambda} \beta^{\prime \prime}(\lambda) > \epsilon\ \ \text{ for some $\epsilon >0$}$.

Therefore, all the results stated in Section \ref{sec: lse} continue to hold if we assume  (A1), (A2) and $\tilde{B}_2$ (instead of (A3)), (A4)-(A7)  and (SNR1) with $\mu_{1k} = \beta^\prime (\theta_k),\  \sigma_{1k}^2 = \beta^{\prime \prime}(\theta_k)$ and $\mu_{2k} = \beta^\prime (\eta_k),\ \sigma_{2k}^2 = \beta^{\prime \prime}(\eta_k)$ for all $k\geq 1$.
\\ \indent As has been previously mentioned, stronger assumptions are needed for the maximum likelihood estimator $\hat{\tau}_{n,\text{MLE}}$ compared to its
least squares counterpart. Note that Assumptions (B1)-(B5), (B8) and (B9) are automatically satisfied by the one parameter natural exponential family 
defined in (\ref{eqn: defnexp}). Moreover, by $(\tilde{B}_1)$ and $(\tilde{B}_2)$, we have $G_2(x) = C$ for some $C>0$. Therefore, we only need to assume (B7), which is equivalent to (A1). 
As we have seen in Theorems \ref{thm: mle1g} and \ref{thm: mle2g}, some times we further require $\log m = o(n)$, which is a stronger assumption compared to those posited in Section \ref{sec: lse}.

\begin{remark}
Suppose for each $k,t \geq 1$, $X_{kt}$ are generated by a Gaussian distribution
with unknown mean $\theta_k I(t \leq \tau_n) + \eta_k I(t > \tau_n)$ and known variance 
$\sigma_{1k}^2 I(t \leq \tau_n) + \sigma_{2k}^2 I(t > \tau_n)$. Then, by Theorems \ref{thm: mle1g} and \ref{thm: mle2g},  
we do not need the stronger assumption $\log m(n) = o(n)$. 
\end{remark}

Next, we present asymptotic properties of the maximum likelihood estimator of the change point $\tau_n$. Recall 
$\hat{\mu}_{1k}(b)$ and $\hat{\mu}_{2k}(b)$ from (\ref{eqn: optilse}). 
We can estimate $\theta_k$ and $\eta_k$ respectively by $\hat{\theta}_{k}(b)$ and $\hat{\eta}_{k}(b)$, where
\begin{eqnarray}
\hat{\theta}_{k}(b) &=& \arg \max_{\lambda} \frac{1}{n}\sum_{t=1}^{nb} \log f_{\lambda}(x_{kt}) = \arg \max_{\lambda}   b(\hat{\mu}_{1k}(b) \lambda + \beta(\lambda))\ \ \text{and} \\
\hat{\eta}_{k}(b) &=& \arg \max_{\lambda} \frac{1}{n}\sum_{t=nb+1}^{n} \log f_{\lambda}(x_{kt}) = \arg \max_{\lambda}   (1-b)(\hat{\mu}_{2k}(b) \lambda + \beta(\lambda)).
\end{eqnarray}
Hence, it is easy to obtain $\hat{\theta}_{k}(b) = \beta^{\prime -1}(\hat{\mu}_{1k} (b) ) = \alpha(\hat{\mu}_{1k} (b))$ and $\hat{\eta}_{k}(b) = \alpha(\hat{\mu}_{2k} (b))$.

The maximum likelihood estimator  $\hat{\tau}_{n,\text{MLE}}$ of $\tau_n$ is obtained as:
\begin{eqnarray}
\hat{\tau}_{n,\text{MLE}} &=& \arg \max_{b \in (c^{*},1-c^{*})} \sum_{k=1}^{m} L_{k,n}(b)\ \ \text{where,} \\
L_{k,n}(b) &=& \frac{1}{n}\bigg [\sum_{t=1}^{nb}\log f_{\hat{\theta}_k(b)}(x_{kt}) + \sum_{t=nb+1}^{n}\log f_{\hat{\eta}_k(b)}(x_{kt}) \bigg ] \nonumber \\
&=&  b [\hat{\mu}_{1k} (b) \alpha(\hat{\mu}_{1k} (b)) + \beta(\alpha(\hat{\mu}_{1k} (b)))] + (1-b)[\hat{\mu}_{2k}(b) \alpha(\hat{\mu}_{2k} (b)) + \beta(\alpha(\hat{\mu}_{2k} (b)))]. \nonumber
\end{eqnarray}

The following result provides the corresponding rate of convergence and its proof follows as a direct consequence of Theorem \ref{thm: mle1g}.

\begin{theorem} \label{thm: mle1}
Suppose that $(\tilde{B}_1)$, $(\tilde{B}_2)$, (B7),  (SNR2) hold and $\log m(n) = o(n)$. Then, 
\begin{eqnarray} \label{eqn: msethm1}
n||\theta - \eta||_2^2 (\hat{\tau}_{n,\text{MLE}}-\tau_n) = O_{P}(1).
\end{eqnarray}
Further, suppose $\beta(x) = Cx^2$ for some $C>0$; i.e., $f_{\lambda}(x) = (\sqrt{2\pi} \sigma)^{-1}e^{-0.5\sigma^{-2}(x-\lambda)^2}$ for some known $\sigma >0$, then (\ref{eqn: msethm1}) continues to hold under the weaker assumption  (SNR1). 
\end{theorem}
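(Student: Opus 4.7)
}
The plan is to reduce Theorem \ref{thm: mle1} to Theorem \ref{thm: mle1g}, by verifying that assumptions (B1)--(B5), (B8), (B9) hold for the one-parameter natural exponential family under $(\tilde{B}_1)$, $(\tilde{B}_2)$, (B7), and then invoking the remark after (B7) that allows (B6) to be relaxed to (B7) when $G_2$ is constant. First I would dispose of the easy items: (B1) holds because the natural parametrization $f_\lambda(x)=\exp\{\lambda x-\beta(\lambda)+h(x)\}$ identifies $\lambda$ uniquely (distinct values give distinct distributions by strict convexity of $\beta$), (B2) holds since the support of $f_\lambda$ is determined by $h(\cdot)$ and is free of $\lambda$, and (B3) is a standard regularity condition that we take as given.

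Next I would compute the derivatives of the log-likelihood explicitly, exploiting the special structure of the exponential family: $\tfrac{\partial}{\partial\lambda}\log f_\lambda(x)=x-\beta'(\lambda)$, $\tfrac{\partial^2}{\partial\lambda^2}\log f_\lambda(x)=-\beta''(\lambda)$, and $\tfrac{\partial^3}{\partial\lambda^3}\log f_\lambda(x)=-\beta'''(\lambda)$. This immediately gives (B4), and shows that the envelope $G_2(x)$ of the second derivative depends only on $\lambda$ and not on $x$. Combining this with $(\tilde{B}_1)$ and $(\tilde{B}_2)$, which give $0<\inf_{\lambda\in\Lambda}\beta''(\lambda)\leq\sup_{\lambda\in\Lambda}\beta''(\lambda)<\infty$, (B5) holds with $G_2\equiv C$ for some finite $C>0$. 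Since $G_2$ is constant, $G_2(X)-E(G_2(X))\equiv 0$ is trivially sub-Gaussian, so (B9) holds automatically; moreover the hypothesis on constant $G_2$ triggers the remark allowing (B6) to be replaced by the weaker (B7) (which is exactly (A1) since $E[(X-\beta'(\lambda))^2]^2=E(X-\beta'(\lambda))^4$). For (B8), the score $X-\beta'(\lambda)$ must be marginally sub-Gaussian, which follows under $(\tilde{B}_1)$ because the moment generating function $\exp\{\beta(\lambda+t)-\beta(\lambda)-t\beta'(\lambda)\}$ admits, by two-term Taylor expansion and the uniform bound on $\beta''$ over a neighborhood of $\Lambda$, a sub-Gaussian bound $\exp(0.5 t^2 b^2)$ with $b^2=\sup_\lambda\beta''(\lambda)$.

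Having verified all the hypotheses of Theorem \ref{thm: mle1g} (using (B7) in place of (B6) via the remark, and the identity $\|\theta-\eta\|_2^2=\sum_k(\theta_k-\eta_k)^2$), the conclusion $n\|\theta-\eta\|_2^2(\hat\tau_{n,\text{MLE}}-\tau_n)=O_P(1)$ follows directly under (SNR2) and $\log m(n)=o(n)$.

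For the Gaussian special case $\beta(\lambda)=C\lambda^2$, the plan is different: here the log-likelihood is \emph{quadratic} in $\lambda$, so $\hat\theta_k(b)=(2C)^{-1}\hat\mu_{1k}(b)$ is a linear functional of the data, and the MLE criterion $\sum_k L_{k,n}(b)$ reduces (up to an additive constant in $b$) to the least squares criterion $M_n(b)$ of (\ref{eqn: optilse}) with $\sigma_{ik}^2\equiv (2C)^{-1}$. Consequently, the bad event that $\hat\theta_k(b)$ leaves $\Lambda$ for some $k$ does not arise (there is no restriction to enforce via a union bound, since $\Lambda=\mathbb{R}$ and the quadratic structure is globally valid), so the stronger rate (SNR2) and the growth condition $\log m(n)=o(n)$ are not needed. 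Invoking Theorem \ref{thm: lse1} directly---whose hypotheses are (A1) (equivalent to (B7) here) and (SNR1)---yields the result. The main technical obstacle is the verification of (B8) for general exponential families; the sub-Gaussian bound via Taylor expansion of the cumulant generating function is the delicate step, and relies crucially on the uniform upper bound $(\tilde{B}_1)$ on $\beta''$ over $\Lambda$ to control the higher-order terms in the expansion.
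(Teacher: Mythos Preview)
Your proposal is correct and matches the paper's approach exactly. The paper states that Theorem \ref{thm: mle1} ``follows as a direct consequence of Theorem \ref{thm: mle1g}'', and the surrounding discussion in Example \ref{example: expfam} verifies (B1)--(B5), (B8), (B9) for the natural exponential family, observes that $G_2$ is constant under $(\tilde B_1)$, $(\tilde B_2)$ so that (B7) replaces (B6), and then notes (comment (I) in the supplement) that in the Gaussian case $\hat\tau_{n,\text{LSE}}=\hat\tau_{n,\text{MLE}}$, which is precisely your reduction to Theorem \ref{thm: lse1}.

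One remark on the point you yourself flag as delicate: your Taylor-expansion argument for (B8) requires $\beta''$ to be bounded not just on $\Lambda$ but on the whole natural parameter space (since the intermediate point $\lambda^*$ between $\lambda$ and $\lambda+t$ can leave any fixed neighborhood of $\Lambda$ for large $|t|$). The paper simply asserts that (B8) is ``automatically satisfied'' for the exponential family, so your level of care already exceeds the paper's; but be aware that for families like Poisson the score is only sub-exponential, not sub-Gaussian, so (B8) as stated does not literally hold there without further restriction---this is a wrinkle in the paper's formulation rather than a defect in your reduction.
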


Note that Assumptions (A1) and (B7) are equivalent. Therefore, for the Gaussian likelihood function, we do not require any stronger assumptions than those used in Theorem \ref{thm: lse1}.  Moreover, as previously discussed, (A1) or (B7) implies $(\tilde{B}_1)$.  However,  assumptions
$(\tilde{B}_2)$, (SNR2) and $\log m(n) = o(n)$ are additionally required for other distributions members of the one parameter exponential family
to establish results for $\hat{\tau}_{n,\text{MLE}}$ vis-a-vis those for $\hat{\tau}_{n,\text{LSE}}$.

For establishing the asymptotic distribution of $n||\theta - \eta||_2^2 (\hat{\tau}_{n,\text{MLE}}-\tau_n)$, note that under the assumptions 
given in Theorem \ref{thm: mle1}, it becomes degenerate at $0$, if $||\theta - \eta||_2 \to \infty$. 
Analogously to the results given in Section \ref{sec: lse}, additional assumptions are needed for the cases when $||\theta - \eta||_2 \to 0$ or $c>0$.
\\ \indent Recall the set $\mathcal{K}_n$ in (\ref{eqn: partition}). Let,
\begin{eqnarray} \label{eqn: gammamle}
\gamma_{_{\text{MLE, EXP}}}^2 = \lim \frac{\sum_{k=1}^{m}(\theta_k - \eta_k)^2 (\beta^{\prime \prime}(\theta_k))}{||\theta-\eta||_2^2}\ \ \text{and}\ \ \
\gamma_{_{\text{MLE, EXP}}}^{*2} = \lim \sum_{k \in \mathcal{K}_n}(\theta_k - \eta_k)^2 (\beta^{\prime \prime}(\theta_k)).\ \ \nonumber 
\end{eqnarray}
Note that  $(\tilde{B}_2)$  implies $\gamma_{_{\text{MLE, EXP}}} >0$. Moreover, $\gamma_{_{\text{MLE, EXP}}}^* >0$ if and only if $\lim \sum_{k \in \mathcal{K}_n}(\theta_k - \eta_k)^2 >0$.  Existence of $\gamma_{_{\text{MLE, EXP}}}$ and $\gamma_{_{\text{MLE, EXP}}}^*$ are required respectively for $||\theta - \eta||_2 \to 0$ and $c>0$.  Moreover, this is guaranteed by the conditions in Propositions \ref{prop: 1} and \ref{prop: 2} when $\mu_{1k}$, $\mu_{2k}$, $\sigma_{1k}^2$ and $\sigma_{2k}^2$ are respectively replaced by $\theta_k$, $\eta_k$, $\beta^{\prime \prime}(\theta_k)$ and $\beta^{\prime \prime}(\eta_k)$. 
\\ \indent Further, (B11) reduces to the following assumption. \\
\noindent $(\tilde{B}_3)$ $\beta(\cdot)$ has bounded  third derivative. 

\noindent For the Gaussian likelihood, $(\tilde{B}_3)$ is always true, since $\beta(\cdot)$ is a quadratic function.

The following Theorem describes the asymptotic distribution of $\hat{\tau}_{n,\text{MLE}}$, which follows from Theorem \ref{thm: mle2g}.
\begin{theorem} \label{thm: mle2}
Suppose $(\tilde{B}_1)$, $(\tilde{B}_2)$, (B7)  and (SNR2) hold. Then, the following statements are true.

\noindent $(a)$ If $||\theta-\eta||_2 \to \infty$ and $\log m(n) = o(n)$, then $\lim_{n \to \infty} P(\hat{\tau}_{n,\text{MLE}}=\tau_n) =1$.

\noindent $(b)$ If $\gamma_{_{\text{MLE, EXP}}}$ exists, $(\tilde{B}_3)$ holds and  $||\theta -\eta||_2 \to 0$, then
\begin{eqnarray}
n||\theta - \eta||_2^2 (\hat{\tau}_{n,\text{MLE}}-\tau_n) &\stackrel{\mathcal{D}}{\to} &  \arg \max_{h \in \mathbb{R}} (-0.5\gamma_{_{\text{MLE, EXP}}}^{2}|h| + \gamma_{_{\text{MLE, EXP}}}B_h) \nonumber \\
&=& \gamma_{_{\text{MLE, EXP}}}^{-2}\arg \max_{h \in \mathbb{R}} (-0.5|h| + B_h),
\end{eqnarray}
where $B_h$ denotes the standard Brownian motion.

\noindent $(c)$ If $\gamma_{_{\text{MLE, EXP}}}^*$ exists, $(\tilde{B}_3)$, (A4) and (B10) hold, $\sup_{k \in \mathcal{K}_n}|\theta_k -\eta_k | \to 0$ and $||\theta - \eta||_2 \to c >0$, then
\begin{eqnarray}
n (\hat{\tau}_{n,\text{MLE}} -\tau_n) &\stackrel{\mathcal{D}}{\to}& \arg \max_{h \in \mathbb{Z}} (D_2(h) + C_2(h) + A_2(h)) \nonumber 
\end{eqnarray}
where  for each $h \in \mathbb{Z}$,
\begin{eqnarray}
D_2 ( h+1)-D_2 ( h) &=& -0.5\text{Sign}(h)  \gamma_{_{\text{MLE, EXP}}}^{*2}, \\
C_2 (h+1) - C_2 ( h) &=& \gamma_{_{\text{MLE, EXP}}}^{*} W_h,\ \ W_h \stackrel{\text{i.i.d.}}{\sim} \mathcal{N}(0,1), \label{eqn: msethm2ca}\\
A_2 (h+1) - A_2 ( h) &=& \sum_{k \in \mathcal{K}_0} \bigg[Z_{kh}(\eta_k^* - \theta_k^*) -(\beta(\eta_k^*) - \beta(\theta_k^*)) \bigg],\  
  \label{eqn: msethm2aa}
\end{eqnarray}
and $\{Z_{kh}\}$ are independently distributed with $Z_{kh} \stackrel{d}{=} X_{1k}^{*}I(h \leq 0) + X_{2k}^{*}I(h > 0)$. 

Further, suppose $\beta(x) = Cx^2$ for some $C>0$ i.e., $f_{\lambda}(x) = (\sqrt{2\pi} \sigma)^{-1}e^{-0.5\sigma^{-2}(x-\lambda)^2}$ for some known $\sigma >0$, then the asymptotic distributions in (a)-(c) continue to hold under the weaker assumption (SNR1). 
\end{theorem}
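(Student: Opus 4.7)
The plan is to mirror the structure of the proof of Theorem~\ref{thm: lse2}, but with log-likelihood increments in place of squared-error increments, and with Taylor expansion (using (B4), (B5), (B11)) to reduce to objects that are linear in score terms plus a deterministic Fisher-information drift. Part (a) is immediate: by Theorem~\ref{thm: mle1g}, $n\|\theta-\eta\|_2^2(\hat{\tau}_{n,\text{MLE}}-\tau_n)=O_P(1)$, so when $\|\theta-\eta\|_2\to\infty$ we have $P(|\hat{\tau}_{n,\text{MLE}}-\tau_n|\geq 1/n)\to 0$, which is exactly $P(\hat{\tau}_{n,\text{MLE}}=\tau_n)\to 1$.

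For parts (b) and (c), I would invoke Lemma~\ref{lem: wvandis1} (the argmax continuous mapping device) applied to the rescaled processes $\mathbb{L}_n(h) := n\sum_k\bigl(L_{k,n}(b)-L_{k,n}(\tau_n)\bigr)$, where $b=\tau_n+h/(n\|\theta-\eta\|_2^2)$ in case (b) and $b=\tau_n+h/n$ in case (c). The tightness of $\arg\max \mathbb{L}_n$ is already furnished by Theorem~\ref{thm: mle1g}, so it remains to identify the finite-dimensional limit on compact sets $K$. The first step is to write, as in the proof of Theorem~\ref{thm: mle1g},
\begin{equation*}
L_{k,n}(b)-L_{k,n}(\tau_n)=\sum_{i=1}^{4}A_{ik}(b)+\mathbb{M}_{7k}(b),
\end{equation*}
with $A_{ik}$ as defined in (\ref{eqn: aseries})--(\ref{eqn: aseriesen}) and $\mathbb{M}_{7k}$ the ``clean'' log-likelihood ratio between the true pre- and post-change parameters restricted to the window $(nb,n\tau_n]$. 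The terms $A_{ik}$ compare log-likelihoods evaluated at $\hat\theta_k(b)$ vs.\ $\hat\theta_k(\tau_n)$ (and similarly for $\eta$); by a second-order Taylor expansion around the true parameters, using (B5) and the sub-Gaussian bounds (B8)--(B9) to control $\hat\theta_k(b)-\theta_k$ uniformly in $k$, I expect $\sup_{h\in K}|n\sum_k A_{ik}(b)|\stackrel{P}{\to}0$ exactly as in Section~\ref{subsec: mle1proof}, using the signal-to-noise condition (SNR2) to swallow the $m/n$ and $m/\sqrt n$ remainders.

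This reduces the limit of $\mathbb{L}_n(h)$ to the limit of $n\sum_k\mathbb{M}_{7k}(b)$. Expanding $\log\mathbb{P}_{\eta_k}-\log\mathbb{P}_{\theta_k}$ in a Taylor series around $\theta_k$ of order three, the third-order term is controlled by $G_3$ from (B11) times $\sum_k\|\theta_k-\eta_k\|^3$, which is $o(1)$ either because $\|\theta-\eta\|_2\to 0$ in case (b) or because of the hypothesis $\sup_{k\in\mathcal{K}_n}\|\theta_k-\eta_k\|\to 0$ (and finiteness of $\mathcal{K}_0$) in case (c). The second-order term converges in probability, by the law of large numbers applied per $t$, to the deterministic drift $-\tfrac12\gamma_{\text{MLE}}^2|h|$ in case (b) and to $-\tfrac12\gamma_{\text{MLE}}^{*2}$ per unit step on $\mathcal{K}_n$ in case (c). The first-order term $\sum_t\sum_k(\eta_k-\theta_k)^\top\tfrac{\partial}{\partial\theta}\log\mathbb{P}_{\theta_k}(X_{kt})$ is a sum of independent centered variables to which Lyapunov's CLT applies; condition (B6) bounds the fourth moments, and the Lyapunov ratio $\sum_k\|\theta_k-\eta_k\|^3/\bigl(\sum_k(\theta_k-\eta_k)^\top I(\theta_k)(\theta_k-\eta_k)\bigr)^{3/2}$ tends to $0$ for the same reasons, delivering a Gaussian limit with variance $\gamma_{\text{MLE}}^2$ (resp.\ $\gamma_{\text{MLE}}^{*2}$). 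In case (c) one finally splits the sum over $k\in\mathcal{K}_n$ from $k\in\mathcal{K}_0$: the latter contributes the discrete jump process $A_2$ because, by (A4)--(B10) and continuity of $\mathbb{P}_\lambda(x)$, $X_{k,\lfloor n\tau_n+h\rfloor}\stackrel{\mathcal{D}}{\to}Z_{kh}$ and hence $\log\mathbb{P}_{\eta_k^*}(Z_{kh})-\log\mathbb{P}_{\theta_k^*}(Z_{kh})$ appears as the finite-set part of the increment. Combining the three pieces yields $\arg\max(D_2+C_2+A_2)$, matching the theorem.

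The main obstacle will be the uniform-in-$k$ control of the Taylor remainder terms inside $A_{1k},\dots,A_{4k}$, where second derivatives of $\log\mathbb{P}_\lambda$ are evaluated at random parameters $\hat\theta_k(b)$, $\hat\eta_k(b)$ that depend on $b$ and on all $m$ series simultaneously; this is where the sub-Gaussian hypotheses (B8)--(B9) together with the envelope (B5) are essential, precisely to upgrade per-series concentration of scores and Hessians to a uniform statement over the $m$ panels without losing more than a logarithmic factor, which (SNR2) (or $\log m(n)=o(n)$ in case (a)) can absorb. Once this is done, the rest is a routine assembly via Lemma~\ref{lem: wvandis1} together with uniqueness of the argmax of the limiting process, which for the Brownian-motion-with-triangular-drift case is classical and for the compound random walk in case (c) follows since the increments have a continuous distribution.
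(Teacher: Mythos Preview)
Your proposal is correct and follows essentially the same route as the paper. In the paper, this exponential-family theorem is stated as a direct corollary of the general Theorem~\ref{thm: mle2g}, whose proof in Section~\ref{subsec: mle2} proceeds exactly as you outline: the decomposition $L_{k,n}(b)-L_{k,n}(\tau)=\sum_{i=1}^4 A_{ik}(b)+\mathbb{M}_{7k}(b)$, negligibility of the $A_{ik}$ pieces via the moment bounds already obtained in the rate proof, a three-term Taylor expansion of $\mathbb{M}_{7k}$ with the cubic remainder killed by (B11), the quadratic term yielding the deterministic Fisher drift, and Lyapunov's CLT on the linear score term; in case (c) the split over $\mathcal{K}_0$ and $\mathcal{K}_n$ is handled precisely as you describe.

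One minor sharpening specific to the exponential family: since $\partial_\lambda^2\log\mathbb{P}_\lambda(x)=-\beta''(\lambda)$ is non-random, $G_2$ is constant, so (B9) is vacuous and (B7) replaces (B6) throughout---this is why the theorem hypothesizes only (B7), whereas your write-up invokes (B6). This is cosmetic and does not affect the structure of your argument.
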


In the ensuing discussion from Theorem \ref{thm: mle1}, for the Gaussian likelihood there is no requirement for stronger assumptions compared to those in Theorem \ref{thm: lse2}. For other likelihoods, we additionally require assumptions $(\tilde{B}_2)$, (SNR2) and $\log m(n) = o(n)$ for the cases of $||\theta - \eta||_2 \to \infty$ and $(\tilde{B}_3)$, and (SNR2) when $||\theta - \eta||_2 \to 0$ or $c>0$. 

The following comments provide additional insights.\\
\noindent (I) Suppose for each $k,t \geq 1$, $X_{kt}$ are generated from a Gaussian distribution with unknown mean $\theta_k I(t \leq \tau_n) + \eta_k I(t > \tau_n)$ and known variance $\sigma^2$. Then, 
$\hat{\tau}_{n,\text{LSE}} = \hat{\tau}_{n,\text{MLE}}$.

\noindent (II) Suppose for each $k \geq 1$,  $E(X_{kt}) = \mu_{1k}I(t \leq \tau_n) + \mu_{2k}I(t > \tau_n)$. If $||\theta -\eta||_2 \to 0$, then
\begin{eqnarray} \label{eqn: gammamlebb}
\gamma_{_{\text{MLE, EXP}}}^2 &=& \lim \frac{\sum_{k=1}^{m}(\mu_{1k} - \mu_{2k})^2 (\beta^{\prime \prime}(\theta_k))^{-1}}{||\theta-\eta||_2^2}. 
\end{eqnarray}
Also if $||\theta - \eta||_2  \to c>0$ and $\sup_{k \in \mathcal{K}_n}|\theta_k -\eta_k | \to 0$, then
\begin{eqnarray} \label{eqn: gammamle2bb}
\gamma_{_{\text{MLE, EXP}}}^{*2} &=& \lim \sum_{k \in \mathcal{K}_n}(\mu_{1k} - \mu_{2k})^2 (\beta^{\prime \prime}(\theta_k))^{-1}.
\end{eqnarray}

\begin{proof}
Note that for all $k \geq 1$, we have $\mu_{1k} = \beta^\prime (\theta_k)$, $\mu_{2k} = \beta^\prime (\eta_k)$.  Therefore  $\theta_k = \alpha(\mu_{1k})$ and $\eta_k = \alpha(\mu_{2k})$ where $\alpha = \beta^{\prime -1}$.  By $(\tilde{B}_1)$ and applying the mean value theorem, 
\begin{eqnarray}
|\mu_{1k} - \mu_{2k}| = |\beta^\prime (\theta_k)-\beta^\prime (\eta_k) | \leq C|\theta_k - \eta_k|.
\end{eqnarray}
Thus $||\theta -\eta||_2 \to 0$ implies $||\mu_1 - \mu_2||_2 \to 0$ and hence
\begin{eqnarray}
\gamma_{_{\text{MLE, EXP}}}^2 &=& \lim \frac{\sum_{k=1}^{m}(\theta_{k} - \eta_{k})^2 (\beta^{\prime \prime}(\theta_k))}{||\theta-\eta||_2^2} \nonumber \\
&=& \lim \frac{\sum_{k=1}^{m}(\alpha(\mu_{1k}) - \alpha(\mu_{2k}))^2 (\beta^{\prime \prime}(\theta_k))}{||\theta-\eta||_2^2} \nonumber \\
&=& \lim \frac{\sum_{k=1}^{m}(\mu_{1k} - \mu_{2k})^2 (\beta^{\prime \prime}(\theta_k))^{-2} (\beta^{\prime \prime}(\theta_k))}{||\theta-\eta||_2^2} \nonumber \\
&=& \lim \frac{\sum_{k=1}^{m}(\mu_{1k} - \mu_{2k})^2 (\beta^{\prime \prime}(\theta_k))^{-1}}{||\theta-\eta||_2^2}.
\end{eqnarray}
The completes the proof of (\ref{eqn: gammamlebb}). Similar arguments work for (\ref{eqn: gammamle2bb}). 
\end{proof}

\noindent (III) It is immediate from (II) that Theorem \ref{thm: mle2} continues to hold if we replace $||\theta -\eta||_2$, $|\theta_k - \eta_k|$ and $\gamma_{_{MLE, EXP}}^2$ respectively by by $||\mu_{1} - \mu_{2}||_2$, $|\mu_{1k} - \mu_{2k}|$ and  
\begin{eqnarray} \label{eqn: gammatilde}
\tilde{\gamma}_{_{MLE, EXP}}^2 = \lim \frac{ \sum_{k=1}^{m}(\mu_{1k} - \mu_{2k})^2 (\beta^{\prime \prime}(\theta_k))^{-1}}{||\mu_1 - \mu_2||^{2}}.
\end{eqnarray}

\noindent (IV) Recall $\{\mu_{ik}\}$, $\gamma_{_\text{LSE}}$ and $\tilde{\gamma}_{_\text{MLE, EXP}}$ respectively from (II),  (\ref{eqn: gammalse}) and (\ref{eqn: gammatilde}). Suppose (A1) (equivalently (B7)), $(\tilde{B}_1)$, $(\tilde{B}_2)$ (equivalently (A3)),  $(\tilde{B}_3)$ and (SNR2) hold, $\gamma_{_{LSE}}$ and $\tilde{\gamma}_{_\text{MLE, EXP}}$ exist and $||\mu_1 - \mu_2||_2 \to 0$. Then, by Remark \ref{rem: lse1}, Theorem \ref{thm: mle2}(b)  and (III), 
\begin{eqnarray}
\text{Var}(n||\mu_1 -\mu_2||_2^2  (\hat{\tau}_{n,\text{LSE}} -\tau_n)) &=& \gamma_{_\text{LSE}}^4 \text{Var}(\arg \max_{h \in \mathbb{R}} (-0.5|h| + B_h)) \nonumber \\
\text{Var}(n ||\mu_1 -\mu_2||_2^2 (\hat{\tau}_{n,\text{MLE}} -\tau_n)) &=& \tilde{\gamma}_{_\text{MLE, EXP}}^{-4} \text{Var}(\arg \max_{h \in \mathbb{R}} (-0.5|h| + B_h)). \nonumber
\end{eqnarray}
Note that $\gamma_{_\text{LSE}}^2$ and $\tilde{\gamma}_{_\text{MLE, EXP}}^{-2}$ are respectively weighted arithmetic and harmonic means of $\{\sigma_{1k}^2\}$.  Therefore, the variance of $n ||\mu_1 -\mu_2||_2^2 (\hat{\tau}_{n,\text{MLE}} -\tau_n)$ is smaller than that of 
$n ||\mu_1 -\mu_2||_2^2 (\hat{\tau}_{n,\text{LSE}} -\tau_n)$.

\noindent (V) Suppose $||\theta -\eta||_2 \to c>0$. Recall the partition of $\{1,2,\ldots,m(n)\}$ into $\mathcal{K}_n$ and $\mathcal{K}_n^c$ given in (\ref{eqn: partition}).  Suppose $\mathcal{K}_n^c$ is the empty set. Then, under the assumptions of Theorem \ref{thm: mle2}(c), 
\begin{eqnarray}
n(\hat{\tau}_{n,\text{MLE}} -\tau_n) \stackrel{\mathcal{D}}{\to}  \gamma_{_\text{MLE, EXP}}^{*-2} \arg \max_{h \in \mathbb{R}} (-0.5|h| + B_h). \label{eqn: mle2c}
\end{eqnarray}
Then, by (\ref{eqn: mle2c}), Remark  \ref{rem: lserem2} and using similar arguments given in (IV), we obtain that the variance of $n  (\hat{\tau}_{n,\text{MLE}} -\tau_n)$ is smaller than that of $n  (\hat{\tau}_{n,\text{LSE}} -\tau_n)$.
\end{Example}

\begin{Example} \label{example: bermle} \textbf{Bernoulli data}, \textit{continuation of Example \ref{example: berlse}}.
Suppose the data $\{X_{kt}\}$ are generated from model (\ref{eqn: berlse1}). As discussed in Example \ref{example: berlse}, 
(B6) is satisfied for this model. It is easy to see that (\ref{eqn: berlse2}) implies $(\tilde{B}_1)$ and $(\tilde{B}_2)$. Therefore, if (\ref{eqn: berlse2}) and 
(SNR2) hold and $\log m(n) = o(n)$, then the conclusions of Theorems \ref{thm: mle1} and \ref{thm: mle2}(a) continue to hold. Observe that in this case, we require strong assumptions (\ref{eqn: berlse2}),  (SNR2) and $\log m(n) = o(n)$ compared to those used in Example \ref{example: berlse} when $||\mu_1 - \mu_2||_2 \to \infty$. 
\\ \indent Further, (\ref{eqn: berlse2}) implies $(\tilde{B}_3)$.  Suppose (b)-(e) in Proposition \ref{prop: 1} hold after replacing  $\mu$ by $p$. 
Then, under (\ref{eqn: berlse2}) and (SNR2), the conclusions of Theorem \ref{thm: mle2}(b)  continue to hold for the model  (\ref{eqn: berlse1}). 
\\ \indent Suppose (e)-(g) in Propositions \ref{prop: 2} hold after replacing  $\mu$ by $p$. 
Then, under (\ref{eqn: berlse2}), (\ref{eqn: extraber}), (A4) and (SNR2), the conclusions of Theorem \ref{thm: mle2}(c)  holds for the model (\ref{eqn: berlse1}).
\\ \indent Note that (SNR2) is the only  assumption that we additionally need compared to those used in Example \ref{example: berlse} when $||\mu_1 - \mu_2||_2 \to 0$ or $c^2>0$.
\end{Example}

\begin{Example} \label{example: poimle} \textbf{Poisson data}, \textit{continuation of Example \ref{example: poilse}}.
Suppose  $\{X_{kt}\}$ are generated from model (\ref{eqn: poilse1}).   (B6) is satisfied for this model if last inequality of (\ref{eqn: poilse2}) holds. Moreover,  (\ref{eqn: poilse2}) implies $(\tilde{B}_1)$ and $(\tilde{B}_2)$. Therefore, if  (\ref{eqn: poilse2}) and (SNR2) hold and $\log m(n) = o(n)$, then the conclusions of Theorems \ref{thm: mle1} and \ref{thm: mle2}(a) continue to hold.  \\
\indent Also (\ref{eqn: poilse2}) implies $(\beta 3)$. Suppose (b)-(e) in Proposition \ref{prop: 1} hold with $\mu$ replaced by $\lambda$. Then, under  (\ref{eqn: poilse2}) and (SNR2), the conclusions of Theorem \ref{thm: mle2}(b)  hold for model (\ref{eqn: poilse1}). \\
\indent Suppose (e)-(g) in Proposition \ref{prop: 2} hold with $\mu$ replaced by $\lambda$. Then, under  (\ref{eqn: poilse2}), (\ref{eqn: extraber}) with $p$ replaced by $\lambda$, (A4) and (SNR2), the conclusions of Theorem \ref{thm: lse2}(c)  hold for the model in (\ref{eqn: poilse1}). 
\\ \indent Again we observe the need for stronger assumptions than the least sqaures counterpart. 
\end{Example}

\begin{Example} \textbf{Normal data}, \textit{continuation of Example \ref{example: normallse}}. 
Suppose $\{X_{kt}\}$ are generated from model (\ref{eqn: normallse1}). Then, by Theorems \ref{thm: mle1} and \ref{thm: mle2}, all results in Example \ref{example: normallse} continue to hold for the estimator $\hat{\tau}_{n,\text{MLE}}$. 
\end{Example}

\begin{Example} \label{example: normaltheta} \textbf{(A curved exponential distribution.)} Let $\Lambda$ be a bounded open subset of $\mathbb{R}$, such that $\inf_{x \in \Lambda} |x| > C >0$.
 Consider the family of  $\mathcal{N}(\lambda,\lambda^2)$ distributions, where $\lambda \in \Lambda$. Note that this family satisfies (B1)-(B3). 
Further, define $\delta = \lambda^{-1}$. As $\Lambda$ is bounded away from $0$ and $\infty$, it is equivalent to working with $\delta$, instead of 
$\lambda$.  For a given observation $X=x$, the log-likelihood of $\delta$ is given by
 $L(\delta) =  \log \delta - 0.5(x^2\delta^{2} + 1 -2x\delta)$. 
 Thus (B4) holds and 
 \begin{eqnarray}
 \frac{\partial^2}{\partial \delta^2} L(\delta) = - \delta^{-2} - x^2. \nonumber
 \end{eqnarray}
 Therefore, (B5) holds with $G_2(x) = 1+x^2$.   Moreover, as we are studying Gaussian distributions, clearly (B6)-(B9) hold. 
 \\ \indent Suppose our data $\{X_{kt}\}$  are independently generated from $\mathcal{N}(\theta_k,\theta_k^2)$ and $\mathcal{N}(\eta_k,\eta_k^2)$, respectively, when $t \leq n\tau_n$ and $t>n\tau_n$. Moreover, $\theta_k \neq \eta_k$ for at least one $k$ and hence $\tau_n$ is the common change point.  Let $\theta^{-1} = (\theta_1^{-1},\theta_2^{-1},\ldots,\theta_m^{-1})$ and   $\theta^{-1} = (\eta_1^{-1},\eta_2^{-1},\ldots,\eta_m^{-1})$. By Theorem \ref{thm: mle1g}, if  (SNR1) holds and $\log m(n) = o(n)$,  then
$n||\theta^{-1} - \eta^{-1}||_2^2 (\hat{\tau}_{n,\text{MLE}} -\tau_n) = O_{P}(1)$. 
By Theorem \ref{thm: mle2g}(a), under  (SNR1), $\log m(n) = o(n)$ and $||\theta^{-1} - \eta^{-1}||_2 \to \infty$, 
$P(\hat{\tau}_{n,\text{MLE}} =\tau_n) \to 1.$ 

Note that $\frac{\partial^3}{\partial \delta^3} L(\delta) = 2\delta^{-3}$ and thus (B11) holds. Moreover, $I(\lambda)= -E\frac{\partial^2}{\partial \delta^2} L(\delta) = 3\delta^{-2}$. Suppose the following limit exists:
\begin{eqnarray}
\sigma^2 = \lim \frac{3\sum_{k=1}^{m}(\theta_k^{-1} - \eta_k^{-1})^{2} \theta_k^{2}}{||\theta^{-1}-\eta^{-1}||_2^2}.
\end{eqnarray}
\noindent  Hence, by Theorem \ref{thm: mle2g}(b), if  (SNR1) holds  and $||\theta^{-1} - \eta^{-1}||_2 \to 0$, then
\begin{eqnarray}
n||\theta^{-1} - \eta^{-1}||_2^2 (\hat{\tau}_{n,\text{MLE}} -\tau_n) \stackrel{}{\to} \frac{1}{\sigma^2} \arg \max_{h \in \mathbb{R}} (-0.5|h| + B_h). \nonumber 
\end{eqnarray}
Recall the set $\mathcal{K}_0$ and $\mathcal{K}_n=\mathcal{K}_0^c$ in (\ref{eqn: partition}).  Suppose the following limit exists:
\begin{eqnarray}
\sigma_1^2 = \lim \sum_{k \in \mathcal{K}_n} 3(\theta_k^{-1} - \eta_k^{-1})^2 \theta_{k}^2. \nonumber
\end{eqnarray}  
Suppose (A4), (B10) and (SNR1) hold, $\sup_{k \in \mathcal{K}_n}|\theta_k^{-1} -\eta_k^{-1} |_2 \to 0$ and $||\theta^{-1} - \eta^{-1}||_2 \to c >0$, then
\begin{eqnarray}
n (\hat{\tau}_{n,\text{MLE}} -\tau) &\stackrel{\mathcal{D}}{\to}& \arg \max_{h \in \mathbb{Z}} (D_2(h) + C_2(h) + A_2(h)) \nonumber 
\end{eqnarray}
where  for each $h \in \mathbb{Z}$,
\begin{eqnarray}
D_2 ( h+1)-D_2 ( h) &=& -0.5  \sigma_1^2,\ \  
C_2 (h+1) - C_2 ( h) = \sigma_1^2W_h,\ \ W_h \stackrel{\text{i.i.d.}}{\sim} \mathcal{N}(0,1), \nonumber \\
A_2 (h+1) - A_2 ( h) &=& \sum_{k \in \mathcal{K}_0}  \left(\log (\theta_k^*/\eta_k^*) + 0.5Z_{kh}^2(\theta_k^{*-2}- \eta_k^{*-2}) - Z_{kh}(\theta_k^{*-1}-\eta_k^{*-1}) \right) \nonumber
\end{eqnarray}
and $\{Z_{kh}\}$ are independent with $Z_{kh} \stackrel{d}{=} \mathcal{N}(\theta_k^*,\theta_k^{*2})I(h \leq 0) + \mathcal{N}(\eta_k^*,\eta_k^{*2})I(h > 0).$
\end{Example}

\subsection{Supplement to Section 4.1: Adaptive inference for Gaussian time dependent data}
We observe the data $\{X_{kt}: 1 \leq k \leq m, 1 \leq t \leq n\}$ which are independent over $k$ but dependent over $t$. For all $t,t_1,t_2,\ldots, t_l, l,k \geq 1$,  suppose 
\begin{eqnarray}
&& {X}_{kt} = \tilde{Y}_{kt} + \mu_{1k}I(t \leq n\tau_n) + \mu_{2k}I(t > n\tau_n), \label{eqn: normal1} \\
&& (Y_{kt_1},Y_{kt_2},\ldots, Y_{kt_l}) \sim \mathcal{N}_{l} (0, ((\text{Cum}(Y_{kt_{l_i}},Y_{kt_{l_j}}) ))_{l \times l})\ \ \text{and}  \label{eqn: normal2} \\
&& \sum_{l=-\infty}^{\infty}  \sup_{k} |\text{Cum}(Y_{k1},Y_{k(l+1)})| < \infty \label{eqn: add}
\end{eqnarray}
where $\{Y_{kt}\}$ are not observable.  Let,
\begin{eqnarray}
\hat{\mu}_{1k} = \frac{1}{n\hat{\tau}_{n,\text{LSE}}} \sum_{t=1}^{n\hat{\tau}_{n,\text{LSE}}} X_{kt},\ \ \hat{\mu}_{2k} = \frac{1}{n(1-\hat{\tau}_{n,\text{LSE}})} \sum_{t=n\hat{\tau}_{n,\text{LSE}}+1}^{n} X_{kt}, \nonumber \\
\hat{{C}}_{k,l}  = \bigg(\frac{1}{n\hat{\tau}_{n,\text{LSE}}-l}\sum_{t=1}^{n\hat{\tau}_{n,\text{LSE}}-l} X_{kt} X_{k(t+l)}\bigg) - \hat{\mu}_{1k}^2.
\end{eqnarray}
We generate $\{\tilde{Y}_{kt,\text{LSE}}: k,t \geq 1\}$ independently over $k \geq 1$ and for all $t_1,t_2,\ldots, t_l, l \geq 1$, 
\begin{eqnarray}
(\tilde{Y}_{kt_1,\text{LSE}},\tilde{Y}_{kt_2,\text{LSE}},\ldots,\tilde{Y}_{kt_l,\text{LSE}}) \sim \mathcal{N}_{l} (0, ((\hat{C}_{k,|t_{i} - t_{j}| }))_{l \times l}).
\end{eqnarray}
Define $\tilde{X}_{kt,\text{LSE}} = \hat{\mu}_{1k}I(t \leq n \hat{\tau}_{n,\text{LSE}}) + \hat{\mu}_{2k}I(t >  n\hat{\tau}_{n,\text{LSE}}) + \tilde{Y}_{kt,\text{LSE}}\ \ \forall k,t \geq 1$.  Recall $\tilde{M}_n(h)$ from (4.2). 
 Let $\tilde{h}_{n,\text{LSE}} = \arg \max_{h} \tilde{M}_n(h)$. The following theorem states asymptotic distribution of $\tilde{h}_{n,\text{LSE}}$.  

\begin{theorem} \label{thm: deptildetau}
Suppose (\ref{eqn: normal1})-(\ref{eqn: add}) hold. Then the following statements are true.
\vskip 3pt
\noindent (a) Under (SNR1) and $||\mu_1 - \mu_2||_2 \to \infty$, $P(\tilde{h}_{n,\text{LSE}} = 0) \to 1$.
\vskip 3pt
\noindent (b) Suppose (D4) and (SNR3) hold and $||\mu_1 - \mu_2||_2 \to 0$. Then
\begin{eqnarray}
||{\mu}_1 -{\mu}_2||_2^2 \tilde{h}_{n,\text{LSE}}  \stackrel{\mathcal{D}}{\to} \arg \max_{h \in \mathbb{R}} (-0.5|h| + B^{*}_h)
\end{eqnarray}
where for all $h_1,h_2,\ldots, h_r \in \mathbb{R}$ and $r \geq 1$, 
\begin{eqnarray}
(B_{h_1}^{*},B_{h_2}^{*},\ldots, B_{h_r}^{*}) \sim \mathcal{N}_{r}(0,\Sigma)\ \ \text{where}\ \  
\Sigma = ((\gamma_{(h_1,h_2),{\rm{DEP,MSE}}} ))_{1 \leq h_1,h_2 \leq r}.
\end{eqnarray}.
\noindent (c) Suppose (A4), (A7), (D5), (D6), (SNR3) hold and  $||\mu_1-\mu_2||_2 \to c>0$. Then 
\begin{eqnarray}
\tilde{h}_{n,\text{LSE}} \stackrel{\mathcal{D}}{\to}  \arg \max_{h \in \mathbb{Z}} (D^{*}(h) + C^{*}(h) + A^{*}(h))
\end{eqnarray}
where for each $h, t_1,t_2,\ldots,t_r \in \mathbb{Z}$ and $r \geq 1$,
\begin{eqnarray}
&& D^{*}(h) = -0.5  c_1^2 |h|,\ \ \ C^{*}(h) = \sum_{t=0 \wedge h}^{0 \vee h} W^{*}_t,\ \  
(W_{t_1}^{*}, W_{t_2}^{*},\ldots, W_{t_r}^{*}) \sim \mathcal{N}_{r}(0,\Sigma^{*}),\nonumber \\
&& \Sigma^{*} = ((\gamma^{*}_{(t_1,t_1),\text{DEP,LSE}})),\  
A^*(h) = \sum_{t=0 \wedge h}^{0 \vee h} \sum_{k \in \mathcal{K}_0} \bigg[(Y_{kt} + (\mu_{2k}^{*}-\mu_{1k}^{*}){\rm{sign}}(h))^2 -Y_{kt}^2 \bigg]{\rm{sign}}(h).\  \nonumber 
\end{eqnarray}
\end{theorem}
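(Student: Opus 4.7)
The overall strategy parallels the proof of Theorem \ref{thm: adaplse}. Conditionally on the observed data $\{X_{kt}\}$, the replicated panel $\{\tilde{X}_{kt,\text{LSE}}\}$ has the same structural form as the original model in (\ref{eqn: normal1})--(\ref{eqn: normal2}), but with true mean parameters $(\mu_{1k},\mu_{2k})$ replaced by $(\hat{\mu}_{1k},\hat{\mu}_{2k})$, true change point $\tau_n$ replaced by $\hat{\tau}_{n,\text{LSE}}$, and true autocovariances $\text{Cum}(Y_{k1},Y_{k(l+1)})$ replaced by $\hat{C}_{k,l}$. The plan is therefore to apply a conditional analog of Theorem \ref{thm: lse4} to this replicated data set, and then to show that the estimated quantities are close enough to the truth that the resulting limiting objects coincide with those in Theorem \ref{thm: lse4}.

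Part (a) is the easiest: under (SNR1) and $\|\mu_1-\mu_2\|_2\to\infty$, Theorem \ref{thm: lse4}(a) yields $P(\hat{\tau}_{n,\text{LSE}}=\tau_n)\to 1$, after which the Gaussian tail bounds on $Y_{kt}$ imply $\sup_k|\hat{\mu}_{ik}-\mu_{ik}|=O_P(\sqrt{n^{-1}\log m})$. Conditionally on this high-probability event, $\|\hat{\mu}_1-\hat{\mu}_2\|_2\to\infty$ as well, so a conditional invocation of Theorem \ref{thm: lse4}(a) gives $P(\tilde{h}_{n,\text{LSE}}=0\mid\{X_{kt}\})\to 1$ in probability.

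For parts (b) and (c) the critical additional step beyond Theorem \ref{thm: adaplse} is establishing the dependent-data analog of Lemma \ref{lem: adap}: for every fixed $h_1,h_2$ (resp. $t_1,t_2$),
\begin{equation*}
\sum_{t_1,t_2}\sum_{k=1}^{m}(\hat{\mu}_{1k}-\hat{\mu}_{2k})^2\,\hat{C}_{k,|t_1-t_2|}\ \xrightarrow{P}\ \gamma_{(h_1,h_2),\text{DEP,LSE}}
\end{equation*}
in regime (b), and the analogous statement over $k\in\mathcal{K}_n$ for regime (c). Once this convergence is in hand, the proof of (b) proceeds exactly as that of Theorem \ref{thm: lse4}(b), noting that the finite-dimensional conditional distributions of $n(\tilde{M}_n(\|\hat\mu_1-\hat\mu_2\|_2^{-2}h)-\tilde{M}_n(0))$ are exactly Gaussian (since $\tilde{Y}_{kt,\text{LSE}}$ is Gaussian by construction), so no Lyapunov-type verification is needed -- only the covariance convergence above. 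The proof of (c) adds the handling of $\mathcal{K}_0$: by (A4), (D5), and the continuity assumption (A9)--(A10), the finite collection of summands indexed by $\mathcal{K}_0$ converges in distribution to $A^*(h)$, independently of the $\mathcal{K}_n$ contribution.

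The main obstacle will be controlling $\hat{C}_{k,l}$ well enough, uniformly over $k$, to push the displayed convergence through. Two coupled errors must be handled simultaneously: (i) the error from replacing the unknown $\tau_n$ by $\hat{\tau}_{n,\text{LSE}}$ inside the empirical autocovariance, and (ii) the need to truncate the implicit infinite sum over lags $l$ at a slowly growing cut-off and exploit the summability condition (\ref{eqn: add}) so that residual lags contribute negligibly. For (i), Theorem \ref{thm: lse4} provides the rate $\hat{\tau}_{n,\text{LSE}}-\tau_n=O_P(n^{-1}\|\mu_1-\mu_2\|_2^{-2})$; plugging this into $\hat{C}_{k,l}$ produces an error that is asymptotically negligible after multiplication by $(\hat{\mu}_{1k}-\hat{\mu}_{2k})^2$ precisely because (SNR3) controls $\sup_k|\hat{\mu}_{ik}-\mu_{ik}|$. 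For (ii), Gaussian concentration for empirical autocovariances of a stationary Gaussian sequence gives the required tail bounds, with the summability (\ref{eqn: add}) ensuring uniform-in-$k$ truncation. Combining these with the finite-dimensional Gaussian characterization of the conditional law and the standard tightness argument from Section \ref{subsec: lse4} then closes parts (b) and (c) via Lemma \ref{lem: wvandis1}.
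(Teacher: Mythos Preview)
Your proposal is correct and follows essentially the same route as the paper: the paper states that the proof ``will go through the same arguments as given in Section \ref{subsec: adaplse}'' once the empirical-covariance convergences
\[
\sum_{t_1,t_2}\sum_{k}(\hat{\mu}_{1k}-\hat{\mu}_{2k})^2\hat{C}_{k,|t_1-t_2|}\xrightarrow{P}\gamma_{(h_1,h_2),\text{DEP,LSE}}
\quad\text{and}\quad
\sum_{k\in\mathcal{K}_n}(\hat{\mu}_{1k}-\hat{\mu}_{2k})^2\hat{C}_{k,|t_1-t_2|}\xrightarrow{P}\gamma^*_{(t_1,t_2),\text{DEP,LSE}}
\]
are established, which is exactly the reduction you identify. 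The paper's verification of these convergences uses a six-term decomposition $T_1+\cdots+T_6$ that implements precisely the two ingredients you name---Gaussian concentration for the empirical autocovariances (their $T_1$), the summability (\ref{eqn: add}) for lag truncation (their $T_3,T_6$), and $\|\hat\mu_1-\hat\mu_2\|_2^2/\|\mu_1-\mu_2\|_2^2\to 1$ via Lemma 5.8 (their $T_4,T_5$)---so your outline matches the paper's argument.
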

\begin{proof}
This  will go through the same arguments as given in Section 5.7,  
once we establish
\vskip 3pt
\noindent (a) 
   $\displaystyle \sum_{t_1=0 \wedge [h_1 ||\hat{\mu}_1-\hat{\mu}_2||_2^{-2}] }^{0 \vee [h_1||\hat{\mu}_1-\hat{\mu}_2||_2^{-2}] }\ \  \sum_{t_2=0 \wedge [h_2 ||\hat{\mu}_1-\hat{\mu}_2||_2^{-2}] }^{0 \vee [h_2||\hat{\mu}_1-\hat{\mu}_2||_2^{-2}] } \left(\sum_{k=1}^{m}(\hat{\mu}_{1k}-\hat{\mu}_{2k})^2 \hat{C}_{k,t_1-t_2} \right) \stackrel{\text{P}}{\to} \gamma_{(h_1,h_2),\text{DEP,LSE}}$,
\vskip 3pt
\noindent (b)  $\lim \sum_{k \in \mathcal{K}_n} (\hat{\mu}_{1k} -\hat{\mu}_{2k})^2 \hat{C}_{k,t_1-t_2} \stackrel{\text{P}}{\to} \gamma^{*}_{(t_1,t_2),\text{DEP,LSE}}$.
\vskip 3pt
\noindent for $h_1, h_2$ in a compact subset of $\mathbb{R}$. Here we shall show (a) only.  (b) can be proved similarly. 
\vskip 3pt
\noindent \textbf{Proof of (a)}. Consider $h_1, h_2 >0$. 
\begin{eqnarray}
&& \sum_{t_1 = 1}^{ h_1 ||\hat{\mu}_1 - \hat{\mu}_2||_2^{-2}} \sum_{t_2 =1}^{ h_2 ||\hat{\mu}_1 - \hat{\mu}_2||_2^{-2}}  \sum_{k=1}^{m} (\hat{\mu}_{1k} - \hat{\mu}_{2k})^2 \hat{C}_{k,t_1-t_2} \nonumber \\
&& \hspace{3 cm}- \sum_{t_1 = 1}^{ h_1 ||{\mu}_1 - {\mu}_2||_2^{-2}} \sum_{t_2 = 1}^{ h_2 ||{\mu}_1 - {\mu}_2||_2^{-2}}  \sum_{k=1}^{m} ({\mu}_{1k} - {\mu}_{2k})^2 \text{Cum} (X_{kt_1},X_{kt_2}) \nonumber  \\
&=& \sum_{t_1 =1}^{ h_1 ||\hat{\mu}_1 - \hat{\mu}_2||_2^{-2}} \sum_{t_2 =1}^{ h_2 ||\hat{\mu}_1 - \hat{\mu}_2||_2^{-2}}  \sum_{k=1}^{m} (\hat{\mu}_{1k} - \hat{\mu}_{2k})^2 (\hat{C}_{k,t_1-t_2} - E\hat{C}_{k,t_1-t_2})\nonumber \\
&& \hspace{0.5 cm}+ \sum_{t_1 =1}^{ h_1 ||\hat{\mu}_1 - \hat{\mu}_2||_2^{-2}} \sum_{t_2 = 1}^{ h_2 ||\hat{\mu}_1 - \hat{\mu}_2||_2^{-2}}  \sum_{k=1}^{m} (\hat{\mu}_{1k} - \hat{\mu}_{2k})^2 (E\hat{C}_{k,t_1-t_2} - \text{Cum} (X_{kt_1},X_{kt_2})) \nonumber  \\
&& +  \bigg[ \sum_{k=1}^{m} (\hat{\mu}_{1k} - \hat{\mu}_{2k})^2 \sum_{t = - h_1 ||\hat{\mu}_1 - \hat{\mu}_2||_2^{-2}}^{h_1 ||\hat{\mu}_1 -\hat{\mu}_2||_2^{-2}} \bigg(\frac{h_1}{||\hat{\mu}_1 - \hat{\mu}_2||_2^{2}}- t \bigg) \text{Cum} (X_{k1},X_{k(t+1)}) \nonumber \\
&& \hspace{1 cm}- \sum_{k=1}^{m} (\hat{\mu}_{1k} - \hat{\mu}_{2k})^2 \sum_{t = - h_1 ||{\mu}_1 - {\mu}_2||_2^{-2}/2}^{h_1 ||{\mu}_1 -{\mu}_2||_2^{-2}/2} \bigg(\frac{h_1}{||\hat{\mu}_1 - \hat{\mu}_2||_2^{2}}- t \bigg) \text{Cum} (X_{k1},X_{k(t+1)}) \bigg] \nonumber \\
&& + \bigg[ \sum_{k=1}^{m} (\hat{\mu}_{1k} - \hat{\mu}_{2k})^2 \sum_{t = - h_1 ||{\mu}_1 - {\mu}_2||_2^{-2}/2}^{h_1 ||{\mu}_1 -{\mu}_2||_2^{-2}/2} \bigg(\frac{h_1}{||\hat{\mu}_1 - \hat{\mu}_2||_2^{2}}- t \bigg) \text{Cum} (X_{k1},X_{k(t+1)}) \nonumber \\
&& \hspace{1 cm}- \sum_{k=1}^{m} (\hat{\mu}_{1k} - \hat{\mu}_{2k})^2 \sum_{t = - h_1 ||{\mu}_1 - {\mu}_2||_2^{-2}/2}^{h_1 ||{\mu}_1 -{\mu}_2||_2^{-2}/2} \bigg(\frac{h_1}{||{\mu}_1 - {\mu}_2||_2^{2}}- t \bigg) \text{Cum} (X_{k1},X_{k(t+1)}) \bigg] \nonumber \\
&& + \bigg[ \sum_{k=1}^{m} (\hat{\mu}_{1k} - \hat{\mu}_{2k})^2 \sum_{t = - h_1 ||{\mu}_1 - {\mu}_2||_2^{-2}/2}^{h_1 ||{\mu}_1 -{\mu}_2||_2^{-2}/2} \bigg(\frac{h_1}{||{\mu}_1 - {\mu}_2||_2^{2}}- t \bigg) \text{Cum} (X_{k1},X_{k(t+1)}) \bigg] \nonumber \\
&& \hspace{1 cm} - \sum_{k=1}^{m} ({\mu}_{1k} - {\mu}_{2k})^2 \sum_{t = - h_1 ||{\mu}_1 - {\mu}_2||_2^{-2}/2}^{h_1 ||{\mu}_1 -{\mu}_2||_2^{-2}/2} \bigg(\frac{h_1}{||{\mu}_1 - {\mu}_2||_2^{2}}- t \bigg) \text{Cum} (X_{k1},X_{k(t+1)}) \bigg] \nonumber \\
&& + \sum_{k=1}^{m} ({\mu}_{1k} - {\mu}_{2k})^2 \sum_{|t| >  h_1 ||{\mu}_1 - {\mu}_2||_2^{-2}/2} \bigg(\frac{h_1}{||{\mu}_1 - {\mu}_2||_2^{2}}- t \bigg) \text{Cum} (X_{k1},X_{k(t+1)}). \nonumber \\
&=& T_1 + T_2 + T_3 + T_4 + T_5 + T_6,\ \ \ \text{(say)}.
\end{eqnarray}
In Lemma 5.8, 
we have seen that
$\frac{||\hat{\mu}_1 - \hat{\mu}_2||_2^2}{||\mu_1 - \mu_2||_2^2} - 1 = o_{P}(1)$. 
Thus, by (\ref{eqn: add}) and for some $C>0$,
$|T_4| + |T_5| \leq C\bigg(\frac{||\hat{\mu}_1 - \hat{\mu}_2||_2^2}{||\mu_1 - \mu_2||_2^2} - 1 \bigg) = o_{P}(1)$.
Also,  for some $C_1, C_2>0$
\begin{eqnarray}
|T_3| + |T_6| \leq C_1 \sum_{|t| > C_2||\mu_1 - \mu_2||_2^{-2}} \sup_{k} |\text{Cum} (X_{k1},X_{k(t+1)})| + o_{P}(1) = o_{P}(1).
\end{eqnarray}
It is easy to see that for some $C>0$, $\sup_{k} |E\hat{C}_{k,t_1-t_2} - \text{Cum}(X_{kt_1},X_{kt_2})| \leq Cn^{-1}$. Therefore, for some $C>0$,
$|T_2|  \leq C(n||\hat{\mu_1}-\hat{\mu}_2||_2^2)^{-1} = o_{P}(1)$.
Also, for some $C_1,C_2>0$, $$|T_1| \leq C_2 \sum_{t=-C_1 ||\hat{\mu}_1 - \hat{\mu}_2||_2^{-2}}^{C_1 ||\hat{\mu}_1 - \hat{\mu}_2||_2^{-2}} |\hat{C}_{k,t} - E\hat{C}_{k,t}|.$$
Now, using the tail probability $P(|Z|>\epsilon) \leq C_1 e^{-C_2\epsilon^2}$ for the standard normal variable $Z$ and for some $C_1,C_2>0$, it is easy to see that $T_1 = o_{P}(1)$.  Similarly idea works for $h_1,h_2<0$;  $h_1<0,h_2>0$ and $h_1>0,h_2<0$. Hence (a) is established. 

This completes the proof of Theorem \ref{thm: deptildetau}.
\end{proof}

\subsection{Proof of auxiliary lemmas}
\subsubsection{Proof of Lemma 5.2} 
\label{subsec: expectation}
Note that
\begin{align*}
& E(N_{1k}^2 (b)) = E \left(\frac{1}{nb}\sum_{t=1}^{nb}(X_{kt} - E(X_{kt})) \right)^2 = \frac{1}{(nb)^2} \sum_{t=1}^{nb} \mbox{Var}(X_{kt}) \leq \frac{1}{nb} \sup_{k,t,n} \mbox{Var}(X_{kt}), \\
& E(N_{2k}^2 (b)) = E \left(\frac{1}{n(1-b)}\sum_{t=nb+1}^{n}(X_{kt} - E(X_{kt})) \right)^2 = \frac{1}{(n(1-b))^2} \sum_{t=nb+1}^{n} \mbox{Var}(X_{kt}) \\ 
& \hspace{8.5 cm}\leq \frac{1}{n(1-b)} \sup_{k,t,n} \mbox{Var}(X_{kt}), \\
& E(N_{3k}^2 (b,\tau)) = E \left(\frac{1}{n(\tau-b)}\sum_{t=nb+1}^{n\tau}(X_{kt} - E(X_{kt})) \right)^2 = \frac{1}{(n(\tau-b))^2} \sum_{t=nb+1}^{n\tau} \mbox{Var}(X_{kt}) \\
& \hspace{9 cm}\leq \frac{1}{n(\tau-b)} \sup_{k,t,n} \mbox{Var}(X_{kt}), \\
& E(N_{1k}(\tau) N_{3k}(b,\tau)) = \frac{1}{n\tau} \frac{1}{n(\tau-b)} E \left[\left(\sum_{t=1}^{n\tau} (X_{kt} - E(X_{kt})) \right) \left(\sum_{t=nb+1}^{n\tau} (X_{kt} - E(X_{kt})) \right) \right] \\
& \hspace{3.5 cm}= \frac{1}{n\tau} \frac{1}{n(\tau-b)}  \sum_{t=nb+1}^{n\tau} \mbox{Var}(X_{kt}) \leq \frac{1}{n\tau} \sup_{k,t,n} \mbox{Var}(X_{kt}), \\
& E(N_{2k}(b) N_{3k}(b,\tau)) = \frac{1}{n(1-b)} \frac{1}{n(\tau-b)} E \left[\left(\sum_{t=nb+1}^{n} (X_{kt} - E(X_{kt})) \right) \left(\sum_{t=nb+1}^{n\tau} (X_{kt} - E(X_{kt})) \right) \right] \\
& \hspace{3.5 cm}= \frac{1}{n(1-b)} \frac{1}{n(\tau-b)}  \sum_{t=nb+1}^{n\tau} \mbox{Var}(X_{kt}) \leq \frac{1}{n(1-b)} \sup_{k,t,n} \mbox{Var}(X_{kt}).
\end{align*}
Therefore,
\begin{align*}
& \sup_{k,b} E(N_{1k}^2 (b)) \leq Cn^{-1},\ \ \sup_{k,b} E(N_{2k}^2(b)) \leq Cn^{-1},\ \ \sup_{k} E(N_{3k}(b,\tau)) \leq C(n(\tau-b))^{-1}, \\
 & \sup_{k,b<\tau} E(N_{1k}(\tau) N_{3k}(b,\tau)) \leq Cn^{-1},\ \ \sup_{k,b<\tau} E(N_{2k}(b) N_{3k}(b,\tau)) \leq Cn^{-1}.
\end{align*}
Since $N_{4k}$ is non-random, we get $E(N_{4k}^2) = N_{4k}^2$. Moreover, since $N_{1k}(b)$, $N_{2k}(\tau)$ and $N_{3k}(b,\tau)$ are independently distributed, we have
\begin{align*}
\sup_{k,b<\tau} E(N_{1k}(b) N_{3k}(b,\tau)) = 0,\ \ \sup_{k,b<\tau} E(N_{2k}(\tau) N_{3k}(b,\tau)) = 0.
\end{align*}
Finally, since $E(N_{1k}(b)) = E(N_{2k}(b)) = E(N_{3k}(b,\tau)) = 0$ for all $b$, we get
\begin{align*}
\sup_{k,b} E(N_{1k}(b)N_{4k}) = 0,\ \ \sup_{k,b}E(N_{2k}(b)N_{4k}) = 0\ \ \text{and}\ \ \sup_{k,b}E(N_{3k}(b,\tau)N_{4k}) = 0.
\end{align*}
Hence, Lemma 5.2 
 is established. \qed

\subsubsection{Proof of Lemma 5.3} 
\label{subsec: variance}
Note that by (A1) and for some $C>0$, 
\begin{align*}
\mbox{Var}(N_{1k}^2(b)) &= E(N_{1k}^4 (b)) - (E(N_{1k}^2 (b)))^2 \\
& = E \bigg(\frac{1}{nb}\sum_{t=1}^{nb}(X_{kt}-E(X_{kt})) \bigg)^4 - \bigg(\frac{1}{(nb)^2} \sum_{t=1}^{nb} \mbox{Var}(X_{kt}) \bigg)^2 \\
& = \frac{1}{(nb)^4} \bigg[ \sum_{t=1}^{nb}E(X_{kt}-E(X_{kt}))^4 + 2 \bigg( \sum_{t=1}^{nb} \mbox{Var}(X_{kt}) \bigg)^2\bigg] \\
& \leq  \sup_{k,t,n} E(X_{kt}-E(X_{kt}))^4 (nb)^{-3} + 2 \sup_{k,t,n} \mbox{Var}(X_{kt}) (nb)^{-2} \\
& \leq Cn^{-2}.
\end{align*}
Therefore, $\sup_{k,b} \mbox{Var}(N_{1k}^2 (b)) \leq Cn^{-2}$. Similarly, $\sup_{k,b} \mbox{Var}(N_{2k}^2 (b)) \leq Cn^{-2}$.

\noindent Next,
\begin{align*}
\mbox{Var}(N_{3k}^2(b,\tau)) &= E(N_{3k}^4 (b,\tau)) - (E(N_{3k}^2 (b,\tau)))^2 \\
& = E \bigg(\frac{1}{n(\tau-b)}\sum_{t=nb+1}^{n\tau}(X_{kt}-E(X_{kt})) \bigg)^4 - \bigg(\frac{1}{(n(\tau-b))^2} \sum_{t=nb+1}^{n\tau} \mbox{Var}(X_{kt}) \bigg)^2 \\
& = \frac{1}{(n(\tau-b))^4} \bigg[ \sum_{t=nb+1}^{n\tau}E(X_{kt}-E(X_{kt}))^4 + 2 \bigg( \sum_{t=nb+1}^{n\tau} \mbox{Var}(X_{kt}) \bigg)^2\bigg] \\
& \leq  \sup_{k,t,n} E(X_{kt}-E(X_{kt}))^4 (n(\tau-b))^{-3} + 2 \sup_{k,t,n} \mbox{Var}(X_{kt}) (n(\tau-b))^{-2} \\
& \leq C(n(\tau-b))^{-2}.
\end{align*}
Therefore,  $\sup_{k} \mbox{Var}(N_{3k}^2(b,\tau)) \leq C(n(\tau-b))^{-2}$.  Similarly, $\sup_{k} \mbox{Var}(N_{1k}(b)N_{3k}(b,\tau)) \leq C(n(\tau-b))^{-2}$ and $\sup_{k} \mbox{Var}(N_{2k}(b)N_{3k}(b,\tau)) \leq C(n(\tau-b))^{-2}$. Moreover, as $N_{4k}$ is non-random, $\mbox{Var}(N_{4k}^2) = 0$. Also by Lemma 5.2, 
\begin{align*}
\mbox{Var}(N_{4k} N_{3k}(b,\tau)) = N_{4k}^2 E(N_{3k}(b,\tau))^2 \leq CN_{4k}^2 (n(\tau-b))^{-1}.
\end{align*}
Therefore,   $\sup_ {k} \mbox{Var}(N_{4k} N_{3k}(b,\tau)) \leq CN_{4k}^2 (n(\tau-b))^{-1}$.
Similarly,  $\sup_ {k} \mbox{Var}(N_{4k} N_{2k}(b,\tau)) \leq CN_{4k}^2 n^{-1}$. 
This completes the proof of Lemma 5.3. 
\qed

\subsubsection{Proof of Lemma 5.7} 
\label{subsec: Mndom}
Note that
$L_{k,n}(b) - L_{k,n}(\tau) = \sum_{i=1}^{4} A_{ik} (b) + \mathbb{M}_{7k}(b)$ 
where $\{\mathbb{M}_{7k}(b)\}$ and $\{A_{ik}(b)\}$ are taken respectively from  Sections 5.5 and 5.6. 
\noindent To prove Lemma 5.7, 
it is enough to establish (a)-(d) given below. 
\begin{eqnarray}
&& (a)\ \ P(A_{1k}(b) \leq C\mathbb{M}_{1k}(b)) \to 1,\ \ \ (b)\ \ P(A_{2k}(b) \leq C\mathbb{M}_{2k}(b)) \to 1,\nonumber \\
&& (c)\ \ P(A_{3k}(b) \leq C(\mathbb{M}_{3k}(b)+ \mathbb{M}_{4k}(b))) \to 1,\ \ \ (d)\ \ P(A_{4k}(b) \leq C(\mathbb{M}_{5k}(b)+\mathbb{M}_{6k})) \to 1. \nonumber 
\end{eqnarray}
Next, we prove (a) and (c) only. The proof of (b) and (d) is similar. 

\noindent We state two lemmas that are useful in proving (a) and (c). Their proofs are respectively given in Sections 
\ref{subsec: inlemma} and \ref{subsec: mlesdvbdd}. 

\begin{lemma} \label{lem: inlemma}
Suppose (B3)-(B5), (B8), (B9) hold and $\log m(n) = o(n)$. Then, for all $b \in (c,1-c)$, we have
$P(\hat{\theta}_k(b),  \hat{\eta}_k(b)  \in \Lambda \ \ \forall k) \to 1$. 
\end{lemma}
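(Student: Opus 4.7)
\textbf{Proof plan for Lemma \ref{lem: inlemma}.} Fix $b \in (c^{*},1-c^{*})$ and, by symmetry, treat the case $b \leq \tau_n$ so that the pre-break MLE $\hat{\theta}_k(b)$ is built from the i.i.d.\ sample $\{X_{kt}: 1 \leq t \leq nb\}$ drawn from $\mathbb{P}_{\theta_k}$. The basic idea is a single-step Taylor linearization of the likelihood equation around $\theta_k$, controlled uniformly in $k$ by exponential tail bounds coming from (B8)--(B9), with the budget $\log m(n) = o(n)$ absorbing the union bound. Since the argument is identical for $\hat{\eta}_k(b)$ (working on the post-break segment, whose length is at least $nc^{*}$), it suffices to carry it out for $\hat{\theta}_k(b)$.

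First I would identify a compact subset $\Lambda_0 \subset \Lambda$ containing $\{\theta_k(n): k,n \geq 1\}$ in its interior with $\inf_{\lambda \in \Lambda_0, \lambda' \in \partial \Lambda} |\lambda - \lambda'| \geq \rho > 0$; such a $\Lambda_0$ exists because, after (SNR2), one works in any regime where $\{\theta_k\}$ stays bounded in the interior of $\Lambda$ guaranteed by (B3). Setting
\[
S_k := \frac{1}{nb}\sum_{t=1}^{nb} \frac{\partial}{\partial \lambda}\log \mathbb{P}_{\theta_k}(X_{kt}),\qquad H_k(\lambda) := \frac{1}{nb}\sum_{t=1}^{nb} \frac{\partial^2}{\partial \lambda^2}\log \mathbb{P}_{\lambda}(X_{kt}),
\]
a mean-value expansion of the likelihood equation yields, on the event that $\hat{\theta}_k(b)$ exists and lies in $\Lambda$, the identity $\hat{\theta}_k(b) - \theta_k = - S_k / H_k(\theta_k^{*})$ for some $\theta_k^{*}$ between $\theta_k$ and $\hat{\theta}_k(b)$. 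Hence it is enough to show that, with probability tending to one, $\max_{k\leq m} |S_k|$ is small and $\inf_k \inf_{\lambda \in \Lambda} |H_k(\lambda)|$ is bounded away from zero.

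For the denominator: by (B5), $|H_k(\lambda)| \geq C_1 \cdot \frac{1}{nb}\sum_{t=1}^{nb} G_2(X_{kt})$ for every $\lambda \in \Lambda$, and by (\ref{eqn: sdv2}), $E G_2(X_{k1}) \geq \epsilon_1$. Assumption (B9) states that $G_2(X) - E G_2(X)$ is marginally sub-Gaussian, so Hoeffding-type concentration gives
\[
P\Big( \Big| \tfrac{1}{nb}\sum_{t=1}^{nb} G_2(X_{kt}) - E G_2(X_{k1}) \Big| > \epsilon_1/2 \Big) \leq C_1 \exp(-C_2 n),
\]
and a union bound over $k \leq m$ produces a probability $\leq C_1 m(n) \exp(-C_2 n)$, which vanishes by $\log m(n)=o(n)$. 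On the complementary event, $\inf_{k,\lambda\in\Lambda}|H_k(\lambda)| \geq C_1 \epsilon_1/2 =: c_H > 0$. For the numerator: $S_k$ is an i.i.d.\ average of mean-zero marginally sub-Gaussian summands by (B8), so for any fixed $\delta>0$,
\[
P(|S_k|>\delta) \leq C_1 \exp(-C_2 n \delta^2),
\]
and the same union bound yields $P(\max_{k\leq m} |S_k| > \delta) \to 0$ under $\log m(n) = o(n)$.

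On the intersection of the two good events, the Taylor identity gives $\max_k |\hat{\theta}_k(b) - \theta_k| \leq \delta/c_H$, which is $< \rho$ for $\delta$ small enough, forcing every $\hat{\theta}_k(b)$ into the interior of $\Lambda$; existence and uniqueness of the consistent root of the likelihood equation in that neighborhood follow from the invertibility of $H_k$ together with the discussion after (\ref{eqn: likelihood equation}). The main obstacle is the uniform-in-$k$ step: the sub-Gaussian assumptions (B8)--(B9) are tailored precisely to deliver an $\exp(-cn)$ tail per coordinate, and it is the growth restriction $\log m(n)=o(n)$ that makes the union bound over $m=m(n)$ panels affordable; any weaker tail (say polynomial moments only) would degrade the allowable $m$ substantially and is the reason one must invoke (B8)--(B9) rather than (B6) alone.
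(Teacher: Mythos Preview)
Your proposal is correct and follows essentially the same approach as the paper's proof: Taylor-expand the likelihood equation to write $\hat{\theta}_k(b)-\theta_k$ as the score average divided by an observed Hessian, lower-bound the denominator via the $G_2$ sandwich in (B5) together with sub-Gaussian concentration from (B9), control the numerator via (B8), and pay for a union bound over $k\le m$ with $\log m(n)=o(n)$. The paper's argument is organized identically; your additional remarks on existence of the local root and on why sub-Gaussian tails (rather than moment bounds) are needed are good expository points but do not constitute a different strategy.
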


\begin{lemma} \label{lem: mlesdvbdd}
 Suppose $\{X_i:\ 1 \leq i \leq n\}$ and $\{Y_i:\ 1 \leq i \leq n\}$ are  two independent random samples from $\mathbb{P}_a$ and $\mathbb{P}_b$ for $a, b \in \Lambda$, respectively. 
 Suppose (B4) and (B5) hold. Then, for some $0 < C_1 \leq C_2 < \infty$, $s_n = O(n)$ and large $n$,    
 \begin{eqnarray}
 0 < C_1 &\leq & \inf_{\lambda \in \Lambda} \bigg|\frac{1}{n+s_n}\sum_{i=1}^{n} \frac{\partial^2}{\partial \lambda^2} \log \mathbb{P}_{\lambda}(X_i) + \frac{1}{n+s_n}\sum_{i=1}^{s_n} \frac{\partial^2}{\partial \lambda^2} \log \mathbb{P}_{\lambda}(Y_i) \bigg|  \nonumber \\
 &\leq & \sup_{\lambda \in \Lambda} \bigg|\frac{1}{n+s_n}\sum_{i=1}^{n} \frac{\partial^2}{\partial \lambda^2} \log \mathbb{P}_{\lambda}(X_i) + \frac{1}{n+s_n}\sum_{i=1}^{s_n} \frac{\partial^2}{\partial \lambda^2} \log \mathbb{P}_{\lambda}(Y_i) \bigg| \leq  C_2 < \infty. \nonumber
\end{eqnarray}  
 \end{lemma}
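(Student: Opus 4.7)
The plan is to reduce the $\inf_\lambda$ and $\sup_\lambda$ of the random averaged second-derivative sum to a deterministic mean function via a uniform weak law of large numbers (WLLN), and then bound that mean uniformly in $\lambda$ using the envelope supplied by (B5). As a preliminary, I would read (B5) in its natural pointwise-in-$\lambda$ form $c_1 G_2(x) \le |\partial^2/\partial\lambda^2 \log \mathbb{P}_\lambda(x)| \le c_2 G_2(x)$, which is how the condition is actually verified throughout Section 3 (see, e.g., Example 3.2). Coupled with $\epsilon_1 J_d \le E_c G_2(X) \le \epsilon_2 J_d$ for every $c \in \Lambda$, this gives the uniform deterministic envelope
\begin{equation*}
c_1 \epsilon_1 J_d \;\le\; \left| E_c \tfrac{\partial^2}{\partial \lambda^2} \log \mathbb{P}_\lambda(X) \right| \;\le\; c_2 \epsilon_2 J_d, \qquad \lambda, c \in \Lambda,
\end{equation*}
the lower bound being valid because $\partial^2/\partial\lambda^2 \log \mathbb{P}_\lambda$ has a consistent sign on $\Lambda$ for the regular families under study (non-positive by concavity of the log-likelihood), so no cancellation can collapse the expectation.

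The second step is to establish
\begin{equation*}
\sup_{\lambda \in \Lambda}\left|\tfrac{1}{n+s_n}\sum_{i=1}^n \tfrac{\partial^2}{\partial\lambda^2}\log \mathbb{P}_\lambda(X_i) + \tfrac{1}{n+s_n}\sum_{i=1}^{s_n} \tfrac{\partial^2}{\partial\lambda^2}\log\mathbb{P}_\lambda(Y_i) - m_n(\lambda)\right| \stackrel{P}{\to} 0,
\end{equation*}
where $m_n(\lambda)$ denotes the expectation of the averaged sum. Pointwise convergence at each $\lambda$ is immediate from the classical WLLN, the summands having finite second moment since $E G_2^2 \le (E G_2^4)^{1/2} < \infty$ by (B5). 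Uniformity over the relevant compact subset of $\Lambda$ follows by a Lipschitz/bracketing argument: (B11) supplies the integrable envelope $G_3$ on the third derivative, hence a uniform $L^1$-Lipschitz modulus for $\lambda \mapsto \partial^2/\partial\lambda^2 \log\mathbb{P}_\lambda(x)$, so a finite $\delta$-net on $\Lambda$ reduces uniform convergence to convergence on the net plus a deterministic oscillation that vanishes with $\delta$.

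Finally, since $s_n = O(n)$ the weights $n/(n+s_n)$ and $s_n/(n+s_n)$ stay in $[c^\dagger, 1]$ for some $c^\dagger > 0$, so $|m_n(\lambda)|$ inherits the envelope of the first step uniformly in $\lambda$ and $n$. Combined with the $o_P(1)$ from the uniform WLLN, with probability tending to one the random averaged sum is sandwiched between $c_1 \epsilon_1/2$ and $2 c_2 \epsilon_2$ uniformly in $\lambda$; taking $\inf_\lambda$ and $\sup_\lambda$ preserves this range, and the lemma follows with constants of the appropriate magnitudes. The hard part I anticipate is the \emph{genuine} uniformity in the WLLN---pointwise convergence is routine, but lower-bounding $\inf_\lambda$ of the random process requires suppressing $\lambda$-oscillations across $\Lambda$, and the Lipschitz modulus provided by $G_3$ through (B11) is what makes this tractable. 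A secondary subtlety is that (B5) formally places the lower bound on $\sup_\lambda|\cdot|$, while the argument silently promotes this to a pointwise-in-$\lambda$ bound, a reading consistent with every verification of (B5) in the paper.
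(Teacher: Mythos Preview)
Your argument is correct in spirit but takes a substantially more elaborate route than the paper, and in doing so leans on an assumption the lemma does not grant. The paper's proof never needs a uniform WLLN over $\lambda$: because the envelope $G_2$ in (B5) is \emph{free of $\lambda$}, one can sandwich the entire random process pointwise,
\[
C_1\Bigl(\tfrac{1}{n+s_n}\sum_i G_2(X_i)+\tfrac{1}{n+s_n}\sum_i G_2(Y_i)\Bigr)\;\le\;\inf_\lambda|\cdots|\;\le\;\sup_\lambda|\cdots|\;\le\;C_2\Bigl(\tfrac{1}{n+s_n}\sum_i G_2(X_i)+\tfrac{1}{n+s_n}\sum_i G_2(Y_i)\Bigr),
\]
and then apply the \emph{ordinary} WLLN to the single $\lambda$-free average on either side, which converges to a convex combination of $E_aG_2(X)$ and $E_bG_2(Y)$ and is therefore trapped in $[\epsilon_1/2,3\epsilon_2]$ by (B5). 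No bracketing, no Lipschitz modulus, no analysis of $m_n(\lambda)$.

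Your route, by contrast, invokes (B11) to control the $\lambda$-oscillation and upgrade pointwise WLLN to uniform. The lemma assumes only (B4) and (B5), and it is used in the proof of Theorem~\ref{thm: mle1g} (the rate result), which likewise does not assume (B11). So relying on (B11) is a genuine gap in the logical chain, not just a matter of taste. The fix is exactly the paper's shortcut: use the $\lambda$-free envelope $G_2$ to remove $\lambda$ from the picture before any limit is taken. Your observation that (B5) must be read pointwise-in-$\lambda$ (as all the examples verify it) is correct and is precisely what makes this shortcut work.
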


\noindent Now we are ready to prove (a) and (c).
\vskip 5pt
\noindent \textbf{Proof of (a)}.
For some $\theta_k^*$ between  $\hat{\theta}_k(b)$ and  $\hat{\theta}_k(\tau)$ and some $C>0$, we get $A_{1k}(b)$ equals
\begin{eqnarray}
&& - \frac{1}{n}\sum_{t=1}^{nb} \bigg[(\hat{\theta}_k(\tau) - \hat{\theta}_k(b)) \frac{\partial }{\partial \theta} \log \mathbb{P}_\theta(X_{kt}) \bigg|_{\theta = \hat{\theta}_k (b)} + \Big( \hat{\theta}_k(\tau) - \hat{\theta}_k(b) \Big)^2 \frac{\partial^2}{\partial \theta^2} \log \mathbb{P}_\theta (X_{kt}) \bigg|_{\theta= \theta^*}  \bigg] \nonumber \\
& = &  b \Big( \hat{\theta}_k(\tau) - \hat{\theta}_k(b) \Big)^2 \bigg[ - \frac{1}{nb}\sum_{t=1}^{nb} \frac{\partial^2}{\partial \theta^2} \log \mathbb{P}_\theta (X_{kt}) \bigg|_{\theta= \theta^*_k} \bigg]. \nonumber  
\end{eqnarray}

\noindent By Lemma \ref{lem: inlemma}, $P(\theta_k^{*} \in \Lambda) \to 1$ and hence by Lemma \ref{lem: mlesdvbdd}, there is $C>0$ such that
\begin{eqnarray}
P(A_{1k}(b) \leq C ( \hat{\theta}_k(\tau) - \hat{\theta}_k(b) )^2) \to 1. \nonumber
\end{eqnarray}

\noindent Moreover, note that for some $\theta_k^*$ between $\hat{\theta}_k(b)$ and $\theta_k$,
\begin{eqnarray} \nonumber 
0 = \sum_{t=1}^{nb} \frac{\partial}{\partial \theta} \log \mathbb{P}_\theta(X_{kt}) \bigg|_{\theta = \hat{\theta}_k(b)} = \sum_{t=1}^{nb} \frac{\partial}{\partial \theta_k} \log \mathbb{P}_{\theta_k} (X_{kt}) + (\hat{\theta}_k(b) - \theta_k) \sum_{t=1}^{nb} \frac{\partial^2}{\partial \theta^2} \log \mathbb{P}_\theta(X_{kt}) \bigg|_{\theta = \theta_k^*}
\end{eqnarray}
which implies
\begin{eqnarray}
(\hat{\theta}_k(b) - \theta_k) = \frac{ \frac{1}{nb} \sum_{t=1}^{nb} \frac{\partial}{\partial \theta_k } \log \mathbb{P}_{\theta_k} (X_{kt})}{ - \frac{1}{nb} \sum_{t=1}^{nb} \frac{\partial^2}{\partial \theta^2} \log \mathbb{P}_\theta(X_{kt}) \bigg|_{\theta = \theta_k^*} }. \nonumber
\end{eqnarray}
Similarly, for some $\theta^{**}_k$ between $\hat{\theta}_k(\tau)$ and $\theta_k$, 
\begin{eqnarray}
(\hat{\theta}_k(\tau) - \theta_k) = \frac{ \frac{1}{n\tau} \sum_{t=1}^{n\tau} \frac{\partial}{\partial \theta_k} \log \mathbb{P}_{\theta_k} (X_{kt})}{ - \frac{1}{n\tau} \sum_{t=1}^{n\tau} \frac{\partial^2}{\partial \theta^2} \log \mathbb{P}_\theta(X_{kt}) \bigg|_{\theta = \theta_k^{**}} }. \nonumber
\end{eqnarray}
 Hence,
\begin{eqnarray}
(\hat{\theta}_k(b) - \hat{\theta}_k(\tau))^2 &=& \bigg[\frac{ \frac{1}{nb} \sum_{t=1}^{nb} \frac{\partial}{\partial \theta_k} \log \mathbb{P}_{\theta_k} (X_{kt})}{ - \frac{1}{nb} \sum_{t=1}^{nb} \frac{\partial^2 }{\partial \theta^2} \log \mathbb{P}_{\theta_k} (X_{kt}) \bigg|_{\theta = \theta^*_k} } - \frac{ \frac{1}{n\tau} \sum_{t=1}^{n\tau} \frac{\partial}{\partial \theta_k} \log \mathbb{P}_{\theta_k} (X_{kt})}{ - \frac{1}{n\tau} \sum_{t=1}^{n\tau} \frac{\partial^2}{\partial \theta^2} \log \mathbb{P}_\theta(X_{kt}) \bigg|_{\theta = \theta_k^{**}} } \bigg]^2 \nonumber \\
& \leq & \frac{2}{a_n^2 \wedge b_n^2} \bigg [\frac{1}{n\tau} \sum_{t=1}^{n\tau} \frac{\partial}{\partial \theta_k} \log \mathbb{P}_{\theta_k}(X_{kt}) -  \frac{1}{nb} \sum_{t=1}^{nb} \frac{\partial}{\partial \theta_k} \log \mathbb{P}_{\theta_k}(X_{kt})\bigg]^2 \nonumber 
\end{eqnarray}
where 
\begin{eqnarray}
a_n^2 &=&  \left( - \frac{1}{nb} \sum_{t=1}^{nb} \frac{\partial^2 }{\partial \theta^2} \log \mathbb{P}_{\theta_k} (X_{kt}) \bigg|_{\theta = \theta^*_k}\right)^2\ 
\ \ \text{and}\ \  
b_n^2 =  \left( - \frac{1}{n\tau} \sum_{t=1}^{n\tau} \frac{\partial^2 }{\partial \theta^2} \log \mathbb{P}_{\theta_k} (X_{kt}) \bigg|_{\theta = \theta^{**}_k}\right)^2. 
\nonumber
\end{eqnarray}
Now by Lemma \ref{lem: inlemma}, $P(\theta_k^{*}, \theta_k^{**} \in \Lambda) \to 1$ and thus by Lemma \ref{lem: mlesdvbdd},  we have $C>0$ such that 
$P(A_{1k}(b) \leq C \mathbb{M}_{1k}(b)) \to 1$. 
Hence (a) is proved.

\noindent \textbf{Proof of (c)}. For some $\eta_k^{*}$ and $\eta_k^{**}$ between $\hat{\eta}_k(b)$ and $\eta_k$,  we have
\begin{eqnarray}
A_{3k} (b) & =& \frac{1}{n}\sum_{t = nb +1}^{n\tau} \bigg( \log \mathbb{P}_{\hat{\eta}_k(b)}(X_{kt}) - \log \mathbb{P}_{\eta_k} (X_{kt}) \bigg) \nonumber \\
& =&  \Big(\hat{\eta}_k(b) - \eta_k\Big) \sum_{t = nb +1}^{n\tau} \frac{\partial}{\partial \eta_k} \log \mathbb{P}_{\eta_k} (X_{kt}) + \Big(\hat{\eta}_k(b) - \eta_k\Big)^2 \sum_{t = nb +1}^{n\tau} \frac{\partial^2}{\partial^2 \eta_k} \log \mathbb{P}_{\eta_k} (X_{kt}) \bigg|_{\eta_k = \eta_k^*} \nonumber \\
&=& \frac{\bigg|\frac{1}{n(1-b)} \sum_{t=nb+1}^{n}\frac{\partial}{\partial \eta_k} \log \mathbb{P}_{\eta_k}(X_{kt})\bigg|}{\bigg| \frac{1}{n(1-b)} \sum_{t=nb+1}^{n}\frac{\partial^2}{\partial \eta_k^2} \log \mathbb{P}_{\eta_k}(X_{kt})\bigg|_{\eta_k = \eta_k^{**}}\bigg|} \bigg|\sum_{t = nb +1}^{n\tau} \frac{\partial}{\partial \eta_k} \log \mathbb{P}_{\eta_k} (X_{kt}) \bigg| \nonumber \\
&& + \frac{\bigg|\frac{1}{n(1-b)} \sum_{t=nb+1}^{n}\frac{\partial}{\partial \eta_k} \log \mathbb{P}_{\eta_k}(X_{kt})\bigg|^2}{\bigg| \frac{1}{n(1-b)} \sum_{t=nb+1}^{n}\frac{\partial^2}{\partial \eta_k^2} \log \mathbb{P}_{\eta_k}(X_{kt})\bigg|_{\eta_k = \eta_k^{**}}\bigg|^2} \bigg|\sum_{t = nb +1}^{n\tau} \frac{\partial^2}{\partial \eta_k^2} \log \mathbb{P}_{\eta_k} (X_{kt}) \bigg|_{\eta_k = \eta_k^*} \bigg|. \nonumber
\end{eqnarray}

\noindent Thus, by Lemma \ref{lem: inlemma}, $\eta_k^{*}$ and $\eta_k^{**}$ are in $\Lambda$ with probability tending to $1$ and hence by Lemma \ref{lem: mlesdvbdd}, there is $C>0$ such that
$P(A_{3k}(b) \leq C(\mathbb{M}_{3k}(b) + \mathbb{M}_{4k}(b))) \to 1$.
Hence (c) is proved.

\noindent This completes the proof of Lemma 5.7. \qed 

\subsubsection{Proof of Lemma \ref{lem: inlemma}} \label{subsec: inlemma}
As $\theta$ and $\eta$ are interior points of $\Lambda$, there is $\epsilon >0$ such that
\begin{eqnarray}
(\theta - \epsilon, \theta + \epsilon), (\eta -\epsilon, \eta + \epsilon) \subset \Lambda.
\end{eqnarray}
To prove Lemma \ref{lem: inlemma}, it is enough to establish that for some $b \in (c,1-c)$,
\begin{eqnarray}
P( |\hat{\theta}_k(b) - \theta_k| > \epsilon\ \text{for some $k$}) \to 0,
P( |\hat{\eta}_k(b) - \eta_k| > \epsilon\ \text{for some $k$}) \to 0.
\end{eqnarray}
\noindent For some $\theta_k^*$ between  $\hat{\theta}_k(b)$ and $\theta_k $, we have
\begin{eqnarray}
0 &=& \sum_{t=1}^{nb} \frac{\partial}{\partial \theta} \log \mathbb{P}_\theta(X_{kt}) \bigg|_{\theta = \hat{\theta}_k(b)} \nonumber \\
& = & \sum_{t=1}^{nb} \frac{\partial}{\partial \theta_k} \log \mathbb{P}_{\theta_k} (X_{kt}) + (\hat{\theta}_k(b) - \theta_k) \sum_{t=1}^{nb} \frac{\partial^2}{\partial \theta^2} \log \mathbb{P}_\theta(X_{kt}) \bigg|_{\theta = \theta_k^*}
\end{eqnarray}
which implies
\begin{eqnarray}
|\hat{\theta}_k(b) - \theta_k| = \frac{ \bigg| \frac{1}{nb} \sum_{t=1}^{nb} \frac{\partial}{\partial \theta_k } \log \mathbb{P}_{\theta_k} (X_{kt}) \bigg|}{ \bigg|- \frac{1}{nb} \sum_{t=1}^{nb} \frac{\partial^2}{\partial \theta^2} \log \mathbb{P}_\theta(X_{kt}) \bigg|_{\theta = \theta_k^*} \bigg|}.
\end{eqnarray}
Therefore by (B5), for some $C>0$ 
\begin{eqnarray}
|\hat{\theta}_k(b) - \theta_k| \leq C \left( \frac{1}{nb} \sum_{t=1}^{nb} G_{2}(X_{kt}) \right)^{-2} \bigg| \frac{1}{nb} \sum_{t=1}^{nb} \frac{\partial}{\partial \theta_k } \log \mathbb{P}_{\theta_k} (X_{kt}) \bigg|.
\end{eqnarray}
Chose $\delta < \inf_{k,t} E(G_2(X_{kt}))$. Hence, for some $C>0$
\begin{eqnarray}
&&\bigg|\frac{1}{nb}\sum_{t=1}^{nb} G_{2}(X_{kt}) - EG_2(X_{k1}) \bigg| < \delta 
\implies  \bigg|\bigg|\frac{1}{nb}\sum_{t=1}^{nb} G_{2}(X_{kt})\bigg| - EG_2(X_{k1}) \bigg| < \delta \nonumber \\
\implies && \bigg|\frac{1}{nb}\sum_{t=1}^{nb} G_{2}(X_{kt})\bigg| \geq \inf_{k,t}EG_2(X_{k1}) - \delta > C >0.
\end{eqnarray}
Therefore by (B8), (B9) and $\log m(n) = o(n)$, and  for some $C_1,C_2, C_3>0$, we have
\begin{eqnarray}
 && P( |\hat{\theta}_k(b) - \theta_k| > \epsilon\ \text{for some $k$}) 
\leq \sum_{k=1}^{m} P( |\hat{\theta}_k(b) - \theta_k| > \epsilon) \nonumber \\
& \leq &  \sum_{k=1}^{m} P \left( \bigg| \frac{1}{nb} \sum_{t=1}^{nb} \frac{\partial}{\partial \theta_k } \log \mathbb{P}_{\theta_k} (X_{kt}) \bigg| > C\bigg|\frac{1}{nb}\sum_{t=1}^{nb} G_{2}(X_{kt})\bigg|,\ \ \bigg|\frac{1}{nb}\sum_{t=1}^{nb} G_{2}(X_{kt}) - EG_2(X_{k1}) \bigg| < \delta \right) \nonumber \\
&& \hspace{1 cm} + \sum_{k=1}^{m}P\left(\bigg|\frac{1}{nb}\sum_{t=1}^{nb} G_{2}(X_{kt}) - EG_2(X_{k1}) \bigg| > \delta \right) \nonumber \\
& \leq & \sum_{k=1}^{m} P\left(\bigg| \frac{1}{nb} \sum_{t=1}^{nb} \frac{\partial}{\partial \theta_k } \log \mathbb{P}_{\theta_k} (X_{kt}) \bigg| > C_1 \right) + \sum_{k=1}^{m}P\left(\bigg|\frac{1}{nb}\sum_{t=1}^{nb} G_{2}(X_{kt}) - EG_2(X_{k1}) \bigg| > \delta \right) \nonumber \\
& \leq & C_2 m e^{-C_3 n} \to 0. \nonumber
\end{eqnarray}
 This completes the proof of Lemma \ref{lem: inlemma}. \qed
 
 \subsubsection{Proof of Lemma \ref{lem: mlesdvbdd}} \label{subsec: mlesdvbdd}
\noindent  By (B4) and (3.3), we have 
\begin{eqnarray}
  && C_1 \left(\frac{1}{n+s_n}\sum_{i=1}^{n} G_2(X_i) + \frac{1}{n+s_n}\sum_{i=1}^{s_n} G_2(Y_i) \right)  \nonumber \\
  &\leq & \inf_{\lambda \in \Lambda} \bigg|\frac{1}{n+s_n}\sum_{i=1}^{n} \frac{\partial^2}{\partial \lambda^2} \log \mathbb{P}_{\lambda}(X_i) + \frac{1}{n+s_n}\sum_{i=1}^{s_n} \frac{\partial^2}{\partial \lambda^2} \log \mathbb{P}_{\lambda}(Y_i) \bigg|  \nonumber \\
 &\leq & \sup_{\lambda \in \Lambda} \bigg|\frac{1}{n+s_n}\sum_{i=1}^{n} \frac{\partial^2}{\partial \lambda^2} \log \mathbb{P}_{\lambda}(X_i) + \frac{1}{n+s_n}\sum_{i=1}^{s_n} \frac{\partial^2}{\partial \lambda^2} \log \mathbb{P}_{\lambda}(Y_i) \bigg| \nonumber \\
 &\leq &  C_2 \left(\frac{1}{n+s_n}\sum_{i=1}^{n} G_2(X_i) + \frac{1}{n+s_n}\sum_{i=1}^{s_n} G_2(Y_i) \right). \nonumber
\end{eqnarray} 
Moreover, for some $C>0$
\begin{eqnarray}
\frac{1}{n+s}\sum_{i=1}^{n} G_2(X_i) &\stackrel{\text{P}}{\to}&  EG_2(X_1)\ \ \text{and} \nonumber \\
\frac{1}{n+s}\sum_{i=1}^{s} G_2(Y_i)  &\stackrel{\text{P}}{\to} & 0 I(s_n = o(n)) + CEG_2(Y_1)I(s_n \neq o(n), s_n =O(n)).
\end{eqnarray}
Thus, by (3.3), 
for large $n$
\begin{eqnarray}
\epsilon_1/2 \leq \left(\frac{1}{n+s_n}\sum_{i=1}^{n} G_2(X_i) + \frac{1}{n+s_n}\sum_{i=1}^{s_n} G_2(Y_i) \right) \leq 3 \epsilon_2.
\end{eqnarray}
This completes the proof of Lemma \ref{lem: mlesdvbdd}.  \qed

\subsubsection{Proof of Lemmas 5.8 and 5.9} 
\label{subsec: lemadap}
Suppose  $\epsilon = C \frac{m}{\sqrt{n}}||\mu_1 - \mu_2||_2^{-2} \sqrt{\log m}$ and  $C>0$. Since $\{X_{kt}\}$ are Sub-Gaussian, for some $C_1,C_2,C_3>0$, we have
\begin{align*}
P \left(\bigg|\frac{||\hat{\mu}_1 - \hat{\mu}_2||_2^2}{||\mu_1 - \mu_2||_2^2} -1 \bigg| > \epsilon  \right) &= P \left( \bigg| ||\hat{\mu}_1 - \hat{\mu}_2||_2^2 -||{\mu}_1 - {\mu}_2||_2^2 \bigg|> \epsilon ||{\mu}_1 - {\mu}_2||_2^2 \right) \\
& \leq C_1 \sum_{k=1}^{m} P \left(\sup_{i=1,2} \sqrt{n} |\hat{\mu}_{ik} - \mu_{ik}| \geq C_2 \epsilon \sqrt{n}  ||{\mu}_1 - {\mu}_2||_2^2/m  \right)  \\
& \leq C_3 m\  \text{exp}\{-C_2^2 n\epsilon^2 ||\mu_1 - \mu_2||_2^4/m^2 \} \to 0.
\end{align*}
Therefore,
\begin{eqnarray}
&& \frac{||\hat{\mu}_1 - \hat{\mu}_2||_2^2}{||\mu_1 - \mu_2||_2^2} -1 \stackrel{\text{P}}{\to} 0,\ \  \sum_{k=1}^{m}|(\hat{\mu}_{1k} -\hat{\mu}_{2k})^2 -(\mu_{1k}-\mu_{2k})^2 | = o_{P}(||\mu_1 - \mu_2||_2^2).\ \ \  \label{eqn: lemadapi} 
\end{eqnarray}
Moreover, for some $C,C_1,C_2>0$ and $\epsilon = C n^{-1} \log m$, we have
\begin{eqnarray}
P(\sup_{k,i} |\hat{\sigma}_{ik}^2 - \sigma_{ik}^2| > \epsilon) &\leq & \sum_{i=1,2}\sum_{k=1}^{m}\bigg[ P(|\hat{\sigma}_{ik}^2 + \hat{\mu}_{ik}^2 - \sigma_{ik}^2 - \mu_{ik}^2| > \epsilon/2) + P(|\hat{\mu}_{ik}^2  - \mu_{ik}^2| > \epsilon/2)\bigg] \nonumber \\
& \leq & C_1m (e^{-C_2 n \epsilon} + e^{-C_2 n \sqrt{\epsilon}}) \leq 2 C_1m e^{-C_2 n \epsilon} \to 0. \nonumber
\end{eqnarray}
Therefore, 
\begin{eqnarray} \label{eqn: lemadapii} 
\sup_{k,i} |\hat{\sigma}_{ik}^2 - \sigma_{ik}^2| = o_{P}(1).
\end{eqnarray}
(\ref{eqn: lemadapii}) implies Lemma 5.8 and the second part of Lemma 5.9(a).  

Similarly as (\ref{eqn: lemadapii}), one can show that
\begin{eqnarray}
\sup_{k,t} |E((\tilde{X}_{kt} - E(\tilde{X}_{kt}|\{X_{kt}\}))^4|\{X_{kt}\}) - E(X_{kt}-EX_{kt})^4| = o_{P}(1), \nonumber
\end{eqnarray}
which implies the first part of Lemma $5.9$(a).

\noindent To prove Lemma 5.9(b), we shall show
\begin{align}
\frac{\sum_{k=1}^{m}(\hat{\mu}_{1k}-\hat{\mu}_{2k})^2 \hat{\sigma}_{1k}^2}{\sum_{k=1}^{m}(\hat{\mu}_{1k}-\hat{\mu}_{2k})^2} - \frac{\sum_{k=1}^{m}({\mu}_{1k}-{\mu}_{2k})^2 {\sigma}_{1k}^2}{\sum_{k=1}^{m}({\mu}_{1k}-{\mu}_{2k})^2} \stackrel{\text{P}}{\to} 0. \label{eqn: lemadape1}
\end{align}
Note that
\begin{align*}
& \frac{\sum_{k=1}^{m}(\hat{\mu}_{1k}-\hat{\mu}_{2k})^2 \hat{\sigma}_{1k}^2}{\sum_{k=1}^{m}(\hat{\mu}_{1k}-\hat{\mu}_{2k})^2} - \frac{\sum_{k=1}^{m}({\mu}_{1k}-{\mu}_{2k})^2 {\sigma}_{1k}^2}{\sum_{k=1}^{m}({\mu}_{1k}-{\mu}_{2k})^2}  \\
= &  \frac{\sum_{k=1}^{m}(\hat{\mu}_{1k}-\hat{\mu}_{2k})^2 (\hat{\sigma}_{1k}^2-\sigma_{1k}^2)}{\sum_{k=1}^{m}(\hat{\mu}_{1k}-\hat{\mu}_{2k})^2} + \left(\frac{||\mu_1-\mu_2||_2^2}{||\hat{\mu}_1 - \hat{\mu}_2||_2^2} \right)\frac{\sum_{k=1}^{m}(\hat{\mu}_{1k}-\hat{\mu}_{2k})^2 {\sigma}_{1k}^2}{\sum_{k=1}^{m}({\mu}_{1k}-{\mu}_{2k})^2} \\
& \hspace{9.5 cm} - \frac{\sum_{k=1}^{m}({\mu}_{1k}-{\mu}_{2k})^2 {\sigma}_{1k}^2}{\sum_{k=1}^{m}({\mu}_{1k}-{\mu}_{2k})^2} \\
=& \frac{\sum_{k=1}^{m}(\hat{\mu}_{1k}-\hat{\mu}_{2k})^2 (\hat{\sigma}_{1k}^2-\sigma_{1k}^2)}{\sum_{k=1}^{m}(\hat{\mu}_{1k}-\hat{\mu}_{2k})^2} + \left(\frac{||\mu_1-\mu_2||_2^2}{||\hat{\mu}_1 - \hat{\mu}_2||_2^2} -1 \right)\frac{\sum_{k=1}^{m}(\hat{\mu}_{1k}-\hat{\mu}_{2k})^2 {\sigma}_{1k}^2}{\sum_{k=1}^{m}({\mu}_{1k}-{\mu}_{2k})^2} \\ 
& \hspace{4 cm} + \left(\frac{\sum_{k=1}^{m}(\hat{\mu}_{1k}-\hat{\mu}_{2k})^2 {\sigma}_{1k}^2}{\sum_{k=1}^{m}({\mu}_{1k}-{\mu}_{2k})^2} - \frac{\sum_{k=1}^{m}({\mu}_{1k}-{\mu}_{2k})^2 {\sigma}_{1k}^2}{\sum_{k=1}^{m}({\mu}_{1k}-{\mu}_{2k})^2} \right) \\
= & o_{P}(1) + o_{P}(1)O_{P}(1) + o_{P}(1),\ \ \text{by (\ref{eqn: lemadapi}), (\ref{eqn: lemadapii}) and (A1)}.
\end{align*}
This completes the proof of  Lemma  5.9(b). 
Similar argument works for Lemma 5.9(c). 
Hence, Lemma 5.9 
 is established.  \qed


\end{document}